\newcounter{results}[section] % Uniform counters for lemmas, theorems, propositions etc
\newcounter{steps}[section] % Uniform counters for lemmas, theorems, propositions etc
\theoremstyle{plain}
\newtheorem{theorem}[results]{Theorem}
\newtheorem{lemma}[results]{Lemma}
\newtheorem{proposition}[results]{Proposition}
\newtheorem{corollary}[results]{Corollary}
\newtheorem*{theorem*}{Theorem}
\newtheorem*{lemma*}{Lemma}
\newtheorem*{proposition*}{Proposition}
\newtheorem*{corollary*}{Corollary}
\newtheorem*{exercise*}{Exercise}
\newtheorem*{fact*}{Fact}
\theoremstyle{remark}
\newtheorem{remark}[results]{Remark}
\theoremstyle{definition}
\newtheorem{definition}[results]{Definition}
\newtheorem{example}[results]{Example}
\newtheorem*{definition*}{Definition}
\newtheorem*{example*}{Example}
\newcommand{\N}{\ensuremath{\mathbb N}}%Natural numbers
\newcommand{\R}{\ensuremath{\mathbb R}}%Real numbers
\newcommand{\C}{\ensuremath{\mathbb C}}%Complex numbers
\newcommand \eps{\ensuremath{\varepsilon}}
\newcommand{\st}{\ensuremath{\ :\ }} % Such that in formulas.
\newcommand{\id}{\ensuremath{\mathds{1}}}% Identity
\DeclareMathOperator{\tr}{tr}
\DeclareMathOperator{\sym}{Sym}
\let\div\undefined
\newcommand{\div}{\ensuremath{\mathrm{div}}} % Divergence
\DeclareMathOperator{\II}{I\!I} % Second fundamental form
\DeclareMathOperator{\Riem}{Riem} % Ricci tensor
\DeclareMathOperator{\dist}{dist} % Distance
\let\oldL\L
\renewcommand{\L}{\ensuremath{\mathcal L}} % Jacobi operator
\DeclareMathOperator{\ind}{index} % Index
\newcommand{\be}{\begin{equation}}
\newcommand{\ee}{\end{equation}}
\newcommand{\AAa}{\mathbb{A}}
\newcommand{\BB}{\mathbb{B}}
\newcommand{\CC}{\mathbb{C}}
\newcommand{\NN}{\mathbb{N}}
\newcommand{\RR}{\mathbb{R}}
\newcommand{\SSp}{\mathbb{S}}
\newcommand{\cA}{\mathcal{A}}
\newcommand{\cC}{\mathcal{C}}
\newcommand{\cE}{\mathcal{E}}
\newcommand{\cF}{\mathcal{F}}
\newcommand{\cG}{\mathcal{G}}
\newcommand{\cI}{\mathcal{I}}
\newcommand{\cL}{\mathcal{L}}
\newcommand{\cM}{\mathcal{M}}
\newcommand{\cR}{\mathcal{R}}
\newcommand{\cS}{\mathcal{S}}
\newcommand{\cT}{\mathcal{T}}
\newcommand{\cZ}{\mathcal{Z}}
\newcommand{\mbfA}{\mathbf{A}}
\newcommand{\mbfB}{\mathbf{B}}
\newcommand{\mbfC}{\mathbf{C}}
\newcommand{\mbfD}{\mathbf{D}}
\newcommand{\mbfE}{\mathbf{E}}
\newcommand{\mbfF}{\mathbf{F}}  
\newcommand{\mbfG}{\mathbf{G}}
\newcommand{\mbfP}{\mathbf{P}}
\newcommand{\bfr}{\mathbf{r}}
\newcommand{\mbfT}{\mathbf{T}}
\newcommand{\mbfV}{\mathbf{V}}
\newcommand{\mbfx}{\mathbf{x}}
\newcommand{\bfx}{\mathbf{x}}
\newcommand{\rmI}{\mathrm{I}}
\newcommand{\rmP}{\mathrm{P}}
\newcommand{\rmW}{\mathrm{W}}
\newcommand{\mfr}{\mathfrak{r}}
\newcommand{\scG}{\mathscr{G}}
\newcommand{\scH}{\mathscr{H}}
\newcommand{\scL}{\mathscr{L}}
\newcommand{\scM}{\mathscr{M}}
\newcommand{\scN}{\mathscr{N}}
\newcommand{\orig}{\mathbf{0}}
\newcommand{\End}{\operatorname{End}}
\newcommand{\Ker}{\operatorname{Ker}}
\newcommand{\Coker}{\operatorname{Coker}}
\newcommand{\spt}{\operatorname{spt}}
\newcommand{\graph}{\operatorname{graph}}
\newcommand{\injrad}{\operatorname{injrad}}
\newcommand{\Reg}{\operatorname{Reg}}
\newcommand{\Sing}{\operatorname{Sing}}
\newcommand{\MSI}{\textbf{MSI}}
\newcommand{\conetimes}{\times\!\!\!\!\!\times}
\newcommand{\eucl}{\mathrm{euc}}
\newcommand{\set}[1]{\left\{#1\right\}}
     \title[Non-persistence of strongly isolated singularities] 
     {Non-persistence of strongly isolated singularities, and geometric applications}
     \author{Alessandro Carlotto, Yangyang Li, Zhihan Wang}
     \address{
     \newline \indent Alessandro Carlotto: 
     \newline Universit\`a di Trento, Dipartimento di Matematica,
via Sommarive 14, 38123 Trento, Italy
\newline\textit{E-mail address: alessandro.carlotto@unitn.it}
 \newline \newline \indent Yangyang Li: 
\newline Department of Mathematics, University of Chicago, 5734 S. University Avenue,
Chicago, IL, 60637, United States of America
       \newline \textit{E-mail address: 
       yangyangli@uchicago.edu} 
     	   	 \newline \newline \indent Zhihan Wang: 
\newline Department of Mathematics, Cornell University, 310 Malott Hall,
Ithaca, NY, 14853, United States of America
       \newline \textit{E-mail address: zw782@cornell.edu} 
       }
\begin{document}

\begin{abstract}
    We obtain a generic regularity result for stationary integral $n$-varifolds with only strongly isolated singularities inside $N$-dimensional Riemannian manifolds, in absence of any restriction on the dimension ($n\geq 2$) and codimension. As a special case, we prove that for any $n\geq 2$ and any compact $(n+1)$-dimensional manifold $M$ the following holds: for a generic choice of the background metric $g$ all stationary integral $n$-varifolds in $(M,g)$ will either be entirely smooth or have at least one singular point that is not strongly isolated.
    In other words, only ``more complicated'' singularities may possibly persist. This implies, for instance, a generic finiteness result for the class of all closed minimal hypersurfaces of area at most $4\pi^2-\varepsilon$ (for any $\varepsilon>0$) in nearly round four-spheres: we can thus give precise answers, in the negative, to the questions of persistence of the Clifford football and of Hsiang's hyperspheres in nearly round metrics. 
    The aforementioned main regularity result is achieved as a consequence of the fine analysis of the Fredholm index of the Jacobi operator for such varifolds: we prove on the one hand an exact formula relating that number to the Morse indices of the conical links at the singular points, while on the other hand we show that the same number is non-negative for all such varifolds if the ambient metric is generic.
\end{abstract}

\maketitle
    
\tableofcontents

\section{Introduction}\label{sec:Intro}

\subsection{Context and geometric motivations}\label{subs:ContextGeoMotiv} In 1983, Hsiang \cite{Hsi83} disproved Chern's spherical Bernstein conjecture by constructing a sequence of infinitely many, pairwise distinct, embedded minimal hyperspheres in the round four-dimensional sphere. This result is to be compared with what happens in one dimension less, as determined by the earlier rigidity theorems obtained by Almgren \cite{Alm66} and Calabi \cite{Cal67}. Such hyperspheres are constructed by means of an equivariant reduction and the analysis of the resulting singular ODE system in the quotient space. It turns out that their (unit multiplicity) varifold limit is the spherical suspension of a Clifford torus inside an equatorial three-sphere, thus a singular minimal subvariety (henceforth refereed to as Clifford football), which is smooth at all points except the north and south pole.

It is an interesting question, brought to the attention of the first-named author about a decade ago by Neves, whether such a picture persists when considering nearly round metrics on $S^4$. In particular, one may ask whether for \emph{any} metric $g$ sufficiently close to the round one --- say in the smooth topology --- the Riemannian manifold $(S^4,g)$ still contains infinitely many (embedded) minimal hyperspheres. This problem stems from the well-known link between rigidity phenomena characterizing special submanifolds in round spheres, and the resulting scarcity phenomena for slightly deformed metrics.
Indeed, as a reflex of the well-known characterization of simple closed geodesics on the round two-sphere as equatorial circles it was proven by Morse that there are nearly round metrics on $S^2$ that have \emph{only three} simple closed geodesics, and similarly (as a reflex of the aforementioned rigidity theorems by Almgren and Calabi) White \cite{Whi91} proved that there are nearly round metrics on $S^3$ that have \emph{only four} minimal hyperspheres. Aiming for an understanding of the situation in ambient dimension more naturally leads to the question above.

In principle, one may attack this problem by means of a perturbative approach of essentially PDE-theoretic nature, by first showing (ideally) that any nearly round metric allows for a --- suitably defined --- singular minimal subvariety modelled on the aforementioned Clifford football, and then desingularizing the football in question to obtain the desired minimal hyperspheres. Such an approach turns out to be at the very least challenging, because a direct application of the implicit function theorem is (unsurprisingly) obstructed by the large kernel of the Jacobi operator of the football; in suitable weighted Sobolev spaces such a kernel has actually dimension 18 (cf. \cite{Car19}, see later discussion in Section \ref{sec:Index}) and it is unclear how to possibly handle it by means of a Lyapunov-Schmidt reduction; in fact, it is a consequence of the present work (see, specifically, the statements of Corollary \ref{cor:Smooth} and Corollary \ref{cor:MainHsiang} below), that such a program is inevitably doomed to fail.

\subsection{Main results}\label{subs:MainRes} For indeed, in this article we approach Neves' question from a totally different perspective, and answer it in strong negative terms. Such a conclusion ultimately descends from our main theorem, which can be stated as follows:
\begin{theorem} \label{Thm:GenReg,IsolatedSing}
Given a closed manifold $M$ of dimension $N\geq 3$, there exists a generic subset $\scG_0$ of the space of smooth metrics on $M$ with the following property:
 for every $g\in \scG_0$, any $g$-stationary integral $n$-varifold in $(M,g)$, $2\leq n<N$, will:
\begin{enumerate}[label=(\roman*)]
\item{either be entirely smooth, or}
\item{have at least one singular point that is not strongly isolated, or}
\item{have only strongly isolated singular points all having links with Morse index equal to N.}
\end{enumerate}
 \end{theorem}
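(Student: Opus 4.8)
The plan is to reduce Theorem \ref{Thm:GenReg,IsolatedSing} to two statements about the Fredholm index of the Jacobi operator acting on suitable weighted function spaces associated to a $g$-stationary integral $n$-varifold $V$ with only strongly isolated singularities. Call this index $\operatorname{ind}_F(V,g)$. The first ingredient, which the abstract advertises as an "exact formula", should express $\operatorname{ind}_F(V,g)$ as a sum of a term that is intrinsic to the smooth part of $\operatorname{spt} V$ (and hence vanishes, or is controlled, for the generic-metric considerations) and, for each strongly isolated singular point $p$, a contribution $N - \operatorname{ind}(L_p)$, where $L_p$ is the conical link at $p$ and $\operatorname{ind}(L_p)$ its Morse index as a minimal submanifold of the round sphere. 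The cleanest route to such a formula is a relative index / gluing computation: cut out small geodesic balls around each singular point, compute the $L^2$-index of the Jacobi operator on the exterior region against the index on each truncated cone, and match the asymptotics of the indicial roots of the conical Jacobi operator — these indicial roots are precisely governed by the eigenvalues of the Jacobi operator of $L_p$, whence the Morse index enters. I would isolate this as a self-contained index lemma.

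The second ingredient is the genericity statement: there is a generic set $\scG_0$ of metrics for which every such $V$ has $\operatorname{ind}_F(V,g)\geq 0$. Here the strategy is a Sard–Smale / Structure-theorem argument in the Banach manifold of metrics. One sets up the universal moduli space of pairs $(V,g)$ with $V$ a $g$-stationary integral varifold with only strongly isolated singularities, shows (using unique continuation and the fact that one can prescribe metric perturbations supported near the smooth part to be essentially arbitrary transverse directions) that the projection to the space of metrics is Fredholm of a controlled index, and then argues that a varifold with strictly negative Fredholm index cannot survive a generic perturbation: negative index forces the cokernel of the linearization to be nontrivial in a way that is destroyed by an open dense set of metric variations. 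The technical heart is the bumpy-metric-type transversality: one needs enough freedom to perturb the metric near a smooth (dense) portion of $\operatorname{spt} V$ while keeping the singular structure under control, and one needs the moduli space to be locally well-behaved, which is where the strong isolation hypothesis and the weighted-space Fredholm theory of the first ingredient are used.

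Combining the two: for $g\in\scG_0$ and $V$ as in the theorem, the exact formula gives
\[
0 \leq \operatorname{ind}_F(V,g) = (\text{smooth-part term}) + \sum_{p\in\operatorname{Sing}V}\bigl(N - \operatorname{ind}(L_p)\bigr).
\]
If $V$ is not entirely smooth and has no "complicated" singular point, then all its singularities are strongly isolated; an intersection-theoretic / monotonicity argument on the link $L_p\subset S^{N-1}$ — any closed minimal submanifold of the round sphere of dimension $\geq 1$ has Morse index at least $N$ (this is the rigidity input, e.g. the standard lower bound coming from the coordinate functions restricted to the link) — forces each summand $N-\operatorname{ind}(L_p)\leq 0$, while the smooth-part term can be arranged nonpositive (or made to vanish by the choice of weight). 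Hence every summand must vanish, i.e. $\operatorname{ind}(L_p)=N$ for every $p$, which is exactly alternative (iii). The main obstacle I anticipate is the transversality step in the second ingredient: making the Sard–Smale machinery run in the presence of singular points requires a careful choice of weighted Hölder or Sobolev spaces in which the Jacobi operator is Fredholm, a proof that the relevant moduli space is cut out transversally by an open dense set of metrics (a singular analogue of White's structure theorem), and a verification that the metric perturbations used do not inadvertently create or destroy the strongly-isolated structure — this last point is exactly the "non-persistence" phenomenon the paper is named after, and is presumably where the strong-isolation hypothesis is genuinely exploited.
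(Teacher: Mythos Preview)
Your proposal is correct and follows essentially the same architecture as the paper: the exact Fredholm-index formula (Theorem~\ref{thm:Count}, proved via the Lockhart--McOwen weight-crossing formula rather than excision, and with no ``smooth-part term'' once one works in the augmented space $\hat{W}^{m,2}_\tau$), the genericity of $\widehat{\ind}_\tau\geq 0$ (Theorem~\ref{thm:generic_perturbation}), and Simons' lower bound $\ind(L_p)\geq N$ (Remark~\ref{rem:BasicIndexEst}) combine exactly as you describe. The one caveat is that the paper cannot run Sard--Smale directly because the moduli space of metric--$\MSI$ pairs carries no Banach-manifold structure; instead it unwraps the argument by hand via ``canonical pseudo-neighborhoods'' and a bespoke local Sard theorem (Theorem~\ref{Thm_Local Sard Thm}), which is where most of the technical work lies---but you correctly flagged this as the main obstacle.
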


The precise notion of ``strongly isolated singularity'' is recalled in Definition \ref{def:MSI}. In particular, in the codimension one case (that is to say: when $N=n+1$) the third alternative may not possibly happen (see Remark \ref{rem:BasicIndexEst}) and so we conclude an unconditional generic regularity result.

\begin{corollary}\label{cor:MainSpecializedCodim1}
 Given a closed manifold $M$ of dimension $n+1\geq 3$, there exists a generic subset $\scG_0$ of the space of smooth metrics on $M$ with the following property:
for every $g\in \scG_0$, any $g$-stationary integral varifold 
will either be entirely smooth or have at least one singular point that is not strongly isolated.
    \end{corollary}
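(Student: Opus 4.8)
The plan is to obtain Corollary \ref{cor:MainSpecializedCodim1} as a direct specialization of Theorem \ref{Thm:GenReg,IsolatedSing} to the case $N=n+1$, the only real point being that the third alternative there becomes vacuous in codimension one. So let $\scG_0$ be the generic set produced by Theorem \ref{Thm:GenReg,IsolatedSing}, fix $g\in\scG_0$, and let $V$ be a $g$-stationary integral $n$-varifold in the closed $(n+1)$-manifold $(M,g)$. By the theorem, $V$ is either smooth, or has a singular point that is not strongly isolated, or has only strongly isolated singular points, each of whose link has Morse index equal to $N=n+1$. It therefore suffices to establish the assertion recorded in Remark \ref{rem:BasicIndexEst}: \emph{the link of a strongly isolated singularity of a codimension-one stationary integral varifold has Morse index at least $n+2$, and in particular different from $N$.}

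To prove this I would argue as follows. Let $p\in\Sing V$ be a strongly isolated singular point. By Definition \ref{def:MSI} its link is a smooth, closed, embedded minimal hypersurface $\Sigma^{n-1}\subset S^n$, and $\Sigma$ is not totally geodesic — otherwise the associated (multiplicity-one) tangent cone $C(\Sigma)$ would be a hyperplane and Allard's regularity theorem would force $p$ to be a regular point. Moreover $\Sigma$ is connected, since by Frankel's theorem any two closed minimal hypersurfaces of the round $S^n$ intersect. Let $L_\Sigma=\Delta_\Sigma+|A_\Sigma|^2+(n-1)$ be the Jacobi operator of $\Sigma\subset S^n$ (with $|A_\Sigma|^2$ the squared norm of its second fundamental form) and $Q_\Sigma(f)=-\int_\Sigma f\,L_\Sigma f$ the associated index form, so that $\ind(\Sigma)$ is the largest dimension of a subspace on which $Q_\Sigma$ is negative. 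I would test $Q_\Sigma$ on the span $W$ of the constant function $1$ together with the restrictions $\ell_a$ to $\Sigma$ of the $n+1$ ambient coordinate functions $x\mapsto\langle x,e_a\rangle$. Two identities carry the computation: integrating $\Delta_\Sigma x=-(n-1)x$ over the closed manifold $\Sigma$ gives $\int_\Sigma x=0$, and minimality of $\Sigma$ in $S^n$ gives $L_\Sigma\ell_a=|A_\Sigma|^2\ell_a$. Hence for $f=c_0+\sum_a c_a\ell_a$,
\[
Q_\Sigma(f)\;=\;-(n-1)\,c_0^2\,\Haus^{n-1}(\Sigma)\;-\;\int_\Sigma |A_\Sigma|^2 f^2\;\le\;0,
\]
with equality only if $c_0=0$ and $f$ vanishes on the nonempty open set $\{|A_\Sigma|\ne 0\}$; since $f$ is then the restriction to the connected real-analytic $\Sigma$ of a linear function, it vanishes identically, and — $\Sigma$ being non-totally-geodesic, hence not contained in any great hypersphere — all $c_a$ vanish as well. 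Thus $W$ is $(n+2)$-dimensional and $Q_\Sigma$ is negative definite on it, so $\ind(\Sigma)\ge n+2>N$.

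Combining the two steps, the third alternative of Theorem \ref{Thm:GenReg,IsolatedSing} cannot occur when $g\in\scG_0$, and therefore $V$ is either entirely smooth or has a singular point that is not strongly isolated — which is precisely Corollary \ref{cor:MainSpecializedCodim1}. I do not anticipate any genuine obstacle in this argument: the corollary is a formal consequence of the main theorem, and its sole analytic ingredient, the lower bound $\ind(\Sigma)\ge n+2$ for non-totally-geodesic closed minimal hypersurfaces of a round sphere, is classical (the relevant test-function computation is in essence already in Simons' work on minimal cones). All the real content of the statement lies in Theorem \ref{Thm:GenReg,IsolatedSing} itself.
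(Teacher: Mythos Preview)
Your proposal is correct and follows exactly the route the paper takes: the corollary is deduced from Theorem \ref{Thm:GenReg,IsolatedSing} by observing that alternative (iii) is vacuous in codimension one, which the paper simply attributes to Remark \ref{rem:BasicIndexEst}\ref{item:IndexCodim1} (``the Morse index of any non-equatorial $\Sigma$ is at least $N+1$''). You go one step further and supply a self-contained proof of that index bound via the classical test-function computation (constants and coordinate restrictions), together with the connectedness of the link from Frankel; this is precisely the argument behind the ``well-known'' fact the paper cites, so there is no difference in approach, only in level of detail.
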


\begin{remark}\label{rem:BaireWhite}
In the statements above and throughout this article the notion of ``genericity'' is understood in the sense of Baire, i.\,e. a generic set is a countable intersection of open dense sets (hence a dense $G_{\delta}$).

A well-known result by White (see \cites{Whi91, Whi17}) ensures that the set of smooth ($C^\infty$) Riemannian metrics such that \emph{all} closed, (smooth) embedded minimal hypersurfaces are non-degenerate (namely: have no non-trivial Jacobi fields) is indeed generic. We can thus assume to have fixed, once and for all, a subset $\mathscr{G}$ of the class of smooth Riemannian metrics enjoying such a property, and when we write --- for instance --- that $g\in\mathscr{N}$ is a generic metric we may convene that $\mathscr{N}\subset\mathscr{G}$. In particular, in the previous statement we shall tacitly agree that $\scG_0\subset \scG$.
\end{remark}

Now, the connection with the geometric question we started with, and more specifically with the problem of perturbing the Clifford football to nearby stationary varifolds lies in the fact that, if we impose an area bound --- that is to say: a suitable upper bound on the mass of our stationary integral varifolds --- then all such varifolds, if mod 2 cyclic (cf. \cite{Whi09}), necessarily have a very special singular set (namely: all singularities are automatically strongly isolated) and thus have the structure of what we call an $\MSI$ (see Definition \ref{def:MSI} and Proposition \ref{pro:SmallMass=>Isol}). Informally speaking, one can say that, under such a bound, the Clifford football represents, in terms of regularity, the worst that can possibly happen. 
As a result, we have the following geometric consequence:

\begin{corollary}\label{cor:Smooth}
Given any $\varepsilon>0$, there is a neighborhood $\mathscr{N}(\varepsilon)$ of the round metric in the space of smooth metrics on $S^4$ such that for a generic metric $g \in \mathscr{N}(\varepsilon)$, the following is true:
Every mod 2 cyclic $g$-stationary integral varifold with mass less that $4\pi^2 - \varepsilon$ is entirely smooth.
\end{corollary}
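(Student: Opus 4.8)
The plan is to derive Corollary \ref{cor:Smooth} from the codimension-one generic regularity statement Corollary \ref{cor:MainSpecializedCodim1} by showing that the mass bound $4\pi^2-\varepsilon$ renders the ``bad singularity'' alternative of that corollary vacuous for nearly round metrics. Since $\dim S^4 = 4 = n+1$ with $n=3$, Corollary \ref{cor:MainSpecializedCodim1} applies directly and yields a generic set $\scG_0 \subset \scG$ of smooth metrics on $S^4$ (with $\scG$ as in Remark \ref{rem:BaireWhite}) such that, for every $g\in\scG_0$, any $g$-stationary integral $3$-varifold is either entirely smooth or has at least one singular point that is not strongly isolated. (Throughout we take $n=3$, the hypersurface case to which the statement refers; the $n=2$ case is handled similarly, via Theorem \ref{Thm:GenReg,IsolatedSing}, Remark \ref{rem:BasicIndexEst}, and the observation that a strongly isolated singularity of a $2$-varifold in $S^4$ has a flat, hence removable, tangent cone.)

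The geometric input is Proposition \ref{pro:SmallMass=>Isol}, which (for the round $S^4$) guarantees that every mod $2$ cyclic stationary integral varifold in $S^4$ of mass strictly less than $4\pi^2$ has \emph{only} strongly isolated singularities, i.e.\ is an $\MSI$ in the sense of Definition \ref{def:MSI}. The threshold is dictated by $4\pi^2 = 2\,\vol(S^3)$: by monotonicity, a singular point of density $\geq 2$ would force the total mass to be at least $4\pi^2$, so under our bound all densities stay $<2$, every tangent cone at a singular point is a cone over a stationary integral $2$-varifold in $S^3$ of mass $<8\pi$, and in this low-mass range the mod $2$ cyclic condition together with the rigidity of minimal surfaces in $S^3$ forbids junction-type links and forces each link to be smooth --- which is exactly the defining property of a strongly isolated singularity.

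The remaining --- and, I expect, main --- difficulty is to upgrade this to a whole neighbourhood: one wants an open neighbourhood $\mathscr{N}(\varepsilon)$ of the round metric such that, for every $g\in\mathscr{N}(\varepsilon)$, every mod $2$ cyclic $g$-stationary integral varifold $V$ with $\|V\|(S^4) < 4\pi^2-\varepsilon$ is still an $\MSI$. The natural approach is contradiction and compactness: otherwise there would be $g_i \to g_{\mathrm{round}}$ and mod $2$ cyclic $g_i$-stationary $V_i$ with $\|V_i\|(S^4) < 4\pi^2-\varepsilon$, each with a non-strongly-isolated singular point, converging as varifolds to a mod $2$ cyclic $g_{\mathrm{round}}$-stationary $V_\infty$ of mass $\leq 4\pi^2-\varepsilon < 4\pi^2$. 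Since ``having a non-strongly-isolated singularity'' is not obviously closed under varifold convergence, one cannot simply pass that singularity to the limit; instead one must show that near each (strongly isolated, by the round case) singular point of $V_\infty$, and away from $\Sing V_\infty$ by Allard, the $V_i$ themselves can only have strongly isolated singularities for $i$ large. This requires an effective regularity/rigidity statement for the links, uniform over nearly round ambient metrics, rather than a mere limiting argument --- precisely the point I would expect to be technically delicate.

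Granting this, the conclusion is immediate. As $\mathscr{N}(\varepsilon)$ is open and $\scG_0$ is a dense $G_\delta$, the set $\scG_0\cap\mathscr{N}(\varepsilon)$ is a dense $G_\delta$ --- hence generic --- in $\mathscr{N}(\varepsilon)$. Fix $g$ in it and let $V$ be a mod $2$ cyclic $g$-stationary integral varifold with $\|V\|(S^4) < 4\pi^2-\varepsilon$. By the choice of $\mathscr{N}(\varepsilon)$ every singular point of $V$ is strongly isolated, so $V$ has no singular point failing to be strongly isolated; since $g\in\scG_0$, Corollary \ref{cor:MainSpecializedCodim1} then forces $V$ to be entirely smooth.
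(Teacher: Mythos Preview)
Your overall strategy is exactly the paper's: combine Corollary \ref{cor:MainSpecializedCodim1} (the codimension-one case of Theorem \ref{Thm:GenReg,IsolatedSing}) with Proposition \ref{pro:SmallMass=>Isol}, then intersect the generic set $\scG_0$ with the neighborhood $\mathscr{N}(\varepsilon)$. However, you have misread Proposition \ref{pro:SmallMass=>Isol}: it is already stated for every $g$ in a neighborhood $\scN(\varepsilon)$ of the round metric, not just for the round metric itself. So what you flag as ``the remaining --- and, I expect, main --- difficulty'' is precisely the content of that proposition, and once you cite it correctly your proof is complete and matches the paper's one-line argument.

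As for your sketch of how to obtain the neighborhood version by contradiction and compactness: the framework is right, but the resolution is simpler than the ``effective regularity/rigidity for links, uniform over nearly round metrics'' you anticipate. The key observation (Lemma \ref{lem:AreaLowerBd,NonIsolatedSing} in the paper) is that any mod $2$ cyclic $3$-dimensional stationary integral cone in $\R^4$ that is \emph{not} regular has density $\geq 2$ at the tip. Hence if $p_j\in\Sing(V_j)$ has an irregular tangent cone, then $\Theta_{V_j}(p_j)\geq 2$. Density is upper semicontinuous under varifold convergence, so the subsequential limit $V_\infty$ (stationary in the round metric, mass $\leq 4\pi^2-\varepsilon$) would have a point of density $\geq 2$, which by the monotonicity inequality (Lemma \ref{lem:MonotIneq}) forces $\|V_\infty\|(\mathbb{S}^4)\geq 2A_3=4\pi^2$, a contradiction. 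So one does pass the singularity to the limit --- not as a singularity, but as a density concentration --- and no uniform link analysis is needed.
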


This statement does not, in itself, give a direct response to Neves' question, although it indicates a definite obstruction to the possible ``perturbation-desingularization'' approach described above. But, in fact, Theorem \ref{Thm:GenReg,IsolatedSing} also implies a generic finiteness result under a pure area bound:

\begin{corollary}\label{cor:MainArea}
Given any $\varepsilon>0$ there exists a neighborhood $\mathscr{N}(\varepsilon)$ of the round metric on $S^4$ such that for a generic choice of $g\in \mathscr{N}(\varepsilon)$ the Riemannian manifold $(S^4, g)$ shall contain only finitely many closed, embedded minimal hypersurfaces of area less that $4\pi^2-\varepsilon$. 
\end{corollary}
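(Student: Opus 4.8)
The plan is to combine Corollary \ref{cor:Smooth} with a standard compactness-plus-nondegeneracy argument. First I would invoke Corollary \ref{cor:Smooth} to fix the neighborhood $\mathscr{N}(\varepsilon)$ of the round metric: for a generic $g\in\mathscr{N}(\varepsilon)$, every mod $2$ cyclic $g$-stationary integral varifold of mass $<4\pi^2-\varepsilon$ is entirely smooth. Since any closed embedded minimal hypersurface of area $<4\pi^2-\varepsilon$ gives rise, via its multiplicity-one varifold, to exactly such an object (and embedded hypersurfaces in $S^4$ are mod $2$ cyclic in the relevant sense, cf. \cite{Whi09}), this already tells us that all the hypersurfaces we must count are smooth, embedded, and satisfy a uniform area bound. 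The second ingredient is the convention fixed in Remark \ref{rem:BaireWhite}: after shrinking $\mathscr{N}(\varepsilon)$ if necessary and intersecting the relevant generic set with White's set $\mathscr{G}$, we may additionally assume $g$ is bumpy, i.e.\ every closed smooth embedded minimal hypersurface in $(S^4,g)$ is nondegenerate.

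Next I would run the standard finiteness argument. Suppose, for contradiction, that for some such generic $g$ there were infinitely many distinct closed embedded minimal hypersurfaces $\Sigma_k\subset(S^4,g)$ with $\mathrm{area}_g(\Sigma_k)<4\pi^2-\varepsilon$. By Sharp's compactness theorem for minimal hypersurfaces with uniform area and index bounds (or, more directly here, by Allard-type regularity together with the Schoen--Simon estimates, since a uniform area bound forces a uniform bound on the total curvature in this dimension), a subsequence converges --- in the varifold sense, and smoothly with multiplicity away from a finite set --- to some stationary integral varifold $V$ of mass $\leq 4\pi^2-\varepsilon<4\pi^2-\varepsilon'$ for a slightly smaller $\varepsilon'$; applying Corollary \ref{cor:Smooth} (with $\varepsilon'$ in place of $\varepsilon$, after the harmless further shrinking of the neighborhood) the limit $V$ is a smooth closed minimal hypersurface $\Sigma_\infty$, possibly with some integer multiplicity $m\geq 1$. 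Now here the bumpiness of $g$ enters: a theorem of White (unique continuation / the structure of the moduli space near a nondegenerate minimal hypersurface) rules out multiplicity $m\geq 2$ in the limit, because $\Sigma_\infty$ being nondegenerate admits no nontrivial Jacobi field and hence cannot be approached by a sheet of distinct minimal hypersurfaces folding onto it; thus $m=1$ and the convergence $\Sigma_k\to\Sigma_\infty$ is smooth and graphical for $k$ large. But then, for $k$ large, $\Sigma_k$ is a minimal graph of a small function over $\Sigma_\infty$, and nondegeneracy of $\Sigma_\infty$ forces this function to vanish, i.e.\ $\Sigma_k=\Sigma_\infty$ for all large $k$ --- contradicting that the $\Sigma_k$ are pairwise distinct.

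The main obstacle, and the step requiring the most care, is the passage from the varifold/mod-$2$ convergence furnished by compactness to the \emph{smooth, multiplicity-one} convergence needed to apply the bumpiness argument: one must rule out multiplicity and must know that the limit, a priori only a stationary integral varifold with strongly isolated singularities of the $\MSI$ type guaranteed by Proposition \ref{pro:SmallMass=>Isol}, is in fact genuinely smooth --- which is exactly what Corollary \ref{cor:Smooth} provides, and why the area threshold $4\pi^2$ (the mass of the Clifford football) is the natural one. Everything else is routine: the uniform area bound plus the ambient dimension being four keeps us in the range where the standard regularity and compactness theory for minimal hypersurfaces applies without extra hypotheses, and the elimination of higher multiplicity together with the final rigidity step are by now classical consequences of White's bumpy metrics theorem.
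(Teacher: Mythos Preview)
Your overall strategy --- compactness, smoothness of the limit via Corollary~\ref{cor:Smooth}, and a contradiction with bumpiness --- is exactly the paper's. But there is a genuine gap in how you pass from varifold convergence to smooth convergence. You write that ``a uniform area bound forces a uniform bound on the total curvature in this dimension'': this is false, and the Hsiang hyperspheres themselves are a counterexample (bounded area, second fundamental forms blowing up as they approach the singular Clifford football). For the same reason you cannot invoke Sharp's compactness theorem, which requires an index bound that is simply not available here; there is no a priori reason the $\Sigma_k$ should have bounded index.

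The paper's fix is short and clean, and also renders your separate multiplicity discussion unnecessary. After Allard's varifold compactness gives a stationary integral limit $V$ (mod~2 cyclic by \cite{Whi09}) with mass $\leq 4\pi^2-\varepsilon$, the monotonicity inequality of Lemma~\ref{lem:MonotIneq} forces the density of $V$ at every point to be strictly less than $2$ (indeed $\leq 2-\varepsilon/(2\pi^2)$). This density bound does double duty: it feeds into Corollary~\ref{cor:Smooth} to make the limit entirely smooth, and --- via Allard's regularity theorem --- it immediately upgrades the convergence to smooth, graphical, multiplicity-one convergence, without any appeal to curvature or index bounds and without a separate argument to exclude $m\geq 2$. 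From there the contradiction with bumpiness (the limit would carry a nontrivial Jacobi field, cf.\ \cite{Sha17}) is the same as yours.
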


We recall that $2\pi^2$ equals the area of an equatorial $S^3$ in round $S^4$; thus note that the conclusion of this corollary cannot possibly hold if we remove the genericity assumption (one counterexample being indeed the round metric).
Furthermore, we expect the threshold $4\pi^2$ to be sharp, based on the following heuristic argument. There clearly exist nearly round metrics on $S^4$ that contain (at least) five minimal hyperspheres, any pair of which shall intersect by virtue of the Frankel property: now, one expects that the desingularization of (any) such pair would give rise - by virtue of the implicit function theorem - to infinitely many closed minimal hypersurfaces with area arbitrarily close to $4\pi^2$.

A statement describing the implications at the level of the ``perturbation'' problem we posed above requires some notation and a brief digression. One can ``parametrize'' the Hsiang hyperspheres by an integer $k\in\mathbb{N}$, agreeing that $M_k$ is the Hsiang hypersphere intersecting the Clifford football transversely along exactly $2k+1$ tori; in particular for $k=0$ one recovers the equatorial three-dimensional hypersphere. Of course, one can act (both on the the Clifford football, and on any of Hsiang's hypersurfaces) via isometries of round $S^4$; considering such actions is already necessary if one attempts to perturb such hypersurfaces ``one (minimal) hypersphere at a time''.

For an open set $\mathscr{N}$ of Riemannian metrics on $S^4$ (containing the unit round metric $g_0$) let us assume to have a continuous map
\begin{equation}\label{eq:NorGraph}
\Psi_k:\mathscr{N} \to \ O(5)\times C^1(M_k;\R)
\end{equation}
such that $\Psi^{(2)}_k(g)$ defines a normal graph over the image through $T^{(k)}=:\Psi^{(1)}_k(g)$ of $M_k$ --- a hypersurface henceforth simply denoted $M_k(g)$ --- that is minimal in metric $g$; in particular, if this is the case we note that the ``area'' map $\mathscr{N}\ni g\mapsto \| M_k(g)\|$ is itself continuous. 

That being said, and recalling that the area of the Clifford football equals $\pi^3\simeq 31.00063$, it follows from the aforementioned varifold convergence that there exist at most finitely many (conjecturally none) Hsiang hyperspheres whose area exceeds the threshold value $4\pi^2=39.47842$. With all of this notation in place and keeping in mind all these remarks, the preceding statement implies this one:

\begin{corollary}\label{cor:MainHsiang}
    For any $\eps>0$ the following holds.

Given any neighborhood $\mathscr{N}\ni g_0$ of smooth Riemannian metrics on $S^4$ there are only finitely many integers $k_1<k_2<\ldots< k_{\ell}$ and, for each such integer, only a finite set $I(k_j)$ such that a continuous map $\Psi^{i_j}_{k_j}, (i_j\in I(k_j))$  as per \eqref{eq:NorGraph} and satisfying
\[
\sup_{g\in\mathscr{N}}\|M^{i_j}_{k_j}(g)\|<4\pi^2-\eps
\]
can possibly exist. 
\end{corollary}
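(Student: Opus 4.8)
The plan is to deduce the statement from the generic finiteness result of Corollary~\ref{cor:MainArea} by a pigeonhole argument carried out at a single, carefully chosen metric; the only genuinely delicate point is a continuation step. Two harmless reductions come first. Fix $\eps>0$ and a neighborhood $\mathscr{N}\ni g_{0}$, and let $\mathscr{N}(\eps)$ be the neighborhood furnished by Corollary~\ref{cor:MainArea}. If a continuous map $\Psi^{i}_{k}$ as in \eqref{eq:NorGraph} exists on $\mathscr{N}$ with $\sup_{g\in\mathscr{N}}\|M^{i}_{k}(g)\|<4\pi^{2}-\eps$, then its restriction to the open neighborhood $\mathscr{N}\cap\mathscr{N}(\eps)$ of $g_{0}$ is still admissible and obeys the same bound; hence the set of admissible pairs for $\mathscr{N}$ is contained in that for $\mathscr{N}\cap\mathscr{N}(\eps)$, and we may assume $\mathscr{N}\subseteq\mathscr{N}(\eps)$. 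Passing to the connected component of $g_{0}$ (which only enlarges the admissible set) we may also take $\mathscr{N}$ connected.

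Next, choose a good metric. Since $\mathscr{N}$ is open and nonempty and generic sets are dense, pick $g^{*}\in\mathscr{N}$ lying both in the generic set produced by Corollary~\ref{cor:MainArea} and in the set $\mathscr{G}$ of Remark~\ref{rem:BaireWhite}. Then $(S^{4},g^{*})$ contains only finitely many closed embedded minimal hypersurfaces of area less than $4\pi^{2}-\eps$, say $\Sigma_{1},\dots,\Sigma_{P}$, and each of them is non-degenerate. For any admissible pair $(k,i)$ the hypersurface $M^{i}_{k}(g^{*})$ is a closed embedded $g^{*}$-minimal hypersurface with $\|M^{i}_{k}(g^{*})\|\le\sup_{g\in\mathscr{N}}\|M^{i}_{k}(g)\|<4\pi^{2}-\eps$, so it equals one of $\Sigma_{1},\dots,\Sigma_{P}$. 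It therefore suffices to prove that an admissible continuous family $g\mapsto M^{i}_{k}(g)$ is uniquely determined by its value $M^{i}_{k}(g^{*})$; granting this, there are at most $P$ admissible families, hence only finitely many integers $k_{1}<\dots<k_{\ell}$ can occur and, for each, only a finite index set $I(k_{j})$, as asserted.

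To establish this uniqueness, suppose two admissible families, corresponding to pairs $(k,i)$ and $(k',i')$, agree at $g^{*}$, with common value $\Sigma$. Since $\Sigma$ is non-degenerate, the implicit function theorem produces a unique $g$-minimal hypersurface $C^{1}$-close to $\Sigma$ for every metric $g$ in a neighborhood of $g^{*}$; both given families, being continuous and equal to $\Sigma$ at $g^{*}$, must coincide with it near $g^{*}$. One then propagates: the set of $g\in\mathscr{N}$ on which the two families agree is closed by continuity, and it is open at every metric at which the common hypersurface is non-degenerate, again by the implicit function theorem. Since the locus $\mathscr{B}\subseteq\mathscr{N}$ of metrics admitting a degenerate closed embedded minimal hypersurface of area $<4\pi^{2}-\eps$ is ``small'' --- contained, by the structure underlying White's genericity theorems, in a countable union of closed sets of positive codimension, so that $\mathscr{N}\setminus\mathscr{B}$ remains connected --- the two families agree throughout $\mathscr{N}\setminus\mathscr{B}$, and hence on all of $\mathscr{N}$ by continuity.

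The main difficulty is exactly this propagation: upgrading ``the two families coincide at one non-degenerate metric'' to ``they coincide everywhere on $\mathscr{N}$''. At degenerate metrics the implicit function theorem fails and, a priori, two families sharing a value there could bifurcate; ruling this out demands a quantitative handle on the degeneracy locus $\mathscr{B}$ (showing, for instance, that it cannot locally disconnect $\mathscr{N}$) or, when available, a real-analyticity and unique-continuation argument in the metric parameter. The rest is a routine combination of Corollary~\ref{cor:MainArea}, Remark~\ref{rem:BaireWhite}, and pigeonholing.
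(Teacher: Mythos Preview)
The paper's entire proof reads ``Straightforward from Corollary~\ref{cor:MainArea}''; the intended argument is precisely your pigeonhole step: choose a generic $g^{*}\in\mathscr{N}\cap\mathscr{N}(\eps)$, apply Corollary~\ref{cor:MainArea} to obtain only finitely many closed embedded minimal hypersurfaces of area $<4\pi^{2}-\eps$ in $(S^{4},g^{*})$, and note that every admissible family $\Psi^{i}_{k}$ contributes one of them. Up to this point you and the paper agree.

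Where you go beyond the paper is in the continuation argument, attempting to show that distinct pairs $(k,i)$ must yield distinct hypersurfaces at $g^{*}$. You correctly identify that this is the crux, and also that your argument for it is incomplete: the claim that $\mathscr{N}\setminus\mathscr{B}$ remains connected because $\mathscr{B}$ lies in ``a countable union of closed sets of positive codimension'' is \emph{not} what White's Sard--Smale argument delivers --- it only gives that $\mathscr{B}$ is meager, and meager sets in infinite-dimensional Banach spaces can disconnect open sets (a closed hyperplane is nowhere dense yet separates). So this step, as written, has a genuine gap.

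Whether the step is even needed depends on how one reads the corollary. The one-line proof and the surrounding prose (``only finitely many Hsiang hyperspheres shall possibly persist'') indicate that the paper intends the weaker assertion --- finitely many \emph{distinct families} $g\mapsto M^{i}_{k}(g)$, i.e.\ finitely many values at $g^{*}$ --- for which the pigeonhole alone suffices and your argument is already complete. If instead one reads the statement as literally bounding the set of integers $k$, then you have located a point the paper's proof does not address, and your attempt to fill it, while natural, is not yet closed.
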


This means that, no matter how small the neighborhood of nearly round metrics we consider (and no matter how cleverly we design the map $\Psi$), only finitely many Hsiang hyperspheres shall possibly persist.

\begin{remark}\label{rem:FiniteBound}
We expect that the finiteness result given in Corollary \ref{cor:MainArea} cannot possibly be upgraded to a uniform bound. For indeed, a standard application of the Lyapunov-Schmidt reduction (applied to ``one hypersphere at a time'') should allow to conclude, for any sufficiently large $k\in\mathbb{N}$ (say $k\geq \underline{k}$), the existence of a neighborhood $\mathscr{N}_k\ni g_0$ such that for any $g\in\mathscr{N}_k$ the Riemannian manifold $(S^4,g)$ shall contain at least one minimal hypersphere defined by a continuous deformation --- understood in the sense above --- of an isometric copy of $M_k$. In particular for any metric $g\in \bigcap_{\underline{k}\leq k\leq \overline{k}} \mathscr{N}_k$ the manifold $(S^4,g)$ shall contain a finite but arbitrarily large number of closed embedded minimal hypersurfaces, provided one takes $\overline{k}$ big enough. 
\end{remark}

Let us add some comments on our ``generic regularity'' theorem, that is Theorem \ref{Thm:GenReg,IsolatedSing}, without restricting to the codimension one case. When considering the class of $n$-dimensional stationary varifolds with only strongly isolated singularities (cf. Definition \ref{def:MSI}) in an $N$-dimensional Riemannian manifold, we can rule out, \emph{generically}, all singularities whose link has Morse index strictly larger than $N$ (note that the weak inequality is, in fact, always satisfied, see Remark \ref{rem:BasicIndexEst}). It is appropriate for us to remark that, when the link in question is disconnected, such a condition is, in fact, certainly fulfilled with the sole (well justified) exception of links consisting of the disjoint union of exactly two half-dimensional equatorial spheres (that is to say: $2n=N$). The fact that there may (in fact: there should) be persistent isolated singularities in the case of connected link is less obvious, although e.\,g. we can note that the Veronese embedding of $\C\mathbb{P}^2$ inside the 7-dimensional round sphere does not bound any smooth five-dimensional manifold (so that the cone over such $\C\mathbb{P}^2$ cannot possibly be smoothed out by minimal submanifolds). This clearly provides partial albeit compelling evidence in that
direction; we refer the reader to the striking recent work by Liu \cite{Liu22} for a thorough discussion of these aspects and the construction of a  number of different examples of homologically area-minimizing submanifolds with non-smoothable singularities.

\begin{remark}\label{rem:Sharpness}
More generally, one can study (after Cartan) the following three examples of focal submanifolds of isoparametric minimal hypersurface in spheres. For $\mathbb{F}=\R, \C, \mathbb{H}$, set $m=\dim_{\R}(\mathbb{F})$ and consider the embedding map 
  $\mathbb{F}\mathbb{P}^2 \to \SSp^{3m+1} \subset \RR^{3m+2} = \mathbb{F}^3\times \RR^2$ defined by
  \[
[u: v: w]\mapsto (|u|^2+|v|^2+|w|^2)^{-1}\cdot \left(\sqrt3 v\bar w, \sqrt3 w\bar u, \sqrt3 u\bar v, \frac{3}{2}(|u|^2 - |v|^2),\frac{1}{2}(2|w|^2 - |u|^2 - |v|^2)\right)
  \]
  This gives the classical Veronese minimal $\mathbb{R}\mathbb{P}^2$ in $\SSp^4$, a minimal $\C\mathbb{P}^2$ in $\SSp^7$ and a minimal $\mathbb{H}\mathbb{P}^2$ in $\SSp^{13}$. It is by now standard to prove that none of the three projective planes in question bounds a smooth manifold (of real dimension, respectively, $3,5,9$) since they have odd Euler characteristic. We are led to believe that all of the corresponding minimal submanifolds have Morse index equal to $3m+2$ (i.\,e. $N$ in the notation of Theorem \ref{Thm:GenReg,IsolatedSing}) --- which at the moment is only known for $\mathbb{F}=\mathbb{R}$, see Remark \ref{rem:BasicIndexEst} --- and provide models of persistent singularities.
  \end{remark}

\subsection{Previous work}\label{subs:Previous}
In recent years we have witnessed impressive advances on the theme of ``generic regularity'' of minimal hypersurfaces \cites{LW20, CLS22, CMS23a, CMS23b, Edelen2021, Li23, LW22}, not to mention the related (equally striking) advances on the study of generic mean curvature flows, which can simply not properly be accounted for here. (For various significant results on generic properties of geodesics and geodesic nets the reader is instead referred to \cites{ChoMan23, MKSS24} as well as references therein.) Besides being constrained to the codimension one case, many such results typically concern area-minimizing hypersurfaces, either with respect to a fixed boundary (i.\,e. in the framework of Plateau's problem) or with respect to a fixed homology class (cf. \cite[Problem 108]{Yau82} and \cite[Problem 5.16]{Bro86}); this is the case, in particular, for \cite{CMS23a, CMS23b}, which substantially refined the pioneering theorem by Smale \cite{Sma93}. Instead \cite{CLS22, Wang20, LW20, Edelen2021, Li23, LW22} rather deal (in various forms) with the somewhat more general case of minimal hypersurfaces arising from min-max techniques (in the spirit of Almgren-Pitts) thus not necessarily (and not typically) stable ones; however such works are constrained to ambient dimension 8 and only allow for singularities that are modelled on stable minimal hypercones (as reflected by the assumption of local stability of the minimal hypersurfaces in question). (We wish to stress that, on the contrary, \cite{CMS23a, CMS23b} allow for more general singular sets, which in particular do not need to consist of isolated points.) 

It is to be remarked that many of such contributions build, in various ways, upon the pioneering work by Hardt and Simon \cite{HarSim85}: going beyond the results in Bombieri-De Giorgi-Giusti \cite{BDGG69} (constructing minimal smoothings of the Simons cone in $\R^{2n}$, $2n\geq 8$, by an ODE approach) they proved that, in fact, every regular minimizing hypercone can be uniquely (up to scaling) perturbed to one side to produce a smooth minimizing hypersurface asymptotic to it near infinity. Making using of this fact, they succeeded in showing that minimizing hypersurfaces with a generic boundary in $\R^8$ are actually smooth. Such results about the minimal smoothing of a minimizing hypercone were generalized by the third-named author without assuming the hypercone in question being regular (see \cite{Wan24}), while the strong uniqueness (up to a scaling) of such one-sided smoothing was then established by Edelen-Szekelyhidi \cite{EdeSze24} for ``cylindrical'' hypercones under a pure density assumption.

All that said, there are two aspects of substantial novelty in the present work: on the one hand we deal here with isolated singularities that are modelled on regular yet not necessarily stable cones (which is in fact \emph{necessary} to ultimately resolve the geometric questions we posed at the very beginning of this introduction, as already displayed by the aforementioned Clifford football) while on the other hand our analysis applies to minimal $n$-dimensional submanifolds in an $N$-dimensional ambient manifold, without restricting to the codimension one case. The case of surfaces ($n=2$ thus $N\geq 4$) is somewhat special and can be tackled with tools that are quite different than the ones employed here; the reader should in particular compare our results with those of White \cite{Whi85,Whi19} (obtaining, respectively, a full ``generic regularity result'' for surfaces minimizing area with respect to a fixed boundary, or in a fixed homology class), and the ones by Moore \cite{Moo06, Moo21} (much related to earlier work by B\"ohme and Tromba, \cite{BohTro81}) which are cast in the language of (prime) parametric minimal surfaces.

\subsection{Approach}\label{subs:Approach}

Our main results, while crucially building on previous work of the second and third author (see, in particular, \cite{LW22}), and to some extent on
certain deep contributions by Edelen \cite{Edelen2021}, are obtained by a blend of different techniques and ideas. In a nutshell, our approach consists in studying the Fredholm index of the Jacobi (stability) operator of minimal submanifolds with only strongly isolated singularities (see Definition \ref{def:MSI} of $\MSI$), acting on suitably defined weighted Sobolev spaces. Employing powerful tools in linear Analysis, we first obtain an exact formula giving the value of such an index in terms of the normalized Morse index of links of the cones at the singular points; we refer the reader to Theorem \ref{thm:Count} for a precise statement. Roughly speaking, the presence of singularities perturbs the natural Fredholm index zero Schr\"odinger operator, in a way that is solely encoded by the structure of the singularities. Such a counting formula, in spite of its striking simplicity, appears to be new to the best of our knowledge. Since, by virtue of general lower bounds for the Morse index of minimal submanifolds in round spheres, the local contributions are all non-positive (i.\,e. $\leq 0$) they force the Fredholm index of any $\MSI$ to be itself less or equal than zero, with strict inequality in many case of interest (for instance: unconditionally in the codimension one case). We then complement such a result by showing that, on the other hand, generically (in the sense of Baire) such an index must be non-negative (i.\,e. $\geq 0$); the approach that we present (see Section \ref{sec:GenReg}) essentially follows --- at least at a \emph{conceptual} level --- ideas going back to the fundamental work of White \cite{Whi91}, where one obtained suitable local and global Sard's lemmata for the natural projector of metric-$\MSI$ \ pairs onto the first factor; while in that context one could exploit a Banach manifold structure (and thus ultimately invoke Smale's Sard-type theorem) here the use of (pseudo-)canonical neighborhoods, which are significantly wilder than metric balls, does not allow for the employment of such pre-existing methodologies, and we rather need to work harder to unwind such tools and recast/adapt them to our own setting. 
It is in that respect that we crucially exploit the tools of \cite{LW22} and \cite{Wang20}; we warn the reader, however, that due to our necessity of dealing with unstable (infinite Morse index) cones, and without any codimensional restriction, several changes (and corresponding adaptations) had to be performed, thereby not allowing for a direct quote of technical lemmata (which would have shortened and simplified this manuscript).

\subsection{Organization of the paper}\label{subs:Organization}
Besides the present introduction, this article consists of four sections plus five appendices, that contain technical yet \emph{essential} material that we decided to separate from the main body of the paper with the sole scope of improving readability, thereby leading to a more direct path towards the main theorems. In Section \ref{sec:Prelim} we present the general setup of this work and collect some of the basic definitions we employ in the sequel. Then, Section \ref{sec:Index} combines some preliminary facts about analysis on regular minimal cones with the proof of our ``counting formula'' for the Fredholm index of the Jacobi operator of an $\MSI$. Hence, we move to the detailed study of the generic behaviour of such an operator, which we carry out in Section \ref{sec:GenReg}, crucially building upon various results proven in the appendices. We then capitalize such efforts in Section \ref{sec:ProofMain}, devoted to the proofs of the main theorems, also getting back to the motivating geometric applications we presented above.

Concerning the ``technical material'', we start in Appendix \ref{app:MSE} by collecting certain key features of the minimal surface equation (system) in arbitrary codimension and then 
proving several useful facts about the ``transfer of normal sections'' between nearby subvarieties. It is already at this level that one can get an appreciation of the difference in complexity between the codimension one case and the case of higher codimension: while in the former setting the transfer of normal section is --- at least at a local level --- essentially trivial (since one can just pre-compose with the parametrization map), in the latter one needs to project back onto the normal bundle of the base submanifold, which poses the problem of discussing how natural geometric PDE transform under this operation, and how one may handle the error terms in the resulting equations. 
We then move on to Appendix \ref{sec:3circles}, devoted to certain three-circle inequalities, which allow to compare the behaviour of an $\MSI$ at different scales, thereby deriving $L^2$-decay (more generally: $L^2$-non concentration) estimates that are key to analyzing the limit behaviour of sequences of tame Jacobi fields (cf. Definition \ref{def:SlowerGrowth}). In Appendix \ref{sec:QuantUniq} we discuss the problem of quantitative (uniform) uniqueness of tangent cones at strongly isolated singularities, which in particular allows to obtain Corollary \ref{Cor_Converg in all Scales} concerning the ``uniform convergence at all scales'' of a stationary integral varifold on approach to a strongly isolated singularity (crucially building on both \cite{Sim83} and, more closely, on the recent advances in \cite{Edelen2021}). In turn, such a statement serves as input for Appendix \ref{app:GraphicalPar}, devoted to a fine analysis of the mutual parametrization of two (possibly three) nearby $\MSI$, that comes into play in the core of the paper when we prove the fundamental limit equation (see Lemma \ref{Lem_Tangent vectors to scL^k}). We expect that some of these tools will also be helpful for other related studies in the near future.
Lastly, in Appendix \ref{Sec:Count_Decomp} we discuss (following Edelen's study of the codimension one case) how to effectively describe the space of all $\MSI$ inside a Riemannian manifold, and thus how to obtain a controlled covering theorem (namely: Theorem \ref{Thm_Countable Decomp}) for the space of metric-$\MSI$ pairs by means of countably many (carefully designed) canonical pseudo-neighborhoods. Note that, in comparison to \cite{Edelen2021}, for submanifolds in codimension other than one one loses the natural ``order structure'' coming from the trivialization of the normal bundle, thereby requiring the introduction of new techniques, e.\,g. to handle the combinatorics of ``cascades'' of isolated conical singularities.
The covering theorem in question is then one of the three crucial ingredients in the proof of the global Sard theorem, Theorem \ref{thm:generic_perturbation}, that we alluded to before.

\section{Notation, setup and preliminary results}\label{sec:Prelim}

Throughout this article, we will suppose $M$ be a smooth (i.\,e. $C^\infty$) manifold of dimension $N\geq 3$; it is understood that the regularity of maps between or tensors over smooth manifolds are measured with respect to smooth local coordinates. Although our main theorems, as anticipated in the introduction, are stated for smooth Riemannian metrics, we will need to also work with metrics having a finite degree of regularity.

So, we shall assume --- unless otherwise explicitly stated (like we will do at the very end of Section \ref{sec:GenReg}, and in Section \ref{sec:ProofMain}) --- that $M$ is endowed with metrics of class $C^{k,\alpha}$ where $k\geq 4$ is an integer that (due to technical reasons, cf. Appendix \ref{app:MSE} and Appendix \ref{app:GraphicalPar}) will be chosen larger than a convenient threshold $k_0$, and $0<\alpha<1$.

\subsection{MSI and general setup}\label{subs:GenSetp} Let us start with some basic definitions; it is appropriate, to avoid ambiguity, to state what we mean by regular set of an integral varifold.

\begin{definition}\label{def:RegSet}
Let $(M,g)$ be a Riemannian manifold of dimension $N\geq 3$. Given an integral varifold $V$, of dimension $2\leq n<N $, we say that $p\in\spt(\|V\|)\subset M$ belongs to its regular set if there exists $r>0$ such that $\spt(\|V\|)\cap \overline{B^g(x,r)}$ is a smooth, embedded, compact, connected $n$-dimensional submanifold of class $C^2$ with boundary contained in $\partial B^g(x,r)$. The set of regular points of $V$ shall be denoted by $\Reg(V)$; its complement in $\spt(\|V\|)$ will be referred to as singular set of $V$, i.\,e. we let $\Sing{V}=\spt(\|V\|)\setminus \Reg(V)$.
\end{definition}

Note that, in the setting above, when we say that $\Sigma$ is an embedded $C^{\ell\geq 2}$ submanifold of dimension $2\leq n<N$ we mean that for every $p\in \Sigma$, there exist $C^\infty$ coordinate charts $M\supset U_p\to \RR^N$ on a neighborhood of $p$, mapping $\Sigma$ to a graph of some $\RR^{N-n}$-valued function, of class $C^{\ell}$, under the Euclidean metric $g_\eucl$.

\begin{remark}\label{rem:OptRegMinSubm}
Of course, $\Reg(V)$ (respectively: $\Sing{V}$) is open (respectively: closed) in $\spt(\|V\|)$. Furthermore, by standard elliptic regularity (Schauder theory applied to the minimal surface system) if --- in the setting of the preceding definition --- $p\in\Reg(V)$ and the ambient metric $g$ is $C^{k,\alpha}$ then, in fact, $\spt(\|V\|)$ is an  $n$-dimensional submanifold of class $C^{k+1,\alpha}$ in an open neighborhood of the point in question (see e.\,g. \cite[Theorem 1.1]{Whi91}); in particular if the ambient metric is smooth (by which we shall mean $C^{\infty}$) then so will be $\spt(\|V\|)$, at least away from the singular set of $V$.
\end{remark}

Let us proceed and introduce the objects that we wish to study throughout this article:

\begin{definition}\label{def:MSI}
Let $(M,g)$ be a Riemannian manifold of dimension $N\geq 3$. For every stationary integral varifold $V$, of dimension $2\leq n<N $, we call a point $p\in \Sing(V)$ \textbf{strongly isolated}, if \emph{some} tangent cone of $V$ at $p$ is \textbf{regular}, i.\,e. it is of multiplicity one and has smooth link (equivalently: if the singular set of such a cone coincides with the origin). We will then say that a stationary integral varifold $V$ \textbf{has only strongly isolated singularities} (and refer to it as $\MSI$) if either $\Sing(V) = \emptyset$, or every $p\in \Sing(V)$ is strongly isolated. 
\end{definition}

\begin{remark}\label{rem:MSI}
   We remark that, in the setting of the preceding definition, at any singular point $p\in\Sing(V)$ there holds uniqueness of the tangent cone in question, thanks to a deep result of Simon \cite{Sim83} (see also \cite{Sim85}). Furthermore, the corollary stated after Theorem 5 therein implies that every such singular point has a neighborhood where the only singular point is the one in question: hence, since $\Sing(V)$ is closed and thus compact whenever the ambient manifold is, the finite covering property tells us that there are only finitely many such singular points (thereby justifying the terminology we employ).  
 \end{remark}

 \begin{remark}\label{rem:MSInotation}
   Relying on the content of the previous remark, we will switch to a somewhat more natural (or, possibly, more convenient) notation: given one such varifold, we will rather employ the letter $\Sigma=\Sigma^n$ to denote it, meaning that $V=|\Sigma|$ (that is to say: $V$ is the \emph{multiplicity 1} integral varifold associated to the submanifold $\Sigma$)
   and in our work we will practically identify $\Sigma$ with the regular part of $\overline{\Sigma}$. Thus $\Sigma$ is treated as a regular (cf. Remark \ref{rem:OptRegMinSubm}) but not necessarily closed submanifold. 
   The (unique) tangent cone of $\Sigma$ at a singular point $p$ is denoted by $\mbfC_p(\Sigma)$, or simply $\mbfC_p$ if there is no risk of confusion. 
\end{remark}

Based on White's natural homomorphism one can associate to an integral varifold a mod 2 flat chain (see \cite{Whi09}); if such object has zero boundary (in the sense of the standard boundary operator in the latter setting) we will simply refer to it as mod 2 cyclic varifold.  

\begin{remark}\label{MSIvsMOD2cyclic}
Let us note the following fact: an $\MSI$, say $\Sigma$, can always be regarded as a mod 2 cyclic stationary varifold. This can be justified as follows.
By Sard’s theorem and the slicing theorem, we can choose a sequence $r_i \to 0$ such that $\Sigma_i := \Sigma \setminus\cup_{p\in\Sing(\Sigma)} B_{r_i}(p)$ is a mod 2 flat chain with boundary $\partial \Sigma_i$, and, \emph{as soon as $n\geq 2$}, the mass of $\partial \Sigma_i$ converges to $0$ as $i\to\infty$. By the Federer-Fleming compactness theorem (\cite{FF60}, see also \cite{Fed69}), $\Sigma = \lim_i \Sigma_i$ is a mod 2 flat chain and $\partial \Sigma = \lim_i \partial \Sigma_i = 0$. Hence, $\Sigma$ is a cycle.
    \end{remark}

  To fix the notation, we then let $\Sigma$ have only strongly isolated singularities $p_1,\ldots, p_Q$ with (respectively) regular cones $\mbfC_1,\ldots, \mbfC_Q$ and associated links $S_1,\ldots, S_{Q}$ (that is to say: $S_i:=\mbfC_i\cap\mathbb{S}^{N-1}$, where $\mathbb{S}^d$ shall henceforth denote the round unit sphere in Euclidean $\R^{d+1}$). When we wish to stress the role of the basepoint (rather than the label) we shall write $\mbfC_p$ and $S_p$ instead. In this setting and under such assumptions, one can find a compact set $\Sigma_0\subset \Sigma$ such that $\Sigma\setminus \Sigma_0=\bigsqcup_{i=1}^{Q}E_i$ and for each value of the index $i$ we have that $E_i$ is diffeomorphic to the product of the corresponding link with the interval $(1,\infty)$.
  
\subsection{First and second variation}\label{subs:FirstSecVar}

For a regular submanifold $\Sigma=\Sigma^n$ (not necessarily closed, cf. Remark \ref{rem:MSInotation}) in a Riemannian manifold $(M,g)$ we shall denote by $T\Sigma$ its tangent bundle and we let $\mbfV:=\mbfV(\Sigma,M)$ denote its normal bundle instead. Here we recall what follows. If $g$ is a $C^{k,\alpha}$ metric on $M$, and $\Sigma$ is an embedded $C^{\ell\geq 2}$ submanifold, then: when $k\geq \ell-1$, the normal bundle $\mbfV$ of $\Sigma$ is of class $C^{\ell-1}$, meaning that for every $p\in \Sigma$, there is a neighborhood $\Omega_p\subset \Sigma$ of $p$ and a $C^{\ell-1}$ bundle map $\Upsilon: \BB^n\times\RR^{N-n}\to TM$ which is a bundle isomorphism onto $\mbfV|_{\Omega_p}$;
   when $k\geq \ell$, the normal exponential map $\mbfE: \mbfV\to M$, $(x, z)\mapsto \exp_x^g(z)$, is of class $C^{\ell-1}$, and thus the pull-back metric $\mbfE^*g$ is of class $C^{\ell-2}$ (under the bundle-isomorphic parametrization above). For all claims pertaining to the regularity of the exponential map we refer the reader (here and below), for instance, to the recent article by Lange \cite{Lan24} and reference therein.

For sections $X,Y$ of $T\Sigma$ the second fundamental form of $\Sigma$ inside $M$ is defined by
\[
\II_{\Sigma,g}(X,Y)=(\nabla_X Y)^{\perp}
\]
where $\nabla$ is the Levi-Civita connection in metric $g$ (see Section \ref{subs:Conv}), and ${}^{\perp}={}^{\perp,g}$ stands for the orthogonal projection onto $\mbfV$; the trace of the second fundamental form is then the (vector-valued) mean curvature:
\[
H_{\Sigma,g}=\tr_{\Sigma}(\II_{\Sigma,g}).
\]
In the setting of Definition \ref{def:MSI}, it is a standard consequence of the first variation formula that an $\MSI$ has mean curvature identically equal to zero on its regular part. 

We will routinely work with sections of the bundle $\mbfV$ and corresponding differential operators; in particular we let $\nabla^\perp:=\nabla_{\Sigma,g}^{\perp}$ denote the metric connection on $\mbfV$ that is determined by $\nabla$, and shall further denote by $\Delta_{\Sigma,g}^{\perp}$ the Laplace operator on the normal bundle of $\Sigma$, namely $\mbfV$. If we then consider the second variation of the $n$-dimensional area functional at the $g$-critical point $\Sigma$ we will find, for any section (here assumed, at least initially,  $C^2$ and compactly supported) $u$ of $\mbfV$
\be\label{eq:2ndVar}
\delta^2\Sigma [u]=-\int_{\Sigma}g(u,{L_{\Sigma,g}}u)\,d\|\Sigma\|
\ee
where $L_{\Sigma,g}$ is the ``Jacobi operator'' of $\Sigma$, which takes the form (see e.\,g.  \cite{Sim68})
\be\label{eq:Jacobi}
L_{\Sigma,g}u=\Delta_{\Sigma,g}^{\perp}u+g(\II_{\Sigma,g}, u)\II_{\Sigma,g}+\tr_{\Sigma} R_g(u,\cdot,\cdot)
\ee
with $R_g$ denoting the $(1,3)$-curvature tensor of the ambient manifold $(M,g)$; if $\left\{X_1,\ldots, X_n\right\}$ is any local orthonormal frame of the tangent sub-bundle $T\Sigma$ then we mean
\[
g(\II_{\Sigma,g}, u)\II_{\Sigma,g}=\sum_{i,j=1}^n g(\II_{\Sigma,g}(X_i,X_j), u)\II_{\Sigma,g}(X_i,X_j), \ \text{and} \ \tr_{\Sigma} R_g(u,\cdot,\cdot)=\sum_{i=1}^n R_g(u,X_i,X_i).
\]

\begin{remark}\label{rem:GradientsCompare}
If $u$ is a section of $\mbfV$ of class $C^1$ and $X, Y$ are any (otherwise unspecified) sections of $T\Sigma$, in the setting above, then the identity $g(Y,u)=0$ implies, by covariant differentiation in the direction of $X$, that
\[
g(Y,\nabla_X u)=- g(\II_{\Sigma,g}(X,Y), u).
\]
By the arbitrariness of $X, Y$ this identity allows to identify the ``tangential" component of $\nabla u$ as $-g(\II_{\Sigma,g}, u)$ and thereby, to relate bounds for $\nabla u$ to bounds for $\nabla^\perp u$ and the second fundamental form of the submanifold in question.
\end{remark}

There is a special case that warrants further discussion (and partly special notation): for a regular minimal cone $\mbfC$, of dimension $n$, in $\R^N$, we define the link $\Sigma:=\mbfC\cap\mathbb{S}^{N-1}$; here we are not assuming $\Sigma$ to be orientable, however recall that in the codimension one case $(N=n+1)$ a simple topological argument ensures that any such $\Sigma$ is two-sided hence orientable. At the link $\Sigma$ we thus associate a Jacobi operator $L_\Sigma$ that has the form
     \[
     L_\Sigma u:=\Delta^{\perp}_\Sigma u +g(\II_\Sigma,u)\II_\Sigma+(n-1)u
     \]
where the background unit round metric is understood throughout, and whose spectrum is a discrete sequence
     \[
\lambda_1\leq\lambda_2\leq\lambda_3\leq  \ldots \leq \lambda_k\ldots \to+\infty,   \]
under the sign convention that $L_\Sigma u_j=-\lambda_j u_j$. 
We shall now introduce the following --- a posteriori very convenient --- notion of ``effective Morse index''.

\begin{definition}\label{def:EffMorsIndex}
Let $\Sigma$ denote a closed, embedded minimal submanifold in the round sphere of dimension $N-1$, i.\,e. $\SSp^{N-1}\subset \R^N$, and let $\mbfC$ denote the cone over $\Sigma$ having vertex at the origin $\orig\in\R^N$ (that is: $\mbfC:=\mathbf{0} \conetimes \Sigma$). Then we define the \textbf{effective Morse index} of $\Sigma$ as
\be\label{Equ_GenReg:EffectiveInd_Cone}
    \rmI(\mbfC) := (\,\text{index of the Jacobi operator of } \mbfC\cap \mathbb{S}^{N-1}\,) - N. \ee
(Here it is to be stressed that we are not assuming $\Sigma$ to be connected.)    
\end{definition}

The following recollections will be repeatedly referred to in the sequel of this article.

\begin{remark}\label{rem:BasicIndexEst}
\begin{enumerate}[label={(\Alph*)}]
\item\label{item:IndexCodim1} In the codimension one case it is well-known that the Morse index of any non-equatorial $\Sigma$ is at least $N+1$: there always holds  
$\rmI(\mbfC)\geq 1$. 

\item\label{item:IndexHigherCodim} In the case of higher (in fact: arbitrary) codimension Simons proved in \cite{Sim68} that  
the Morse index of the Jacobi operator of  $\mbfC\cap \mathbb{S}^{N-1}$ is at least $N$ for any connected $n$-dimensional minimal submanifold that is not a great sphere (and any equatorial $n$-sphere has of course index equal to $N-n$). Hence, there \emph{always} holds $\rmI(\mbfC)\geq 0$ since if the link is disconnected then necessarily $n\leq N/2$ and so the total Morse index of the link is at least $2(N-n)\geq 2(N/2)=N$. It is an open question to determine all $(n-1)$-dimensional minimal submanifolds $\Sigma$ in $\SSp^{N-1}$ whose Morse index equals $N$ (thereby saturating the preceding bound): as noted by Kusner-Wang \cite[Theorem 4.8]{KusWan24} recent work by Karpukhin \cite{Kar21} implies that the aforementioned Veronese embedding $\R\mathbb{P}^2\to\SSp^4$ does provide such an example, whether by contrast all minimal 2-tori in $\SSp^4$ have index at least 6. 
\end{enumerate}
\end{remark}

\subsection{Functional spaces and augmentation by translation-like sections}\label{subs:Functional}

Let again $M$ denote a smooth, compact boundaryless manifold of dimension $N\geq 3$, endowed with a metric of class $C^{k,\alpha}$, and let $\Sigma$ be an
 open submanifold of dimension $2\leq n<N$ and class $C^{\ell}$ for $2\leq \ell\leq k+1$; as stipulated above 
$\mbfV=\mbfV(\Sigma,M)$ shall denote its normal bundle, which --- by a standard extension procedure (employing the ambient charts for $\Sigma$) --- one may regard as a subbundle of the smooth tangent bundle $TM$ of the ambient manifold. 

For any $0\leq m\leq \ell-1$, $0\leq \beta\leq \alpha$ and finite $p\geq 1$ we will routinely work with the functional spaces $C_{loc}^{m,\beta}(\Sigma;\mbfV)$,  $L^p_{loc}(\Sigma;\mbfV)$ and $W^{m,p}_{loc}(\Sigma;\mbfV)$ of $\mbfV$-valued maps. Furthermore, we will deal with vector spaces of compactly supported sections, and most often with $C^{m}_c(\Sigma;\mbfV)$.  In the very same setting described above, but specified to the case when $\Sigma$ is an $\MSI$ in $(M,g)$, we will further define weighted Sobolev spaces, as follows. 

Given a multi-index $\boldsymbol{\beta}=(\beta_1,\ldots,\beta_{Q})\in\mathbb{R}^{Q}$, let us agree to denote by $\rho^{\boldsymbol{\beta}}$ a positive, continuous function that equals $\rho_{\Sigma,g}^{\beta_i}$ along the end $E_{i}\subset \Sigma$ at least for $\dist_g(p_i,\cdot)\leq  3 r_0$ where $r_0=r_0(\Sigma,g)$; in fact, without loss of generality, it is convenient for the purposes of the present paper to assume that $\rho\leq 1$ at all points of $\Sigma$. Any such function, possibly subject to additional requirements, will henceforth be referred to as \textbf{radius function} of $\Sigma$ in $(M,g)$, and $r_0(\Sigma,g)$ shall be called its \textbf{scale}.

    We then let $W^{m,p}_{\boldsymbol{\beta}}(\Sigma;\mbfV)$ to be the Banach space completion of $C^m_c(\Sigma;\mbfV)$ with respect to the norm
				\[
				\left\|u\right\|_{W^{m,p}_{\boldsymbol{\beta}}(\Sigma)}:=\left(\sum_{j=0}^{m}\int_{\Sigma}|\rho^{(-\boldsymbol{\beta}+j)}(\nabla^{\perp})^{(j)}u|^{p}\rho^{-n}\,d\|\Sigma\|\right)^{1/p}.
				\] 

   It is also possible to define weighted Sobolev spaces $W^{-m,p}_{\boldsymbol{\beta}}(\Sigma;\mbfV)$ using the language of distributions; for manifolds with asymptotically cylindrical ends this is done e.\,g. in reference \cite{Mel93} and it is standard to adapt the treatment to manifolds with asymptotically conical or conically singular (CS) ends (see e.\,g. \cite{Pac13}), which in particular allows to cover the case under consideration. A posteriori, there holds a Banach space isomorphism
$(W^{m,p}_{\boldsymbol{\beta}}(\Sigma;\mbfV))^*\simeq W^{-m,p'}_{-\boldsymbol{\beta}-n}(\Sigma;\mbfV)$.  We wish to single out a case that warrants special notation: when for some $\tau\in\R$ we take $\beta_i=\tau$ (for all $i=1,\ldots, Q$) we shall simply write $W^{m,p}_{\tau}(\Sigma;\mbfV)$ in place of $W^{m,p}_{\boldsymbol{\beta}}(\Sigma;\mbfV)$.
    
	Some fundamental facts about Analysis on manifolds with conical singularities or, more generally, on \emph{conifolds} have been collected, for instance, in \cite{Pac13} (see also references therein); this includes in particular the Sobolev embedding theorem for the weighted spaces defined above. 
    
    Lastly, we will need to possibly ``augment'' such spaces in the terms that follow. 
  
  \begin{definition}\label{def:TranslLikeSection}
  In the setting described above, specified to the case when $\Sigma$ is an $\MSI$ in $(M,g)$ and $m\leq k-2$, we define a section $\phi\in C^{m}_{loc}(\Sigma;\mbfV)$ \textbf{translation-like} if for every $q\in \Sing(\Sigma)$, there exist a neighborhood $U\ni q$ in $M$ and a vector $v\in T_qM$ (viewed as a vector field in $U$ using the normal coordinates) such that under normal coordinates $x^i$ centered at $q$, 
  \[
  \phi|_U(x) = \Pi^\perp_{x}(v),
  \]
  where \[
    \Pi^\perp_x: T_xM \to T_xM
  \]  
  is the $g$-orthogonal projection map onto $T_x^\perp \Sigma (=\mbfV_x)$. 
  \end{definition}

\begin{remark}
  In the setting described at the very beginning of this subsection (Section \ref{subs:Functional}), we recall that for every $x\in M$, the $g$-exponential map $\exp_x^g:T_xM\to M$ is of class $C^{k-1,\alpha}$; hence the $N\times N$-matrix-valued function $(\exp_x^g)^*g\in C^{k-2,\alpha}$ (see \cite{Lan24}). We stress the following consequence: if $\Sigma$ is $g$-minimal, then under $C^\infty$ coordinates of $M$, $\Sigma$ is locally a graph of $C^{k+1, \alpha}$ $\RR^{N-n}$-valued function (as recalled above) but for every $p\in \Sigma$, $(\exp_p^g)^{-1}(\Sigma)$ is only a $C^{k-1,\alpha}$ $\RR^{N-n}$-valued function near $\orig$.
  \end{remark}

  \begin{definition}\label{eq:AugmentedSpaces}
  If $W^{m,p}_{loc}(\Sigma;\mbfV)$ denotes the space of
	 sections that are locally $W^{m,p}$ then one sets  
 \[\hat{W}^{m,p}_\tau(\Sigma;\mbfV) := \{u\in W^{m,p}_{loc}(\Sigma;\mbfV): \|u-\phi\|_{W^{m,p}_{\tau}}<+\infty \text{ for some translation-like function }\phi\}.
  \]
  \end{definition}

  Let us discuss, very concretely, the effects of such an ``augmentation'': in essence, for $\tau>0$ (thus imposing decay at the singular tips of the limit cones) we simply have that 
  \be\label{eq:Augment}
 \hat{W}^{m,p}_\tau(\Sigma;\mbfV)={W}^{m,p}_\tau(\Sigma;\mbfV)\oplus W_{\text{Tr}}(\Sigma;\mbfV)
  \ee
  where the vector space $W_{\text{Tr}}$ has dimension exactly equal to $NQ$ (notation as in Section \ref{subs:GenSetp}), with a basis being given by the collection of sections
\be\label{eq:BasisTranslaLikeFunctions}
\left\{(1-\zeta_{\Sigma,g,r_0}) \ \cdot \  \Pi^{\perp}_x(v) \st i=1,\ldots, Q, \ v\in T_{p_i}M\right\}
\ee
where $\zeta_{\Sigma,g,r_0}$ is the smooth cutoff function defined in the dedicated paragraph of Section \ref{subs:Conv} (so that $1-\zeta_{\Sigma,g,r_0}$ transitions from the value $1$ for $\rho_{\Sigma,g}\leq r_0$ and $0$ for $\rho_{\Sigma,g}\geq  2 r_0$; here $r_0>0$ is the scale of the radius function of $\Sigma$ in $(M,g)$). 

\begin{remark}
Note that the vector space $W_{\text{Tr}}$ comes naturally equipped with a quotient norm; however, being finite-dimensional, such a norm is equivalent to any norm of our liking. For technical reasons that will emerge at a later stage (cf. Corollary \ref{Cor_App_Uniform Noncon for Jac fields} and Lemma \ref{Lem_Cptness for Slower Growth Jac Fields}) we will assume to work with the $L^{\infty}$ norm determined by the ambient Riemannian manifold $(M,g)$.
\end{remark}
    
\subsection{Normal graphs and transfer of sections}\label{subs:NormalSect}

For each of the following definitions, we let $(M,g)$ denote an ambient Riemannian manifold (not necessarily complete) and $\Sigma_0, \Sigma_1$ denote suitably regular ($C^{\ell\geq 2}$) submanifolds (cf. Remark \ref{rem:OptRegMinSubm}), again possibly open. Furthermore, set $\mbfV_0$ (respectively: $\mbfV_1$) denote the normal bundle of $\Sigma_0$ (respectively: $\Sigma_1$) in $(M,g)$. With such a background metric tacitly understood (to lighten notation), thus letting $\exp_x(\cdot)$ denote the exponential map (in the ambient manifold $(M,g)$) with basepoint $x$, we shall further set $\exp_{x,r}(v):=\exp_x(rv)$ wherever this is well-defined.

We will now present two related, yet different, notions of ``graphicality'': the first notion is essentially local and has to do with the description of a (suitable portion of a) submanifold as a graph over its tangent space at a point, while the second rather relates to viewing an $\MSI$ as a normal graph over a nearby reference $\MSI$.

\begin{definition}\label{def:RegularScale}
    In the setting described above, assuming $k\geq k_0:=7$, if the ambient metric $g$ is of class $C^{k,\alpha}$, we define the \textbf{regularity scale} of $\Sigma_0$ to be the function
    \[
        \mfr_{\Sigma_0, g}: \Sigma_0 \to \RR^+
    \]
    determined by letting $\mfr_{\Sigma_0, g}(x)$ be the supremum (least upper bound) of all $r>0$ such that the following conditions hold:
    \begin{enumerate}[label=(\roman*)]
        \item{$r< \injrad_{M,g}(x)$, and $\|r^{-2}\exp_{x,r}^*g - g_{\eucl}\|_{C^5(\overline{\BB}(1))}\leq 1$;} 
        \item{there exists $\phi:\textrm{dom}(\phi)\subset T_x\Sigma_0\to T^{\perp}_x\Sigma_0$, of class $C^2$, satisfying the inequality \newline $r^{-1} |\phi| + |\mathring{\nabla} \phi| + r|\mathring{\nabla}^2 \phi| \leq 1$  and
        \[(\exp_x)^{-1}(\Sigma_0 \cap B^g(x,r)) = \graph_{T_x \Sigma_0}(\phi)\]
         where we identify the pair $(T_x \Sigma_0, T_x M)$ with $(\mathbb{R}^n, \mathbb{R}^N)$ and set \[
         \graph_{T_x \Sigma_0}(\phi)= \left\{(x,
         \phi(x)) \st x\in \textrm{dom}(\phi))\right\}.
         \]}
     \end{enumerate}
   \end{definition}

   Recall that in the preceding definition $\graph_{T_x \Sigma_0}(\phi)$ is automatically of class $C^{\ell}$ so long as  $\ell\leq k-1$; in particular, when $\Sigma_0$ is an $\MSI$ then such a graph is that of a $C^{k-1,\alpha}$ $\R^{N-n}$-valued function near the origin (namely $\phi$ itself).

\begin{remark}\label{rem:RegCurvBound}
    Note that, by the very definition above ``a bound on the regularity scale implies a curvature bound, as well as an area bound'', i.\,e. in the setting above 
    \be\label{eq:CurvBound}
    |\II_{\Sigma_0,g}(y)|_g\leq C/\mfr_{\Sigma_0, g}(x), \  \ \forall y\in B^g(x,\mfr_{\Sigma_0, g}(x)/C)
    \ee
    and
    \be\label{eq:AreaBound}
    \|\Sigma_0\| (B^g(x,\mfr_{\Sigma_0, g}(x)/C))\leq C\mfr^{n}_{\Sigma_0, g}(x)
    \ee
    for a constant $C\geq 2$ only depending on the dimension $N$ (and on $n\leq N-1)$. Hence, if the ambient manifold is compact we can --- by means of a standard covering argument --- derive a global, uniform area bound as soon as we are given a positive, uniform lower bound on the regularity scale of $\Sigma$. 
    Therefore,
    it is by now standard to show (cf.~e.\,g.~\cite{Sha17}) that a positive, uniform lower bound on the regularity scale implies a compactness result with respect to graphical, smooth convergence (with multiplicity one); as a special case, we will invoke this result for a sequence $\left\{\Sigma_j\right\}_{j\geq 1}$ of smooth, closed minimal submanifolds of dimension $n-1$ in the unit round sphere of dimension $N-1$.
    
\end{remark}

\begin{remark}\label{rem:FailMultOne}
We wish to explicitly stress that, in absence of restrictions on the range of $n$ (with respect to $N$), assuming a uniform bound on the second fundamental form and area \emph{does not} imply smooth graphical convergence with multiplicity one. For instance, one may consider $\Sigma=\Sigma(d)$ to be the disjoint union of the two  spheres in $\mathbb{S}^5$ obtained by intersecting with two three-dimensional subspaces only meeting at the origin, whose distance is equal to $d$ in $\mathbb{G}(3,6)$: as $d\to 0^+$ clearly $\Sigma(d)$ will converge to an equatorial two-sphere with multiplicity two. This phenomenon is ruled out by a positive lower bound on the regularity scale.
\end{remark}

\begin{definition}\label{def:KappaGraph}
    In the setting above 
    we shall say that $\Sigma_1$ is a \textbf{$\kappa$-$C^{m}$ graph} over $\Sigma_0$ in an open subset $U\subset M$ if $\Sing(\Sigma_1) \cap U = \emptyset$, there exist $U\cap \Sigma_0\subset \Omega_0\subset \Sigma_0$ and $\phi\in C^{m}(\Omega_0;\mbfV_0)$ such that
    \[
        \Sigma_1\cap U\subset \graph_{\Sigma_0}(\phi):=\left\{\exp_x(\phi(x)) \st x\in\Omega_0\right\} \subset \Sigma_1
    \]
    implicitly assuming such a definition is well-posed (depending on the injectivity radius of the ambient manifold) and moreover
    \[
        \sum_{j=0\ldots, m}\sup_{x\in \Omega_0} \left(\mfr_{\Sigma_0, g}^{j - 1}(x) \cdot |(\nabla^{\perp})^{j}\phi(x)|_g \right) \leq \kappa.
    \]

\end{definition}

    \begin{remark}\label{Rem_Choice of smallness in graph rad}
      Let $\delta_0\in (1/4)$ be the dimensional constant determined in Lemma \ref{Lem_App_GraphParam_Compare diff graph func}. If, for $i=1, 2$, $\Sigma^i$ is a $\delta_0^2$-$C^2$ graph over $\Sigma^0$ in $U$, then for every $\check x\in \Sigma^0$ such that --- recalling the notion of regularity scale introduced in Definition \ref{def:RegularScale} --- $B^g(\check x, \mfr_{\Sigma^0, g}(\check x))\subset U$, the assumptions of Lemma \ref{Lem_App_GraphParam_Compare diff graph func} are satisfied by the data $\check\eta^{-1}(\Sigma^j), \check r^{-2}\check\eta^*g$ in place of $\Sigma^j, g$ therein, $j=0,1,2$, where $\check r:= \delta_0 \mu_0\mfr_{\Sigma^0, g}(\check x)$ for $\mu_0(N)\in (0,1/2)$ also a dimensional constant, and we have denoted $\check\eta: x\mapsto \exp^g_{\check x}(\check r x)=\exp^g_{\check x,\check r}(x)$. Therefore, Lemma \ref{Lem_App_GraphParam_Compare diff graph func} allows to quantitatively compare the graphical section of $\Sigma^2$ over $\Sigma^1$ with the difference of the graphical sections of $\Sigma^i$ over $\Sigma^0$, $i=1,2$ as well as of slightly translated graphs. 
\end{remark}

\begin{definition}\label{def:GraphRadius}
    In the context of the preceding definition, under the specification that $\Sigma_0$ be an $\MSI$ in $(M,g)$,
   we define the \textbf{graphical radius} $\bfr^{\Sigma_1}_{\Sigma_0, g}(x)$ of $\Sigma_1$ over $\Sigma_0$ under metric $g$ as the infimum over all nonnegative functions $\bfr$ on $\Sing(\Sigma_0)$ so that $\Sigma_1$ is a $\delta_0^2$-$C^2$ graph  over $\Sigma_0$ in the open set  \[
      M\setminus \bigcup_{x\in \Sing(\Sigma_0)}\overline{B^g(x, \bfr(x))}\,.
    \]
    
    We use $\mbfG^{\Sigma_1}_{\Sigma_0, g}$ to denote the (uniquely defined) \textbf{graphical section} over $\Sigma_0$ of $\Sigma_1$ under metric $g$ wherever defined, which we then extend by zero to an $L^\infty$ section on the whole $\Sigma_0$, so that $\mbfG^{\Sigma_1}_{\Sigma_0, g}\in L^{\infty}(\Sigma_0;\mbfV_0)$.
\end{definition}

\begin{definition}\label{def:Convergence}
Let $(M,g)$ be a Riemannian manifold of dimension $N\geq 3$, and let $\left\{g_j\right\}_{j\geq 1}$ be a sequence of $C^{k,\alpha}$ Riemannian metrics converging, in that Banach space, to $g$ as $j\to\infty$. 
Assume, further, to be given an $\MSI$ $\Sigma$ in $(M,g)$ as well as, 
for every $j\geq 1$, an $\MSI$ $\Sigma_j$ in $(M,g_j)$. We shall say that the sequence $\left\{\Sigma_j\right\}_{j\geq 1}$ \textbf{converges} to $\Sigma$ in $C^{k',\alpha'}_{loc}$ (for $0\leq k'\leq k$ and $0\leq\alpha'\leq\alpha$) if, as one lets $j\to\infty$, the following two conditions hold:
\[
\bfr^{\Sigma_j}_{\Sigma, g}(x)\to 0 \  \text{for all} \ x \in \Sing(\Sigma)
\]
and for any $r>0$ 
\[
\|\mbfG^{\Sigma_j}_{\Sigma, g}\|_{C^{k',\alpha'}(\Sigma\setminus B^g(\Sing(\Sigma),r))}\to 0,
\]
where it is understood, throughout this article, that $B^g(\Sing(\Sigma),r)=\cup_{p\in \Sing(\Sigma)}B^g(p,r)$.
\end{definition}

That said, whenever we have ``graphical convergence''  we can in fact ``transfer sections'' employing the exponential map. In order to clarify this point, we give the following:

\begin{definition}\label{def:Transfer}

In the setting of Definition \ref{def:KappaGraph}, set $\Sigma_1=\graph_{\Sigma_0}(\phi)$. Given any section $u:\Sigma_1\to\mbfV_1$ we let $\mbfT^{\Sigma_1}_{\Sigma_0,g}(u):\Sigma_0\to \mbfV_0$ be defined as follows
\[
\mbfT^{\Sigma_1}_{\Sigma_0,g}(u)(x)=[(\exp_{x}(\phi(x)))^{-1}_{\ast}(u(\exp_x(\phi(x))))]^{\perp_g}.
\]
\end{definition}

In practice, in this paper we will deal with ``transferring sections'' in two special cases:
\begin{itemize}
\item{a sequence of $\MSI$ $\left\{\Sigma_j\right\}_{j \geq 1}$} converging (in $\mbfF$-metric, with multiplicity one, and) in $C^{k',\alpha'}_{loc}$ as per Definition \ref{def:Convergence} to a reference ``base'' $\MSI$ $\Sigma$;
\item{an $\MSI$ that -- based on \cite{Sim83} -- can be written, locally around $q\in\Sing(\Sigma)$ as a normal graph over its unique tangent cone $\mbfC_q=\mbfC_q(\Sigma)$ in $T_q M$.}
\end{itemize}
Note that the second instance can also be phrased in terms of ``convergence'' on approach to $q$ by virtue of the decay properties of the normal section defining $\Sigma$ over $\mbfC_q$.

The key analytic features of the transfer operators are collected in Proposition \ref{Prop_MSE_Section Transp}, in a way that is equally well applicable to both circumstances.
The second special case (normal graph over tangent cone) will also feature prominently in the second part of Appendix \ref{sec:3circles}, whose key results will in turn be employed in Section \ref{sec:GenReg}.

\subsection{An ancillary computation about translation-like sections}\label{subs:Ancill} The following result will be profitably used throughout the paper (for instance in the proof of Lemma \ref{Lem_Anal for slower growth func}).

    \begin{lemma}\label{lem:JacobiTranslFunct}
    Let $(M,g)$ be a closed Riemannian manifold, and let $\Sigma$ be a $g$-stationary varifold with only strongly isolated singularities ($\MSI$). For any translation-like section $\phi: \Sigma\to \mbfV:=\mbfV(\Sigma,M)$ belonging to the space $W_{\text{Tr}}$ there holds $L_{\Sigma,g}\phi\in L^{\infty}(\Sigma;\mbfV)$ and there exists a constant $C=C(g,\Sigma)$ such that, if $\phi=\mbfx^\perp$ around $q\in\Sing(\Sigma)$  then (in that same neighborhood)
    \[
    \|L_{\Sigma,g}\mbfx^\perp\|_{L^{\infty}}\leq C|\mbfx|,
    \]
    where $|\mbfx|$ denotes the norm of $\mbfx\in T_q M$ with respect to the metric $g$.
    In particular, there holds  $L_{\Sigma,g}\phi\in W^{0,p}_{-\varepsilon}(\Sigma;\mbfV)$ for any $p\geq 1$ and any $\varepsilon>0$.
    \end{lemma}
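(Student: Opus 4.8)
The plan is to reduce, by linearity, to a single ``translation-like generator'' localised at one singular point, and there to exploit the fact that a constant-coefficient vector field in normal coordinates is \emph{infinitesimally Killing to first order} at the centre: the Jacobi operator applied to its normal projection then equals the linearisation of the mean-curvature operator in a metric direction that vanishes linearly at the singularity, and this is exactly what kills the a priori $\rho^{-2}$ blow-up one would fear from the term $\langle\II_{\Sigma,g},\cdot\rangle\II_{\Sigma,g}$ in \eqref{eq:Jacobi}.

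\emph{Reduction.} By the explicit description of $W_{\text{Tr}}$ via the basis \eqref{eq:BasisTranslaLikeFunctions} and linearity of $L_{\Sigma,g}$, it suffices to treat $\phi = (1-\zeta_{\Sigma,g,r_0})\,\Pi^\perp_{(\cdot)}(v)$ with $q=p_i\in\Sing(\Sigma)$ and $v\in T_qM$ fixed (this $v$ is $\mbfx$ in the notation of the statement); such $\phi$ is supported in $\{\rho_{\Sigma,g}\le 2r_0\}$ near $q$ and coincides on $\{\rho_{\Sigma,g}\le r_0\}$ with $v^\perp := \Pi^\perp_{(\cdot)}(v)$, the $g$-normal projection of the vector field with constant components in $g$-normal coordinates about $q$. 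Since $L_{\Sigma,g}$ is a second-order differential operator, $L_{\Sigma,g}\phi$ is again supported in $\{\rho_{\Sigma,g}\le 2r_0\}$, and on the precompact annulus $\{r_0\le\rho_{\Sigma,g}\le 2r_0\}\subset\Reg(\Sigma)$ everything is smooth with bounded geometry and depends linearly on $v$, so there $\|L_{\Sigma,g}\phi\|_{L^\infty}\le C|v|$ with no effort. Thus the whole statement reduces to the pointwise estimate $\|L_{\Sigma,g}(v^\perp)\|_{L^\infty(\{\rho_{\Sigma,g}\le r_0\})}\le C|v|$.

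\emph{The identity and the estimate.} Let $v$ also denote the constant-coefficient vector field above, with local flow $\{\Phi_t\}$ near $q$. Each $\Phi_t$ is an isometry of $(M,\Phi_t^*g)$ onto $(M,g)$ carrying $\Sigma$ to $\Phi_t(\Sigma)$, so $(\Phi_t)_*H_{\Sigma,\Phi_t^*g}=H_{\Phi_t(\Sigma),g}$; differentiating at $t=0$ and using $H_{\Sigma,g}\equiv 0$ on $\Reg(\Sigma)$ (which annihilates every term where $\tfrac{d}{dt}$ lands on a mean-curvature vector, including the tangential reparametrisation term) yields, pointwise on $\Reg(\Sigma)$ near $q$,
\[
L_{\Sigma,g}(v^\perp) = D_g\mathcal{H}\big[\mathcal{L}_v g\big]
\]
(up to an overall sign, immaterial below), where $D_g\mathcal{H}[\dot g]$ is the linearisation at $g$, in the symmetric $2$-tensor direction $\dot g$, of the mean-curvature operator of the fixed $\Sigma$. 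Crucially $\dot g\mapsto D_g\mathcal{H}[\dot g]$ is \emph{first} order, since mean curvature depends on the ambient metric only through $g^{ij}$ and the Christoffel symbols $\Gamma(g,\partial g)$; hence $|D_g\mathcal{H}[\dot g]|\le C(|\nabla\dot g|+(1+|\II_{\Sigma,g}|)|\dot g|)$ with $C$ depending only on a two-sided bound for $g$ near $q$. Two inputs now finish the job: in $g$-normal coordinates about $q$ one has $g_{ij}=\delta_{ij}+O(|x|^2)$, $\partial g=O(|x|)$, $\partial^2 g=O(1)$, so, as $v$ is constant, $\dot g_{ij}=v^k\partial_k g_{ij}$ obeys $|\dot g|\le C|v|\dist_g(\cdot,q)$ and $|\nabla\dot g|\le C|v|$; and since $q$ is strongly isolated with regular tangent cone, Simon's uniqueness of the tangent cone (Remark \ref{rem:MSI} and \cite{Sim83}; equivalently a lower bound on the regularity scale of Definition \ref{def:RegularScale}, cf.\ Remark \ref{rem:RegCurvBound}) gives $|\II_{\Sigma,g}(x)|\le C/\dist_g(x,q)$ near $q$. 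Substituting, $|L_{\Sigma,g}(v^\perp)(x)|\le C(|v|+(1+\dist_g(x,q)^{-1})|v|\dist_g(x,q))\le C|v|$, which is the claim; the boundedness $L_{\Sigma,g}\phi\in L^\infty(\Sigma;\mbfV)$ and the displayed bound of the lemma follow.

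\emph{The weighted-space conclusion, and the main difficulty.} Writing $u:=L_{\Sigma,g}\phi$, which is bounded and supported in $\{\rho_{\Sigma,g}\le 2r_0\}$, and using the density bound $\|\Sigma\|(B^g(p_i,s))\le Cs^n$ at each singular point, a layer-cake computation gives $\|u\|_{W^{0,p}_{-\varepsilon}}^p=\int_\Sigma|u|^p\rho^{\varepsilon p-n}\,d\|\Sigma\|\le\|u\|_{L^\infty}^p\big(C+\sum_i C\int_0^{2r_0}s^{\varepsilon p-1}\,ds\big)<\infty$ for every $p\ge1$ and $\varepsilon>0$, so $u\in W^{0,p}_{-\varepsilon}(\Sigma;\mbfV)$. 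The one point that is not purely formal is the cancellation underlying the estimate: the naive bound on $\langle\II_{\Sigma,g},v^\perp\rangle\II_{\Sigma,g}$ is of size $\rho^{-2}|v|$ and only cancels against a matching $\rho^{-2}$ term inside $\Delta^\perp_{\Sigma,g}v^\perp$ — precisely as a Euclidean-parallel field is a Jacobi field along a minimal cone in flat space — and encoding this cleanly is exactly what the diffeomorphism-invariance identity above achieves; checking that the coefficients of $D_g\mathcal{H}$ are $O(1+|\II_{\Sigma,g}|)$ and no worse, together with the normal-coordinate expansions, is then routine.
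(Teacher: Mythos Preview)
Your proof is correct and follows essentially the same approach as the paper's: both reduce to a single translation-like generator in normal coordinates at $q$, identify $L_{\Sigma,g}(\mbfx^\perp)$ with the linearisation of the mean-curvature operator in the metric direction $\mathcal{L}_{\mbfx}g=\sym(\nabla\mbfx)$, then use the normal-coordinate expansion $\Gamma=O(r)$, $\partial\Gamma=O(1)$ together with $|\II_{\Sigma,g}|=O(\rho^{-1})$ to conclude boundedness. The paper writes out the three terms of $D_g\mathcal{H}[h]$ explicitly (namely $-g(\II_{\Sigma,g},h)+((\div_\Sigma h)^\#)^\perp-\tfrac12\tr_\Sigma\nabla^\perp h$), while you package the same content in the abstract estimate $|D_g\mathcal{H}[\dot g]|\le C(|\nabla\dot g|+(1+|\II_{\Sigma,g}|)|\dot g|)$; your justification of the $|\II|\le C/\rho$ bound via the regularity scale (Remark~\ref{rem:RegCurvBound}) is in fact more explicit than the paper, which leaves this step implicit.
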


    \begin{proof}

We work in the neighborhood of $q\in\Sing(\Sigma)$ endowed with geodesic normal coordinates centered at the point in question. Thus, with slight notational abuse, we will assume $\Sigma^n \subset (\R^N,g)$  and $\phi=\mbfx^\perp$; we shall employ the well-known asymptotic expansion 
\be\label{eq:MetrucNormalExpansion}
g_{ij}(x)=\delta_{ij}-\frac{1}{3}R_{ijk\ell}x^k x^\ell+O(|x|^3), \ (x\to 0).
\ee
Hence, by letting as usual $r:=|x|$, there holds in particular for the Christhoffel symbols and their first derivatives
\be\label{eq:ChristNormalExpansion}
\Gamma^k_{ij}=O(r), \ \text{and} \ \Gamma^k_{ij,\ell}=O(1), \ (r\to 0^+).
\ee
Now, it suffices in fact to consider the case when $g(\mbfx,\mbfx)=1$ at $q$ and prove the desired local $L^\infty$ bound, for the claim then comes by a standard scaling argument.

That said, for $\mbfx=\sum_{i=1}^N \mbfx_i\partial^i$ we need to compute at any $x\in\Sigma$ the quantity
\be\label{eq:JacobiMean}
(L_{\Sigma,g}(\mbfx^\perp))(x)=\left[\frac{d}{ds}\right]_{s=0}H_{\Sigma+s\mbfx,g}(x_s)
\ee
where $x_s$ is the unique element in the intersection $\cup_{t\in [0,2s]}\exp_x(t\mbfx^\perp) \cap (\Sigma+s\mbfx)$, which is well-defined for $s=s(x)$ sufficiently small. In fact, due to the assumed minimality of $\Sigma$ we have
\[
\left[\frac{d}{ds}\right]_{s=0}H_{\Sigma+s\mbfx,g}(x_s-(x+s\mbfx))=g(H_{\Sigma,g},\mbfx^\perp-\mbfx)=0
\] 
and so we are actually left with computing
\[
\left[\frac{d}{ds}\right]_{s=0}H_{\Sigma+s\mbfx,g}(x+s\mbfx)=\left[\frac{d}{ds}\right]_{s=0}H_{\Sigma,\tau^*_{s\mbfx}g}(x)
\]
where the right-hand side is now the variation of mean curvature of a fixed submanifold with respect to a deformation of the ambient metric, and we have set $\tau_{W}(x)=x+W$. 
To move further let us recall that, if we denote $h:=[\frac{d}{dt}]|_{t=0}g(t)$ then there holds in general, again under the minimality assumption
\[
\left[\frac{d}{dt}\right]_{t=0}H_{\Sigma, g(t)}=-g(\II_{\Sigma,g},h)+((\div_{\Sigma}h)^{\#})^{\perp_{g}}-\frac{1}{2}\tr_{\Sigma}\nabla^{\perp_g}h;
\]
working in local coordinates $\left\{x^1,\ldots x^n,x^{n+1},\ldots, x^N\right\}$ where $\left\{\partial_1,\ldots \partial_n\right\}$ (and, respectively: $\left\{\partial_{n+1},\ldots,\partial_N\right\})$ form a local basis for the the tangent (resp. normal) space to $\Sigma$
the three terms above need to be understood as follows :
\[
-g(\II_{\Sigma,g},h)=-g^{ip}g^{jq}h_{pq}\Gamma^a_{ij}\partial_a, \
((\div_{\Sigma}h)^{\#})^{\perp_{g}}=g^{ij}g^{ab}h_{ib,j}\partial_a, \
\tr_{\Sigma}\nabla^{\perp_g}h=-g^{ij}g^{ab}h_{ij,b}\partial_a
\]
for indices $1\leq i,j,p,q\leq n, n+1\leq a,b\leq N$ and summation over repeated indices.

As a result, since in our case
\[
\left[\frac{d}{ds}\right]_{s=0}\tau^*_{s\mbfx}g=\scL_{\mbfx}g=\sym (\nabla \mbfx)
\]
(that is: the Lie derivative of the metric $g$ in the direction $\mbfx$, which equals the ``symmetrized covariant derivative'' of $\mbfx$ with respect to the metric $g$) we will have, in the end
\be\label{eq:FinalVarMeanC}
L_{\Sigma,g}(\mbfx^{\perp})=-g(\II_{\Sigma,g},\sym (\nabla \mbfx))+((\div_{\Sigma}\sym (\nabla \mbfx))^{\#})^{\perp_{g}}-\frac{1}{2}\tr_{\Sigma}\nabla^{\perp_g}\sym (\nabla \mbfx).
\ee
At this stage we simply need to note that the tensor $h:=\sym (\nabla \mbfx)$ satisfies, in view of \eqref{eq:ChristNormalExpansion} and the constancy of $\mbfx$ the bound $\|h\|_g=O(r)$ as $r\to 0^+$, and so, by inspecting all terms of the right-hand side of \eqref{eq:FinalVarMeanC} we conclude that each of them is uniformly bounded as $r\to 0^+$ and so will the same conclusion hold for $L_{\Sigma,g}(\mbfx^\perp)$, as claimed.
    \end{proof}

    \begin{remark}\label{rem:LinftyCanNeighb}
    By inspecting the previous proof it is straightforward to see that, in fact, the constant $C=C(g,\Sigma)$ can be chosen uniformly for all pairs (as per Definition \ref{def:Pairs} below)
    $(g', \Sigma')\in \cM_n^{k, \alpha}(M)$ satisfying the bounds
        \[
\|g'\|_{C^{k,\alpha}}\leq \Lambda, \ \ \mfr_{\Sigma', g'} \geq \Lambda^{-1}\rho_{\Sigma',g'}
         \]
            where we recall that $\rho_{\Sigma',g'}$ denotes the radius function of $\Sing(\Sigma')$ in $(M, g')$.
   That is to say: the claim of Lemma \ref{lem:JacobiTranslFunct} holds true for all $\phi\in W_{\text{Tr}}$ for a constant $C=C(\Lambda)$.
    \end{remark}

\subsection{Conventions}\label{subs:Conv} We shall collect here some more notation and conventions that are implicitly assumed throughout the paper.

\vspace{3mm}

\textbf{Cutoff functions.} First of all, let $\zeta\in C^\infty(\RR; [0,1])$ be a cutoff function (fixed once and for all) such that $\zeta \equiv 0$ on $(-\infty, 1]$ and $\zeta \equiv 1$ on $[2, +\infty)$. Hence, given as above an ambient Riemannian manifold $(M,g)$, an $\MSI$ $\Sigma$ and $r_0>0$, we let
            \be\label{Equ_Def zeta_{Sigma, g, r_0}}
              \zeta_{\Sigma, g, r_0}(x):= \zeta(\rho_{\Sigma,g}(x)/r_0),  
            \ee
            where $\rho_{\Sigma,g}$ is a radius function for $\Sigma$ in $(M,g)$ and $r_0(\Sigma,g)$ denotes its scale (see Section \ref{subs:Functional}).
            When convenient we will simplify this notation; for instance given a sequence of data of the type above we may write $\zeta_j$ in place of $\zeta_{\Sigma_j, g_j, r_j}$. Specific conventions of this sort will however always be declared. 

            \vspace{3mm}

\textbf{Metric notions.} In $\R^N$ we shall denote by $g_{\eucl}$ the standard Euclidean metric, by $\dist_{g_{\eucl}}$ the associated distance and by $\mathring{\nabla}$ its Levi-Civita connection. Furthermore, we let $\BB(x,r)$ denote the open Euclidean ball of center $x\in \R^N$ and radius $r>0$; in the special case of balls centered at the origin we shall simply write $\BB(r)$. For Euclidean annuli we write $\AAa(x,r,s)=\BB(x,s)\setminus \overline{\BB(x,r)}$; when $x$ is the origin we convene to write $\AAa(r,s)$.
In a Riemannian manifold $(M,g)$ we let $\dist_g$ denote the corresponding distance and by $\nabla$ the Levi-Civita connection. 
It is convenient to explicitly indicate the background metric when talking about balls and annuli, so we will write $B^g(x,r)$ (respectively: $A^g(x,r,s)$) for the ball of center $x\in M$ and radius $r>0$ (respectively: for the annulus of center $x$ and radii $0<r<s$). At certain, relatively rare, points of the paper we will need to deal simultaneously with multiple metrics, in which case we will rather indicate the selected metric explicitly, like e.\,g. for $\nabla^g$ in lieu of $\nabla$ and for $\exp_x^g$ in lieu of $\exp_x$ when employing the exponential map; in particular, this will be the case in Appendix \ref{app:MSE}.

            \vspace{3mm}

\textbf{Use of constants.} Throughout this article we shall employ the letter $C$ to denote a constant that is allowed to vary from line to line (or even within the same line); we shall stress the functional dependence of any such constant on geometric quantities by including them in brackets, writings expressions like $C=C(\Lambda,\sigma)$ or similar. The dimension of the ambient manifold (that is: $N$) and of the subvarieties we work with (namely: $n$) are fixed (i.\,e. we do not need to ever vary them in our arguments) and so, for notational simplicity, we agree not to indicate them among the parameters our constants depend upon; the only (explicitly stated) exceptions to such a rule are in Appendix \ref{app:MSE} and Appendix \ref{app:GraphicalPar}, where we determine the constant $\delta_0=\delta_0(N)$ that was mentioned above after the definition of $\kappa$-$C^k$ graph (Definition \ref{def:KappaGraph}). Lastly, in the rare cases when -- within a certain proof -- it is appropriate to keep track of different constants (for instance because they display different functional dependence, or need to be chosen in a certain order) we avoid ambiguities by employing different labels or numbers to indicate such constants. This will always be explicitly remarked if appropriate.

\section{An index-theoretic perspective on generic regularity}\label{sec:Index}

As anticipated in the introduction, 
we shall prove in this section an ``index-counting formula'' of independent interest.  To get there, it is convenient to open a short digression pertaining to the analysis of the Jacobi operator of regular minimal cones in Euclidean space, which will anyways come into play along the course of this work (primarily in Appendix \ref{sec:3circles}).

\subsection{Analysis on a regular minimal cone}\label{subs:AnalysisCone1}
Given $n\geq 2$ and $N>n$ let $\mathcal{C}_{N,n}$ denote the collection of \emph{non-trivial} regular $n$-dimensional minimal cones $\mathbf{C}$ in Euclidean $\mathbb{R}^N$, always understood as \emph{multiplicity one} varifolds.

Thus, for $\mbfC\in \cC_{N,n}$ there always holds that $S:= \mbfC\cap \SSp^{N-1}$ is a smooth minimal submanifold in $\SSp^{N-1}$ other than an $(n-1)$-dimensional equator; as a result, we stress in particular that $\mathcal{C}_{N,n}$ is actually \emph{empty} when $N=3$ and $n=2$. We can parametrize $\mbfC$ by 
    \begin{align}
        (0, +\infty)\times S\to \mbfC,\ \ \ (r, \omega)\mapsto x = r\omega.  \label{Equ_Pre_Parametrize minimal hypercone}
    \end{align}
At the cone $\mbfC\subset \R^{N}$ one deals with the Jacobi operator ($r:=|x|$)
\[
L_{\mbfC}=\partial^2_r +\frac{n-1}{r}\partial_r+\frac{1}{r^2}(L_S-(n-1)).
\]
(Note that, compared to the general notational conventions stipulated in Section \ref{sec:Prelim} here we omit the explicit indications of the background metric for $L_{\mbfC}$ and $L_S$.)

If one looks for homogeneous Jacobi fields, arguing by separation of variables (cf. \cite{CHS84}) the only possible rates of growth/decay correspond to the real part of the complex numbers 
solving the algebraic equation (one for each choice of the label $j$ parametrizing the eigenfunctions on the link, with the sign conventions declared above)
\[
\gamma^2+(n-2)\gamma-(\lambda_j+(n-1))=0
\]
which are 
\be\label{eq:Roots}
\gamma^{\pm}_j(\mbfC)=-\left(\frac{n-2}{2}\right)\pm\sqrt{\left(\frac{n-2}{2}\right)^2+(\lambda_j+(n-1))} \ \in\mathbb{C}.
\ee
Therefore, we collect the real part of these numbers in the set (henceforth referred to as \textbf{asymptotic spectrum} of $\mbfC$)
\be\label{eq:AsymptSpectr}
\Gamma(\mbfC) := \{\deg(w): w \text{ is a homogeneous Jacobi field on }\mbfC\} \equiv \{\Re(\gamma^\pm_j): j\geq 1\}\subset \R
\ee
and conveniently define
\be\label{Equ_GenReg:AsympSpec_Cone}
    \gamma_-(\mbfC) := \sup (\Gamma(\mbfC)\cap \R_{<1})<1\,. 
    \ee

\begin{remark}\label{rem:IndexContrib}
Note for later reference that the inequality $\lambda_j<0$ (that corresponds to the ``index contributions'' of the link) is true if and only if 
\be\label{eq:MagicThreshold}
\gamma_j^{+}=-\left(\frac{n-2}{2}\right)+\sqrt{\left(\frac{n-2}{2}\right)^2+(\lambda_j+(n-1))}<1.
\ee
With this in mind, it is convenient for us to set
$\gamma_*=-(n-1)$ and 
$\gamma^*=1$ (that are the roots associated with $\lambda_j=0$). Furthermore, due to the presence of translations in Euclidean $\R^N$, it is always the case that $0\in\Gamma(\mbfC)$  and so it follows that $\gamma_{-}(\mbfC)\geq 0$ for any $\mbfC\in\cC_{N,n}$.
\end{remark}

  That said, and conveniently set $\mu_j=\lambda_j+(n-1)$ for any $j\geq 1$, a general (real-valued) Jacobi field $v\in C^\infty_{loc}(\mbfC;\mbfV)$ is thus given by a linear combination of homogeneous Jacobi fields,
    \begin{align}
        v(r, \omega) = \sum_{j\geq 1} (v_j^+(r) + v_j^-(r))\varphi_j(\omega), \label{Equ_Pre_Decomp Jac into homogen Jac field}
    \end{align}
    where 
    \begin{align}
    \begin{split}
        v_j^+(r) & = c_j^+\cdot r^{\gamma_j^+}; \\
        v_j^-(r) & = \begin{cases}
            c_j^- \cdot r^{\gamma_j^-},       & \ \text{ if }\mu_j \neq -\frac{(n-2)^2}{4}; \\
            c_j^- \cdot r^{\gamma_j^-}\log r, & \ \text{ if }\mu_j = -\frac{(n-2)^2}{4}.
        \end{cases}
    \end{split} \label{Equ_Pre_Homogen Jac field}
    \end{align}
    for some $c_j^\pm\in \RR$ if $\mu_j\geq -(n-2)^2/4$, while instead $c_j^\pm\in \CC$, $c_j^-=\overline{c_j^+}$ when $\mu_j<-(n-2)^2/4$.

\subsection{An index-counting formula}\label{subs:IndexCount} Here is the main result of this section:

 \begin{theorem}\label{thm:Count}
    Let $(M, g)$ be an $N$-dimensional closed Riemannian manifold, where the metric $g$ belongs to $C^{k,\alpha}$ for some $k\geq 4$, 
    and let $\Sigma$ be a stationary integral $n$-varifold in $(M,g)$ with only strongly isolated singularities (as per Definition \ref{def:MSI} of $\MSI$). 
    Let $2\leq m\leq k-2$ and 
    \be\label{eq:RangeTau}
      \tau \in \left(\sup_{p\in \Sing(\Sigma)} \gamma_-(\mbfC_p), 1 \right) \,.
    \ee
    Then the Jacobi operator \[
      L_{\Sigma,g} : \hat{W}^{m,2}_\tau(\Sigma;\mbfV) \to W^{m-2,2}_{\tau-2}(\Sigma;\mbfV)
    \]
    is a Fredholm operator of index \[
      \widehat{\ind}_{\tau}(L_{\Sigma,g}) = -\sum_{p\in \Sing(\Sigma)} \rmI(\mbfC_p)\,.
    \]
  \end{theorem}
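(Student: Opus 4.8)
The plan is to reduce the computation of the Fredholm index of $L_{\Sigma,g}$ on the augmented weighted space $\hat W^{m,2}_\tau$ to a purely local computation at each singular point, using the classical relative index theory for elliptic operators on manifolds with conical ends (in the spirit of Lockhart--McOwen, Melrose, and Pacini's treatment of conifolds, all of which are available in the references cited in the excerpt). First I would establish that $L_{\Sigma,g}\colon W^{m,2}_\beta(\Sigma;\mbfV)\to W^{m-2,2}_{\beta-2}(\Sigma;\mbfV)$ is Fredholm precisely when $\beta\notin\Gamma(\mathbf C_p)+\{\text{integer shifts coming from homogeneity}\}$ for every $p$, i.e.\ when $\beta$ avoids the ``indicial roots'' of the model operators $L_{\mathbf C_p}$; this is standard once one checks that near each singular tip $\Sigma$ is a small $C^{k-1,\alpha}$ normal graph over its (unique, by Simon \cite{Sim83}) tangent cone $\mathbf C_p$, so that $L_{\Sigma,g}$ is an asymptotically conical perturbation of $L_{\mathbf C_p}$, with the perturbation lying in the appropriate decaying weighted class. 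For $\tau$ in the stated range \eqref{eq:RangeTau} the value $\tau$ lies strictly between $\gamma_-(\mathbf C_p)$ and $1$, hence it is a Fredholm weight and, crucially, it lies in the same ``gap'' of the indicial set as any weight in $(\max_p\gamma_-(\mathbf C_p),1)$ — so on $W^{m,2}_\tau$ (without augmentation) the index is independent of the precise $\tau$ in that interval.

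Next I would compute the index on the \emph{non-augmented} space $W^{m,2}_\tau$. The key observation is that for $\tau\in(0,1)$ one is just below the ``translation rate'' $\gamma=1$: by Remark~\ref{rem:IndexContrib}, $0\in\Gamma(\mathbf C_p)$ always, coming from the constant-coefficient (translation) Jacobi fields, and more generally the indicial roots in the interval $(0,1)$ (strictly) are exactly those $\gamma_j^+$ with $\lambda_j<0$, i.e.\ the genuine index contributions of the link $S_p$ — that is precisely the content of \eqref{eq:MagicThreshold}. The relative index formula then says: moving the weight $\tau$ across the ``wall'' at each indicial root changes the Fredholm index by (minus) the dimension of the corresponding space of homogeneous solutions on the cone. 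Comparing $W^{m,2}_\tau$ for $\tau\in(0,1)$ with the ``balanced'' weight $\tau\in(-(n-2)/2-\epsilon,0)$-side reference operator — which, being the natural $L^2$-type Schrödinger setting, has Fredholm index $0$ — one collects across each root in $(0,1)$ a contribution of $-\dim\{\text{eigenfunctions }\varphi_j\text{ on }S_p\text{ with }\lambda_j<0\}$, counted with multiplicity. But $\dim\bigoplus_{\lambda_j<0}(\text{span }\varphi_j)$ on $S_p$ is, by definition, the Morse index of the Jacobi operator of $\mathbf C_p\cap\mathbb S^{N-1}$. Hence
\[
\ind_\tau\big(L_{\Sigma,g}\colon W^{m,2}_\tau\to W^{m-2,2}_{\tau-2}\big) = -\sum_{p\in\Sing(\Sigma)}\big(\text{index of the Jacobi operator of }\mathbf C_p\cap\mathbb S^{N-1}\big).
\]
One has to be slightly careful that roots at $\gamma=0$ (multiplicity $N$, the translations) and roots equal to exactly $1$ are handled consistently — this is exactly why the conventions $\gamma_*=-(n-1)$, $\gamma^*=1$ and the strict inequalities in Remark~\ref{rem:IndexContrib} are introduced.

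Finally I would account for the augmentation: by \eqref{eq:Augment}, for $\tau>0$ we have $\hat W^{m,2}_\tau=W^{m,2}_\tau\oplus W_{\text{Tr}}$ with $\dim W_{\text{Tr}}=NQ$, and by Lemma~\ref{lem:JacobiTranslFunct} the operator $L_{\Sigma,g}$ maps every translation-like section $\phi$ into $W^{0,p}_{-\epsilon}\subset W^{m-2,2}_{\tau-2}$, so $L_{\Sigma,g}$ does extend to the augmented domain and the extension is still Fredholm with the same target. Enlarging the domain by the $NQ$-dimensional space $W_{\text{Tr}}$ increases the Fredholm index by at most $NQ$; and in fact by exactly $NQ$, since the translation-like sections are genuinely ``new'' — they are not already in $W^{m,2}_\tau$ (they do not decay at the tips) and, modulo $W^{m,2}_\tau$, they are linearly independent in the quotient, with images that land in the target but do not spuriously coincide modulo $L_{\Sigma,g}(W^{m,2}_\tau)$ (again using that the $\gamma=0$ rate is below $\tau$, so no genuine kernel element decaying like $r^{>\tau}$ is translation-like). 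Therefore
\[
\widehat\ind_\tau(L_{\Sigma,g}) = \ind_\tau\big(L_{\Sigma,g}|_{W^{m,2}_\tau}\big) + NQ = -\sum_{p}\big(\text{ind of Jacobi op.\ of }\mathbf C_p\cap\mathbb S^{N-1}\big) + N\cdot\#\Sing(\Sigma) = -\sum_{p\in\Sing(\Sigma)}\rmI(\mbfC_p),
\]
using Definition~\ref{def:EffMorsIndex} of the effective Morse index $\rmI(\mathbf C_p)=(\text{index of Jacobi op.\ of }\mathbf C_p\cap\mathbb S^{N-1})-N$.

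\medskip

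The main obstacle I expect is making the relative index bookkeeping genuinely rigorous in \emph{arbitrary codimension} and in the presence of \emph{unstable} (infinite Morse index) cones. In codimension one and for stable cones the indicial analysis is classical, but here: (i) the link $S_p$ may be disconnected, so one must be careful that the threshold $\gamma=1$ is never itself an indicial root (this is ensured by the genericity already baked into $\scG$, or can be arranged by perturbing $\tau$, since the excerpt only asks for $\tau$ in an open interval) — actually the statement only claims Fredholmness and a formula for the index, so one needs $\tau$ to avoid the \emph{discrete} indicial set, which is automatic for generic $\tau$ in the stated interval and the index is then locally constant; (ii) the ``$L^2$ reference operator has index $0$'' step must be justified without appealing to any self-adjointness on a compact manifold — the right statement is that on the balanced weight the cokernel is identified with the kernel on the dual weight via the pairing $(W^{m,2}_\beta)^*\simeq W^{-m,2}_{-\beta-n}$ recalled in the excerpt, and the relevant symmetry of the indicial roots of $L_{\mathbf C_p}$ about $\gamma=-(n-2)/2$ (visible in \eqref{eq:Roots}) forces the signed count of roots crossed to equal $0$ there; (iii) one must confirm that the perturbation of $L_{\mathbf C_p}$ by curvature-of-$M$ terms and by the graphicality of $\Sigma$ over $\mathbf C_p$ is subordinate enough (decays faster than the homogeneous principal part) that it does not shift the indicial roots — this uses the quantitative uniqueness/decay of tangent cones from Appendix~\ref{sec:QuantUniq} and the $C^{k-1,\alpha}$-graph description. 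Once these three points are in place, the formula follows by summing the local contributions, the interior of $\Sigma$ contributing $0$ because $L_{\Sigma,g}$ is there a genuinely elliptic operator of order $2$ on a compact core with well-understood (index-zero) behaviour.
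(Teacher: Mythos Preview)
Your overall strategy matches the paper's exactly: compute the Fredholm index on the non-augmented space $W^{m,2}_\tau$ via Lockhart--McOwen weight-crossing combined with a duality/self-adjointness argument, then add $NQ$ for the augmentation by $W_{\mathrm{Tr}}$. The augmentation step is correct and is precisely the paper's Lemma~\ref{lem:augment} (in fact that lemma shows the index jumps by \emph{exactly} the codimension $d=NQ$ automatically, so your extra justification about translation-like sections being ``genuinely new'' is unnecessary).

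However, your wall-crossing bookkeeping has a genuine gap. You propose to cross from a ``balanced'' reference weight (where you claim the index is $0$) to $\tau$, picking up ``each root in $(0,1)$''. But the indicial roots $\gamma_j^+$ corresponding to $\lambda_j<0$ do \emph{not} all lie in $(0,1)$: when real they lie in $\bigl(-(n-2)/2,\,1\bigr)$, and when complex their real part is exactly $-(n-2)/2$. For instance, in the Clifford football example of Section~\ref{subs:Clifford}, $\Re(\gamma_1^\pm)=-1/2$. So crossing from any single reference to $\tau$ does not obviously collect one contribution per negative eigenvalue, and the complex/coinciding-root cases are not handled. Your obstacle~(ii) correctly identifies that duality is the fix, but the way you invoke it is still vague.

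The paper executes this cleanly: rather than locating a weight where the index vanishes, it uses formal self-adjointness together with the duality $(W^{m,2}_\beta)^*\simeq W^{-m,2}_{-\beta-n}$ to show $\ind_{2-\tau-n}(L_{\Sigma,g})=-\ind_\tau(L_{\Sigma,g})$. It then applies the weight-crossing formula to the \emph{difference} $\ind_\tau-\ind_{2-\tau-n}$ over the full interval $(2-\tau-n,\tau)$, which is symmetric about $-(n-2)/2$ and therefore captures \emph{both} roots $\gamma_j^\pm$ for each $\lambda_j<0$ (including the complex and double-root cases), yielding $-2\sum_p(\rmI(\mbfC_p)+N)$. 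Dividing by $2$ gives the non-augmented index $-\sum_p(\rmI(\mbfC_p)+N)=-NQ-\sum_p\rmI(\mbfC_p)$, and the augmentation adds back $NQ$.
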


Towards the proof of this result, let us first handle the effect of the aforementioned ``augmentation''. We will employ the following functional-analytic statement, whose proof is straightforward:

\begin{lemma}\label{lem:augment}
Let $X,Y$ be Banach spaces and let $T:X\to Y$ be a bounded linear operator. Assume $X_0\subset X$ be a closed subspace of finite codimension equal to $d\in\N$ and let $T_0$ denote the restriction of $T$ to $X_0$. Then $T$ is Fredholm if and only if $T_0$ is, and the corresponding indices are related by the equation
\[
\ind(T)=\ind(T_0)+d.
\]
\end{lemma}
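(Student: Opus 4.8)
The plan is to reduce the statement to two standard stability properties of Fredholm operators: first, that a composition of Fredholm operators is Fredholm with index equal to the sum of the indices; and second, that a finite-rank (in particular, compact) perturbation of a Fredholm operator is again Fredholm, with the same index. Both are classical and may be quoted without proof. The two ``model'' maps I would use are the inclusion $\iota\colon X_0\hookrightarrow X$, which is Fredholm with $\Ker\iota=\{0\}$ and $\im\iota=X_0$ of codimension $d$, hence $\ind(\iota)=-d$; and a bounded projection $P\colon X\to X$ onto $X_0$ along a (necessarily closed, finite-dimensional) algebraic complement $W$ with $\dim W=d$. Continuity of $P$ is automatic here because $X/X_0$ is a finite-dimensional Banach space and the restriction to $W$ of the quotient map $X\to X/X_0$ is a linear isomorphism of finite-dimensional spaces; writing $\pi\colon X\to X_0$ for the corestriction of $P$, one obtains a surjection with $\Ker\pi=W$, so $\pi$ is Fredholm with $\ind(\pi)=d$.

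For the first implication, I would suppose $T$ is Fredholm. Since $T_0=T\circ\iota$ and both $T$ and $\iota$ are Fredholm, the composition rule immediately gives that $T_0$ is Fredholm with
\[
\ind(T_0)=\ind(T)+\ind(\iota)=\ind(T)-d,
\]
which is the asserted relation, after rearrangement.

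For the converse, suppose $T_0$ is Fredholm. Then $T_0\circ\pi\colon X\to Y$ is a composition of Fredholm operators, hence Fredholm with $\ind(T_0\circ\pi)=\ind(T_0)+d$. Now observe that for every $x\in X$ we have $Px\in X_0$, so $(T_0\circ\pi)(x)=T_0(Px)=T(Px)$, and therefore
\[
T(x)-(T_0\circ\pi)(x)=T\big((I-P)x\big),
\]
so that $T-T_0\circ\pi=T\circ(I-P)$ has range contained in $T(W)$, a subspace of dimension at most $d$. Thus $T$ is a finite-rank perturbation of the Fredholm operator $T_0\circ\pi$, hence Fredholm with $\ind(T)=\ind(T_0\circ\pi)=\ind(T_0)+d$. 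Combining the two implications yields the equivalence, and the index identity $\ind(T)=\ind(T_0)+d$ then holds whenever either side is defined.

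I do not expect a genuine obstacle here; the only points that require a moment's care are the continuity of the projection $P$ (handled as above, using the finite-dimensionality of $X/X_0$), and the remark that finite-rank perturbations fall under the classical compact-perturbation stability theorem, so that the index is genuinely unchanged and not merely finite.
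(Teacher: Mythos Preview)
Your proof is correct. The paper itself does not supply a proof of this lemma; it simply declares the statement ``straightforward'' and moves on to apply it. Your argument via the composition rule for Fredholm indices (using the inclusion $\iota$ and a bounded projection $\pi$) together with finite-rank perturbation stability is exactly the kind of routine verification the authors had in mind, and there is nothing to add or correct.
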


\begin{proof}[Proof of Theorem \ref{thm:Count}]
We claim that the operator
      $L_{\Sigma,g} : W^{m,2}_\tau(\Sigma;\mbfV) \to W^{m-2,2}_{\tau-2}(\Sigma;\mbfV)$ is Fredholm and that its index satisfies
\be\label{eq:IndCount1}
\ind_{\tau}(L_{\Sigma,g}) =-\sum_{p\in \Sing(\Sigma)} (\rmI(\mbfC_p)+N)=-NQ-\sum_{p\in \Sing(\Sigma)} \rmI(\mbfC_p)
\ee
where, we recall, $Q$ is the notation we employ to denote the number of singular points of $\Sigma$. If that is the case, then the desired conclusion comes at once from Lemma \ref{lem:augment} since --- like we discussed in Section \ref{subs:Functional} --- as soon as $\tau>0$ (which is indeed the case, by virtue of the last assertion in Remark \ref{rem:IndexContrib}) $W^{m,p}_\tau(\Sigma;\mbfV)$ is a closed subspace of $\hat{W}^{m,p}_\tau(\Sigma;\mbfV)$ of codimension equal to $NQ$.

That said, in order to prove \eqref{eq:IndCount1} we will appeal to Lockhart-McOwen theory (specifically to Theorem 6.2 and Section 7 in \cite{LocMcO85}; see also Section 9 of \cite{Pac13}, to be specified to the ``conically singular'' (CS) case only). For this reason we shall consider the set $\Gamma(\mbfC_i)$ (as per \eqref{eq:AsymptSpectr})
and further define
\be\label{eq:IndicialProduct}
\Gamma(\Sigma):=\left\{\boldsymbol{\gamma}=(\gamma_1,\ldots,\gamma_{Q})\in\R^{Q} \st \gamma_i\in\Gamma(\mbfC_i) \ \text{for some} \ i\in\left\{1,\ldots,Q\right\}\right\}.
\ee
Let then $L_{\Sigma,g}^*$ denote the adjoint of $L_{\Sigma,g} : W^{m,2}_\tau(\Sigma;\mbfV) \to W^{m-2,2}_{\tau-2}(\Sigma;\mbfV)$, so (by the duality result recalled in Section \ref{subs:Functional})
we will then have
\be\label{eq:Adjoint}
L_{\Sigma,g}^*: W^{2-m,2}_{2-\tau-n}(\Sigma;\mbfV)\to W^{-m,2}_{-\tau-n}(\Sigma;\mbfV);
\ee
let $\ind_{\tau}(L^{*}_{\Sigma,g})$ denote the Fredholm index of such an adjoint.
Since, however, by elliptic regularity the  Fredholm index of $L_{\Sigma,g}$ is independent of the choice of $m$ within any connected component of $\R^{Q}\setminus \Gamma(\Sigma)$, by self-adjointness 
we have that
$
\ind_{\tau}(L_{\Sigma,g}^*)=\ind_{2-\tau-n}(L_{\Sigma,g}).
$
On the other hand, we can rely on the usual ``orthogonality relations'' for the adjoint, which (in the context under consideration) give
$
\ind_{\tau}(L_{\Sigma,g}^*)=-\ind_{\tau}(L_{\Sigma,g}).
$
Hence combining these two equations we finally get
\be\label{eq:AdjFinal}
\ind_{2-\tau-n}(L_{\Sigma,g})=-\ind_{\tau}(L_{\Sigma,g}).
\ee
The next step is then to compute the \emph{difference}
$
\ind_{\tau}(L_{\Sigma,g})-\ind_{2-\tau-n}(L_{\Sigma,g})
$
at least for suitably chosen values of $\tau$; it is here that the indicial roots (and their multiplicities) come into play. 

In order to relate this number to the Morse index of the links, keeping in mind Remark \ref{rem:IndexContrib} we note that picking 
\[
\tau\in \left(\sup_{p\in \Sing(\Sigma)} \gamma_-(\mbfC_p), 1 \right)
\]
there holds in fact $-n+1<2-\tau-n<\tau$ and the interval $(2-\tau-n,\tau)$ intercepts all (and only) those indicial roots associated to negative eigenvalues of the links, namely to index contributions. 
That said, we further note that each such eigenvalue ``generates'' two indicial roots (possibly coinciding, i.\,e. one root with multiplicity two). 

Therefore, for any such fixed value of $\tau$ the weight-crossing formula in \cite{LocMcO85} gives
\[
\ind_{\tau}(L_{\Sigma,g})-\ind_{2-\tau-n}(L_{\Sigma,g})=-2\sum_{j=1\ldots, Q}(\rmI(\mbfC_j)+N)
\]
and so, by \eqref{eq:AdjFinal} we then get the claimed conclusion.
\end{proof}

\subsection{The case of the Clifford football in the four-sphere}\label{subs:Clifford}

Let us now revisit the previous discussion by specializing to the case of our primary interest, that is the application of this result to the analysis of the Clifford football in $\mathbb{S}^4$.

\begin{example}
In this case we have $N=4$, $n=3$ and $Q=2$, each of the two singularities being modelled on the ``Clifford cone'', the cone over the Clifford torus in $\mathbb{S}^3$. The eigenvalues of the Jacobi operator of the link are computed --- by separation of variables --- to be
\[
\lambda^{p,q}=2[(p^2+q^2)-2], \ \ \ (p,q\in\N)
\]
so that, in particular (properly accounting for the multiplicity of the associated eigenspaces)
\[
\lambda_1=-4, \ \lambda_2=\lambda_3=\lambda_4=\lambda_5=-2, \ \lambda_6=\lambda_7=\lambda_8=\lambda_9=0
\]
and $\lambda_j>0$ for any $j\geq 10$.
We then get the corresponding values of $\gamma$:
\begin{multline*}
\gamma^{\pm}_1=-\frac{1}{2}\pm\frac{\sqrt{-7}}{2}, \
\gamma^{+}_2=\gamma^{+}_3=\gamma^{+}_4=\gamma^{+}_5=0, \
\gamma^{-}_2=\gamma^{-}_3=\gamma^{-}_4=\gamma^{-}_5=-1 \\
\gamma^{+}_6=\gamma^{+}_7=\gamma^{+}_8=\gamma^{+}_9=1, \
\gamma^{-}_6=\gamma^{-}_7=\gamma^{-}_8=\gamma^{-}_9=-2
\end{multline*}
and for $j\geq 10$ we will have instead $\gamma^+_j>1$ and $\gamma^-_j<-2$.

Hence
$
\gamma_{-}(\mbfC_1)=\gamma_{-}(\mbfC_2)=0$ and so Theorem \ref{thm:Count} is a statement about the Fredholm index of the Clifford football when each weight $\beta_j$ (equivalently: $\tau$ in that setting) lies in the open interval $(0,1)$. Next, following the previous argument, the counting of the indicial roots refers to the interval $(-2,1)$ and gives for the index of the Clifford football the value -10  for all $\tau\in (0,1)$; note that for $\tau\in (1,2)$ we would get instead the value $-18$.
\end{example}

\section{Meagerness of singularities with positive normalized Morse index}\label{sec:GenReg}

In this section we will complement the previous, general index-counting formula with the following assertion about the generic sign of the Fredholm index of the Jacobi operator. We let, throughout, $n\geq 2, N>n$ and assume $k\geq k_0$ as well as $\alpha\in (0,1)$.

    \begin{theorem}\label{thm:generic_perturbation}
        Let $(M, g)$ be an $N$-dimensional closed Riemannian manifold with a generic choice of metric $g$, understood either in $C^{k,\alpha}$ or in $C^\infty$, and let $\Sigma$ be a stationary integral $n$-varifold with only strongly isolated singularities. Let $\tau$ satisfy  equation \eqref{eq:RangeTau}.
        Then
        \[
            \widehat{\ind}_{\tau}(L_{\Sigma,g}) \geq 0 \,.
        \]
    \end{theorem}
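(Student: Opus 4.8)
The plan is to run a Sard-type argument in the style of White, but adapted to the wild ``pseudo-canonical neighborhood'' structure of the space of metric-$\MSI$ pairs rather than to a Banach manifold. The starting point is the index-counting formula of Theorem \ref{thm:Count}, which tells us that $\widehat{\ind}_\tau(L_{\Sigma,g})$ depends only on the links $\mbfC_p$, and in particular is the same for all $\MSI$ whose singularities have links of the same type. Since $\widehat{\ind}_\tau(L_{\Sigma,g}) = \dim\ker - \dim\mathrm{coker}$, proving $\widehat{\ind}_\tau \ge 0$ for a generic metric is equivalent to showing that, generically, the cokernel of $L_{\Sigma,g}$ on $\hat W^{m,2}_\tau \to W^{m-2,2}_{\tau-2}$ vanishes — i.e. $L_{\Sigma,g}$ is surjective — for \emph{every} such $\Sigma$. (If the cokernel vanished, then $\widehat{\ind}_\tau = \dim\ker \ge 0$; conversely by self-duality one only needs surjectivity.) So the target becomes: for a generic $g$, the Jacobi operator of every $g$-stationary $\MSI$ is surjective on the relevant weighted spaces.

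\textbf{Key steps.} First I would set up the ``universal'' parametrized problem: over a suitably chosen pseudo-canonical neighborhood (as produced by the covering theorem, Theorem \ref{Thm_Countable Decomp} of Appendix \ref{Sec:Count_Decomp}), consider the space of pairs $(g,\Sigma)$ with $\Sigma$ a $g$-stationary $\MSI$ graphical over a fixed reference $\MSI$ $\Sigma_0$, and study the natural projection $\pi$ onto the metric factor. Using the transfer-of-sections machinery (Proposition \ref{Prop_MSE_Section Transp}) and the graphical-parametrization estimates of Appendix \ref{app:GraphicalPar}, one identifies the ``vertical differential'' of the minimal-surface operator with $L_{\Sigma,g}$ itself acting on the augmented weighted space $\hat W^{m,2}_\tau$ — the augmentation by translation-like sections $W_{\mathrm{Tr}}$ is exactly what accounts for the freedom of moving the conical tips, and Lemma \ref{lem:JacobiTranslFunct} guarantees $L_{\Sigma,g}$ maps this enlarged space into the right target. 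Second, I would establish a \emph{local Sard lemma}: at a pair $(g,\Sigma)$ where $L_{\Sigma,g}$ fails to be surjective, one shows the horizontal derivative (variation of the ambient metric) together with the vertical one spans the target — this is where one perturbs $g$ and computes, as in the proof of Lemma \ref{lem:JacobiTranslFunct}, the first variation of mean curvature under a metric deformation, checking that arbitrary cokernel elements can be hit; consequently the set of ``bad'' metrics in that neighborhood is meagre (nowhere dense, by a compactness/Fredholm-stability argument using the three-circle $L^2$-decay estimates of Appendix \ref{sec:3circles} to prevent cokernel elements from escaping to the singular tips). Third, I would pass from local to global: the covering theorem writes the whole space of metric-$\MSI$ pairs as a \emph{countable} union of such pseudo-neighborhoods, so the bad set of metrics is a countable union of nowhere dense sets, hence meagre; its complement is the desired generic set $\scG_0 \subset \scG$ (intersected with White's generic set from Remark \ref{rem:BaireWhite}), and for $g$ in it every $g$-stationary $\MSI$ has surjective Jacobi operator, whence $\widehat{\ind}_\tau(L_{\Sigma,g}) \ge 0$. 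Finally, the $C^\infty$ case follows from the $C^{k,\alpha}$ case by a standard approximation argument (a countable intersection over increasing $k$, using that $C^\infty$ metrics are dense and the conditions are open).

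\textbf{Main obstacle.} The hardest part will be the local Sard lemma in the presence of singularities and in the wild pseudo-neighborhood setting. Unlike White's situation, the domain is not a Banach manifold: the pseudo-canonical neighborhoods parametrizing $\MSI$ involve a genuinely non-smooth dependence on the location and scale of the conical tips (the ``cascades of isolated conical singularities'' mentioned in the introduction), so one cannot simply invoke Smale's infinite-dimensional Sard theorem. One must instead unwind the implicit-function-theorem argument by hand, carefully tracking how the graphical sections, the transferred Jacobi fields, and the error terms behave under simultaneous variation of $g$ and of the tip data, and crucially one must show that a nonzero cokernel element of $L_{\Sigma,g}$ — an element of $W^{2-m,2}_{2-\tau-n}(\Sigma;\mbfV)$ in the weak/distributional sense — can always be detected by an ambient metric perturbation supported away from the singularities. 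This last point relies on a unique-continuation-type property for Jacobi fields on $\MSI$ together with the $L^2$-non-concentration estimates near the tips (Corollary \ref{Cor_App_Uniform Noncon for Jac fields}), which is precisely why the three-circle inequalities of Appendix \ref{sec:3circles} and the quantitative tangent-cone uniqueness of Appendix \ref{sec:QuantUniq} are needed as inputs; assembling all of these into a clean compactness argument that is uniform over each countable piece of the covering is the technical heart of the matter.
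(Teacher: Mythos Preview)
Your overall architecture is right: countable cover by canonical pseudo-neighborhoods (Theorem \ref{Thm_Countable Decomp}), a local Sard-type statement on each piece, countable intersection, and the passage from $C^{k,\alpha}$ to $C^\infty$ via Lemma \ref{lem:MetricSpaces}. You also correctly identify the main obstacle (no Banach manifold structure) and the relevant analytic inputs (transfer of sections, three-circle estimates, non-concentration).

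However, your framing of the \emph{target} of the local Sard step is wrong, and this matters. You write that the goal is to show ``generically the cokernel of $L_{\Sigma,g}$ vanishes, i.e.\ $L_{\Sigma,g}$ is surjective, for every such $\Sigma$''. But by Theorem \ref{thm:Count}, $\widehat{\ind}_\tau(L_{\Sigma,g}) = -\sum_p \rmI(\mbfC_p)$ is determined entirely by the links, not by the ambient metric. If some $\rmI(\mbfC_p)>0$ (always the case in codimension one, cf.\ Remark \ref{rem:BasicIndexEst}), then $\widehat{\ind}_\tau<0$ forces $\dim\widehat\Coker>\dim\widehat\Ker\geq 0$ for \emph{any} metric admitting such a $\Sigma$; no perturbation of $g$ can make $L_{\Sigma,g}$ surjective. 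The correct conclusion of the local Sard step is rather that, for generic $g$, \emph{no} $\MSI$ with $\widehat{\ind}_\tau<0$ exists in the given canonical neighborhood.

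Concretely, the paper's mechanism (Theorem \ref{Thm_Local Sard Thm}) is a finite-dimensional dimension count rather than a surjectivity statement. One fixes a $J$-dimensional family $\cF$ of conformal perturbations whose normal gradients span $\widehat\Coker_\tau(L_{\Sigma,g})$ (this is where your ``horizontal derivative hits cokernel elements'' intuition enters, as the Claim inside the proof of Lemma \ref{Lem_Loc Parametriz of scL^k}). The key new lemma, which your sketch lacks, is Lemma \ref{Lem_Loc Parametriz of scL^k}: the slice $\cL^{k,\alpha}_{top}\cap\Pi^{-1}(\cF\cdot\bar g)$ is \emph{bi-Lipschitz embedded} into the $I$-dimensional space $\widehat\Ker_\tau(L_{\bar\Sigma,\bar g})$ via the map $(g',\Sigma')\mapsto \pi^{L^2}_{\bar\Sigma,\bar g}(\mbfG^{\Sigma'}_{\bar\Sigma,\bar g}\cdot\zeta)$. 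Since $I<J$, the projection of this compact $I$-dimensional set to the $J$-dimensional $\cF$ has nowhere-dense image. One then runs an induction on $I=\dim\widehat\Ker$ (using the upper semicontinuity from Corollary \ref{Cor_Upper Semi-conti of dim Ker^+ L}, which in turn needs the compactness of tame Jacobi fields, Lemma \ref{Lem_Cptness for Slower Growth Jac Fields}) to peel off lower-nullity strata. The limit-equation Lemma \ref{Lem_Tangent vectors to scL^k}, identifying the ``tangent'' to $\cL^{k,\alpha}$ along a converging sequence of pairs as a tame solution of $L_{\Sigma,g}\hat u_\infty=\tfrac n2\nabla^\perp\hat f_\infty$, is what makes the bi-Lipschitz estimate go through and is the genuine replacement for the implicit function theorem. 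Your proposal gestures at ``unwinding IFT by hand'' but does not contain this parametrization-by-kernel idea or the induction on $\dim\widehat\Ker$, which are the heart of the argument.
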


\begin{remark}
As apparent from the preceding statement, there are in fact two versions of this theorem, depending on the (regularity of the) space of metric we work with. For the vast majority of this section we will in fact work with the space of $C^{k,\alpha}$ metrics, and --- at the very end --- we will present the argument that allows to derive the smooth version of the theorem from that in finite regularity.
\end{remark}

\subsection{Compact subclasses of regular minimal cones}\label{sub:Compact}

\begin{definition}\label{def:CcL}
Recalling the definition of $\mathcal{C}_{N,n}$ given at the beginning of Section \ref{sec:Index}, we shall set for any $\Lambda>0$
\[
\mathcal{C}_{N,n}(\Lambda):=\{\mbfC\in \cC_{N,n}: \inf_{\mbfC \cap \SSp^{N-1}} \mathfrak{r}_\mbfC(x) \geq \Lambda^{-1}\},
\]
where, just by specializing Definition \ref{def:RegularScale},
    \[
    \mathfrak{r}_\mathbf{C}(x) := \sup\left\{r > 0 : \mbfC\cap \BB(x,r)=\graph_{T_x \mbfC}(u), \ r^{-1}|u|+|\mathring{\nabla} u|+r|\mathring{\nabla}^2 u|\leq 1\right\}
    \]
for $u:\textrm{dom}(u)\subset T_x\mbfC\to T^{\perp}_x\mbfC$, of class $C^2$.
\end{definition}

\begin{remark}\label{rem:SufficesBundRegScale}
In view of Remark \ref{rem:RegCurvBound} we note that a positive, uniform lower bound on the regularity scale of the link $\mbfC \cap \SSp^{N-1}$ implies an upper bound on its $(n-1)$-dimensional area, hence on the density of the cone $\theta_\mbfC(\orig)$. 
\end{remark}

    \begin{lemma} \label{Lem_cC_(N,n)(Lambda) bF Cpt}
        For every $\Lambda \geq 1$, $\cC_{N,n}(\Lambda)$ is compact in $\mbfF$-metric and in $C^\infty$ topology along the cross-section.
    \end{lemma}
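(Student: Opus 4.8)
\textbf{Proof strategy for Lemma \ref{Lem_cC_(N,n)(Lambda) bF Cpt}.}

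The plan is to show sequential compactness by extracting convergent subsequences of links and then passing to the cones. First I would take an arbitrary sequence $\{\mbfC_j\}_{j\geq 1}\subset \cC_{N,n}(\Lambda)$ and consider the associated links $S_j := \mbfC_j\cap\SSp^{N-1}$, which are smooth, closed, embedded minimal $(n-1)$-submanifolds of the fixed round sphere $\SSp^{N-1}$ satisfying, by hypothesis, the uniform lower bound $\inf_{S_j}\mfr_{\mbfC_j}\geq\Lambda^{-1}$ on the regularity scale. As recalled in Remark \ref{rem:RegCurvBound} (see also Remark \ref{rem:SufficesBundRegScale}), a positive uniform lower bound on the regularity scale yields uniform bounds on the second fundamental form \emph{and} on the $(n-1)$-dimensional area of the $S_j$, and these imply local graphical $C^\infty$ estimates of all orders (elliptic regularity, bootstrapping the minimal surface system with uniformly controlled geometry); since $\SSp^{N-1}$ is compact, standard Arzel\`a--Ascoli / compactness for submanifolds with bounded geometry gives a subsequence (not relabeled) converging in $C^\infty$, with multiplicity one, to a limit $S_\infty$ which is again a smooth, closed, embedded minimal $(n-1)$-submanifold of $\SSp^{N-1}$. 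The crucial point that multiplicity-one is preserved — which in the absence of the regularity-scale bound could fail, as Remark \ref{rem:FailMultOne} warns — is exactly what the definition of $\cC_{N,n}(\Lambda)$ is engineered to guarantee.

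Next I would verify that $S_\infty$ is \emph{not} an $(n-1)$-equator, so that the cone $\mbfC_\infty := \orig\conetimes S_\infty$ genuinely lies in $\cC_{N,n}$. The point is that the regularity-scale bound is scale-invariant and closed under $C^\infty$ convergence: each $S_j$ is, at every point and scale $\Lambda^{-1}$, a Lipschitz graph over its tangent plane with the quantitative bounds in Definition \ref{def:RegularScale}/Definition \ref{def:CcL}, and these pass to the limit, so $\inf_{S_\infty}\mfr_{\mbfC_\infty}\geq\Lambda^{-1}$, i.e. $\mbfC_\infty\in\cC_{N,n}(\Lambda)$; in particular $S_\infty$ has bounded geometry and cannot degenerate. (An equator would of course satisfy a regularity-scale bound too, so one must separately rule it out — but the links $S_j$ are by assumption non-equatorial, and I would argue that $C^\infty$ convergence of minimal submanifolds in $\SSp^{N-1}$ cannot produce an equator as a limit of non-equators without violating the uniform lower area bound, since the equator realizes the strict minimum of area among closed minimal $(n-1)$-submanifolds only in low dimensions — more robustly, one notes that being an $(n-1)$-equator is equivalent to the cone being an $n$-plane, which is an isolated point in the $C^\infty$ topology on $\cC_{N,n}\cup\{n\text{-planes}\}$ by the Allard-type regularity/rigidity, so a sequence staying inside $\cC_{N,n}$ cannot limit onto it.) Granting this, $\mbfC_\infty\in\cC_{N,n}(\Lambda)$.

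Finally I would translate convergence of links into the two claimed modes of convergence of cones. The $C^\infty$ convergence $S_j\to S_\infty$ along the cross-section is immediate from the previous step, and it upgrades, via the conical parametrization \eqref{Equ_Pre_Parametrize minimal hypercone} $(r,\omega)\mapsto r\omega$, to $C^\infty_{loc}$ convergence of $\mbfC_j\to\mbfC_\infty$ on $\RR^N\setminus\{\orig\}$ uniformly on compact subsets of $(0,\infty)\times S$. For convergence in $\mbfF$-metric one uses that $\mbfF$-distance of varifolds is controlled by weak-$*$ convergence of the associated Radon measures together with a uniform mass bound on compact sets; the uniform density bound $\theta_{\mbfC_j}(\orig)\leq C(\Lambda)$ from Remark \ref{rem:SufficesBundRegScale} and monotonicity give uniform mass bounds $\|\mbfC_j\|(\BB(R))\leq C(\Lambda)R^n$, and the $C^\infty_{loc}$ convergence away from the origin forces $\|\mbfC_j\|\rightharpoonup\|\mbfC_\infty\|$ as Radon measures on all of $\RR^N$ (no mass can escape to the vertex because of the density bound and upper semicontinuity of density), whence $\mbfF(\mbfC_j,\mbfC_\infty)\to 0$. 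The main obstacle — and the step deserving the most care — is precisely the preservation of multiplicity one and the exclusion of equatorial (flat) limits: this is where one genuinely needs the regularity-scale hypothesis rather than mere bounds on $\|\II\|$ and area, and it is worth isolating as a lemma (or invoking the graphical compactness result of \cite{Sha17} cited in Remark \ref{rem:RegCurvBound}) to keep the argument clean.
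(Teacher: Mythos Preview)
Your proposal is correct and follows essentially the same approach as the paper: extract a $C^\infty$-convergent subsequence of links using the regularity-scale bound (via Remark \ref{rem:RegCurvBound}), rule out an equatorial limit, and verify that the limit cone lies in $\cC_{N,n}(\Lambda)$. The one place where you meander is the exclusion of the equator --- your initial area-based attempt does not work (as you yourself note), but your second argument via Allard regularity is exactly what the paper does: if $S_\infty$ were equatorial then $\mbfC_\infty$ would be an $n$-plane, and Allard's theorem would force $\mbfC_j$ to be smooth at the origin (hence a plane) for large $j$, contradicting $\mbfC_j\in\cC_{N,n}$.
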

    \begin{proof}
        For every $\mbfC \in \mathcal{C}_{N, n}(\Lambda)$, its cross-section $\Sigma := \mbfC \cap \SSp^{N-1}$ is a minimal submanifold of dimension $n - 1$, which (by definition) satisfies a positive, uniform lower bound on the regularity scale.
    
        Therefore, in view of Remark \ref{rem:RegCurvBound}, every sequence $\{\mbfC_i\}_{i\geq 1} \subset \mathcal{C}_{N, n}(\Lambda)$ has a subsequence, still denoted by $\{\mbfC_i\}$, such that $\Sigma_i := \mbfC_i \cap \SSp^{N-1}$ converges to a minimal submanifold $\Sigma_\infty$ of $\SSp^{N-1}$ in $C^\infty$. Here, we stress once again that the convergence occurs with multiplicity one.
        
        In particular, $\mbfC_i \to \mathbf{C}_\infty := 0 \conetimes \Sigma_\infty$ (the minimal cone over $\Sigma_\infty$) under the $\mbfF$-metric. Note that $\Sigma_\infty$ cannot be a standard $\mathbb{S}^n$, because if it were, $\mathbf{C}_\infty$ would be an $n$-dimensional subspace; thus, by Allard's regularity theorem \cite{All72}, this would imply that $\mathbf{C}_i$ is a smooth cone (for any large enough $i$) and therefore also an $n$-dimensional subspace, which contradicts the assumption that $\mathbf{C}_i \in \mathcal{C}_{N, n}$.
        Lastly, as a direct consequence of the definition of regularity scale, one can verify that $0 \conetimes \Sigma_\infty \in \mathcal{C}_{N, n}(\Lambda)$.
    \end{proof}

 \begin{remark}\label{rem:SpectrumConv}
    If $\{\mbfC_i\}_{i\geq 1}$ is a family of regular cones such that 
    \[
        \mbfF(|\mbfC_i|, |\mbfC_\infty|) \to 0
    \] 
    (with multiplicity one)
    then the asymptotic spectrum also converges in the following sense: for every $j\geq 1$, as $i\to \infty$,
    \[
        \gamma_j^\pm(\mbfC_i) \to \gamma^\pm_j(\mbfC_\infty)\,.
    \]
    So, this conclusion applies in particular whenever we are within the range of applicability of the preceding lemma.
\end{remark}

As a simple application of this compactness result we prove the following inequality, which we will employ in the sequel of this paper.

\begin{lemma}\label{lem:L2perp}
Given $\Lambda>0$ there exists a constant $C=C(\Lambda)$ such that the following holds: for any cone $\mbfC\in\cC_{N,n}(\Lambda)$ and for any $\mbfx\in\R^N$, set $\mbfA(1,2):=\mbfC\cap\AAa(1,2)$, there holds
\be\label{eq:L2perp}
|\mbfx|\leq C(\Lambda)\|\mbfx^{\perp}\|_{L^2(\mbfA(1,2))}.
\ee
\end{lemma}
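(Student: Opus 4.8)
The plan is to argue by contradiction using the compactness of $\cC_{N,n}(\Lambda)$ (Lemma \ref{Lem_cC_(N,n)(Lambda) bF Cpt}) together with the observation that the only way $\mbfx^\perp$ can vanish identically on $\mbfA(1,2)$ is for $\mbfx^\perp\equiv 0$ on the whole cone $\mbfC$, which in turn forces $\mbfx$ to be \emph{tangent} to $\mbfC$ at every point — an impossibility unless $\mbfx=\orig$, since a non-trivial regular minimal cone is not contained in any hyperplane (let alone a line). First I would note that, by homogeneity in $\mbfx$, it suffices to prove the bound with $|\mbfx|=1$ replaced by $|\mbfx|\le 1$ and with the left-hand side equal to $1$; concretely, assume for contradiction that there exist sequences $\mbfC_i\in\cC_{N,n}(\Lambda)$ and unit vectors $\mbfx_i\in\R^N$ with $\|\mbfx_i^{\perp_{\mbfC_i}}\|_{L^2(\mbfA_i(1,2))}\to 0$, where $\mbfA_i(1,2):=\mbfC_i\cap\AAa(1,2)$.

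Next I would pass to a subsequence so that, by Lemma \ref{Lem_cC_(N,n)(Lambda) bF Cpt}, $\mbfC_i\to\mbfC_\infty\in\cC_{N,n}(\Lambda)$ in $\mbfF$-metric and with $C^\infty$ convergence of the cross-sections $S_i=\mbfC_i\cap\SSp^{N-1}$ to $S_\infty=\mbfC_\infty\cap\SSp^{N-1}$ with multiplicity one; and simultaneously $\mbfx_i\to\mbfx_\infty$ with $|\mbfx_\infty|=1$. Because the convergence of the cones is smooth and with multiplicity one on the compact region $\mbfA(1,2)$ (in particular the normal projections $\Pi^\perp_{\mbfC_i}$ converge uniformly to $\Pi^\perp_{\mbfC_\infty}$ along with the induced area measures), the functional $\mbfx\mapsto\|\mbfx^{\perp_{\mbfC}}\|_{L^2(\mbfC\cap\AAa(1,2))}$ is continuous under this convergence, so we obtain $\|\mbfx_\infty^{\perp_{\mbfC_\infty}}\|_{L^2(\mbfA_\infty(1,2))}=0$. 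Hence $\mbfx_\infty$ is everywhere tangent to $\mbfC_\infty$ along $\mbfA_\infty(1,2)$, and by the scaling symmetry of the cone (the tangent plane at $r\omega$ equals the tangent plane at $\omega$) this means $\mbfx_\infty\in T_\omega\mbfC_\infty$ for every $\omega\in S_\infty$.

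The final step, and the only point requiring a genuine (if standard) geometric argument, is to rule this out: a fixed non-zero vector $\mbfx_\infty$ lying in $T_\omega\mbfC_\infty$ for all $\omega$ would generate a global parallel (constant) tangent vector field on the smooth minimal submanifold $\Reg(\mbfC_\infty)$, which by the splitting theorem for minimal cones — or more elementarily by noting that $\mbfC_\infty$ would then be invariant under translations in the direction $\mbfx_\infty$, hence split off a line as $\mbfC_\infty=\mbfC'\times\R\mbfx_\infty$ with $\mbfC'$ a minimal cone of dimension $n-1$ in $\mbfx_\infty^\perp\cong\R^{N-1}$ — is incompatible with $\mbfC_\infty$ being a cone with vertex \emph{only} at the origin (a split cone $\mbfC'\times\R$ is singular along the whole line $\R\mbfx_\infty$ unless $\mbfC'$ is itself smooth, i.e. a subspace, in which case $\mbfC_\infty$ is an $n$-plane, contradicting $\mbfC_\infty\in\cC_{N,n}$). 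This contradiction with $|\mbfx_\infty|=1$ completes the proof. I expect the main (modest) obstacle to be making precise the continuity of the $L^2$-norm functional under $\mbfF$-convergence with smooth cross-sectional convergence — but this is exactly the multiplicity-one smooth convergence recorded in Lemma \ref{Lem_cC_(N,n)(Lambda) bF Cpt} and Remark \ref{rem:RegCurvBound}, so it reduces to uniform convergence of the graphing functions and of the associated Jacobians on the compact annular region $\AAa(1,2)$.
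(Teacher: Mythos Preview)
Your proposal is correct and follows essentially the same route as the paper: contradiction, compactness of $\cC_{N,n}(\Lambda)$ via Lemma~\ref{Lem_cC_(N,n)(Lambda) bF Cpt}, passage to the limit to obtain $\mbfx_\infty^{\perp_{\mbfC_\infty}}\equiv 0$ on $\mbfA_\infty(1,2)$, and then a geometric contradiction. The paper phrases the final step slightly more succinctly---it notes that $\mbfx_\infty^\perp\equiv 0$ means the translation flow generated by $\mbfx_\infty$ preserves $\mbfC_\infty$, hence preserves $\Sing(\mbfC_\infty)=\{\orig\}$, which is impossible for a non-zero translation---whereas you go through the splitting $\mbfC_\infty=\mbfC'\times\R\mbfx_\infty$ and a dichotomy on $\mbfC'$; but these are equivalent observations.
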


\begin{proof}
Assume, towards a contradiction, that there exists a sequence $\left\{\mbfC_i\right\}_{i\geq 1}$ in $\cC_{N,n}(\Lambda)$ and corresponding vectors $\mbfx_i \in \R^{N}$ (say normalized to have unit norm) such that
\[
\|\mbfx_i^{\perp}\|_{L^2(\mbfA_i(1,2))}\leq 1/i,
\]
where we have set (like we did above) $\mbfA_i(1,2)=\mbfC_i\cap\AAa(1,2)$.
Now, by Lemma \ref{Lem_cC_(N,n)(Lambda) bF Cpt}, possibly extracting a subsequence (which we do not rename) there holds  $\mbfF(|\mbfC_i|, |\mbfC_\infty|) \to 0$ for some $\mbfC_\infty\in \cC_{N,n}(\Lambda)$. (We stress that, in particular, this implies the non-triviality of the limit cone.) In addition, the sequence $(\mbfx_i)$ has itself a converging subsequence to a limit element $\mbfx_\infty\in\R^N$ with $|\mbfx_\infty|=1$.

Hence, passing to the limit in the previous inequality we would obtain $\|\mbfx_{\infty}^{\perp}\|_{L^2(\mbfA_{\infty}(1,2))}=0$, which is only possible if the flow of translations generated by $\mbfx_\infty$ in $\R^N$ preserves $\mbfC_\infty$. However, this contradicts the fact that $\Sing(\mbfC_{\infty})=\left\{0\right\}$ (which is not kept still by any translation). Thereby the proof is complete.
\end{proof}

We proceed with a (by now well-known) result on the discreteness of the space of densities of minimal cones, that is key (among other things) to proving Theorem \ref{Thm_Countable Decomp}, which we will do in Appendix \ref{Sec:Count_Decomp}.

\

    \begin{lemma} \label{Lem_cC_(N,n)(Lambda) finitely many density}
        $\{\theta_\mbfC(\orig):\mbfC\in \cC_{N,n}(\Lambda)\}$ is a finite subset of $\RR$.
    \end{lemma}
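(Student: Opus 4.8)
The statement to prove is that $\{\theta_\mbfC(\orig):\mbfC\in \cC_{N,n}(\Lambda)\}$ is a finite subset of $\RR$. The natural strategy is to combine the $\mbfF$-metric compactness of $\cC_{N,n}(\Lambda)$ (Lemma \ref{Lem_cC_(N,n)(Lambda) bF Cpt}) with the upper semicontinuity of density under varifold convergence and a rigidity/discreteness input. First I would argue that the density function $\mbfC\mapsto \theta_\mbfC(\orig)$ is continuous on $\cC_{N,n}(\Lambda)$ with respect to the $\mbfF$-metric: since $\cC_{N,n}(\Lambda)$ consists of cones with a uniform lower bound on the regularity scale of the link, the cross-sections $\Sigma_i=\mbfC_i\cap\SSp^{N-1}$ converge smoothly and with multiplicity one (Remark \ref{rem:RegCurvBound}, Remark \ref{rem:SufficesBundRegScale}), so the $(n-1)$-dimensional areas $\Haus^{n-1}(\Sigma_i)$ converge to $\Haus^{n-1}(\Sigma_\infty)$; since $\theta_\mbfC(\orig)=\Haus^{n-1}(\mbfC\cap\SSp^{N-1})/(n\,\omega_n)$ for a regular cone (the area of the unit ball in the cone being $\Haus^{n-1}(\Sigma)/n$), this gives continuity of the density on the compact space $\cC_{N,n}(\Lambda)$.

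However, continuity on a compact set only yields that the image is a compact subset of $\RR$, not a finite one — so the real content is a discreteness statement. The key classical input here is the monotonicity formula combined with the fact that at a point where a stationary cone has density strictly between two consecutive ``allowed'' values, one can produce, by a dimension-reduction / blow-up argument, a contradiction with the structure of $\cC_{N,n}(\Lambda)$. More efficiently, one invokes the following well-known principle (essentially due to Almgren–Allard–Simon, and appearing in Edelen's work referenced in the excerpt, cf.\ Appendix \ref{Sec:Count_Decomp}): for stationary integral varifolds satisfying a uniform mass/regularity bound, the set of possible densities is closed under taking tangent cones, and a minimal cone $\mbfC$ with an isolated singularity at the origin is \emph{isolated} in $\mbfF$-metric within the space of such cones of the same density unless it admits a nontrivial Jacobi field generated by the ambient symmetries; since members of $\cC_{N,n}(\Lambda)$ have $\Sing(\mbfC)=\{\orig\}$, the only continuous families are generated by rotations of $\SSp^{N-1}$, which do not change the density. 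Hence the density takes only isolated values in a compact set, so finitely many.

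More concretely, here is the cleanest route I would write up. Suppose for contradiction that there are infinitely many distinct densities $\theta_i:=\theta_{\mbfC_i}(\orig)$, realized by cones $\mbfC_i\in\cC_{N,n}(\Lambda)$, with $\theta_i\to\theta_\infty$ and all $\theta_i$ distinct. By Lemma \ref{Lem_cC_(N,n)(Lambda) bF Cpt}, pass to a subsequence so that $\mbfC_i\to\mbfC_\infty\in\cC_{N,n}(\Lambda)$ in $\mbfF$-metric and smoothly (with multiplicity one) along the cross-section. By the area convergence of the cross-sections just discussed, $\theta_\infty=\theta_{\mbfC_\infty}(\orig)$. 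Now the smooth multiplicity-one convergence $\Sigma_i\to\Sigma_\infty$ means that for $i$ large, $\Sigma_i$ is a normal graph over $\Sigma_\infty$ of a small section $w_i$ of the normal bundle, with $\|w_i\|_{C^2}\to 0$; since both $\Sigma_i$ and $\Sigma_\infty$ are minimal in $\SSp^{N-1}$, $w_i/\|w_i\|$ (after passing to a subsequence) converges to a nonzero Jacobi field $w_\infty$ on $\Sigma_\infty$ for the operator $L_{\Sigma_\infty}$. The crucial point is then that, because $\mbfC_\infty$ has its singular set equal to $\{\orig\}$ exactly, a \L ojasiewicz–Simon type argument (as in Simon's work \cite{Sim83} invoked throughout the paper, or Allard–Almgren) forces the only such limiting Jacobi fields to be \emph{integrable}, i.e.\ tangent to an actual family of minimal submanifolds, and in the present rigid situation that family must be the orbit of $\Sigma_\infty$ under a one-parameter subgroup of $\mathrm{SO}(N)$ — along which $\Haus^{n-1}$ is constant, hence $\theta_i=\theta_\infty$ for all large $i$, contradicting that the $\theta_i$ are distinct.

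\textbf{Main obstacle.} The delicate step is the last one: passing from ``$\Sigma_i\to\Sigma_\infty$ smoothly'' to ``the densities are eventually equal''. Naively, nearby minimal submanifolds of $\SSp^{N-1}$ can have different areas (e.g.\ along a non-integrable Jacobi field one expects area to vary to second order), so one genuinely needs either (a) a \L ojasiewicz–Simon inequality to rule out accumulation of distinct critical values of the area functional near $\Sigma_\infty$, or (b) to cite the known discreteness of the density spectrum for minimal cones with uniformly bounded regularity scale, which in this paper is exactly the kind of statement that the authors draw from Edelen's framework (cf.\ the discussion preceding Theorem \ref{Thm_Countable Decomp} in Appendix \ref{Sec:Count_Decomp}) and ultimately from Simon's asymptotics \cite{Sim83}. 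In the write-up I would take route (b): invoke the \L ojasiewicz–Simon inequality for the area functional on the link near $\Sigma_\infty$ (valid precisely because $\Sigma_\infty$ is smooth and the relevant functional is analytic), which implies that the set of critical values of the area functional restricted to a small $C^2$-neighborhood of $\Sigma_\infty$ is discrete; since each $\Haus^{n-1}(\Sigma_i)$ is such a critical value and they accumulate at $\Haus^{n-1}(\Sigma_\infty)$, they must eventually all equal it — the desired contradiction.
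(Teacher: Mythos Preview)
Your final approach --- invoking the \L ojasiewicz--Simon inequality for the area functional on the link to conclude that nearby minimal graphs over $\Sigma_\infty$ must have \emph{exactly} the same area --- is precisely the paper's proof: the paper applies \cite[Theorem~3]{Sim83} to obtain
\[
|\scH^{n-1}(\graph_{\Sigma_\infty}(u_i)) - \scH^{n-1}(\Sigma_\infty)|^{1-\vartheta}\leq \|\scM(u_i)\|_{L^2(\Sigma_\infty)}=0
\]
(since $\Sigma_i$ is minimal), giving the contradiction directly.

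That said, the Jacobi-field argument in your Plan should be excised entirely. The claim that ``a \L ojasiewicz--Simon type argument forces the only such limiting Jacobi fields to be integrable, and the family must be an $\mathrm{SO}(N)$-orbit'' is false on both counts: the \L ojasiewicz--Simon inequality makes no integrability assertion whatsoever (its whole point is to work \emph{without} integrability), and there is no reason a general minimal submanifold of $\SSp^{N-1}$ has all Jacobi fields induced by ambient isometries. You correctly sensed this in your ``Main obstacle'' paragraph and supplied the right fix --- just write that fix directly, without the detour. (Minor: in your last paragraph you describe route (a) but label it (b).)
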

    \begin{proof}
        Suppose by contradiction that there exists a sequence of $\{\mbfC_i\}_{i\geq 1} \subset \mathcal{C}_{N, n}(\Lambda)$ with pairwise distinct densities at the origin.

        By the preceding compactness result (Lemma \ref{Lem_cC_(N,n)(Lambda) bF Cpt}), $\Sigma_i := \mbfC_i \cap \SSp^{N-1}$ converges in $C^\infty$ to a minimal submanifold $\Sigma_\infty$. Let then $\mbfC_\infty := \mathbf{0} \conetimes \Sigma_{\infty}$. Hence, for sufficiently large $i$, $\Sigma_i$ can be expressed as the graph of a smooth normal section
        \[
            u_i: \Sigma_\infty \to \Sigma^\perp_\infty\,.
        \]
        
        It follows from the \oldL{}ojasiewicz-Simon inequality \cite[Theorem~3]{Sim83} that for every $\mu \in (0, 1)$, there exist  $\vartheta(\Sigma_\infty) \in (0, 1/2)$, and $\delta(\Sigma_\infty, \mu)$ such that every $C^{2, \mu}$ normal section $u: \Sigma_\infty \to \Sigma^\perp_\infty$ with $\|u\|_{C^{2,\mu}(\Sigma_{\infty})} \leq \delta$ shall satisfy the bound on the $(n-1)$-dimensional Hausdorff measure
        \[
            |\scH^{n-1}(\graph_{\Sigma_\infty}(u)) - \scH^{n-1}(\graph_{\Sigma_\infty}(0))|^{1 - \vartheta} \leq \|\scM(u)\|_{L^2(\Sigma_\infty)}\,,
        \]
        where $\scM(u) = 0$ corresponds to the minimal surface equation (cf. Appendix \ref{app:MSE}). In particular, for sufficiently large $i$, $\|u_i\|_{C^{2,\mu}(\Sigma_{\infty})} \leq \delta$ and thus,
        \[
            |\scH^{n-1}(\graph_{\Sigma_\infty}(u_i)) - \scH^{n-1}(\graph_{\Sigma_\infty}(0))|^{1 - \vartheta} = 0\,,
        \]
        which implies that $\theta_{\mbfC_i}(\mathbf{0}) = \theta_{\mbfC_\infty}(\mathbf{0})$, a contradiction to our assumption.
    \end{proof}

     \begin{remark}\label{rem:DensityFunct}  
 Following up on the preceding Remark \ref{rem:SufficesBundRegScale}, we note that Lemma \ref{Lem_cC_(N,n)(Lambda) bF Cpt} implies that we can define
\be\label{eq:DensityFunct}
\Theta(\Lambda):=\sup_{\mbfC\in\cC_{N,n}(\Lambda)}\theta_{\mbfC}(\mathbf{0})>0
\ee
which does in fact determine a monotone (non-decreasing) function $\Theta:\R^+\to\R^+$.
\end{remark}

\subsection{Key definitions and tools}

Now, we move on towards the proof of Theorem \ref{thm:generic_perturbation};   
we shall work with the following spaces.

\begin{definition}\label{def:Pairs} Given a smooth, boundaryless compact manifold $M$ of dimension $N\geq 3$, and fixed a reference (smooth) background metric $\hat g$, we let (for $k\geq k_0, \alpha\in (0,1)$):
\begin{itemize}
\item $\cG^{k,\alpha}(M)$ denote the space of $C^{k,\alpha}$ Riemannian metrics on $M$, endowed with its natural Banach manifold structure;
\item $\cM^{k,\alpha}_n(M)$ denote the space of pairs $(g, \Sigma)$, where $g\in \cG^{k,\alpha}(M)$ and $\Sigma$ is a connected $n$-dimensional minimal submanifold with only strongly isolated singularities ($\MSI$) in $(M, g)$ for $2\leq n<N$; 
 \item $\Pi: \cM^{k,\alpha}_n(M) \to \cG^{k,\alpha}(M)$ denote the projection onto the first factor.
    \end{itemize}
\end{definition}

\begin{remark}\label{rem:TopPairs}
The space $\cM^{k,\alpha}_n(M)$ is endowed with the topology induced by $C^{k,\alpha}$ convergence in the first factor, where it is understood that the norms of all tensors are measured with respect to $g_0$, and multiplicity $1$ $\mbfF$-convergence (following \cite{Pit81}) in the second factor.
\end{remark}

\begin{definition} \label{def:SlowerGrowth}  For $(g, \Sigma)\in \cM^{k,\alpha}_n(M)$, we shall call a normal section $u$ on $\Sigma$ \textbf{of $\tau$-growth} if $u\in \hat{W}^{0,2}_\tau(\Sigma;\mbfV)$. When $\tau$ satisfies (\ref{eq:RangeTau}), we shall also call such sections \textbf{tame}.
    We denote by $\widehat{\Ker}_{\tau} (L_{\Sigma,g})$ the space of all the Jacobi fields of $\tau$-growth on $\Sigma$ in the ambient manifold $(M, g)$.
    \end{definition}

    The following concept allows to quantitatively encode the behaviour of normal sections on approach to a singular point of an $\MSI$.

\begin{definition}\label{def:AsymptRate}
    For $(g,\Sigma)\in\cM^{k,\alpha}_n(M)$ and an $L_{loc}^2$-normal section $v$ defined near $q\in\Sing(\Sigma)$, we introduce the notion of \textbf{asymptotic rate} of $v$ at $q$ as follows
    \begin{align}
        \cA\cR_q(v):= \sup\left\{\gamma\in\R: \lim_{s\searrow 0} \int_{A^g(q, s, 2s)} |v|^2\cdot \rho^{-n-2\gamma}\ d\|\Sigma\| = 0 \right\},  \label{Equ_Asymp Rate}
    \end{align}
    where, as stipulated in the previous section, $\rho(x) = \rho_{\Sigma,g}(x)$. We adopt the convention that $\sup\emptyset = -\infty$. 
    \end{definition}
    
    Heuristically, if $v$ grows like $\rho^\gamma$ near $q$, then $\cA\cR_q(v) = \gamma$. Note that in the setting of hypersurfaces, a similar (yet not identical) notion of asymptotic rate was introduced in \cite{Wang20} and studied in \cite{LW20, LW22}.

    \begin{lemma} \label{Lem_Anal for slower growth func}
        In the setting of the preceding definition, suppose $p, k\geq 2$, $\tau\in \RR$ satisfies \eqref{eq:RangeTau}, and $u\in W^{2,2}_{loc}(\Sigma;\mbfV)$ is a non-zero normal section.
        \begin{enumerate}[label=(\roman*)]
        \item\label{item:AR} If $L_{\Sigma, g} u\in W^{0, p}_{\tau-2}(\Sigma;\mbfV)$ then $\cA\cR_q(u) \in \{-\infty\}\cup \Gamma(\mbfC_q)\cup\{-(n-2)/2\}\cup [\tau, +\infty]$ for every $q\in \Sing(\Sigma)$, where $\mbfC_q:=\mbfC_q(\Sigma)$. 
        \item\label{item:SobAR} If $u\in W^{0,p}_\tau(\Sigma;\mbfV)$ (or, respectively: $\hat{W}^{0,p}_\tau(\Sigma;\mbfV)$), then $\cA\cR_q(u)\geq \tau$ (respectively: $\cA\cR_q(u)\geq 0$) for every $q\in \Sing(\Sigma)$.
        \item\label{item:ARSob} If $\cA\cR_q(u)\geq \tau$ for every $q\in \Sing(\Sigma)$ and $L_{\Sigma, g} u\in W^{k-2, p}_{\tau-2}(\Sigma;\mbfV)$, then $u\in W^{k,p}_{\tau'}(\Sigma;\mbfV)$ for every $\tau'<\tau$.
            \end{enumerate}
    \end{lemma}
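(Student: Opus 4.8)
The three assertions are all local near the singular points, so in each case I would fix $q\in\Sing(\Sigma)$ and work in geodesic normal coordinates centered at $q$, writing $\Sigma$ over a neighborhood of $q$ as a normal graph over its (unique, by Simon \cite{Sim83}) tangent cone $\mbfC_q$ with decaying graphical section, so that the Jacobi operator $L_{\Sigma,g}$ is, up to lower-order perturbations of the metric and of the graphical section, the model operator $L_{\mbfC_q}=\partial_r^2+\tfrac{n-1}{r}\partial_r+\tfrac1{r^2}(L_S-(n-1))$ analyzed in Section~\ref{subs:AnalysisCone1}. The overarching tool is Lockhart--McOwen / Simon-type asymptotics on conically singular ends: a solution of $L_{\Sigma,g}u=f$ with $f$ in a suitable weighted space must, near $q$, be asymptotic to a homogeneous Jacobi field on $\mbfC_q$ whose rate lies in $\Gamma(\mbfC_q)$ (with the logarithmic exceptional rate $-(n-2)/2$ accounting for the double-root case in \eqref{Equ_Pre_Homogen Jac field}), unless its rate is dictated instead by the inhomogeneity, i.e.\ is $\geq\tau$.

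For part \ref{item:AR}: assuming $L_{\Sigma,g}u\in W^{0,p}_{\tau-2}(\Sigma;\mbfV)$, I would run the standard three-annuli / Simon asymptotic analysis on the end $E_i$ near $q$. Decompose $u$ in the eigenbasis $\{\varphi_j\}$ of $L_S$ as in \eqref{Equ_Pre_Decomp Jac into homogen Jac field} and derive an ODE with forcing for each Fourier mode; a Gronwall / energy-comparison argument (this is exactly where the three-circle inequalities of Appendix~\ref{sec:3circles} are modelled) shows that the $L^2$-growth rate of $u$ on dyadic annuli $A^g(q,s,2s)$ must either (a) converge to one of the indicial roots $\Re(\gamma_j^\pm)\in\Gamma(\mbfC_q)$, or to the exceptional logarithmic rate $-(n-2)/2$, when the homogeneous part dominates, or (b) be governed by the forcing term, in which case the rate is at least $\tau$ (the weight of $f$, shifted appropriately — note $f\in W^{0,p}_{\tau-2}$ means $f$ decays like $\rho^{\tau-2}$, and integrating against the fundamental solution of $L_{\mbfC_q}$ gains two powers, giving rate $\tau$). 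The possibility $\cA\cR_q(u)=-\infty$ is kept for the degenerate bookkeeping case (e.g.\ $u$ vanishing to infinite order is excluded since $u\neq0$, but the convention is harmless). The key subtlety is justifying that the lower-order error terms — coming from the curvature of $g$ via \eqref{eq:MetrucNormalExpansion}–\eqref{eq:ChristNormalExpansion}, and from the graphical section of $\Sigma$ over $\mbfC_q$ — are genuinely subcritical and do not shift the indicial roots; this uses the decay of the tangent-cone graphical section (uniform uniqueness, cf.\ Appendix~\ref{sec:QuantUniq}) together with Lemma~\ref{lem:JacobiTranslFunct}-type estimates controlling $L_{\Sigma,g}$ applied to model sections.

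For part \ref{item:SobAR}: if $u\in W^{0,p}_\tau$, then by definition $\int_\Sigma |u|^p\rho^{-p\tau-n}\,d\|\Sigma\|<\infty$, and a routine Hölder argument on dyadic annuli (converting the $L^p$-weighted finiteness into decay of the $L^2$-mass on $A^g(q,s,2s)$ weighted by $\rho^{-n-2\gamma}$ for every $\gamma<\tau$) gives $\cA\cR_q(u)\geq\tau$; the constant depends only on $n,N,p$. For the augmented space $\hat W^{0,p}_\tau$, write $u=\phi+w$ with $\phi$ translation-like and $w\in W^{0,p}_\tau$; since a translation-like section is bounded near $q$ (it is $\Pi^\perp_x(v)$ for a fixed vector $v$, hence $|\phi|\leq|v|$), one gets $\cA\cR_q(\phi)\geq 0$ trivially and $\cA\cR_q(w)\geq\tau>0$, so $\cA\cR_q(u)\geq\min\{0,\tau\}=0$.

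For part \ref{item:ARSob}: this is a bootstrap/regularity statement. Given $\cA\cR_q(u)\geq\tau$ at every singular point and $L_{\Sigma,g}u\in W^{k-2,p}_{\tau-2}$, I would first upgrade the $L^2$-rate information to weighted $W^{0,p}$ membership: the hypothesis $\cA\cR_q(u)\geq\tau$ says $u$ decays at rate at least $\tau-\epsilon$ for every $\epsilon>0$ in the $L^2$-annular sense, and combined with interior Schauder/$L^p$ elliptic estimates for $L_{\Sigma,g}$ on dyadic annuli (scaling each annulus $A^g(q,2^{-m},2^{-m+1})$ to unit size, applying the fixed-scale estimate, and rescaling back) one promotes this to $u\in W^{0,p}_{\tau'}$ for every $\tau'<\tau$. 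Then the weighted elliptic estimate $\|u\|_{W^{k,p}_{\tau'}}\leq C(\|L_{\Sigma,g}u\|_{W^{k-2,p}_{\tau'-2}}+\|u\|_{W^{0,p}_{\tau'}})$ — valid because $\tau'$ can be taken to avoid the indicial set $\Gamma(\mbfC_q)$, so $L_{\Sigma,g}$ is semi-Fredholm on the corresponding weighted space (Lockhart--McOwen) — closes the argument, using that $W^{k-2,p}_{\tau-2}\subset W^{k-2,p}_{\tau'-2}$ for $\tau'<\tau$.

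\textbf{Main obstacle.} The hard part is part \ref{item:AR}: establishing the trichotomy for $\cA\cR_q(u)$ rigorously in this higher-codimension, non-stable (infinite Morse index of the link) setting. The difficulty is twofold — first, one cannot quote the codimension-one asymptotic-decay results of \cite{Wang20, LW20, LW22} directly, and must instead redo the three-circle/ODE-comparison analysis while carefully tracking that the perturbation of $L_{\mbfC_q}$ caused by the ambient curvature and by writing $\Sigma$ as a graph over $\mbfC_q$ is of lower order (rate strictly better than any fixed indicial gap), so that the limiting rate genuinely lands in $\Gamma(\mbfC_q)$; second, the logarithmic exceptional rate $-(n-2)/2$ and the bookkeeping of double indicial roots must be handled with care, since in higher codimension the link spectrum $\{\lambda_j\}$ can be arranged so that $\mu_j=\lambda_j+(n-1)$ equals $-(n-2)^2/4$, forcing the $r^{\gamma}\log r$ branch in \eqref{Equ_Pre_Homogen Jac field}. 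Everything else (parts \ref{item:SobAR} and \ref{item:ARSob}) is standard weighted-space manipulation once the Lockhart--McOwen framework for CS ends, already invoked in the proof of Theorem~\ref{thm:Count}, is in place.
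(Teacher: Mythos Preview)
Your treatments of parts \ref{item:SobAR} and \ref{item:ARSob} are essentially the paper's: H\"older on dyadic annuli for \ref{item:SobAR} (plus the triangle inequality \eqref{eq:TriangleForAR} for the augmented case), and for \ref{item:ARSob} the paper likewise rescales dyadic annuli to unit size, applies the interior $W^{k,p}$ estimate, and sums a geometric series --- it does not invoke the Lockhart--McOwen Fredholm estimate you name at the end, but rather obtains the needed $L^2_{\tau'}$ control on each annulus from Corollary~\ref{Cor_App_Uniform Growth Est for Jac Equ} directly; your version would also work.

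For part \ref{item:AR} the paper takes a different and cleaner route than your direct Fourier-mode ODE analysis. The issue with your plan is precisely the one you flag as the main obstacle: after transferring $u$ to a section $v$ on $\mbfC_q$, the equation for $v$ is \eqref{Equ_Pre_Pert Jac equ} with perturbation terms $b_0,b_1$ satisfying \eqref{Equ_3Circle_Rem}, and these couple the Fourier modes, so there is no clean ``ODE with forcing for each mode'' to run Gronwall on. The paper sidesteps this by arguing by contradiction and blow-up: assuming $\gamma:=\cA\cR_q(u)\in(-\infty,\tau)\setminus(\Gamma(\mbfC_q)\cup\{-(n-2)/2\})$, pick $\gamma'<\gamma<\gamma''$ with $[\gamma',\gamma'']$ disjoint from the bad set, choose scales $s_j\searrow 0$ where the $\gamma''$-weighted annular $L^2$ norm of $v$ is maximized, set $v_j(x)=v(s_jx)$, and use Corollary~\ref{Cor_App_Uniform Growth Est for Jac Equ} (the iterated perturbed three-circle inequality) to get uniform two-sided $L^2_{loc}$ control on $v_j/\|v_j\|$. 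A Rellich-compactness limit then produces a \emph{nonzero} Jacobi field $\hat v_\infty$ on the exact cone $\mbfC_q$ with growth rates trapped between $\gamma'$ and $\gamma''$ --- and now, on the exact cone, the decomposition \eqref{Equ_Pre_Decomp Jac into homogen Jac field} applies and forces an indicial root in $[\gamma',\gamma'']$, a contradiction. The Fourier analysis is thus used only in the limit, where the coupling has disappeared. Your approach could in principle be pushed through with enough care about the perturbation, but the paper's blow-up argument is both shorter and avoids ever confronting the coupled system.
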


    \begin{remark}\label{rem:TriangleForAR}
    If $u_1, u_2$ are $L^2_{loc}$-normal sections of an $\MSI$ $\Sigma$ in $(M,g)$ defined around $q\in\Sing(\Sigma)$ then, by the Minkowski (triangle) inequality, there holds
    \be\label{eq:TriangleForAR}
\cA\cR_q(u_1+u_2)\geq \min_{i=1,2}\left\{ \cA\cR_q(u_1); \cA\cR_q(u_2)\right\}.
    \ee

    \end{remark}

\begin{remark}\label{rem:WeightedHolder}
We note the following weighted version of the H\"older inequality. If $\Sigma =\Sigma^n$ is an $\MSI$ in a Riemannian manifold $(M,g)$ and $1/p_1 + 1/p_2=1$, then
\be\label{eq:WeightedHolder}
u_i\in W^{0,p_i}_{\tau_i}(\Sigma;\mbfV), i=1,2 \ \Rightarrow 
\begin{cases}
u_1 \cdot u_2 \in  L^1(\Sigma;\mbfV) & \ \text{for} \ \tau_1+\tau_2\geq -n \\
u_1 \cdot u_2 \in  W^{0,1}_{\tau}(\Sigma;\mbfV) & \ \text{for} \ \tau_1+\tau_2\geq\tau \\
\end{cases}
\ee
and there hold the estimates (under the corresponding finiteness assumptions):
\be \label{eq:WeightedHolder2}
\|u_1\cdot u_2\|_{L^1(\Sigma)}\leq \|u_1\|_{W^{0,p_1}_{\tau_1}(\Sigma)} \|u_2\|_{W^{0,p_2}_{\tau_2}(\Sigma)}, \ \|u_1\cdot u_2\|_{W^{0,1}_{\tau}(\Sigma)}\leq \|u_1\|_{W^{0,p_1}_{\tau_1}(\Sigma)} \|u_2\|_{W^{0,p_2}_{\tau_2}(\Sigma)}
\ee
  where the unit constant descends from the fact that we had stipulated the convention $\rho\leq 1$.
The proof is done by straightforward reduction to the unweighted (standard) H\"older inequality. (Of course, a fully analogous version of these results holds true for scalar-valued functions $u_1, u_2$ in place of sections of the bundle $\mbfV$).
\end{remark}
    
    \begin{proof}
        To prove \ref{item:AR}, it suffices to show that if $\gamma:= \cA\cR_q(u)\in (-\infty, \tau)$, then $\gamma\in \Gamma(\mbfC_q)\cup\{-(n-2)/2\}$. To see this, suppose for contradiction that $\gamma$ does not belong to such a set, then there exist $\sigma>0$ and $\gamma'<\gamma<\gamma''$ such that \[
            [\gamma'-\sigma, \gamma''+\sigma]\cap (\Gamma(\mbfC_q)\cup\{-(n-2)/2\}\cup [\tau, +\infty)) = \emptyset \,.  \]

        Since the tangent cone of $\Sigma$ at $q$ is a regular cone of multiplicity $1$, there exist $s_0>0$ and $\Lambda>|\gamma'|$, such that:
        \begin{itemize}
            \item after pulling back to $T_qM$ using the exponential map, $\Sigma$ is a graph over the cone $\mbfC:= \mbfC_q \in\cC_{N,n}(\Lambda)$ in $B^g(q, 2s_0)$; we use that graphical parametrization to define $v:=\mbfT^{\Sigma}_{\mbfC,g_{\eucl}}(u)$, that is a section defined on $\mbfC_q$;
            \item for every $s\in (0, s_0]$, conditions \eqref{Equ_App_Close to Cone w Reg Scal bd} in Corollary \ref{Cor_App_Uniform Growth Est for Jac Equ} hold for $\Sigma$ with respect to the scaled metric $s^{-2}g$ (for $\sigma, \Lambda$ as above), \eqref{Equ_App_Uniform away from asymp spectr} also holds for $\gamma'$ in place of $\gamma$, and in addition (by our assumption on $\cA\cR_q(u)$)
            \[
            \limsup_{s\searrow 0}\int_{\mbfA(s, 2s)}|v|^2\cdot |x|^{-n-2\gamma''}\ d\|\mbfC\| = +\infty \,.
        \]
        \end{itemize} 
    Hence, we can construct a monotonically decreasing sequence $\left\{s_j\right\}_{j\geq 1}$ contained in the interval $(0, s_0/2)$, tending to $0$ as $j\to\infty$, such that
    \[
    \int_{\mbfA(s_1, 2s_1)}|v|^2\cdot |x|^{-n-2\gamma''}\ d\|\mbfC\|> 0 
    \]
    and for every $j\geq 1$ and every $s\in [s_j, s_1]$, we have 
        \begin{align*}
            \int_{\mbfA(s_j, 2s_j)}|v|^2\cdot |x|^{-n-2\gamma''}  \ d\|\mbfC\|\geq \int_{\mbfA(s, 2s)}|v|^2\cdot |x|^{-n-2\gamma''}\ d\|\mbfC\|\,.
        \end{align*}
        Equivalently, by letting $v_j(x):= v(s_j x)$, there holds
        \begin{align}
            \int_{\mbfA(1, 2)}|v_j|^2\cdot |x|^{-n-2\gamma''}\ d\|\mbfC\|\geq \int_{\mbfA(s, 2s)}|v_j|^2\cdot |x|^{-n-2\gamma''}\ d\|\mbfC\|, \qquad \forall\, s\in [1, s_j^{-1}s_1]\,.  \label{Equ_Meager_InPf_u_j Decay near infty}
        \end{align}
        Note that this in particular implies, for every $j\geq 1$,
        \begin{align}
            \|v_j\|_{L^2(\mbfA(1,2))}^2 & \geq C(\gamma'')\int_{\mbfA(1, 2)}|v_j|^2\cdot |x|^{-n-2\gamma''}\ d\|\mbfC\| \nonumber \\
            & \geq C(\gamma'')\int_{\mbfA(s_j^{-1}s_1, 2s_j^{-1}s_1)}|v_j|^2\cdot |x|^{-n-2\gamma''}\ d\|\mbfC\| \nonumber \\
            & = C(\gamma'') s_j^{2\gamma''}\int_{\mbfA(s_1, 2s_1)} |v|^2\cdot |x|^{-n-2\gamma''}\ d\|\mbfC\| > 0 \,. 
         \label{Equ_Meager_InPf_|u_j|_A(1,2)>cs^gamma''}
        \end{align}

        On the other hand, it follows from Proposition \ref{Prop_MSE_Section Transp} (see also Remark \ref{Rem_Pre_Jac field equ on MH near cone}) that the section $v$ solves a problem of the form
          \begin{align}
        \nabla^\perp\cdot (\nabla^\perp w + b_0(x)) + \langle A_\mbfC, w\rangle A_\mbfC  + |x|^{-1} b_1(x) = f(x),  
        \end{align}
        where $\nabla^{\perp}=\nabla_{\mbfC, g_{\eucl}}^\perp$ is the Levi-Civita connection of the normal bundle $\mbfV$ to $\mbfC$ in Euclidean $\R^N$, there holds a bound that reads
 \be\label{eq:BoundsCoeffs}
            |b_0|(x) + |b_1|(x) \leq \eps \left(|x|^{-1}|v|(x) + |\nabla^\perp v|(x) \right)
        \ee
        for suitable $\varepsilon>0$, and we have set $f=\mbfT^{\Sigma}_{\mbfC,g_{\eucl}}(L_{\Sigma, g}u)$. Similarly, for any $j\geq 1$ the section $v_j$ shall satisfy an analogous PDE with 
        $b^j_0, b^j_1$ in lieu of $b_0, b_1$ respectively (satisfying a bound of the form \eqref{eq:BoundsCoeffs} for a suitable sequence $\eps_j \to 0$)
        and the datum $f$ replaced by $f_j(x):= s_j^2 f(s_j x)$.

       Hence, by applying Corollary \ref{Cor_App_Uniform Growth Est for Jac Equ} to the data $ \Sigma, M, g_j:= s_j^{-2}g$, $v_j$ as defined above and $\gamma'$, we see that for every $j\geq 1$ and $s\in (0, 1)$, since $\mbfA(K^{-1}s, s)$ is contained in $\mbfA(K^{-i-2}, K^{-i})$ for some non-negative integer $i$, we have
        \begin{align}
            \|v_j\|_{L^2(\mbfA(K^{-1}s, s))} \leq C(\Lambda, \sigma)\left(F_j + \|v_j\|_{L^2(\mbfA(K^{-1}, 1))} \right)s^{n/2 + \gamma'}\,, \label{Equ_Meager_InPf_u_j Decay near 0}
        \end{align}
        where 
        \[
            F_j \leq \left\||x|^{2-\gamma'-n/2}f_j \right\|_{L^2(\mbfB(2))}  
             \leq C(\Lambda,\sigma,\tau)\|L_{\Sigma, g}u\|_{L^2_{\tau-2}(B^g(q,2s_0)\cap\Sigma)} \cdot s_j^\tau\,.
        \]
(Note that the same computation allows, more generally, to prove uniform $L^2_{loc}$ bounds for the data $f_j$.)
        
        Combined with \eqref{Equ_Meager_InPf_|u_j|_A(1,2)>cs^gamma''}, the previous estimate implies,
        \begin{align}
            F_j \leq C(\sigma, \Lambda, \tau, \gamma'', u, L_{\Sigma, g}u, s_1) s_j^{\tau -\gamma''}\|v_j\|_{L^2(\mbfA(1, 2))} \,.  \label{Equ_Meager_InPf_Error Est for F_j}
        \end{align}
        By \eqref{Equ_Meager_InPf_u_j Decay near infty}, \eqref{Equ_Meager_InPf_u_j Decay near 0}, \eqref{Equ_Meager_InPf_Error Est for F_j} and the Rellich Compactness Theorem, $\|v_j\|_{L^2(\mbfA(K^{-1},2))}^{-1}\cdot v_j$ subconverges in $L^2_{loc}$ (and weakly in $W^{1,2}_{loc}$) to some $\hat{v}_\infty$ in $W^{1,2}_{loc}$, which is non-zero by the choice of normalization, and weakly solves $L_{\mbfC} \hat v_\infty = 0$ (see Proposition \ref{Prop_MSE_Section Transp}) as well as the decay estimates, respectively near $0$ and $\infty$:
        \begin{align*}
            \sup_{s>1} \int_{\mbfA(s, 2s)} |\hat v_\infty|^2 \cdot |x|^{-n-2\gamma''}\ d\|\mbfC\| < +\infty\,, \ \text{and} \  &
            \sup_{s\in (0, 1)} \int_{\mbfA(K^{-1}s, s)} |\hat v_\infty|^2\cdot |x|^{-n-2\gamma'}\ d\|\mbfC\| < +\infty\,.
        \end{align*}
        But since in particular $[\gamma', \gamma'']\cap \Gamma(\mbfC) = \emptyset$, there is no such non-zero Jacobi fields on $\mbfC$, which is a contradiction.
        
For part \ref{item:SobAR}, let us first assume $u\in W^{0,p}_\tau(\Sigma;\mbfV)$; let $\tau'<\tau$ (without further restrictions). The case $p=2$ is straightforward; otherwise, if $p>2$, due to the H\"older inequality we have
\[
\int_{A^g(q,s,2s)} \!\!\! |u|^2\cdot \rho^{-n-2\tau'}\ d\|\Sigma\|\leq \!\!\left(\int_{A^g(q,s,2s)} \!\!\! |u|^p\cdot \rho^{-n-p\tau}\ d\|\Sigma\|\right)^{2/p} \!\!\! \left(\int_{A^g(q,s,2s)} \!\!\! \rho^{-n+\frac{2p(\tau-\tau')}{p-2}}\ d\|\Sigma\|\right)^{1-2/p}
\]
therefore, as one lets $s\searrow 0$ both factors on the right-hand side tend to zero and so will the left-hand side.
But then, by the arbitrariness of $\tau'<\tau$ this precisely means that $\cA\cR_q(u)\geq \tau$ for every $q\in \Sing(\Sigma)$. When instead one works in the augmented spaces, namely if $u\in \hat{W}^{0,p}_\tau(\Sigma;\mbfV)$ then $u=u_0+\phi$ for $u_0 \in W^{0,p}_\tau(\Sigma;\mbfV)$ and $\phi\in W_{\text{Tr}}$ (cf. Definition \ref{def:TranslLikeSection}), and thus it suffices to note that, thanks to the Minkowski (triangle) inequality (see Remark \ref{rem:TriangleForAR}) there holds
\[
\cA\cR_q(u)\geq \min_{i=1,2}\left\{ \cA\cR_q(u_0); \cA\cR_q(\phi)\right\}
\]
and $\cA\cR_q(u_0)\geq\tau$ (as just shown) while trivially $\cA\cR_q(\phi)\geq 0$; since by \eqref{eq:RangeTau} in particular $\tau\in (0,1)$ the conclusion follows at once.
        
        To prove \ref{item:ARSob}, first notice that by applying Corollary \ref{Cor_App_Uniform Growth Est for Jac Equ} in the same way as the proof of \ref{item:AR}, for every $\tau'<\tau$ and every $q\in \Sing(\Sigma)$, there exists $s_0>0$ small enough and $K>2$ such that for all $s\in (0, s_0]$,  
        \[
            \|u\|_{W^{0,2}_{\tau'}(A^g(q,K^{-1}s, s)\cap\Sigma)} \leq C(\Sigma, g, \tau, \tau') \left(\|L_{\Sigma, g}u\|_{W^{0,2}_{\tau'-2}(B^g(q,s_0)\cap\Sigma)} + \|u\|_{L^2(A^g(q,K^{-1}s_0, s_0)\cap\Sigma)} \right) \,.
        \]
        On the other hand, the standard interior elliptic estimate (in scaling-invariant form) reads for every $s\in (0, s_0]$, 
        \[
           \|u\|_{W^{k,p}_{\tau'}(A^g(q,K^{-1}s, s)\cap\Sigma)} \leq C(\Sigma, g)\left(\|L_{\Sigma, g} u\|_{W^{k-2,p}_{\tau'-2}(A^g(q,K^{-2}s, 2s)\cap\Sigma)} + \|u\|_{W^{0,2}_{\tau'}(A^g(q,K^{-2}s, 2s)\cap\Sigma)} \right).
           \]
           Thus, if we plug-in the former in the latter we get for all $s\leq s_0/2$ that
           \[
 \|u\|_{W^{k,p}_{\tau'}(A^g(q,K^{-1}s, s)\cap\Sigma)} \leq C(\Sigma, g, \tau, \tau')\left(\|L_{\Sigma, g} u\|_{W^{k-2,p}_{\tau-2}(\Sigma)} + \|u\|_{L^2(A^g(q,K^{-1}s_0, 2s_0)\cap\Sigma)} \right) 
               \]
               that is a bound independent of $s$.
               As a result, if we take any $\tau''<\tau'$, since there clearly holds
        \[
\|u\|_{W^{k,p}_{\tau''}(A^g(q,K^{-1}s, s)\cap\Sigma)} \leq s^{(\tau'-\tau'')} \|u\|_{W^{k,p}_{\tau'}(A^g(q,K^{-1}s, s)\cap\Sigma)} 
        \]
        we can bound
        \[
\|u\|_{W^{k,p}_{\tau''}(A^g(q,K^{-J}s, s)\cap\Sigma)}\leq C(\Sigma,g,\tau,\tau',u)s^{(\tau'-\tau'')}\sum_{j=0}^{J-1}K^{-j(\tau'-\tau'')}.
        \]
        Letting $J\to\infty$ and observing that the uniform bound on the right-hand side (which is a geometric series) implies the finiteness of $\|u\|_{W^{k,p}_{\tau''}(B^g(q,s_0)\cap\Sigma)}$, by arbitrariness of $\tau'', \tau'$ we readily get to the desired conclusion.
\end{proof}
    
      \begin{corollary}\label{cor:AR_Jacobi}
        For $\tau\in \RR$ satisfying (\ref{eq:RangeTau}), $\widehat{\Ker}_{\tau} (L_{\Sigma,g})$ is independent of $\tau$; in particular, if $u\in \widehat{\Ker}_{\tau} (L_{\Sigma, g})$ then for every $q\in\Sing(\Sigma)$ there exists $\mbfx\in T_q M$ such that $\cA\cR_q(u-\mbfx^\perp)\geq 1$.
    \end{corollary}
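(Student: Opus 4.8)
The plan is to reduce both assertions to a single statement: for $u\in\widehat{\Ker}_\tau(L_{\Sigma,g})$ and any $q\in\Sing(\Sigma)$, there is $\mbfx\in T_qM$ with $\cA\cR_q(u-\mbfx^\perp)\geq 1$; the $\tau$-independence will then follow by bootstrapping $u$ into every weighted space allowed by \eqref{eq:RangeTau}. All the genuinely analytic input (transfer to the tangent cone, the growth dichotomy, the elliptic/three-circle estimates) is already packaged into Lemma~\ref{Lem_Anal for slower growth func} together with Lemma~\ref{lem:JacobiTranslFunct}, so the argument should be essentially bookkeeping with weights and indicial roots. If $\Sing(\Sigma)=\emptyset$ everything is trivial, so I would assume $\Sing(\Sigma)\neq\emptyset$; then every admissible $\tau$ lies in $(0,1)$, since $\gamma_-(\mbfC_p)\geq 0$ for each $p$, and so the splitting $\hat{W}^{0,2}_\tau=W^{0,2}_\tau\oplus W_{\text{Tr}}$ of \eqref{eq:Augment} is available.

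To prove the key statement I would write $u=u_0+\phi$ with $u_0\in W^{0,2}_\tau(\Sigma;\mbfV)$ and $\phi\in W_{\text{Tr}}$ translation-like, and let $\mbfx\in T_qM$ be the defining vector of $\phi$ at $q$ (Definition~\ref{def:TranslLikeSection}), so that $\phi\equiv\mbfx^\perp$ near $q$ and hence $u-\mbfx^\perp=u_0$ there; thus it suffices to show $\cA\cR_q(u_0)\geq 1$ (the case where $u_0$ vanishes near $q$ being trivial, as $\cA\cR_q(0)=+\infty$). On one side, Lemma~\ref{Lem_Anal for slower growth func}\ref{item:SobAR} gives $\cA\cR_q(u_0)\geq\tau$. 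On the other side, since $u$ is a Jacobi field, $L_{\Sigma,g}u_0=-L_{\Sigma,g}\phi$, which by Lemma~\ref{lem:JacobiTranslFunct} lies in $W^{0,2}_{-\e}(\Sigma;\mbfV)$ for every $\e>0$; because every admissible $\tau'$ satisfies $\tau'<1<2$, the convention $\rho\leq 1$ shows $W^{0,2}_{-\e}\subset W^{0,2}_{\tau'-2}$ for suitable $\e$, so $L_{\Sigma,g}u_0\in W^{0,2}_{\tau'-2}(\Sigma;\mbfV)$ for every admissible $\tau'$. Then Lemma~\ref{Lem_Anal for slower growth func}\ref{item:AR}, applied for each such $\tau'$, forces $\cA\cR_q(u_0)\in\{-\infty\}\cup\Gamma(\mbfC_q)\cup\{-(n-2)/2\}\cup[\tau',+\infty]$, and intersecting these memberships over all $\tau'\in(\sup_p\gamma_-(\mbfC_p),1)$ upgrades the half-line to $[1,+\infty]$. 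Now $\cA\cR_q(u_0)\geq\tau>\gamma_-(\mbfC_q)=\sup(\Gamma(\mbfC_q)\cap\R_{<1})$ rules out every value of $\Gamma(\mbfC_q)$ below $1$, and $\tau>0\geq -(n-2)/2$ rules out $-(n-2)/2$; hence $\cA\cR_q(u_0)\geq 1$, which is what is wanted (and which is exactly the ``in particular'' clause of the Corollary).

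For the $\tau$-independence I would take $u\in\widehat{\Ker}_\tau(L_{\Sigma,g})$, use the key statement to pick $\mbfx_q$ at every singular point, assemble the translation-like section $\phi\in W_{\text{Tr}}$ with those defining vectors, and set $w:=u-\phi$, so that $\cA\cR_q(w)\geq 1$ at each $q$ while $L_{\Sigma,g}w=-L_{\Sigma,g}\phi\in W^{0,2}_{-\e}(\Sigma;\mbfV)$ for all $\e>0$. Given an arbitrary admissible target weight $\tau'$, I would pick $\tau''\in(\max\{\tau',\sup_p\gamma_-(\mbfC_p)\},1)$ and apply Lemma~\ref{Lem_Anal for slower growth func}\ref{item:ARSob} with weight $\tau''$ (and $k=2$): since $\cA\cR_q(w)\geq 1>\tau''$ and $L_{\Sigma,g}w\in W^{0,2}_{\tau''-2}(\Sigma;\mbfV)$, this gives $w\in W^{2,2}_\sigma(\Sigma;\mbfV)$ for every $\sigma<\tau''$, in particular $w\in W^{0,2}_{\tau'}(\Sigma;\mbfV)$. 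Hence $u=w+\phi\in\hat{W}^{0,2}_{\tau'}(\Sigma;\mbfV)$, so $u\in\widehat{\Ker}_{\tau'}(L_{\Sigma,g})$. Since the reverse inclusion of kernels for $\tau'\geq\tau$ is immediate from $\hat{W}^{0,2}_{\tau'}\subset\hat{W}^{0,2}_\tau$ (again by $\rho\leq 1$), all of these kernels coincide.

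The step I expect to carry the real weight is the interplay of Lemma~\ref{Lem_Anal for slower growth func}\ref{item:AR} with the definition of $\gamma_-$: part \ref{item:AR} only pins $\cA\cR_q(u_0)$ down to ``the asymptotic spectrum plus the half-line $[\tau',+\infty]$'', and it is precisely the freedom to send $\tau'\uparrow 1$ — together with the standing hypothesis $\tau>\gamma_-(\mbfC_q)$, which guarantees that $[\tau,1)$ contains no indicial root of $\mbfC_q$ — that promotes this to $\cA\cR_q(u_0)\geq 1$. A secondary technical point is that $L_{\Sigma,g}$ applied to a translation-like section lies in every $W^{0,2}_{-\e}$ (equivalently, if one wishes to invoke part \ref{item:ARSob} with the full $k$ instead of $k=2$, in $W^{k-2,2}_{-\e}$); this is the content of Lemma~\ref{lem:JacobiTranslFunct} and of the estimates in its proof, cf.\ Remark~\ref{rem:LinftyCanNeighb}. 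Everything else reduces to the routine bookkeeping of weighted Sobolev inclusions under the convention $\rho\leq 1$.
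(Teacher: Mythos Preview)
Your proof is correct and follows essentially the same approach as the paper: decompose $u=u_0+\phi$ with $\phi\in W_{\text{Tr}}$, use Lemma~\ref{lem:JacobiTranslFunct} to place $L_{\Sigma,g}u_0$ in $W^{0,2}_{\tau'-2}$ for every admissible $\tau'$, combine parts~\ref{item:AR} and~\ref{item:SobAR} of Lemma~\ref{Lem_Anal for slower growth func} with the definition of $\gamma_-(\mbfC_q)$ to push $\cA\cR_q(u_0)$ up to $1$, and then invoke part~\ref{item:ARSob} to land $u_0$ in every target weighted space. The only cosmetic difference is that you prove the ``in particular'' clause first and then deduce $\tau$-independence, whereas the paper obtains both simultaneously by letting $\tau'\nearrow 1$; the analytic content is identical.
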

    \begin{proof}
        Let $u\in \widehat{\Ker}_{\tau}(L_{\Sigma,g})$ for some $\tau$ satisfying \eqref{eq:RangeTau} and let $\tau'\in (\tau,1)$. Recall that this means $u=\phi+u_0$ for $u_0\in W^{k,2}_{\tau}(\Sigma;\mbfV)$ and $\phi$ a translation-like section; note, also, that such $\phi$ is uniquely determined as soon as $\tau>0$ and, in addition, by direct computation there holds  $L_{\Sigma,g}\phi \in W^{0, 2}_{-\varepsilon}(\Sigma;\mbfV)$ for all $\varepsilon>0$ (see Lemma \ref{lem:JacobiTranslFunct}). 

        The latter remark (together with the very definition of Jacobi field, which gives $L_{\Sigma,g} u=0$) implies in particular that $L_{\Sigma,g} u_0\in W^{0,2}_{\tau'-2}(\Sigma;\mbfV)$ and so, by items \ref{item:AR} and \ref{item:SobAR} above, there holds $\cA\cR_q(u_0)\geq \tau'$ for every $q\in\Sing(\Sigma)$. At that stage, we appeal to item \ref{item:ARSob} to conclude that $u_0\in W^{0,2}_{\tau''}(\Sigma;\mbfV)$ for any $\tau''<\tau'$, which -- by the arbitrariness of $\tau'$ -- completes the proof.
        \end{proof}

        \begin{remark}\label{rem:IndipDimCoker}
      By combining this corollary with Theorem \ref{thm:Count} it follows that $\dim\widehat{\Coker}_{\tau}(L_{\Sigma,g})$ is also independent of $\tau$ as this varies in the chosen range, i.\,e. for $\tau$ satisfying \eqref{eq:RangeTau}.
        \end{remark}

\subsection{Canonical neighborhoods} We shall begin here by introducing some notions and objects that will play a prominent role in this section. For $(g, \Sigma)\in \cM^{k,\alpha}_n(M)$, we set
\[
\injrad(g, \Sigma):= \min\{ \injrad(M, g), \min\{\dist_g(p, p')/2: p\neq p'\in \Sing(\Sigma)\}\}\,,
\]
and
\be\label{eq:RangeLambda}
\cR^{k,\alpha}_n(g,\Sigma):=\left\{\Lambda>0 \st \|g\|_{C^{k, \alpha}} \leq \Lambda, \quad \mfr_{\Sigma, g} \geq \Lambda^{-1}\rho_{\Sigma,g} \right\}.
\ee
Hence, we also define
\be\label{eq:DomainRadiusMap}
\cT^{k,\alpha}_n(M):=\left\{(g,\Sigma,\Lambda)\in\cM^{k,\alpha}_n(M)\times\R_{+} \st \Lambda\in \cR^{k,\alpha}_n(g,\Sigma)\right\}.
\ee

    \begin{definition} \label{Def_injrad, canonical neighb}
        For $(g, \Sigma)\in \cM^{k,\alpha}_n(M)$, and $\Lambda\in \cR^{k,\alpha}_n(g,\Sigma)$, we define $\cL^{k,\alpha}(g, \Sigma; \Lambda, \delta)$ to be the space of all pairs $(g', \Sigma')\in \cM^{k,\alpha}_n(M)$ such that:
        \begin{itemize}
            \item $\|g'\|_{C^{k,\alpha}}\leq \Lambda$, and $\|g-g'\|_{C^k}\leq \delta$;
            \item $\Sigma'$ is a connected $\MSI$ in $(M, g')$ satisfying
            \begin{equation}
            \mfr_{\Sigma', g'} \geq \Lambda^{-1}\rho_{\Sigma',g'}, \quad \text{and} \quad
                \mbfF(|\Sigma|_g, |\Sigma'|_{g'})\leq \delta;\label{Equ_Canonical Nbhd_|A_Sigma|<r^(-1)} 
            \end{equation}
            \item there exists a bijection $\Sing(\Sigma)\to \Sing(\Sigma')$, such that for every $p\in \Sing(\Sigma)$ its image $p'\in\Sing(\Sigma')$ satisfies
            \begin{equation} \label{Equ_Canonical dist_density}
                \dist_g(p, p')\leq \injrad(g,\Sigma)/2, \quad \theta_{|\Sigma'|}(p') = \theta_{|\Sigma|}(p).
            \end{equation}
        \end{itemize}

        We call such $\cL^{k,\alpha}(g, \Sigma; \Lambda, \delta)$ a \textbf{canonical (pseudo-)neighborhood} of $(g, \Sigma)$ in $\cM^{k,\alpha}_n(M)$; it is understood that its topology is the one induced from $\cM^{k,\alpha}_n(M)$.
    \end{definition}

    Clearly, when $g, \Sigma$ are fixed, $\cL^{k,\alpha}(g, \Sigma;\Lambda, \delta)$ is monotonically increasing (with respect to set-theoretic inclusion) in both $\Lambda$ and $\delta$. Also note that in general, $\cL^{k,\alpha}(g, \Sigma; \Lambda, \delta)$ is not a genuine topological neighborhood of $(g, \Sigma)$ in $\cM^{k,\alpha}_n(M)$, as it is not necessarily an open subset (nor will it contain an open subset). Indeed, there might be $(g', \Sigma')\in \cM^{k,\alpha}_n(M)$ arbitrarily close to $(g, \Sigma)$ but not contained in $\cL^{k,\alpha}(g, \Sigma; \Lambda, \delta)$, which happens, for example, when multiple singular points collapse to a single one. 
    
    \begin{remark}\label{rem:DensityConesClosure}
    Note that for every $(g', \Sigma')\in \cL^{k,\alpha}(g, \Sigma; \Lambda, \delta)$, \eqref{Equ_Canonical Nbhd_|A_Sigma|<r^(-1)} implies that every tangent cone of $\Sigma'$ at any of its singular points also belongs to $\cC_{N,n}(\Lambda)$.
    \end{remark}

    One of the main reasons behind this specific definition of canonical neighborhood lies in the associated compactness property, which we now state.

    \begin{lemma}[Compactness of canonical neighborhoods] \label{Lem_Cptness for scL^k}
        For $(g, \Sigma)\in \cM^{k,\alpha}_n(M)$ and $\Lambda\in \cR^{k,\alpha}_n(g,\Sigma)$, there exists $\varsigma_0(g, \Sigma, \Lambda)\in (0, 1)$ with the following property. 
        
        For every $\delta \in (0, \varsigma_0)$, $\cL^{k,\alpha}(g, \Sigma; \Lambda, \delta)$ is compact under $C^k$-norm in the first factor and $\mbfF$-distance in the second factor. Moreover, there holds $C^{k,\beta}_{loc}$ convergence in the second factor for any $\beta\in (0,1)$.
    \end{lemma}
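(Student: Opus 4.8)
The plan is to prove $\cL^{k,\alpha}(g,\Sigma;\Lambda,\delta)$ is sequentially compact. Let $\{(g_i,\Sigma_i)\}_{i\geq 1}$ be a sequence in $\cL^{k,\alpha}(g,\Sigma;\Lambda,\delta)$. First I would extract convergence in the first factor: since $\|g_i\|_{C^{k,\alpha}}\leq \Lambda$ uniformly, by Arzel\`a--Ascoli the metrics $g_i$ subconverge in $C^k$ (and weakly-$*$ in $C^{k,\alpha}$) to a limit metric $g_\infty$ with $\|g_\infty\|_{C^{k,\alpha}}\leq\Lambda$ and $\|g-g_\infty\|_{C^k}\leq\delta$. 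Next, by the uniform mass bound coming from \eqref{Equ_Canonical Nbhd_|A_Sigma|<r^(-1)} (which, via Remark \ref{rem:RegCurvBound}, yields a global area bound from the lower regularity-scale bound $\mfr_{\Sigma_i,g_i}\geq\Lambda^{-1}\rho_{\Sigma_i,g_i}$, together with the $\mbfF$-closeness to $|\Sigma|_g$), the varifolds $|\Sigma_i|_{g_i}$ subconverge in $\mbfF$-distance to a stationary integral $n$-varifold $V_\infty$ in $(M,g_\infty)$; moreover, by upper semicontinuity of density and the monotonicity formula, $\|V_\infty\|\leq \Theta(\Lambda')\cdot(\text{const})$ is controlled. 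The substance of the proof is then to show $V_\infty = |\Sigma_\infty|_{g_\infty}$ for some connected $\MSI$ $\Sigma_\infty$ lying in $\cL^{k,\alpha}(g,\Sigma;\Lambda,\delta)$, and that the convergence upgrades to $C^{k,\beta}_{loc}$ away from the singular set.

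To carry this out I would argue as follows. The singular sets $\Sing(\Sigma_i)$ consist of exactly $Q:=\#\Sing(\Sigma)$ points (by the bijection hypothesis), each at controlled mutual distance (bounded below by $\injrad(g,\Sigma)$ up to factors, once $\delta<\varsigma_0$ is small enough) and lying within $\injrad(g,\Sigma)/2$ of the corresponding point of $\Sing(\Sigma)$; after passing to a further subsequence these $Q$ points converge to limit points $p_1^\infty,\dots,p_Q^\infty\in M$, which remain pairwise distinct provided $\varsigma_0$ is chosen small in terms of $\injrad(g,\Sigma)$. Away from $\bigcup_j \overline{B^{g_\infty}(p_j^\infty,r)}$, the uniform lower bound on the regularity scale $\mfr_{\Sigma_i,g_i}$ forces uniform $C^{1,\alpha}$ (hence, by Schauder applied to the minimal surface system in metrics $g_i\to g_\infty$ in $C^k$, uniform $C^{k,\beta}$ for any $\beta<\alpha$) control of the sheets of $\Sigma_i$, so that there the convergence is smooth graphical with multiplicity one; the multiplicity-one conclusion is where the regularity-scale bound is essential (cf. Remark \ref{rem:FailMultOne}), ruling out the collapsing phenomenon of Remark \ref{rem:FailMultOne}. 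This identifies $V_\infty$ outside the $Q$ points with $|\Sigma_\infty^{\mathrm{reg}}|$ for a smooth minimal submanifold. At each $p_j^\infty$, one invokes the compactness of $\cC_{N,n}(\Lambda)$ (Lemma \ref{Lem_cC_(N,n)(Lambda) bF Cpt}) together with the density-preservation condition in \eqref{Equ_Canonical dist_density}: blowing up $\Sigma_i$ at its $j$-th singular point gives cones in $\cC_{N,n}(\Lambda)$, the density at $p_j^\infty$ equals the common value $\theta_{|\Sigma|}(p_j)$, and this is bounded strictly below $\Theta(\Lambda)$-type thresholds so that, by an Allard-type argument plus the discreteness of densities (Lemma \ref{Lem_cC_(N,n)(Lambda) finitely many density}), either $p_j^\infty$ is a removable (smooth) point --- excluded if the common density is $>1$ --- or it is a genuine singular point whose tangent cone is regular (belongs to $\cC_{N,n}(\Lambda)$), i.e.\ $p_j^\infty$ is strongly isolated for $V_\infty$. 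Hence $\Sigma_\infty := \mathrm{spt}\,V_\infty$ is an $\MSI$; connectedness of $\Sigma_\infty$ passes to the limit since connectedness is preserved under $\mbfF$-convergence once one controls that no ``extra'' components appear (again using the area/density bounds to preclude bubbling off of additional pieces), so $\Sigma_\infty$ is a connected $\MSI$ in $(M,g_\infty)$.

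It then remains to verify $(g_\infty,\Sigma_\infty)\in\cL^{k,\alpha}(g,\Sigma;\Lambda,\delta)$, i.e.\ to check each defining clause in the limit. We already have $\|g_\infty\|_{C^{k,\alpha}}\leq\Lambda$ and $\|g-g_\infty\|_{C^k}\leq\delta$. The inequality $\mfr_{\Sigma_\infty,g_\infty}\geq\Lambda^{-1}\rho_{\Sigma_\infty,g_\infty}$ passes to the limit because the regularity scale is lower-semicontinuous under the convergence established (smooth graphical convergence of the regular part, plus convergence of the tangent cones at the $Q$ singular tips, which controls $\rho$); the $\mbfF$-bound $\mbfF(|\Sigma|_g,|\Sigma_\infty|_{g_\infty})\leq\delta$ follows from $\mbfF(|\Sigma|_g,|\Sigma_i|_{g_i})\leq\delta$ and $\mbfF$-continuity. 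The bijection $\Sing(\Sigma)\to\Sing(\Sigma_\infty)$ is $p_j\mapsto p_j^\infty$, and the distance and density conditions \eqref{Equ_Canonical dist_density} pass to the limit by continuity of distance and the constancy of density along the sequence. Finally, the $C^{k,\beta}_{loc}$ convergence in the second factor (in the sense of Definition \ref{def:Convergence}) is exactly what the uniform Schauder estimates on the regular part give, once one notes that the graphical radii $\bfr^{\Sigma_i}_{\Sigma_\infty,g_\infty}(p_j^\infty)\to0$ --- which holds because, near each $p_j^\infty$, both $\Sigma_i$ and $\Sigma_\infty$ are small normal graphs over the (common-density, hence by uniqueness of tangent cones and the compactness of $\cC_{N,n}(\Lambda)$, nearby) regular tangent cones, so the graphical section of $\Sigma_i$ over $\Sigma_\infty$ tends to zero on each fixed annulus. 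The main obstacle is the analysis at the singular tips: one must rule out that the limit singular points become smooth points ``of higher multiplicity'' or that singular points of $\Sigma_i$ escape or collide --- this is precisely what the density-preservation clause \eqref{Equ_Canonical dist_density}, the uniform lower regularity-scale bound, and the choice of $\varsigma_0$ small relative to $\injrad(g,\Sigma)$ are engineered to prevent, and making this rigorous requires carefully combining Allard's theorem, the discreteness of cone densities (Lemma \ref{Lem_cC_(N,n)(Lambda) finitely many density}), and the compactness of $\cC_{N,n}(\Lambda)$ (Lemma \ref{Lem_cC_(N,n)(Lambda) bF Cpt}).
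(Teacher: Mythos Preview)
Your proposal is correct and follows essentially the same approach as the paper: Arzel\`a--Ascoli for the metrics, varifold compactness plus the uniform regularity-scale lower bound (Remark~\ref{rem:RegCurvBound}) to upgrade to smooth multiplicity-one graphical convergence on the regular part, and the density-preservation clause to control the singular tips. The paper's own proof is a one-line sketch invoking exactly these ingredients, so your write-up is in fact a more detailed unpacking of the same argument.
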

    \begin{proof}
        The claims easily follows from the Arzel\`a-Ascoli compactness theorem, the definition of canonical neighborhood and the implications of assuming a uniform bound on the regularity scale of all $\MSI$ in $\cL^{k,\alpha}(g, \Sigma; \Lambda, \delta)$ (see in particular Remark \ref{rem:RegCurvBound}). 
    \end{proof}

    We explicitly note that this compactness result implies, among other things, that the scale for the radius functions of any $\Sigma'$ in $(M,g')$ can be chosen uniformly (bounded from below) as one varies $(g',\Sigma')\in \cL^{k,\alpha}(g, \Sigma; \Lambda, \delta)$.

    A second, important reason to consider such canonical neighborhoods is that the following local Sard-Smale-type theorem holds.

    \begin{theorem} \label{Thm_Local Sard Thm}
        Let $(g, \Sigma)\in \cM^{k,\alpha}_n(M)$ with $\widehat{\ind}_{\tau}(L_{\Sigma,g})<0$, $\tau \in \left(\sup_{p\in \Sing(\Sigma)} \gamma_-(\mbfC_p), 1 \right)$ and $\Lambda\in \cR^{k,\alpha}_n(g,\Sigma)$. Then there exists $\varsigma=\varsigma(g, \Sigma, \Lambda)>0$ such that for every $\delta \in (0, \varsigma]$,
        \begin{align*}
            \cG^{k,\alpha}(g, \Sigma; \Lambda, \delta) := \cG^{k,\alpha}(M) \setminus \Pi\left( \cL^{k,\alpha}(g,\Sigma; \Lambda, \delta)\right)
        \end{align*}
        is an open, dense subset of $\cG^{k,\alpha}(M)$. 
    \end{theorem}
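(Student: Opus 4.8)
The plan is to prove openness and density of $\cG^{k,\alpha}(g,\Sigma;\Lambda,\delta)$ separately, with density being the heart of the matter. Openness is essentially a consequence of the compactness of canonical neighborhoods: since $\Pi$ is continuous and, by Lemma \ref{Lem_Cptness for scL^k}, for $\delta\in(0,\varsigma_0)$ the set $\cL^{k,\alpha}(g,\Sigma;\Lambda,\delta)$ is compact in the topology of $\cM^{k,\alpha}_n(M)$, its image $\Pi(\cL^{k,\alpha}(g,\Sigma;\Lambda,\delta))$ is a compact — hence closed — subset of $\cG^{k,\alpha}(M)$; therefore its complement is open. (One must be mildly careful that the topology on the first factor used in Lemma \ref{Lem_Cptness for scL^k} is the $C^k$ one while $\cG^{k,\alpha}(M)$ carries the $C^{k,\alpha}$ topology; since the $C^{k,\alpha}$-bounded, $C^k$-closed set is $C^{k,\alpha}$-closed by a standard interpolation/weak-$*$ argument, closedness in $\cG^{k,\alpha}(M)$ follows.) First I would record this, taking $\varsigma\le\varsigma_0$.

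For density, fix an arbitrary metric $g_1\in\cG^{k,\alpha}(M)$ and $\eta>0$; I must produce $g_2$ with $\|g_1-g_2\|_{C^{k,\alpha}}<\eta$ and $g_2\notin\Pi(\cL^{k,\alpha}(g,\Sigma;\Lambda,\delta))$, i.e. such that no $\MSI$ $\Sigma'$ with $(g_2,\Sigma')\in\cL^{k,\alpha}(g,\Sigma;\Lambda,\delta)$ exists. The idea, following White's scheme as adapted here, is to argue by contradiction and use compactness: if density fails, there is a metric $g_1$ and $\eta_0>0$ so that for \emph{every} $g'$ in the $\eta_0$-ball around $g_1$ there exists $\Sigma'$ with $(g',\Sigma')\in\cL^{k,\alpha}(g,\Sigma;\Lambda,\delta)$. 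Restricting $\delta$ further if necessary so that $\cL^{k,\alpha}(g,\Sigma;\Lambda,\delta)$ is contained in a neighborhood on which one has good analytic control (uniform regularity scale, uniform radius-function scale, uniform Fredholm theory via the counting formula Theorem \ref{thm:Count}), one then wants to derive a contradiction from the strict negativity $\widehat{\ind}_\tau(L_{\Sigma,g})<0$. The mechanism is: the hypothesis $\widehat{\ind}_\tau(L_{\Sigma',g'})<0$ (which holds for all nearby pairs, since the Fredholm index depends only on the singular data and the effective Morse indices $\rmI(\mbfC_p)$, which are locally constant on $\cC_{N,n}(\Lambda)$ by Remark \ref{rem:SpectrumConv} and Lemma \ref{Lem_cC_(N,n)(Lambda) bF Cpt}) forces $\widehat{\Coker}_\tau(L_{\Sigma',g'})\neq 0$; one then shows that metric perturbations supported away from $\Sing(\Sigma')$ act transitively enough on the cokernel to move $\Sigma'$ off the constraint manifold — concretely, one builds a one-parameter family of metrics $g_s'$ along which the stationary varifold cannot persist as an $\MSI$ with the prescribed densities, contradicting the standing assumption. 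The key analytic input is the surjectivity, modulo the (finite-dimensional, nonzero) cokernel, of the linearized map $(h,u)\mapsto L_{\Sigma',g'}u + (\text{derivative of }H\text{ in the direction }h)$, together with an implicit-function/Lyapunov–Schmidt argument carried out \emph{uniformly} over the compact family $\cL^{k,\alpha}(g,\Sigma;\Lambda,\delta)$.

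The main obstacle — and the reason the proof requires the machinery built in the appendices rather than a direct quotation of White \cite{Whi91} — is precisely this uniformity over a canonical neighborhood. Unlike the smooth setting, $\cL^{k,\alpha}(g,\Sigma;\Lambda,\delta)$ is not a Banach manifold: the singularities prevent one from invoking Smale's theorem off the shelf, and the "pseudo-neighborhoods" are genuinely wilder than metric balls (singular points can collide at the boundary). One therefore has to (i) set up the weighted Fredholm theory (Theorem \ref{thm:Count}) uniformly, using that all tangent cones lie in the fixed compact family $\cC_{N,n}(\Lambda)$ (Remark \ref{rem:DensityConesClosure}) so that the indicial roots, and hence the admissible weight window $(2-\tau-n,\tau)$, are uniformly separated from $\Gamma(\Sigma')$; (ii) transfer normal sections between nearby $\MSI$ and between an $\MSI$ and its tangent cones in a controlled way (Proposition \ref{Prop_MSE_Section Transp}, Appendix \ref{app:GraphicalPar}), so that the Lyapunov–Schmidt reduction has uniform estimates; and (iii) show that the "variation of mean curvature under a metric perturbation" operator — which by the computation in Lemma \ref{lem:JacobiTranslFunct} lands, for the relevant perturbations, in the right weighted space — together with $L_{\Sigma',g'}$, is uniformly surjective onto a uniformly-codimension subspace. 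Granting all of this, the contradiction with $\widehat{\ind}_\tau<0$ closes the argument; the strict negativity is exactly what guarantees there is a nonzero cokernel to exploit, so that a generic perturbation destroys the constrained solution. I would organize the write-up as: (1) openness via compactness; (2) reduction of density to a statement about a single compact canonical neighborhood; (3) the uniform Fredholm/Lyapunov–Schmidt setup; (4) the perturbation argument producing the contradiction.
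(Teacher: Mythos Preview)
Your openness argument is correct and matches the paper's. The density argument, however, has a genuine gap: you never explain how perturbing the metric to destroy \emph{one} $\Sigma'$ in the fiber $\Pi^{-1}(g')\cap\cL^{k,\alpha}(g,\Sigma;\Lambda,\delta)$ would prevent \emph{another} $\Sigma''$ from appearing nearby. Your contradiction hypothesis is that \emph{every} metric in a ball has \emph{some} $\MSI$ in the canonical neighborhood, and the fiber may well be infinite. A one-parameter family $g'_s$ along which a given $\Sigma'$ fails to persist does not contradict this. The phrase ``metric perturbations \dots\ act transitively enough on the cokernel'' and the invocation of a uniform Lyapunov--Schmidt reduction are suggestive but do not, as stated, close this gap; you would still need to control the \emph{global} structure of $\Pi^{-1}(\cF\cdot\bar g)\cap\cL^{k,\alpha}$ for some finite-dimensional family $\cF$ of metrics, and nothing in your outline does that.

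The paper's proof supplies exactly this missing structural step. It stratifies $\cL^{k,\alpha}$ by the dimension $I=\dim\widehat{\Ker}_\tau(L_{\Sigma',g'})$ (upper semicontinuous by Corollary \ref{Cor_Upper Semi-conti of dim Ker^+ L}) and proceeds by induction on $I$. On the top stratum $\cL^{k,\alpha}_{top}$, Lemma \ref{Lem_Loc Parametriz of scL^k} --- whose proof rests on the induced-Jacobi-field Lemma \ref{Lem_Tangent vectors to scL^k} --- constructs a $J$-dimensional family $\cF$ of conformal factors (chosen so that $\{\nabla^\perp f_i\}$ spans $\widehat{\Coker}_\tau$) and shows that $\cL^{k,\alpha}_{top}\cap\Pi^{-1}(\cF\cdot\bar g)$ embeds \emph{bi-Lipschitz} into $\widehat{\Ker}_\tau(L_{\bar\Sigma,\bar g})\cong\RR^I$. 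Since $I<J$ by the index hypothesis, the image of a Lipschitz map $\RR^I\supset\cZ\to\cF\cong\RR^J$ has dense complement; this gives metrics $g_j\in\cF\cdot\bar g$ avoiding $\Pi(\cL^{k,\alpha}_{top})$. Any $\Sigma'$ over such $g_j$ has strictly smaller kernel dimension, and the inductive hypothesis (together with compactness of the fiber) finishes. Your outline has the right linearized operator but lacks both the bi-Lipschitz parametrization of the top stratum and the induction on kernel dimension; these are the core ideas, not technicalities.
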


     To derive a global Sard-Smale type theorem on $\cM^{k,\alpha}_n(M)$, it thus suffices to show that it can be covered by countably many canonical neighborhoods.
            
    \begin{theorem} \label{Thm_Countable Decomp}
        Let $\kappa: \cT^{k,\alpha}_n(M)\to \RR_+$ be a positive function, not necessarily continuous. Then there exists a countable number of triples $\{(g_j, \Sigma_j; \Lambda_j)\}\in \cT^{k,\alpha}_n(M)$ such that \[
            \cM^{k,\alpha}_n(M) = \bigcup_{j\geq 1}\cL(g_j, \Sigma_j; \Lambda_j, \kappa_j),   
            \]
        where $\kappa_j:= \kappa(g_j, \Sigma_j; \Lambda_j)$.
    \end{theorem}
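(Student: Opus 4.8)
The plan is to exhibit, for each admissible triple $(g,\Sigma;\Lambda)\in\cT^{k,\alpha}_n(M)$, a countable family of canonical neighborhoods that together already cover the "slice" of $\cM^{k,\alpha}_n(M)$ consisting of pairs whose geometry is controlled by the parameter $\Lambda$, and then take the union over a countable exhaustion of $\Lambda$. Concretely, first I would stratify $\cM^{k,\alpha}_n(M)$ by
\[
\cM^{k,\alpha}_n(M) = \bigcup_{i\geq 1} \cM^{(i)}, \qquad
\cM^{(i)} := \left\{ (g,\Sigma)\in \cM^{k,\alpha}_n(M) \st i\in \cR^{k,\alpha}_n(g,\Sigma),\ \|g\|_{C^{k,\alpha}}\leq i \right\},
\]
which is an exhaustion by Definition \ref{def:Pairs}: every $(g,\Sigma)$ has a $C^{k,\alpha}$ metric of some finite norm and some finite $\Lambda$ with $\mfr_{\Sigma,g}\geq \Lambda^{-1}\rho_{\Sigma,g}$ (the latter is finite because $\Sigma$ is a genuine $\MSI$, so the regularity scale is positive away from $\Sing(\Sigma)$ and comparable to $\rho_{\Sigma,g}$ near the tips by the uniqueness-of-tangent-cone decay, cf. Remark \ref{rem:MSI} and Appendix \ref{sec:QuantUniq}). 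It then suffices to cover each $\cM^{(i)}$ by countably many canonical neighborhoods.

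The key point is that $\cM^{(i)}$, while not itself compact (singular points may collide and densities may jump), becomes covered by canonical neighborhoods once one invokes the compactness built into Definition \ref{Def_injrad, canonical neighb}. I would argue as follows. Fix $i$ and suppose, towards a contradiction, that $\cM^{(i)}$ is \emph{not} covered by any countable collection of the form $\{\cL^{k,\alpha}(g_j,\Sigma_j; i, \kappa(g_j,\Sigma_j;i))\}_{j\geq 1}$ with $(g_j,\Sigma_j)\in\cM^{(i)}$. By a standard exhaustion/diagonal argument one produces a sequence $(g_\ell,\Sigma_\ell)\in\cM^{(i)}$, each lying outside all the canonical neighborhoods centered at the previously chosen points with their prescribed radii $\kappa$; since $\cM^{(i)}$ is precompact in the topology of Remark \ref{rem:TopPairs} (uniform $C^{k,\alpha}$ bound on metrics plus uniform regularity-scale lower bound, hence uniform area and curvature bounds, so Arzel\`a--Ascoli and the finiteness of the density spectrum of Lemma \ref{Lem_cC_(N,n)(Lambda) finitely many density} apply) we may pass to a subsequence with $g_\ell\to g_\infty$ in $C^k$ and $|\Sigma_\ell|_{g_\ell}\to |\Sigma_\infty|_{g_\infty}$ in $\mbfF$-metric. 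One must check that $(g_\infty,\Sigma_\infty)\in\cM^{k,\alpha}_n(M)$: this is where the bound $\mfr_{\Sigma_\ell,g_\ell}\geq i^{-1}\rho_{\Sigma_\ell,g_\ell}$ and Remark \ref{rem:FailMultOne} are essential — they guarantee that the varifold limit is multiplicity one, that $\Sigma_\infty$ is again an $\MSI$, and (using the finiteness of densities and Allard's theorem on the regular part) that $\Sigma_\infty$ has only finitely many singular points with a well-defined bijection onto $\Sing(\Sigma_\ell)$ for large $\ell$ respecting densities, exactly as in Lemma \ref{Lem_Cptness for scL^k}. Moreover $\|g_\infty\|_{C^{k,\alpha}}\leq i$ and, by lower semicontinuity of the regularity-scale bound under this convergence, $i\in\cR^{k,\alpha}_n(g_\infty,\Sigma_\infty)$, so $(g_\infty,\Sigma_\infty)\in\cM^{(i)}$. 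Now for $\ell$ large enough all the defining inequalities of $\cL^{k,\alpha}(g_\infty,\Sigma_\infty; i, \kappa(g_\infty,\Sigma_\infty;i))$ hold for $(g_\ell,\Sigma_\ell)$ — the $C^k$-proximity of metrics, the $\mbfF$-proximity of varifolds, the regularity-scale bound (inherited from membership in $\cM^{(i)}$), and the density-matching bijection on singular sets — so $(g_\ell,\Sigma_\ell)$ lies in that canonical neighborhood, contradicting the choice of the sequence. Hence $\cM^{(i)}$ \emph{is} covered by countably many such canonical neighborhoods; taking the union over $i$ finishes the proof.

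The main obstacle is the verification that the $\mbfF$-limit $(g_\infty,\Sigma_\infty)$ of a sequence in $\cM^{(i)}$ is again an honest element of $\cM^{k,\alpha}_n(M)$ — in particular that no loss of multiplicity, no creation of non-isolated singularities, and no "cascade" collapse of conical tips occurs in the limit. In codimension one this is essentially Edelen's argument \cite{Edelen2021}; in our setting, as flagged in the introduction and in Appendix \ref{Sec:Count_Decomp}, one loses the order structure coming from the trivialized normal bundle, so the combinatorics of nested/colliding isolated conical singularities must be handled separately. The tools that make this go through are: the uniform regularity-scale lower bound (ruling out the multiplicity-two degeneration of Remark \ref{rem:FailMultOne}), the finiteness of admissible densities (Lemma \ref{Lem_cC_(N,n)(Lambda) finitely many density}) which forbids infinite cascades and lets one pass the density-matching bijection to the limit, Allard's regularity theorem on the smooth part, and the quantitative uniqueness of tangent cones (Appendix \ref{sec:QuantUniq}) which controls the behaviour near each surviving tip. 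Granting these, the countable covering is a routine consequence of precompactness, exactly in the spirit of — but technically heavier than — the construction in \cite{Edelen2021}.
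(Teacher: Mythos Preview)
Your argument has a genuine gap at the step where you assert that, once $(g_\ell,\Sigma_\ell)\to (g_\infty,\Sigma_\infty)$ in $C^k\times\mbfF$, the pairs $(g_\ell,\Sigma_\ell)$ eventually lie in $\cL^{k,\alpha}(g_\infty,\Sigma_\infty;i,\kappa(g_\infty,\Sigma_\infty;i))$. Recall from Definition~\ref{Def_injrad, canonical neighb} that membership in a canonical neighborhood requires a \emph{bijection} $\Sing(\Sigma_\infty)\to\Sing(\Sigma_\ell)$ with matching densities. Nothing in your hypotheses prevents two (or more) singular points of $\Sigma_\ell$ from colliding in the limit: the regularity-scale bound $\mfr_{\Sigma_\ell,g_\ell}\geq i^{-1}\rho_{\Sigma_\ell,g_\ell}$ is measured relative to $\Sing(\Sigma_\ell)$ itself, so it is perfectly compatible with $\dist_{g_\ell}(p_1^\ell,p_2^\ell)\to 0$. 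In that scenario $\#\Sing(\Sigma_\ell)>\#\Sing(\Sigma_\infty)$, no such bijection exists, and $(g_\ell,\Sigma_\ell)\notin\cL^{k,\alpha}(g_\infty,\Sigma_\infty;\Lambda,\delta)$ for any $\delta>0$. The paper flags this explicitly after Definition~\ref{Def_injrad, canonical neighb}: canonical neighborhoods are \emph{not} genuine neighborhoods precisely because ``multiple singular points collapse to a single one''. Your invocation of Lemma~\ref{Lem_cC_(N,n)(Lambda) finitely many density} does not help here: finiteness of the density spectrum rules out infinite cascades, but not a finite collision of two cones into a third regular cone of different density.

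This is not a technicality that can be patched by adjusting constants; it is the entire reason Appendix~\ref{Sec:Count_Decomp} exists. The paper's route is substantially different from a direct compactness argument: it first passes through \emph{intermediate} canonical neighborhoods $\cL^{k,\alpha}_0(g,\Sigma;\Lambda,I,\beta)$, defined not by proximity in $\cM^{k,\alpha}_n(M)$ but by $\beta$-closeness of the \emph{tree representations} of Edelen-type cone decompositions (Definition~\ref{Def:cone_decomposition} and the tree definitions following it). The tree encodes the full combinatorics of nested conical scales, so that within a fixed $\cL^{k,\alpha}_0$ the number and hierarchy of singular points is rigid and no collision can occur; this is what makes each $\cL^{k,\alpha}_0$ sequentially compact (Proposition~\ref{prop:finite_covering}) and hence finitely coverable by genuine canonical neighborhoods. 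The countable decomposition of $\cM^{k,\alpha}_n(M)$ into such $\cL^{k,\alpha}_0$'s (equation~\eqref{Eqn:Snd_Decomp}, via Theorem~\ref{thm:delta_nbhd-decomposition}) is where the real work lies, and your proposal bypasses it entirely.
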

    Theorem \ref{Thm_Countable Decomp} will be proved in Section \ref{Sec:Count_Decomp}, based on decomposition arguments inspired by the work \cite{Edelen2021}.

    Heuristically, recalling one of the key points of \cite{Whi91}, $\widehat{\ind}_{\tau}(L_{\Sigma,g})<0$ (which implies the non-surjectivity of the Jacobi operator of $\Sigma$ in $(M,g)$) suggests that the ``tangent map'' of $\Pi$ restricted to $\cM^{k,\alpha}_n$ is itself not surjective at $(g,\Sigma)$. Hence one may think of $\cG(g, \Sigma; \Lambda, \kappa_0)$ as the set of regular values of $\Pi|_{\cL(g, \Sigma; \Lambda, \kappa_0)}$. When we only focus on smooth minimal hypersurfaces, such a Sard-Smale Theorem was proved in \cites{Whi91} by showing that --- working with spaces of metrics having finite, in fact $C^k$, degree of regularity --- $\cM^k$ is a Banach manifold, and $\Pi$ is a Fredholm map with Fredholm index $0$. The discussion about how to derive the corresponding result for smooth metrics, which task turns out to be not entirely trivial, was then later presented in \cite{Whi17}; see also \cite[Section 7]{AmbCarSha18} for a thorough study of this aspect, and the application to the free-boundary counterpart of such a result.

    Here, however, it is unreasonable to expect any Banach manifold structure on $\cM^{k,\alpha}_n(M)$; the way we prove Theorem \ref{Thm_Local Sard Thm} is rather by unwrapping the proof of the infinite dimensional Sard-Smale Theorem by hand crucially employing the canonical neighborhoods defined above. This poses various technical challenges, which we shall describe at due course.

\subsection{Induced Jacobi fields for pairs}\label{subs:JacobiPairs}

Given a pair $(g, \Sigma) \in \cM^{k, \alpha}_n(M)$, for $\Lambda\in\cR^{k,\alpha}_n(g,\Sigma)$ and sufficiently small $\delta$, we can define a \emph{semi-metric} on $\cL^{k,\alpha}(g, \Sigma; \Lambda, \delta)$ by 
    \[\begin{aligned}
        & \mbfD^\cL[(g_1,\Sigma_1), (g_2, \Sigma_2)] :=\|g_1-g_2\|_{C^{k,\alpha}(M)} \\
        & + \|\mbfG^{\Sigma_2}_{\Sigma_1, g_1}\|_{L^2\left(\Sigma_1 \setminus \cup_{x\in \Sing(\Sigma_1)}B^{g_1}(x, C^\cL \bfr^{\Sigma_2}_{\Sigma_1, g_1}(x))\right)}
         + \|\Sigma_2\|_{g_2}(\cup_{x\in \Sing(\Sigma_1)}B^{g_2}(x, 2C^\cL \bfr^{\Sigma_2}_{\Sigma_1, g_1}(x)))\\
        & + \|\mbfG^{\Sigma_1}_{\Sigma_2, g_2}\|_{L^2\left(\Sigma_2 \setminus \cup_{x\in \Sing(\Sigma_2)}B^{g_2}(x, C^\cL \bfr^{\Sigma_1}_{\Sigma_2, g_2}(x))\right)}
        + \|\Sigma_1\|_{g_1}(\cup_{x\in \Sing(\Sigma_2)}B^{g_1}(x, 2C^\cL \bfr^{\Sigma_1}_{\Sigma_2, g_2}(x))) \,.
    \end{aligned}
    \]
    Here, the constant $C^\cL$ is defined as follows. Since $\Sigma$ is an $\MSI$ in $(M, g)$, by Definition \ref{def:MSI} and Remark \ref{rem:MSInotation}, there are finitely many singular points $\Sing(\Sigma) = \{p_i\}^Q_{i = 1}$ and at each singular point $p_i$ the submanifold $\Sigma$ has a unique tangent cone $\mbfC_i \coloneqq \mbfC_{p_i}(\Sigma)$. Using the fact that each set $\Gamma(\mathbf{C}_i)$ is discrete, we can choose a small enough $\sigma \coloneqq \sigma(g,\Sigma) \in (0, 1)$ such that for every $p_i \in \Sing(\Sigma)$,
    \begin{equation}\label{eq:choice_sigma}
      \dist_\R(1 - 2\sigma, \Gamma(\mathbf{C}_i) \cup \{- (n - 2)/ 2\}) \geq 2\sigma\,.
    \end{equation}
   At that stage, we set 
    \begin{equation}\label{eqn:C^cL}
        C^\cL := C_2(1 - 2\sigma, \sigma, \Lambda)>0
    \end{equation} as afforded by Corollary~\ref{Cor_L^2 Growth Est}; clearly, we can assume without loss of generality that $C^\cL\geq 1$ which ensures the well-posedness of the definition above. It is also worth stressing that $C^\cL$ depends on the choice of $\sigma$, but we will fix one admissible $\sigma$ for each pair $(g, \Sigma)$ from now on (we will in fact require equality in \eqref{eq:choice_sigma}).
    
    Recall that (consistently with Definition \ref{def:Pairs}) norms of tensors are taken with respect to the (smooth) reference metric $\hat{g}$ which we have fixed earlier in the discussion. The reason we call $\mbfD^\cL$ a \emph{semi-metric} rather than a \emph{metric} is that it satisfies all the metric axioms except, possibly, for the triangle inequality.

    \begin{remark}\label{rem:AllardMetrization}
    By Allard's regularity theorem~\cite{All72}, a sequence $(g_j, \Sigma_j)$ converges to $(g_\infty, \Sigma_\infty)$ in $\cL^{k,\alpha}$ if and only if $\mbfD^\cL[(g_j, \Sigma_j), (g_\infty, \Sigma_\infty)]\to 0$ when $j\to \infty$. If that is the case (as in Lemma \ref{Lem_Cptness for scL^k}) then there holds $C^{k,\beta}_{loc}$ convergence of $\Sigma_j$ to $\Sigma_{\infty}$ for any $\beta\in (0,1)$, understood as graphical convergence with multiplicity one.
    \end{remark}
        
    The following statement builds upon the fine analysis carried out in Appendix \ref{app:GraphicalPar}, specifically appealing to Corollary~\ref{Cor_L^2 Growth Est} and Corollary \ref{Cor_L^2 Growth est for pair} therein.

    \

    \begin{lemma} \label{Lem_Tangent vectors to scL^k}
        Given $(g, \Sigma) \in \mathcal{M}^{k, \alpha}_n(M)$, $r\in (0, \injrad(g, \Sigma))$, $\Lambda\in\cR^{k,\alpha}_n(g,\Sigma)$, and $\delta > 0$, suppose that:
        \begin{enumerate}[label=(\roman*)]
        \item\label{item:SubspaceData}{$\cF$ be a finite-dimensional subspace of $C^{k,\alpha}(M)$, consisting of functions supported in the complement of the union of the metric balls $B^g(p,r)$ for $p\in\Sing(\Sigma)$;}
        \item\label{item:ConvergencePairs}{$\{(\bar{g}_j, \bar{\Sigma}_j)\}_j$, $\{(g_j^0, \Sigma_j^0)\}_j$ and $\{(g_j^1, \Sigma_j^1)\}_j$ be three sequences in $\cL^{k,\alpha}(g, \Sigma; \Lambda, \delta)$ such that $(\bar{g}_j, \bar{\Sigma}_j)\to (g, \Sigma)$ as $j \to \infty$, for every $j$, $(g_j^0, \Sigma_j^0)$ and $(g_j^1, \Sigma_j^1)$ are distinct pairs, and both sequences also satisfy $(g^i_j, \Sigma^i_j) \to (g, \Sigma)$ as $j\to \infty$;}
        \item\label{item:ConfFactors}{for $i=0,1$ and $j\geq 1$, $g_j^i = (1+f_j^i)\bar{g}_j$ for $f^i_j\in\cF$,  and either $f_j^0\equiv f_j^1$ (in which case we set $f_\infty \equiv 0$), 
                or 
                \[
                \frac{f_j^1 - f_j^0}{\|f_j^1 - f_j^0\|_{C^{k,\alpha}(M)}}\to f_\infty \ \text{ in } C^{k,\alpha}(M)
                \] with the limit function satisfying the condition that $\nabla^\perp_{\Sigma, g}(f_\infty):=(\nabla f_{\infty})^{\perp_{\Sigma,g}}$ is not identically $0$.}
 \end{enumerate}
        For all $j\geq 0$ and $i \in \{0, 1\}$, we set 
		\[
			d_j:= \mbfD^\cL [(g^0_j, \Sigma_j^0), (g^1_j, \Sigma_j^1)] >0\,, \quad u^{(i)}_j:= \mbfG^{\Sigma_j^i}_{\bar{\Sigma}_j, \bar{g}_j}\in  L^\infty(\bar{\Sigma}_j;\mbfV_{j})\,, \quad v^{(i)}_j:= \mbfT^{\bar{\Sigma}_j}_{\Sigma,g}(u^{(i)}_j)\in  L^\infty(\Sigma;\mbfV)
		\]
  where $\mbfV_j$ (respectively: $\mbfV$) denotes the normal bundle to $\bar{\Sigma}_j$ with respect to the metric $\bar{g}_j$ (resp.: to $\Sigma$ with respect to the metric $g$).
		Then we have, after passing to a subsequence, as $j \to \infty$,
        \begin{align*}
            d_j^{-1}\cdot(v^{(1)}_j - v^{(0)}_j) \to \hat{u}_\infty  \ \text{ in }L^2_{loc}(\Sigma;\mbfV) \, \qquad d_j^{-1}\cdot (f_j^1 - f_j^0) \to \hat{f}_\infty \ \text{ in }C^{k,\alpha}(M),
        \end{align*}
        where $\mbfV$ is the normal bundle to $\Sigma$ with respect to the metric $g$, $\hat{f}_\infty$ is a non-negative multiple of $f_\infty$ and $\hat{u}_\infty\in W^{k,2}_{loc}(\Sigma;\mbfV)$ is a non-zero solution of 
		\[
            L_{\Sigma, g} \hat{u}_\infty = \frac{n}{2}\nabla^\perp_{\Sigma, g}(\hat{f}_\infty).  
		\]
        Moreover, $\hat{u}_\infty$ is a tame section (in the sense of Definition \ref{def:SlowerGrowth}).
    \end{lemma}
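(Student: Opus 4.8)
The plan is to argue by a renormalization (blow‑up) procedure, modelled on the classical strategy behind Sard–Smale‑type theorems but carried out ``by hand'' because of the singularities. Abbreviate $w_j \eqdef \mbfG^{\Sigma_j^1}_{\bar\Sigma_j,\bar g_j}-\mbfG^{\Sigma_j^0}_{\bar\Sigma_j,\bar g_j}=u_j^{(1)}-u_j^{(0)}$ and $h_j \eqdef f_j^1-f_j^0$. Since $\Sigma_j^i$ is minimal for $g_j^i=(1+f_j^i)\bar g_j$ while $\bar\Sigma_j$ is $\bar g_j$-minimal, the conformal transformation law for the mean curvature together with the graphical minimal‑surface operator over $\bar\Sigma_j$ (Appendix \ref{app:MSE}, in particular the intertwining of operators recorded in Proposition \ref{Prop_MSE_Section Transp}) give, upon subtracting the equations for $i=0$ and $i=1$, a relation of the schematic form
\[
L_{\bar\Sigma_j,\bar g_j}w_j \;=\; \tfrac n2\,\nabla^\perp_{\bar\Sigma_j,\bar g_j}(h_j)\;+\;\cE_j^{(0)}[w_j]\;+\;\cE_j^{(1)}[h_j],
\]
where $\cE^{(0)}_j$ is linear in $w_j$ and its first two normal derivatives with coefficients tending to $0$ (because $u_j^{(i)}\to 0$ locally as $(g_j^i,\Sigma_j^i)\to(g,\Sigma)$) and $\cE_j^{(1)}$ is a similarly vanishing perturbation. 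Since $\|h_j\|_{C^{k,\alpha}(M)}\le d_j$ by the very definition of $\mbfD^\cL$ and $\dim\cF<\infty$, after passing to a subsequence $d_j^{-1}h_j\to \hat f_\infty$ in $C^{k,\alpha}(M)$, with $\hat f_\infty$ a non-negative multiple of $f_\infty$; dividing the displayed identity by $d_j$ then produces a renormalized equation for $\hat w_j\eqdef d_j^{-1}w_j$ with convergent inhomogeneous term.

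Next I would collect the uniform estimates needed to pass to the limit. Away from $\Sing(\Sigma)$, the comparison results of Appendix \ref{app:GraphicalPar} (Lemma \ref{Lem_App_GraphParam_Compare diff graph func}, as used in Remark \ref{Rem_Choice of smallness in graph rad}) identify, up to controlled errors and a translation coming from the conformal factor, the graphical section $\mbfG^{\Sigma_j^1}_{\Sigma_j^0,g_j^0}$ with $w_j$; since $\mbfD^\cL$ dominates the $L^2$-norm of that section away from the singular balls, one gets $\|w_j\|_{L^2(\Sigma\setminus B^g(\Sing(\Sigma),r))}\le Cd_j$ for every $r>0$, hence a uniform $L^2_{loc}$-bound for $\hat w_j$ on $\Sigma\setminus\Sing(\Sigma)$, which interior elliptic estimates applied to the renormalized equation upgrade to a uniform $W^{k,2}_{loc}$-bound there. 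Near each $q\in\Sing(\Sigma)$ the role of the constants $\sigma=\sigma(g,\Sigma)$ and $C^\cL$, fixed through \eqref{eq:choice_sigma} and \eqref{eqn:C^cL}, is precisely to render the growth estimates of Corollary \ref{Cor_L^2 Growth Est} and Corollary \ref{Cor_L^2 Growth est for pair} applicable uniformly along the sequence: they produce, for suitable vectors $\mbfx_j^{(i)}$ attached to the nearby singular points $q_j^i\in\Sing(\Sigma_j^i)$, uniform bounds of the type $\|v_j^{(i)}-(\mbfx_j^{(i)})^\perp\|_{W^{0,2}_{1-2\sigma}}\le C$ near $q$, together with the attendant $L^2$-non-concentration as the radius shrinks, uniformly in $j$. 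Because $\bar\Sigma_j\to\Sigma$ and $\bar g_j\to g$ in $C^{k,\beta}_{loc}$ (Lemma \ref{Lem_Cptness for scL^k}), so that $\mbfT^{\bar\Sigma_j}_{\Sigma,g}$ is a bi‑Lipschitz intertwiner converging to the identity (Proposition \ref{Prop_MSE_Section Transp}), Rellich compactness then produces a subsequential limit $d_j^{-1}(v_j^{(1)}-v_j^{(0)})\to\hat u_\infty$ strongly in $L^2_{loc}(\Sigma;\mbfV)$ and weakly in $W^{k,2}_{loc}(\Sigma;\mbfV)$, solving $L_{\Sigma,g}\hat u_\infty=\tfrac n2\nabla^\perp_{\Sigma,g}(\hat f_\infty)$.

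It then remains to check non‑triviality and tameness. For the former, note that, again by the structure of $\mbfD^\cL$ and the comparison lemmas, $d_j$ is comparable to $\|h_j\|_{C^{k,\alpha}(M)}$ plus the $L^2$-norms of $w_j$ and of the reverse graphical section over a fixed compact piece of $\Sigma$ (the near‑singularity masses entering $\mbfD^\cL$ being reabsorbed by the growth and non‑concentration estimates above). Dividing by $d_j$ and letting $j\to\infty$, the sum of the corresponding limiting quantities is bounded below by a positive constant; hence either $\hat u_\infty\not\equiv0$, or else $\hat f_\infty\not\equiv0$, in which case $f_j^0\not\equiv f_j^1$ for infinitely many $j$ so that $\hat f_\infty$ is a positive multiple of $f_\infty$, whence $\nabla^\perp_{\Sigma,g}(\hat f_\infty)=\tfrac2n L_{\Sigma,g}\hat u_\infty\not\equiv0$ by hypothesis \ref{item:ConfFactors}, which again forces $\hat u_\infty\not\equiv0$. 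For tameness, observe that every $f_j^i$ is supported away from $B^g(p,r)$, $p\in\Sing(\Sigma)$, so $\hat f_\infty$ vanishes near $\Sing(\Sigma)$ and $\hat u_\infty$ is a genuine Jacobi field in a punctured neighbourhood of each $q\in\Sing(\Sigma)$; passing to the limit in the uniform estimates near $q$ and subtracting the resulting translation‑like piece, $\hat u_\infty=\phi+\tilde u$ with $\phi$ translation‑like and $\cA\cR_q(\tilde u)\ge 1-2\sigma$ for every $q$. Since $L_{\Sigma,g}\phi\in W^{0,2}_{-\eps}(\Sigma;\mbfV)$ for every $\eps>0$ (Lemma \ref{lem:JacobiTranslFunct}) while $\nabla^\perp_{\Sigma,g}(\hat f_\infty)$ vanishes near the singular points, we have $L_{\Sigma,g}\tilde u\in W^{k-2,2}_{\tau-2}(\Sigma;\mbfV)$ for $\tau$ in the admissible range \eqref{eq:RangeTau}, and $1-2\sigma$ lies in that range by the gap condition \eqref{eq:choice_sigma}; item \ref{item:ARSob} of Lemma \ref{Lem_Anal for slower growth func} then gives $\tilde u\in W^{k,2}_{\tau'}(\Sigma;\mbfV)$ for every $\tau'<1-2\sigma$, so that $\hat u_\infty\in\hat W^{0,2}_{\tau'}(\Sigma;\mbfV)$ is tame in the sense of Definition \ref{def:SlowerGrowth} (if one needs the specific $\tau$ of the statement and $\tau>1-2\sigma$, one first upgrades the rate of $\tilde u$ at $q$ using that $\Gamma(\mbfC_q)$ misses $(1-2\sigma,1)$, exactly as in the proof of item \ref{item:AR} of Lemma \ref{Lem_Anal for slower growth func}). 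The step I expect to be the main obstacle is the near‑singularity analysis: making the three‑circle and growth estimates of Appendices \ref{sec:3circles} and \ref{app:GraphicalPar} apply uniformly along the sequence — which forces the delicate calibration of $\sigma$ and $C^\cL$ and a careful comparison of the several nearby graphical parametrizations, including the ``translated‑graph'' corrections induced by the conformal change of metric — and then converting the resulting uniform $L^2$-non‑concentration into both the strong $L^2_{loc}$ convergence of the $\hat w_j$'s and the tameness of $\hat u_\infty$.
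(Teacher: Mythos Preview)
Your proposal is correct and follows essentially the same route as the paper: set up the PDE for $w_j$ via Proposition~\ref{Prop_MSE_Main}, extract $\hat f_\infty$ by finite-dimensionality of $\cF$, use the growth estimates of Corollaries~\ref{Cor_L^2 Growth Est}--\ref{Cor_L^2 Growth est for pair} together with Lemma~\ref{Lem_App_GraphParam_Compare diff graph func} to control the near-singularity behavior and pass to the limit via Corollary~\ref{Cor_MSE_converg preserve equ}, and establish tameness by extracting the translation-like piece and invoking Lemma~\ref{Lem_Anal for slower growth func}\ref{item:ARSob}. The paper organizes the non-triviality step as a separate dichotomy (its Claim~2, showing $d_j\sim d'_j:=\|w_j\|_{L^2(\bar\Sigma_j\setminus B(\Sing,r_0))}$) and in the case $d_j/d'_j\to\infty$ invokes unique continuation; your comparability argument is a legitimate shortcut that sidesteps that appeal, since once each term of $\mbfD^\cL$ is bounded by $C(\|h_j\|+d'_j)$ the alternative ``$\hat u_\infty\equiv 0$ and $\hat f_\infty\neq 0$'' is immediately excluded by the limit equation. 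Two minor inaccuracies to clean up: there is no ``translation coming from the conformal factor'' in the comparison of $\mbfG^{\Sigma_j^1}_{\Sigma_j^0,g_j^0}$ with $w_j$ away from the singular set (the translations in Appendix~\ref{app:GraphicalPar} come from the displaced singular points, and the $f_j^i$ vanish there); and the uniform near-$q$ bound you quote should be for the \emph{difference} $v_j^{(1)}-v_j^{(0)}$ (via Corollary~\ref{Cor_L^2 Growth est for pair}), not for each $v_j^{(i)}$ separately.
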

    \begin{proof}
        For $(g, \Sigma)$ as in the statement, we choose $\sigma$ as described above (see \eqref{eq:choice_sigma}) and set $\gamma=1-2\sigma$.
       Since $\bar \Sigma_j, \Sigma^0_j, \Sigma^1_j \to \Sigma$ in $\cL^{k, \alpha}$, for all sufficiently large $j$, for any (necessarily non-trivial) tangent cone $\mbfC$ at points belonging to $\Sing(\bar{\Sigma}_j), \Sing(\Sigma^0_j)$ or $\Sing(\Sigma^1_j)$, we have
        \begin{equation}\label{eqn:sigma_choice}
            \dist_\R(1 - 2\sigma, \Gamma(\mathbf{C}) \cup \{- (n - 2)/ 2\}) \geq \sigma\,.
        \end{equation}
        
        Note, further, that within the set $\cup_{p\in \Sing(\Sigma)} B^g(p, r)$, for all $j\geq 1$ there holds
        \[
            \bar g_j = g^0_j = g^1_j
        \]
        Therefore, we can apply Corollary~\ref{Cor_L^2 Growth Est} (keeping in mind the $C^{k,\beta}_{loc}$ convergence of both $\Sigma^0_j$ and $\Sigma^1_j$ to $\Sigma$, cf. Remark \ref{rem:AllardMetrization}). Thus there exist $C_0 = C_0(\sigma, \Lambda) > 0$, $\tilde C_0 = \tilde C_0(\sigma, \Lambda) > 2$, and $r_0 = r_0(g, \Sigma; \Lambda, \sigma) \in (0, \min(r / (2\tilde C_0 + 1), 1/2))$ such that, for every $\bar{\mathbf{x}}_j \in \Sing(\bar \Sigma_j)$, set $\mathbf{x}^0_j$ and $\mathbf{x}^1_j$ the unique element in $\Sing(\Sigma^0_j) \cap B^{\bar g_j}(\bar{\mathbf{x}}_j, r_0)$ and $\Sing(\Sigma^1_j) \cap B^{\bar g_j}(\bar{\mathbf{x}}_j, r_0)$ respectively,  we shall have for $i = 0, 1$
        \begin{equation}\label{eqn:estimate_|x^1_j - x^0_j|_r^i_j}
           \frac{1}{2} |\mbfx^1_j-\mbfx^0_j| \leq \bfr^{(i)}_j(\mbfx^{1-i}_j) \coloneqq \bfr^{\Sigma^i_j}_{\Sigma^{1-i}_j, \bar g_j}(\mbfx^{1-i}_j) \leq C_0 \|\mbfG^{\Sigma^i_j}_{\Sigma^{1-i}_j, \bar g_j}\|_{L^2(\Sigma^{1-i}_j\cap A^{\bar g_j}(\mbfx^{1-i}_j, 2 r_0, \tilde C_0 r_0))},
        \end{equation}
        \begin{equation}\label{eqn:estimate_mbfG^1_0}
            \|\mbfG^{\Sigma_j^i}_{\Sigma^{1-i}_j, \bar g_j}\|_{C^0 (\Sigma^{1-i}_j\cap A^{\bar g_j}(\mbfx^{1-i}_j, C^\cL\bfr^{(i)}_j(\mbfx^{1-i}_j), 2r_0))} \leq C_0 \|\mbfG^{\Sigma_j^i}_{\Sigma^{1-i}_j, \bar g_j}\|_{L^2(\Sigma^{1-i}_j\cap A^{\bar g_j}(\mbfx^{1-i}_j, 2 r_0, \tilde C_0 r_0))}\,.
        \end{equation}

        In addition, we note that, letting $j\to\infty$ in the equation
        $g_j^i = (1+f_j^i)\bar{g}_j$ we get (by virtue of the convergence assumptions \ref{item:ConvergencePairs}, based on the specification in \ref{item:ConfFactors}) in fact 
        \be\label{eq:f^i_j}
            \|f^i_j\|_{C^{k,\alpha}(M)} \to 0, \ \ i=0,1 \ \ (j\to\infty).
        \ee
        
        It follows from the minimal submanifold system (see Proposition \ref{Prop_MSE_Main}, cf. \cite[Theorem~B.1]{LW22}) that, for sufficiently large $j$, $w_j \coloneqq u^{(1)}_j - u^{(0)}_j$ satisfies the equation
        \begin{equation}\label{eqn:w_j_f_j}
            L_{\bar{\Sigma}_j, \bar g_j} w_j - \frac{n}{2} \nabla^\perp_{\bar{\Sigma}_j, \bar g_j}(f^1_j - f^0_j) + \nabla^\perp_{\bar{\Sigma}_j, \bar g_j} \cdot \tilde b_0 + \frac{\tilde b_1}{\rho_j} = 0
        \end{equation}
        on $\bar \Sigma_j \setminus \cup_{\mbfx_j\in \Sing(\bar \Sigma_j)} B^{\bar g_j}(\mbfx_j,\max_{i=0,1}C^{\cL}\bfr^{(i)}_j(\mbfx^{1-i}_j))$. Here, $\rho_j:=\rho_{\bar{\Sigma}_j,\bar{g}_j}$, and $\tilde b_0$ and $\tilde b_1$ are error terms satisfying pointwise estimates
        \begin{equation}\label{eqn:Error_estimates}
            \begin{aligned}
            |\tilde b_0|, |\tilde b_1| \leq C \cdot \left(\sum_{i = 0,1} \frac{|u^{(i)}_j|}{\rho_j} + |\nabla^{\perp} u^{(i)}_j| + \|f^i_j\|_{C^2}\right)
                \cdot \left(\frac{|w_j|}{\rho_j} + |\nabla^{\perp} w_j| + \|f^1_j - f^0_j\|_{C^2}\right)\,
            \end{aligned}
        \end{equation}
        where, for notational convenience, we have written $\nabla^{\perp}$ in lieu of $\nabla^{\perp}_{\bar{\Sigma}_j,\bar{g}_j}$, and $C=C(g,\Sigma,\Lambda,\delta)$ is a constant independent of $j$.

        \textbf{Claim 1.} For any subsequence of $\left\{f^1_j-f^0_j\right\}_{j\geq 1}$, we can find a further subsequence such that
        \[
            \frac{(f^1_j - f^0_j)}{d_j} \to \hat{f}_{\infty}= c \cdot  f_\infty \in \cF\,,
        \]
        for some $c \geq 0$. (The value of $c$ could depend on the chosen subsequence.)
        \begin{proof}
            If $f^1_j = f^0_j$, then this is trivially true with $c = 0$.
    
            Otherwise, by definition,
            \[
                d_j\geq  \|g^1_j - g^0_j\|_{C^{k,\alpha}(M)}  \geq C\|f^1_j - f^0_j\|_{C^{k,\alpha}(M)}\,,
            \]
            and the existence of the limit (recalling the finite-dimensionality of $\cF$) follows immediately.
        \end{proof}

        \textbf{Claim 2.} Let $d'_j := \|w_j\|_{L^2(\bar \Sigma_j \setminus B^{\bar g_j}(\Sing(\bar \Sigma_j), r_0))}$. Then 
        \[
            \liminf_{j \to \infty} \frac{d_j}{d'_j} \in (0, +\infty)\,.
        \]
        \begin{proof}
            Without loss generality, and without renaming, we can restrict to a subsequence whose limit is equal to $\liminf_{j \to \infty} \frac{d_j}{d'_j}$. Suppose for the sake of contradiction that either $\lim_{j \to \infty} \frac{d_j}{d'_j} = 0$ or $+\infty$. Let us discuss such two cases separately.
			
            \medskip
            \paragraph*{Case 1} $\boxed{\lim_{j \to \infty} \frac{d_j}{d'_j} = 0}.$ 
            
            Note that, by definition of $d_j$, for any $j$ large enough there holds
            \[
                d_j \equiv \mbfD^\cL [(g^0_j, \Sigma_j^0), (g^1_j, \Sigma_j^1)] 
                \geq \|\mbfG^{\Sigma^1_j}_{\Sigma^0_j, g^0_j}\|_{L^2(\Sigma^0_j \setminus B^{g^0_j}(\Sing( \Sigma^0_j), r_0 / 3))}.
            \]

            Next, by the convergence assumptions in item \ref{item:ConvergencePairs}, for any $\delta>0$ there exists $j_0=j_0(\delta)$ such that  $j\geq j_0(\delta)$ implies that outside of $B^{g^0_j}(\Sing(\Sigma^0_j), r_0 / 8)$, the submanifolds $\Sigma^0_j$, $\Sigma^1_j$ and $\bar \Sigma_j$ are all $\delta$-$C^3$ graphs over $\Sigma$. In addition, for all sufficiently large $j$, there holds the inclusion $B^{g^0_j}(\Sing( \Sigma^0_j), 2 r_0 / 3)  \subset B^{\bar g_j}(\Sing(\bar \Sigma_j), r_0)$. It then follows from \eqref{Equ_GraphParam_|u^2_0-u^1_0| approx |u^2_1|} in Lemma \ref{Lem_App_GraphParam_Compare diff graph func} (employing a suitable covering argument) that for all sufficiently large $j$ there exists a constant $C> 0$ (possibly depending on $g,\Sigma,\Lambda,\delta$ but independent of $j$) such that
            \[\begin{aligned}
                d'_j &= \|\mbfG^{\Sigma_j^1}_{\bar{\Sigma}_j, \bar{g}_j} - \mbfG^{\Sigma_j^0}_{\bar{\Sigma}_j, \bar{g}_j}\|_{L^2(\bar \Sigma_j \setminus B^{\bar g_j}(\Sing(\bar \Sigma_j), r_0))}\\
                    &\leq \|\mbfG^{\Sigma_j^1}_{\bar{\Sigma}_j, \bar{g}_j} - \mbfG^{\Sigma_j^0}_{\bar{\Sigma}_j, \bar{g}_j}\|_{L^2(\bar \Sigma_j \setminus B^{g^0_j}(\Sing(\Sigma^0_j), 2 r_0 / 3))}\\
                    &\leq C \|\mbfG^{\Sigma^1_j}_{\Sigma^0_j, g^0_j}\|_{L^2(\Sigma^0_j \setminus B^{g^0_j}(\Sing(\Sigma^0_j), r_0 / 3))}\\
                    &\leq C d_j\,.
            \end{aligned}\] 

            Therefore, $\frac{d_j}{d'_j} \geq \frac{1}{C}$. This contradicts our assumption that $\lim_{j \to \infty} \frac{d_j}{d'_j} = 0$.
			
		\medskip
            \paragraph*{Case 2} $\boxed{\lim_{j \to \infty} \frac{d_j}{d'_j} = +\infty.}$ In this case, we can divide the PDE~\eqref{eqn:w_j_f_j} by $d_j$, and apply Corollary \ref{Cor_MSE_converg preserve equ} to \eqref{eqn:w_j_f_j}, \eqref{eqn:Error_estimates} to derive,
            \[
                \frac{v^{(1)}_j-v^{(0)}_j}{d_j}=\frac{\mbfT^{\bar{\Sigma}_j}_{\Sigma,g} (w_j)}{d_j} \to \hat u_\infty\ \text{ in } L^2_{loc}(\Sigma;\mbfV), \qquad \frac{(f^1_j - f^0_j)}{d_j} \to \hat f_\infty \text{ in }C^{k,\alpha}(M)\,,
            \]
            for some weak solution $\hat u_\infty\in W^{1,2}_{loc}(\Sigma; \mbfV)$ to
            \begin{equation}\label{eqn:ellip_PDE2}
                -L_{\Sigma, g} \hat u_\infty + \frac{n}{2}\nabla^\perp_{\Sigma, g}(\hat f_\infty) = 0\,.
            \end{equation}
            As a result, we have 
            \[
                \|\hat u_\infty\|_{L^2(\Sigma \setminus B^{\bar g}(\Sing(\Sigma), 2r_0))} \leq \lim_{j \to \infty} \frac{\|w_j\|_{L^2(\bar \Sigma_j \setminus B^{\bar g_j}(\Sing(\bar \Sigma_j), r_0))}}{d_j} = \lim_{j \to \infty} \frac{d'_j}{d_j} = 0\,.
            \] 
          Therefore, $\hat u_\infty \equiv 0$ in $\Sigma \setminus B^g(\Sing(\Sigma), 2r_0)$.
          Recall that $\hat f_\infty \equiv 0$ in $B^g(\Sing(\Sigma), 3r_0)$. By the unique continuation property of solutions to~\eqref{eqn:ellip_PDE2} (for which \cite{Aro57} would suffice), we know that $\hat u_\infty \equiv 0$ on $\Sigma$. Then, since $\nabla^\perp_{\Sigma, g}(f_\infty)$ is not identically $0$ and $\hat f_\infty = c f_\infty$, we then have $c = 0$, i.\,e., $\hat f_\infty \equiv 0$. Hence, as one lets $j\to\infty$ there holds in fact
            \[
                \frac{\|g^1_j - g^0_j\|_{C^{k,\alpha}(M)}}{d_j} \to 0\,.
            \]
                Again by \eqref{Equ_GraphParam_|u^2_0-u^1_0| approx |u^2_1|} in Lemma \ref{Lem_App_GraphParam_Compare diff graph func}, there exists a constant $C>0$ independent of $j$ such that
            \begin{align*}
                &\quad \frac{\|\mbfG^{\Sigma^1_j}_{\Sigma^0_j, g^0_j}\|_{L^2(\Sigma^0_j \setminus B^{g^0_j}(\Sing(\Sigma^0_j), 2r_0))} + \|\mbfG^{\Sigma^0_j}_{\Sigma^1_j, g^1_j}\|_{L^2(\Sigma^1_j \setminus B^{g^1_j}(\Sing(\Sigma^1_j), 2r_0))}}{d_j}\\
                &\leq C\cdot \frac{\|w_j\|_{L^2(\bar \Sigma_j \setminus B^{\bar g_j}(\Sing(\bar \Sigma_j), r_0))}}{d_j} \to 0\,,
            \end{align*}
            as $j \to \infty$. By the pointwise estimates for $\mbfG^{\Sigma^1_j}_{\Sigma^0_j, g^0_j}$ and $\mbfG^{\Sigma^0_j}_{\Sigma^1_j, g^1_j}$ collected in \eqref{eqn:estimate_mbfG^1_0}, there exists a possibly larger constant (still independent of $j$ such that 
            \begin{align*}
                &\quad \frac{\|\mbfG^{\Sigma^1_j}_{\Sigma^0_j, g^0_j}\|_{L^2(\bigcup_{\mbfx \in \Sing(\Sigma^0_j)} \Sigma^0_j \cap A^{\bar g_j}(\mbfx, C^\cL \bfr^{(1)}_j(\mbfx), 2 r_0))} + \|\mbfG^{\Sigma^0_j}_{\Sigma^1_j, g^1_j}\|_{L^2(\bigcup_{\mbfx \in \Sing(\Sigma^1_j)} \Sigma^1_j\cap A^{\bar g_j}(\mbfx, C^\cL \bfr^{(0)}_j(\mbfx), 2 r_0))}}{d_j}\\
                &\leq C \cdot \frac{\|\mbfG^{\Sigma^1_j}_{\Sigma^0_j, g^0_j}\|_{L^2(\bigcup_{\mbfx \in \Sing(\Sigma^0_j)} \Sigma^0_j \cap A^{\bar g_j}(\mbfx, 2 r_0, \tilde C_0 r_0))} + \|\mbfG^{\Sigma^0_j}_{\Sigma^1_j, g^1_j}\|_{L^2(\bigcup_{\mbfx \in \Sing(\Sigma^1_j)} \Sigma^1_j \cap A^{\bar g_j}(\mbfx, 2 r_0, \tilde C_0 r_0))}}{d_j}\\
                &\leq C \cdot \frac{\|\mbfG^{\Sigma^1_j}_{\Sigma^0_j, g^0_j}\|_{L^2(\Sigma^0_j \setminus B^{g^0_j}(\Sing(\Sigma^0_j), 2r_0))} + \|\mbfG^{\Sigma^0_j}_{\Sigma^1_j, g^1_j}\|_{L^2(\Sigma^1_j \setminus B^{g^1_j}(\Sing(\Sigma^1_j), 2r_0))}}{d_j} \to 0\,,
            \end{align*}
            as $j \to \infty$, where, in the last inequality, we have exploited the fact that for sufficiently large $j$, $\bar g_j = g^0_j = g^1_j$ in $B^{g^0_j}(\Sing(\Sigma^0_j), 3r_0) \cup B^{g^1_j}(\Sing(\Sigma^1_j), 3r_0)$. Hence, we can conclude that
            \begin{equation*}
                \frac{\|\mbfG^{\Sigma^1_j}_{\Sigma^0_j, g^0_j}\|_{L^2(\Sigma^0_j\setminus \bigcup_{\mbfx \in \Sing(\Sigma^0_j)}B^{g^0_j}(\mbfx, C^\cL \bfr^{(1)}_j(\mbfx)))} + \|\mbfG^{\Sigma^0_j}_{\Sigma^1_j, g^1_j}\|_{L^2(\Sigma^1_j\setminus \bigcup_{\mbfx \in \Sing(\Sigma^1_j)}B^{g^1_j}(\mbfx, C^\cL \bfr^{(0)}_j(\mbfx)))}}{d_j}\to 0.
            \end{equation*}
               Moreover, since both $\Sigma^0_j$ and $\Sigma^1_j$ converge to $\Sigma$ in $\cL^{k, \alpha}$, 
            by~\eqref{eqn:estimate_|x^1_j - x^0_j|_r^i_j}, 
            \[
                \bfr^{\Sigma^0_j}_{\Sigma^1_j, g^1_j}, \quad \bfr^{\Sigma^1_j}_{\Sigma^0_j, g^0_j} \to 0 \quad \text{uniformly, respectively on}\, \Sing(\Sigma^1_j), \,\Sing(\Sigma^0_j)\,.
            \] 
            It follows from Corollary~\ref{Cor_Converg in all Scales} that, if for each $\bar \mbfx_j \in \Sing(\bar \Sigma_j)$ we let $\mbfx \in \Sing(\Sigma)$ be the unique element in $\Sing(\Sigma)\cap B^{\bar g_j}(\bar \mbfx_j, r_0)$, there holds -- after suitably translating in order to make the singular points coincide, and blowing up to unit scale -- varifold convergence of $\Sigma^0_j$ (respectively: $\Sigma^1_j$) in a neighborhood of $\mbfx^1_j$ of radius $2C^\cL \bfr^{\Sigma^0_j}_{\Sigma^1_j, g^1_j}(\mbfx^{1}_j)$ (resp.: in a neighborhood of $\mbfx^0_j$ of radius $2C^\cL \bfr^{\Sigma^1_j}_{\Sigma^0_j, g^0_j}(\mbfx^0_j)$) to the truncated cone $\mbfC_{\mbfx}(\Sigma)\cap \BB(1) $; in turn, this
            implies, for all sufficiently large $j$ that
            \begin{align*}
                &\quad \frac{\|\Sigma^1_j\|_{g^1_j}(\bigcup_{\mbfx \in \Sing(\Sigma^0_j)} B^{g^0_j}(\mbfx, 2C^\cL \bfr^{\Sigma^1_j}_{\Sigma^0_j, g^0_j}(\mbfx))) + \|\Sigma^0_j\|_{g^0_j}(\bigcup_{\mbfx \in \Sing(\Sigma^1_j)} B^{g_1}(\mbfx, 2C^\cL \bfr^{\Sigma^0_j}_{\Sigma^1_j, g^1_j}(\mbfx)))}{d_j} \\
                &\leq Q C \cdot\frac{\left(\max_{\mbfx \in \Sing(\Sigma^0_j)}\bfr^{\Sigma^1_j}_{\Sigma^0_j, g^0_j}\right)^n(\mbfx) + \left(\max_{\mbfx \in \Sing(\Sigma^1_j)} \bfr^{\Sigma^0_j}_{\Sigma^1_j, g^1_j}(\mbfx)\right)^n}{d_j}\\
                &\leq QC\cdot \frac{\|\mbfG^{\Sigma^1_j}_{\Sigma^0_j, g^0_j}\|^n_{L^2(\Sigma^0_j \setminus B^{g^0_j}(\Sing(\Sigma^0_j), 2r_0))} + \|\mbfG^{\Sigma^0_j}_{\Sigma^1_j, g^1_j}\|^n_{L^2(\Sigma^1_j \setminus B^{g^1_j}(\Sing(\Sigma^1_j), 2r_0))}}{d_j}
                \to 0\,
            \end{align*}
            where this last conclusion follows from \eqref{eqn:estimate_|x^1_j - x^0_j|_r^i_j} and we have denoted by $Q$ the number of singular points of $\Sigma$ (as stipulated in Section \ref{sec:Prelim}).
	 These three facts, that we just collected, when combined together contradict the definition of $d_j$.
	\end{proof}

        Summarizing, by Claim 2 and Corollary \ref{Cor_MSE_converg preserve equ}, if we divide the PDE~\eqref{eqn:w_j_f_j} by $d_j$, we get subsequential $L^2_{loc}$ convergence of $\{ d^{-1}_j \mbfT^{\bar{\Sigma}_j}_{\Sigma,g} (w_j)\}_j$ to a non-zero weak solution $\hat u_\infty\in W^{1,2}_{loc}(\Sigma; \mbfV)$ of the limit equation 
        \[
            -L_{\Sigma, g} \hat u_\infty + \frac{n}{2}\nabla^\perp_{\Sigma, g}(\hat f_\infty) = 0\,.
        \]
        By Corollary~\ref{Cor_L^2 Growth est for pair}, for each $\bar \mbfx_j \in \Sing(\bar \Sigma_j)$ the corresponding ratio $\frac{\mbfx^0_j - \mbfx^1_j}{d_j}$ converges (again up to extracting a subsequence), which determines a translation-like section $\phi\in W_{\text{Tr}}(\Sigma; \mbfV)$. Moreover, by the last inequality in the statement of Corollary~\ref{Cor_L^2 Growth est for pair}, for every $p \in \Sing(\Sigma)$,
        \[
            \cA\cR_p(\hat{u}_\infty - \phi) \geq \gamma > \gamma_{-}(\mbfC_p)\,.
        \]
        Note that this conclusion relies on the fact that the $s^{-1}$ term in the last inequality of Corollary~\ref{Cor_L^2 Growth est for pair} does not affect the asymptotic rate, as for every fixed $s > 0$, the coefficient of $s^{-1}$ converges to $0$ in the limit as $j \to \infty$. Therefore, (recalling that we had set $\gamma=1-2\sigma$, satisfying \eqref{eqn:sigma_choice}) by item \ref{item:ARSob} of Lemma \ref{Lem_Anal for slower growth func} there holds $\hat{u}_\infty \in \hat W^{0,2}_{\gamma'}(\Sigma; \mbfV)$ for every $\gamma'<\gamma$, and thus $\hat{u}_\infty$ is indeed a tame section.
    \end{proof}

\subsection{The local Sard-Smale theorem}
The goal of this section is to prove Theorem \ref{Thm_Local Sard Thm}.
    Recalling the notion of canonical neighborhoods $\cL^{k,\alpha}$ presented in Definition \ref{Def_injrad, canonical neighb}, let us start with the first preparatory statement:
    \begin{lemma}[Compactness of tame Jacobi fields] \label{Lem_Cptness for Slower Growth Jac Fields}
        Let $(M,g)$ be a closed Riemannian manifold of dimension $N\geq 3$, let
        $\Sigma$ be a connected, $n$-dimensional $\MSI$ therein, and let $\varsigma_0$ be as in Lemma \ref{Lem_Cptness for scL^k}.   
        Assume the existence of a sequence $\set{(g_j, \Sigma_j)}_{j\geq 1}$ of pairs in $\cL^{k,\alpha}(g, \Sigma; \Lambda, \varsigma_0)$ satisfying the following two conditions:
        \begin{enumerate} [label=(\roman*)]
            \item\label{item:CompactJac1} $(g_j, \Sigma_j) \to (g_\infty, \Sigma_\infty) \in \cL^{k,\alpha}(g, \Sigma; \Lambda, \varsigma_0)$ in $\cM_n^{k,\alpha}(M)$ as $j\to\infty$;
            \item\label{item:CompactJac2}  for every $j\geq 1$ there exists a tame Jacobi field $u_j$ on $\Sigma_j$ such that  $\|u_j\|_{L^2(\Sigma_j)} = 1$.
        \end{enumerate}
        Then, after passing to a subsequence, $v_j:=\mbfT^{\Sigma_j}_{\Sigma_{\infty},g_{\infty}}(u_j)$ converges in $L^2_{loc}$ (and weakly in $W^{1,2}_{loc}$) to some tame Jacobi field $u_\infty$; furthermore for any $\eps>0$ there exist $s=s(\eps)\in (0,1)$ and $j_0=j_0(\eps)$
          such that there holds 
          \[
\sum_{p_j\in\Sing(\Sigma_j)}\int_{B^{g_j}(p_j,s)}|u_j|^2\,d\|\Sigma_j\|\leq\eps, \ \ \text{for all} \ j\geq j_0\,;
\]
as a consequence, in particular $\|u_\infty\|_{L^2(\Sigma_\infty)} = 1$.  
    \end{lemma}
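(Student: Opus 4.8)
The plan is to establish the three assertions in the order they are stated: subsequential $L^2_{loc}$/weak-$W^{1,2}_{loc}$ convergence of the transferred sections $v_j=\mbfT^{\Sigma_j}_{\Sigma_\infty,g_\infty}(u_j)$ to a Jacobi field $u_\infty$ on $\Sigma_\infty\setminus\Sing(\Sigma_\infty)$; the uniform $L^2$ non-concentration at the singular points; and, as a consequence, both $\|u_\infty\|_{L^2(\Sigma_\infty)}=1$ and the tameness of $u_\infty$. As preliminary input I would record the standard consequences of Lemma~\ref{Lem_Cptness for scL^k}: since $\set{(g_j,\Sigma_j)}_{j\geq1}$ and $(g_\infty,\Sigma_\infty)$ all lie in the fixed canonical neighborhood $\cL^{k,\alpha}(g,\Sigma;\Lambda,\varsigma_0)$, the regularity scales $\mfr_{\Sigma_j,g_j}$ and the radius-function scales are bounded below by a positive constant depending only on $(g,\Sigma,\Lambda)$, and $(g_j,\Sigma_j)\to(g_\infty,\Sigma_\infty)$ is equivalent to $\mbfD^\cL$-convergence, hence to $C^{k,\beta}_{loc}$ graphical convergence of $\Sigma_j$ to $\Sigma_\infty$ (with multiplicity one) away from $\Sing(\Sigma_\infty)$, together with $\bfr^{\Sigma_j}_{\Sigma_\infty,g_\infty}\to0$ (Remark~\ref{rem:AllardMetrization}); in particular $v_j$ is, for $j$ large, a well-defined section of $\mbfV$ over any fixed compact subset of $\Sigma_\infty\setminus\Sing(\Sigma_\infty)$, extended by zero elsewhere.

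For the convergence step I would argue as in the proof of Lemma~\ref{Lem_Anal for slower growth func}: by Proposition~\ref{Prop_MSE_Section Transp} (and Corollary~\ref{Cor_MSE_converg preserve equ}), since $L_{\Sigma_j,g_j}u_j=0$, the sections $v_j$ solve on compact subsets of $\Sigma_\infty\setminus\Sing(\Sigma_\infty)$ a perturbed Jacobi equation whose error coefficients are pointwise controlled by the $C^1$-norm of the graphical function of $\Sigma_j$ over $\Sigma_\infty$ (hence tend to $0$ locally as $j\to\infty$). The transfer map being an $L^2$-quasi-isometry with constant tending to $1$ on the good region, $\|v_j\|_{L^2(K)}$ is bounded uniformly in $j$ for every compact $K\subset\Sigma_\infty\setminus\Sing(\Sigma_\infty)$, thanks to $\|u_j\|_{L^2(\Sigma_j)}=1$. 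Interior elliptic $L^2$-estimates, Rellich's theorem, and a diagonal argument over an exhaustion of $\Sigma_\infty\setminus\Sing(\Sigma_\infty)$ then produce the desired subsequence along which $v_j\to u_\infty$ in $L^2_{loc}$ and weakly in $W^{1,2}_{loc}$, with $u_\infty$ a weak solution of $L_{\Sigma_\infty,g_\infty}u_\infty=0$ on $\Sigma_\infty\setminus\Sing(\Sigma_\infty)$. This step is essentially routine.

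The crux is the uniform non-concentration, and this is where I would invoke Corollary~\ref{Cor_App_Uniform Noncon for Jac fields}. Every pair $(g_j,\Sigma_j)\in\cL^{k,\alpha}(g,\Sigma;\Lambda,\varsigma_0)$ satisfies precisely the hypotheses $\|g_j\|_{C^{k,\alpha}}\leq\Lambda$ and $\mfr_{\Sigma_j,g_j}\geq\Lambda^{-1}\rho_{\Sigma_j,g_j}$ needed there (this is exactly why the $L^\infty$-norm on $W_{\text{Tr}}$ was fixed), so the corollary applies \emph{uniformly in $j$}: writing $u_j=u_j^0+\phi_j$ with $\phi_j\in W_{\text{Tr}}$, it yields a constant $C=C(g,\Sigma,\Lambda)$ and an exponent $\gamma\in(\sup_p\gamma_-(\mbfC_p),1)$ with $\|\phi_j\|_{L^\infty}\leq C$ and $\int_{B^{g_j}(p_j,s)}|u_j^0|^2\,d\|\Sigma_j\|\leq Cs^{\,n+2\gamma}$ for all $p_j\in\Sing(\Sigma_j)$ and $s\in(0,1)$. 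Combined with the area bound $\|\Sigma_j\|(B^{g_j}(p_j,s))\leq Cs^n$ from the uniform regularity scale (Remark~\ref{rem:RegCurvBound}), this gives $\sum_{p_j}\int_{B^{g_j}(p_j,s)}|u_j|^2\,d\|\Sigma_j\|\leq C'Q\,s^n$, from which the stated estimate follows by taking $s(\eps)=\min\{1/2,(\eps/(C'Q))^{1/n}\}$ and $j_0(\eps)$ large. I expect this to be the main obstacle, but only in the sense that it rests entirely on the three-circle/Lockhart--McOwen machinery of Appendix~\ref{sec:3circles}; granting that corollary, the step is immediate, and without it the $L^2$-mass could a priori escape into the shrinking balls around the (vanishing) singular sets, destroying the normalization of $u_\infty$.

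Finally I would deduce the two consequences. No loss of mass: fixing $\eps>0$ and $s=s(\eps)$, the convergence of Step~1 and the quasi-isometry property of the transfer identify $\lim_{j}\|u_j\|^2_{L^2(\Sigma_j\setminus B^{g_j}(\Sing(\Sigma_j),s))}$ with $\|u_\infty\|^2_{L^2(\Sigma_\infty\setminus B^{g_\infty}(\Sing(\Sigma_\infty),s))}$, which is $\geq1-\eps$ by the non-concentration estimate; since Fatou also gives $\|u_\infty\|_{L^2(\Sigma_\infty)}\leq1$, letting $\eps\searrow0$ yields $\|u_\infty\|_{L^2(\Sigma_\infty)}=1$, so $u_\infty\not\equiv0$. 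Tameness: after a further subsequence $\phi_j\to\phi_\infty\in W_{\text{Tr}}(\Sigma_\infty;\mbfV)$ (bounded translation vectors, finite dimension, convergent singular points and metrics), whence $\mbfT^{\Sigma_j}_{\Sigma_\infty,g_\infty}(u_j^0)\to w_\infty:=u_\infty-\phi_\infty$ locally, and passing the $s^{\,n+2\gamma}$-bound of Step~2 to the limit gives $\int_{B^{g_\infty}(p,s)}|w_\infty|^2\,d\|\Sigma_\infty\|\leq Cs^{\,n+2\gamma}$ for each $p\in\Sing(\Sigma_\infty)$, i.e.\ $w_\infty\in W^{0,2}_{\gamma'}$ and $\cA\cR_p(w_\infty)\geq\gamma>\gamma_-(\mbfC_p)$ for every such $p$. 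Hence $u_\infty=\phi_\infty+w_\infty\in\hat{W}^{0,2}_{\gamma'}(\Sigma_\infty;\mbfV)$ for any $\gamma'\in(\sup_p\gamma_-(\mbfC_p),\gamma)$; since $L_{\Sigma_\infty,g_\infty}u_\infty=0$ (the weak solution of Step~1 extends across the conical points because $u_\infty$ lies in the weighted space and elliptic regularity upgrades it), Corollary~\ref{cor:AR_Jacobi} gives $u_\infty\in\widehat{\Ker}_{\gamma'}(L_{\Sigma_\infty,g_\infty})=\widehat{\Ker}_{\tau}(L_{\Sigma_\infty,g_\infty})$, so $u_\infty$ is a tame Jacobi field, completing the argument.
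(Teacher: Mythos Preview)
Your proposal is correct and follows essentially the same approach as the paper: convergence via Corollary~\ref{Cor_MSE_converg preserve equ}, non-concentration via Corollary~\ref{Cor_App_Uniform Noncon for Jac fields}, then pass the weighted $L^2$ bound and the translation-like piece to the limit to conclude tameness.

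One point deserves more care. You assert that the hypotheses of Corollary~\ref{Cor_App_Uniform Noncon for Jac fields} hold ``uniformly in $j$'' simply because $(g_j,\Sigma_j)\in\cL^{k,\alpha}(g,\Sigma;\Lambda,\varsigma_0)$. Membership in the canonical neighborhood gives $\mfr_{\Sigma_j,g_j}\geq\Lambda^{-1}\rho_{\Sigma_j,g_j}$, but it does \emph{not} directly furnish the smallness condition \eqref{Equ_App_Close to Cone w Reg Scal bd}, i.e.\ that $\Sigma_j$ is an $\eps_1$-$C^2$ graph over its tangent cone in a ball of \emph{fixed} radius $r_0$ around each $p_j$. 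The paper obtains this by first fixing $\sigma$ (hence $\gamma=1-\sigma$) from the asymptotic spectra of $\Sigma_\infty$, choosing $r_0=r_0(\Sigma_\infty,g_\infty)$ so the conditions hold for $(\Sigma_\infty,r_0^{-2}g_\infty)$, and then invoking the quantitative uniqueness of tangent cones from Appendix~\ref{sec:QuantUniq} (Corollary~\ref{Cor_Converg in all Scales}) together with the $\cL^{k,\alpha}$-convergence and the equal-density condition $\theta_{|\Sigma_j|}(p_j)=\theta_{|\Sigma_\infty|}(p_\infty)$ to propagate the same conditions to $(\Sigma_j,r_0^{-2}g_j)$ for all large $j$. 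This also ensures that your chosen $\gamma$ stays at distance $\geq\sigma$ from $\Gamma(\mbfC_{p_j}(\Sigma_j))$ (via Remark~\ref{rem:SpectrumConv}). Once you insert this justification, your argument matches the paper's.
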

    \begin{proof}

       By Corollary \ref{Cor_MSE_converg preserve equ}, the sequence $\left\{v_j\right\}$ subconverges to some Jacobi field $u_\infty$ on $\Sigma_\infty$ in $L^2_{loc}$ and, by classical (interior) elliptic estimates, such a section belongs to $W^{k,2}_{loc}(\Sigma_\infty;\mbfV_\infty)$;
    the effort here is to prove the non-concentration claim, and to show that $u_\infty$ is tame.
       
       We first fix $\sigma\in (0, 1)$ such that \[
           \min_{p\in \Sing(\Sigma_\infty)}\dist_\RR(1-\sigma, \Gamma(\mbfC_p(\Sigma_\infty))) = \sigma\,.
       \]
       For every $p\in \Sing(\Sigma_\infty)$, there exists $r_0=r_0(\Sigma_\infty, g_\infty)>0$ such that the assumptions in Corollary \ref{Cor_App_Uniform Noncon for Jac fields} are satisfied for $(\Sigma_\infty, r_0^{-2}g_\infty)$ near $p$.  
    
       By the way we have defined the notion of convergence in $\cL^{k,\alpha}$ and thanks to the analysis we carried out in Appendix \ref{sec:QuantUniq}, if $p_j\in \Sing(\Sigma_j)$ are approaching $p_\infty$ (thus with $\theta_{\Sigma_j}(p_j)=\theta_{\Sigma_{\infty}}(p)$) we know that for $j$ large enough, the assumptions in Corollary \ref{Cor_App_Uniform Noncon for Jac fields} are also satisfied for $(\Sigma_j, r_0^{-2}g_j)$ near $p_j$. Therefore, Corollary \ref{Cor_App_Uniform Noncon for Jac fields} provides an $\varepsilon_1(\Lambda, \sigma)\in (0, 1)$ and a uniform $L^2_{1-\sigma}$-estimate on $u_j$ near $p_j$ of the form
       \be\label{Equ_Cptness of Slower growth Jac}   
           \|\phi_j\|_{L^\infty(\Sigma_j)} + \sup_{i\geq 0}\|u_j-\phi_j\|_{L^2_{1-\sigma}(\Sigma_j\cap A^{g_j}(p_j, 2^{-i-1}r_0, 2^{-i}r_0))} \leq C( \Lambda, \sigma)\|u_j\|_{L^2(\Sigma_j\cap A^{g_j}(p_j, \varepsilon_1 r_0, 2r_0))}
           \ee
           for some translation-like section $\phi_j \in W_{\text{Tr}}(\Sigma_j;\mbfV_j)$.
       In particular, this implies for every $s\in (0, r_0)$ and again all sufficiently large indices $j$
       \[
           \|u_j\|_{L^2(\Sigma_j\cap B^{g_j}(p_j, s))} \leq C(\Lambda,\sigma)s^{n/2}\,;
       \]
      from this inequality it is straightforward to derive the $L^2$-non-concentration claim in the statement: specifically we get at once that 
       \be\label{eq:NonConcGE}
\int_{\Sigma_{\infty}}|u_{\infty}|^2\,d\|\Sigma_{\infty}\|\geq 1.
\ee
        By passing to the limit as $j\to \infty$ for the transferred sections, (\ref{Equ_Cptness of Slower growth Jac}) also holds with $u_\infty$ in place of $u_j$ and some translation-like function $\phi_\infty \in W_{\text{Tr}}(\Sigma_{\infty};\mbfV_{\infty})$ on $\Sigma_\infty$ in place of $\phi_j$. Hence, the $L^2_{loc}$-convergence implies that in fact equality must hold in \eqref{eq:NonConcGE}. Moreover, by the choice of $\sigma$ and Lemma \ref{Lem_Anal for slower growth func} \ref{item:ARSob}, the section $u_\infty$ is also a tame Jacobi field, which completes the proof.
    \end{proof}

    \begin{corollary} \label{Cor_Upper Semi-conti of dim Ker^+ L}
      (Setting as above.)  There exists $\varsigma_1(g,\Sigma, \Lambda)\in (0, \varsigma_0)$ such that for every $(g', \Sigma')\in \cL^{k,\alpha}(g, \Sigma; \Lambda, \varsigma_1)$, we have \[
            \dim \widehat{\Ker}_{\tau} (L_{\Sigma', g'}) \leq \dim \widehat{\Ker}_{\tau}(L_{\Sigma, g}) \,.  \]
    \end{corollary}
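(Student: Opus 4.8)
The plan is to argue by contradiction and exploit the compactness result for tame Jacobi fields (Lemma~\ref{Lem_Cptness for Slower Growth Jac Fields}), extracting a convergent sequence of pairs carrying more and more $L^2$-orthonormal tame Jacobi fields and then contradicting the value of $\dim\widehat{\Ker}_{\tau}(L_{\Sigma,g})$. Suppose the assertion fails: then for every $\varsigma_1\in(0,\varsigma_0)$ there is a pair in $\cL^{k,\alpha}(g,\Sigma;\Lambda,\varsigma_1)$ violating the stated inequality. Taking $\varsigma_1=\varsigma_0/j$, $j\geq 2$, and using the monotonicity of canonical neighborhoods in $\delta$, we obtain a sequence $(g_j,\Sigma_j)\in\cL^{k,\alpha}(g,\Sigma;\Lambda,\varsigma_0)$ with $\|g-g_j\|_{C^k}\leq\varsigma_0/j$, $\mbfF(|\Sigma|_g,|\Sigma_j|_{g_j})\leq\varsigma_0/j$, and $\dim\widehat{\Ker}_{\tau}(L_{\Sigma_j,g_j})\geq d+1$, where $d:=\dim\widehat{\Ker}_{\tau}(L_{\Sigma,g})$. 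By Lemma~\ref{Lem_Cptness for scL^k}, after passing to a subsequence $(g_j,\Sigma_j)$ converges in $C^k$-norm times $\mbfF$-distance (hence $C^{k,\beta}_{loc}$ along the submanifolds) to some $(g_\infty,\Sigma_\infty)\in\cL^{k,\alpha}(g,\Sigma;\Lambda,\varsigma_0)$, and since $\|g-g_j\|_{C^k}\to 0$ and $\mbfF(|\Sigma|_g,|\Sigma_j|_{g_j})\to 0$ we must have $(g_\infty,\Sigma_\infty)=(g,\Sigma)$. Recall next that, by Theorem~\ref{thm:Count}, elliptic regularity and Corollary~\ref{cor:AR_Jacobi}, for any pair $(g',\Sigma')$ the space $\widehat{\Ker}_{\tau}(L_{\Sigma',g'})$ is finite-dimensional and $\tau$-independent, and that (using \eqref{eq:Augment}, $\rho\leq1$, $\tau>0$, $\|\Sigma'\|(M)<\infty$ and the boundedness of translation-like sections) it is contained in $L^2(\Sigma';\mbfV')$; since its elements are smooth sections on $\Sigma'$, the $L^2$-inner product restricts to a genuine inner product on it. We may therefore choose, for each $j$, an $L^2(\Sigma_j)$-orthonormal system $u_j^{(1)},\dots,u_j^{(d+1)}$ of tame Jacobi fields on $\Sigma_j$.

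Next I would apply Lemma~\ref{Lem_Cptness for Slower Growth Jac Fields} to each of the $d+1$ sequences $\{u_j^{(m)}\}_j$ (with $g_\infty=g$, $\Sigma_\infty=\Sigma$) and pass to a common diagonal subsequence along which, for every $m$, the transferred section $v_j^{(m)}:=\mbfT^{\Sigma_j}_{\Sigma,g}(u_j^{(m)})$ converges in $L^2_{loc}(\Sigma;\mbfV)$, and weakly in $W^{1,2}_{loc}$, to a tame Jacobi field $u_\infty^{(m)}$ on $\Sigma$ with $\|u_\infty^{(m)}\|_{L^2(\Sigma)}=1$. Crucially, the same lemma furnishes, for every $\eps>0$, a scale $s=s(\eps)\in(0,1)$ and an index $j_0=j_0(\eps)$ — which, after taking the minimum of the $s(\eps)$ and the maximum of the $j_0(\eps)$ over the finitely many values of $m$, may be chosen uniform in $m$ — such that
\[
\sum_{p_j\in\Sing(\Sigma_j)}\int_{B^{g_j}(p_j,s)}|u_j^{(m)}|^2\,d\|\Sigma_j\|\leq\eps\qquad\text{for all }j\geq j_0\text{ and all }m.
\]

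The heart of the argument is to show that $u_\infty^{(1)},\dots,u_\infty^{(d+1)}$ are $L^2(\Sigma)$-orthonormal, hence linearly independent. Fix indices $m,m'$ and $\eps>0$, and let $s=s(\eps)$ be as above. Writing
\[
\delta_{mm'}=\langle u_j^{(m)},u_j^{(m')}\rangle_{L^2(\Sigma_j)}=\int_{\Sigma_j\cap B^{g_j}(\Sing(\Sigma_j),s)}\!\!\langle u_j^{(m)},u_j^{(m')}\rangle\,d\|\Sigma_j\|+\int_{\Sigma_j\setminus B^{g_j}(\Sing(\Sigma_j),s)}\!\!\langle u_j^{(m)},u_j^{(m')}\rangle\,d\|\Sigma_j\|,
\]
the first integral has absolute value at most $\eps$ for $j\geq j_0$, by the Cauchy--Schwarz inequality and the non-concentration estimate. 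For the second, on the relevant region $\Sigma_j$ is, for $j$ large, the normal graph $x\mapsto\exp^{g}_x(\mbfG^{\Sigma_j}_{\Sigma,g}(x))$ over a subset of $\Sigma$ whose complement tends to $B^{g}(\Sing(\Sigma),s)$, with $\mbfG^{\Sigma_j}_{\Sigma,g}\to 0$ in $C^{k',\alpha'}_{loc}$; using the analytic properties of the transfer operator (Proposition~\ref{Prop_MSE_Section Transp}), together with $g_j\to g$ and the attendant convergence to $1$ of the graphical Jacobians, the second integral differs from $\langle v_j^{(m)},v_j^{(m')}\rangle_{L^2(\Sigma\setminus B^{g}(\Sing(\Sigma),s))}$ by an amount that is $o(1)$ as $j\to\infty$ with $s$ fixed; and the latter converges, by $L^2_{loc}$-convergence of $v_j^{(m)}$ and $v_j^{(m')}$, to $\langle u_\infty^{(m)},u_\infty^{(m')}\rangle_{L^2(\Sigma\setminus B^{g}(\Sing(\Sigma),s))}$. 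Letting first $j\to\infty$, then $s\to 0$ (so that, since $u_\infty^{(m)}\in L^2(\Sigma)$, the truncated pairing converges to $\langle u_\infty^{(m)},u_\infty^{(m')}\rangle_{L^2(\Sigma)}$), and finally $\eps\to 0$, yields $\langle u_\infty^{(m)},u_\infty^{(m')}\rangle_{L^2(\Sigma)}=\delta_{mm'}$.

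Thus $u_\infty^{(1)},\dots,u_\infty^{(d+1)}$ are $d+1$ linearly independent tame Jacobi fields on $\Sigma$, contradicting $\dim\widehat{\Ker}_{\tau}(L_{\Sigma,g})=d$; this contradiction furnishes the desired $\varsigma_1\in(0,\varsigma_0)$. I expect the only genuinely delicate step to be the bookkeeping in the third paragraph — namely, upgrading $L^2_{loc}$-convergence of the transferred sections, together with the uniform non-concentration near the singular tips and the convergence of the ambient geometric data, to convergence of the \emph{global} $L^2$ pairings (in particular keeping careful track of how the balls $B^{g_j}(\Sing(\Sigma_j),s)$ compare to $B^{g}(\Sing(\Sigma),s)$ under the transfer map). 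All the needed ingredients (Lemma~\ref{Lem_Cptness for Slower Growth Jac Fields}, Proposition~\ref{Prop_MSE_Section Transp}, and the $C^{k',\alpha'}_{loc}$-graphical convergence built into the notion of convergence in $\cL^{k,\alpha}$, cf. Remark~\ref{rem:AllardMetrization}) are already available, so this is a matter of organizing the estimates rather than introducing new ideas.
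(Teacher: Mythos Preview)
Your proof is correct and follows essentially the same approach as the paper: argue by contradiction, extract a sequence $(g_j,\Sigma_j)\to(g,\Sigma)$ carrying $d+1$ $L^2$-orthonormal tame Jacobi fields, apply Lemma~\ref{Lem_Cptness for Slower Growth Jac Fields} to pass to limits, and use the non-concentration estimate to conclude that the limiting family remains $L^2$-orthonormal. The paper's version is terser (it simply asserts that orthonormality of the limits ``follows at once from the non-concentration claim''), whereas you spell out the splitting into near-singular and far-from-singular contributions and the role of the transfer operator; this extra bookkeeping is accurate and does not diverge from the paper's strategy.
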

    \begin{proof}
        Let us argue by contradiction: if the assertion was false, then for any $j\geq 1$ large enough one could find $(g_j, \Sigma_j) \in \cL^{k,\alpha}(g, \Sigma; \Lambda, 1/j)$ such that $\dim \widehat{\Ker}_{\tau}(L_{\Sigma_j, g_j}) > \dim \widehat{\Ker}_{\tau}(L_{\Sigma, g})$. In particular, such a sequence of pairs will converge to $(g,\Sigma)$ in $\cM^{k,\alpha}_n$ .
        One can then take for each such $j$ an $L^2(\Sigma_j;\mbfV_j)$-orthonormal family, of cardinality exactly equal to $d:=\widehat{\Ker}_{\tau}(L_{\Sigma, g})+1$, in $\widehat{\Ker}_{\tau}(L_{\Sigma_j, g_j})$, say $\{u^{(1)}_j,\ldots, u^{(d)}_j \}$; we then know, by appealing to Lemma \ref{Lem_Cptness for Slower Growth Jac Fields} that one can extract, after passing to a subsequence, limit elements $u^{(1)}_{\infty},\ldots,u^{(d)}_{\infty}$ in $L^2(\Sigma;\mbfV)$ such that for each $i=1,\ldots, d$ there holds $\mbfT^{\Sigma_j}_{\Sigma,g}(u^{(i)}_{j})\to u^{(i)}_{\infty}$ in $L^{2}_{loc}(\Sigma;\mbfV)$ as $j\to\infty$. But, in fact, it follows at once 
        from the non-concentration claim proven above (also part of Lemma \ref{Lem_Cptness for Slower Growth Jac Fields})
        that the family in question
            consists of pairwise orthogonal sections also having unit norm in $L^2(\Sigma;\mbfV)$; hence $\widehat{\Ker}_{\tau}(L_{\Sigma, g})$ would have dimension no less than $d$, a contradiction.
    \end{proof}

    Let then 
    \begin{align*}
        \cL^{k,\alpha}_{top}(g, \Sigma; \Lambda, \delta):= \{(g', \Sigma')\in \cL^{k,\alpha}(g, \Sigma; \Lambda, \delta): \dim \widehat{\Ker}_{\tau} (L_{\Sigma', g'}) =\dim \widehat{\Ker}_{\tau} (L_{\Sigma, g})\}\,.
    \end{align*}
    Lemma \ref{Lem_Cptness for Slower Growth Jac Fields} and Corollary \ref{Cor_Upper Semi-conti of dim Ker^+ L} imply that $\cL^{k,\alpha}_{top}(g, \Sigma; \Lambda, \delta)$ is closed in $\cL^{k,\alpha}(g, \Sigma; \Lambda, \delta)$ for $\delta\leq \varsigma_1$.
        
In the setting above, recalling the content of Corollary \ref{cor:AR_Jacobi} and Remark \ref{rem:IndipDimCoker}, we further set for notational convenience
    \begin{align*}
        I:= \dim \widehat{\Ker}_{\tau} (L_{\Sigma, g})\,, & &
        J:= \dim \widehat{\Coker}_{\tau} (L_{\Sigma, g}) \,.
    \end{align*}
    Note that, not only both numbers are independent of $\tau$ as long as \eqref{eq:RangeTau} is satisfied, but also by Theorem \ref{thm:Count} there \emph{always} holds $J\geq I$.

       \

    For $(\bar{g}, \bar{\Sigma})\in \cL^{k,\alpha}_{top}(g, \Sigma; \Lambda, \varsigma_1)$, let \[
      \pi^{L^2}_{\bar{\Sigma}, \bar{g}}: L^2(\bar{\Sigma};\mbfV) \to \widehat{\Ker}_{\tau} (L_{\bar{\Sigma},\bar{g}})
    \] 
    be the $L^2$-orthogonal projection to such a finite dimensional (hence: closed) subspace. 
    The following lemma guarantees that we can parametrize slices of $\cL^{k,\alpha}_{top}$ by compact subset of $\widehat{\Ker}$.

    \begin{lemma} \label{Lem_Loc Parametriz of scL^k}
        Let $\varsigma_1$ be as in Corollary \ref{Cor_Upper Semi-conti of dim Ker^+ L}, and let $\tau$ satisfy \eqref{eq:RangeTau}. Then there exist constants $\varsigma_2(g, \Sigma, \Lambda)\in (0, \varsigma_1)$, $r_0(g, \Sigma, \Lambda)>0$ and a linear subspace $\cF\subset C^{k,\alpha}_c(M\setminus B^g(\Sing(\Sigma), {10r_0}))$ of dimension $J$, also depending only on $g,\Sigma, \Lambda$, satisfying the following property
        
            Denoted for simplicity $\cF\cdot \bar{g}:= \{(1+f)\bar{g}: f\in \cF\}$,  for every $(\bar{g}, \bar{\Sigma})\in \cL^{k,\alpha}_{top}(g, \Sigma; \Lambda, \varsigma_2)$ the map
            \begin{align*}
                \mbfP_{\bar{g}, \bar{\Sigma}}: \cL^{k,\alpha}_{top}(g, \Sigma; \Lambda, \varsigma_2)\cap \Pi^{-1}(\cF \cdot\bar{g}) & \to \widehat{\Ker}_{\tau} (L_{\bar{\Sigma}, \bar{g}}), \\
                (g', \Sigma') & \mapsto  \pi_{\bar{\Sigma}, \bar{g}}^{L^2}\big( \mbfG^{\Sigma'}_{\bar{\Sigma}, \bar{g}}\cdot \zeta_{\bar{\Sigma}, \bar{g}, r_0}\big),     
            \end{align*}
            is bi-Lipschitz onto its image, with bi-Lipschitz constant $\leq C(g, \Sigma, \Lambda)$, and thus injective. 
    \end{lemma}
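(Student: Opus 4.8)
\emph{Overview of the plan.} I would establish the two halves of the bi-Lipschitz estimate separately. The upper (Lipschitz) bound, with constant $C(g,\Sigma,\Lambda)$, is essentially a bookkeeping consequence of the graphical comparison estimates of Appendix~\ref{app:GraphicalPar}. The lower (co-Lipschitz) bound is the heart of the matter and I would argue it by a compactness–contradiction scheme based on the limit equation of Lemma~\ref{Lem_Tangent vectors to scL^k}, combined with a carefully engineered choice of $\cF$.

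\emph{Choice of $r_0$ and $\cF$.} First fix $r_0=r_0(g,\Sigma,\Lambda)>0$ small enough that $10r_0<\injrad(g,\Sigma)$ and that the smallness-of-scale hypotheses required by Lemma~\ref{Lem_Tangent vectors to scL^k} and Lemma~\ref{Lem_Cptness for Slower Growth Jac Fields} hold near $\Sing(\Sigma)$. By Corollary~\ref{cor:AR_Jacobi}, Remark~\ref{rem:IndipDimCoker} and Theorem~\ref{thm:Count}, $\widehat{\Coker}_\tau(L_{\Sigma,g})$ is a fixed $J$-dimensional space, which I represent via the $L^2$-duality recalled in Section~\ref{subs:Functional} by genuine normal sections. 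For any nonzero such section $\psi$, since $\Sigma$ is connected, of dimension $n\geq 2$, with only finitely many singular points, one can pick a regular point $p$ of $\Sigma$ outside $B^g(\Sing(\Sigma),10r_0)$ with $\psi(p)\neq 0$, and then a bump function $f$ supported in a small ambient ball around $p$ such that $\tfrac n2\int_\Sigma\langle\nabla^\perp_{\Sigma,g}f,\psi\rangle\,d\|\Sigma\|\neq 0$. Hence $f\mapsto \pi_{\widehat{\Coker}}\bigl(\tfrac n2\nabla^\perp_{\Sigma,g}f\bigr)$ maps $C^\infty_c\bigl(M\setminus B^g(\Sing(\Sigma),10r_0)\bigr)$ onto $\widehat{\Coker}_\tau(L_{\Sigma,g})$, and I let $\cF\subset C^{k,\alpha}_c\bigl(M\setminus B^g(\Sing(\Sigma),10r_0)\bigr)$ be any $J$-dimensional complement to its kernel, so that $\cF\to\widehat{\Coker}_\tau(L_{\Sigma,g})$ is a linear isomorphism; in particular $\nabla^\perp_{\Sigma,g}f\not\equiv 0$ for every $0\neq f\in\cF$. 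Finally set $\varsigma_2\leq\min(\varsigma_0,\varsigma_1)$, to be decreased below.

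\emph{Lipschitz bound.} For $(\bar g,\bar\Sigma)\in\cL^{k,\alpha}_{top}(g,\Sigma;\Lambda,\varsigma_2)$ and two pairs $(g^0,\Sigma^0),(g^1,\Sigma^1)$ in the domain, using that $\pi^{L^2}_{\bar\Sigma,\bar g}$ has operator norm $\leq 1$ and that $\zeta_{\bar\Sigma,\bar g,r_0}$ is bounded and vanishes on $B^{\bar g}(\Sing(\bar\Sigma),r_0)$, one reduces to bounding $\|\mbfG^{\Sigma^1}_{\bar\Sigma,\bar g}-\mbfG^{\Sigma^0}_{\bar\Sigma,\bar g}\|_{L^2(\bar\Sigma\setminus B^{\bar g}(\Sing(\bar\Sigma),r_0))}$. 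For $\varsigma_2$ small, all three submanifolds are mutually $\delta_0^2$-$C^2$ graphs over one another away from the singular set, and $\bar g=g^0=g^1$ near $\Sing(\bar\Sigma)$ because the conformal factors are supported away from it; thus the comparison estimate of Lemma~\ref{Lem_App_GraphParam_Compare diff graph func}, applied with $\bar\Sigma$ as common reference submanifold, bounds the above by $C(g,\Sigma,\Lambda)$ times the graphical-section and mass summands of $\mbfD^\cL$ (the difference of the ambient metrics being absorbed into the summand $\|g^1-g^0\|_{C^{k,\alpha}(M)}$ of $\mbfD^\cL$). This yields the Lipschitz bound with constant $C(g,\Sigma,\Lambda)$.

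\emph{Co-Lipschitz bound.} If the lemma failed, by the previous paragraph it would be the lower bound that fails for every $\varsigma_2$ and every constant, so there would exist $(\bar g_j,\bar\Sigma_j)\in\cL^{k,\alpha}_{top}(g,\Sigma;\Lambda,1/j)$ and distinct pairs $(g^0_j,\Sigma^0_j),(g^1_j,\Sigma^1_j)$ in the domain with $d_j:=\mbfD^\cL[(g^0_j,\Sigma^0_j),(g^1_j,\Sigma^1_j)]>0$ and $\|\mbfP_{\bar g_j,\bar\Sigma_j}(g^1_j,\Sigma^1_j)-\mbfP_{\bar g_j,\bar\Sigma_j}(g^0_j,\Sigma^0_j)\|_{L^2}<\tfrac1j d_j$; all these data converge to $(g,\Sigma)$ in $\cM^{k,\alpha}_n(M)$. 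Writing $g^i_j=(1+f^i_j)\bar g_j$ with $f^i_j\in\cF$, and passing to a subsequence so that $\|f^1_j-f^0_j\|^{-1}(f^1_j-f^0_j)\to f_\infty$ in $C^{k,\alpha}$ (with $f_\infty\equiv 0$ if $f^1_j\equiv f^0_j$; otherwise $\nabla^\perp_{\Sigma,g}(f_\infty)\not\equiv 0$ by the choice of $\cF$), I am exactly in the setting of Lemma~\ref{Lem_Tangent vectors to scL^k}. It provides, after a further subsequence, a nonzero \emph{tame} section $\hat u_\infty\in W^{k,2}_{loc}(\Sigma;\mbfV)$ and $\hat f_\infty$ — a nonnegative multiple of $f_\infty$ — with $d_j^{-1}\bigl(\mbfT^{\bar\Sigma_j}_{\Sigma,g}(u^{(1)}_j)-\mbfT^{\bar\Sigma_j}_{\Sigma,g}(u^{(0)}_j)\bigr)\to\hat u_\infty$ in $L^2_{loc}$, $d_j^{-1}(f^1_j-f^0_j)\to\hat f_\infty$, and $L_{\Sigma,g}\hat u_\infty=\tfrac n2\nabla^\perp_{\Sigma,g}(\hat f_\infty)$, where $u^{(i)}_j:=\mbfG^{\Sigma^i_j}_{\bar\Sigma_j,\bar g_j}$. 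By tameness and Lemma~\ref{Lem_Anal for slower growth func}\ref{item:ARSob} (the right-hand side being smooth and supported away from $\Sing(\Sigma)$, hence in $W^{m-2,2}_{\tau-2}$), $\hat u_\infty\in\hat W^{m,2}_\tau(\Sigma;\mbfV)$; since $L_{\Sigma,g}\colon\hat W^{m,2}_\tau\to W^{m-2,2}_{\tau-2}$ is Fredholm (Theorem~\ref{thm:Count}), $\tfrac n2\nabla^\perp_{\Sigma,g}(\hat f_\infty)$ lies in its image, hence is $L^2$-orthogonal to $\widehat{\Coker}_\tau(L_{\Sigma,g})$; as $\hat f_\infty$ is a multiple of $f_\infty\in\cF$ and $\cF$ maps isomorphically onto the cokernel, this forces $\hat f_\infty\equiv 0$, so $\hat u_\infty\in\widehat{\Ker}_\tau(L_{\Sigma,g})$. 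Finally, by Lemma~\ref{Lem_Cptness for Slower Growth Jac Fields} and Corollary~\ref{Cor_Upper Semi-conti of dim Ker^+ L}, an $L^2$-orthonormal basis $\{e^{(\ell)}_j\}$ of the $I$-dimensional space $\widehat{\Ker}_\tau(L_{\bar\Sigma_j,\bar g_j})$ transfers (up to a subsequence), with genuine $L^2$ convergence by the non-concentration estimate there, to an $L^2$-orthonormal basis $\{e^{(\ell)}_\infty\}$ of $\widehat{\Ker}_\tau(L_{\Sigma,g})$; since also $\mbfT^{\bar\Sigma_j}_{\Sigma,g}\bigl(d_j^{-1}(u^{(1)}_j-u^{(0)}_j)\,\zeta_{\bar\Sigma_j,\bar g_j,r_0}\bigr)\to\hat u_\infty\,\zeta_{\Sigma,g,r_0}$ in $L^2(\Sigma;\mbfV)$ (on $\{\rho_{\Sigma,g}\geq r_0\}$, where the cutoffs are supported, the $L^2_{loc}$ convergence is genuine $L^2$ convergence, with uniform $L^2$ control there coming from Claim~2 in the proof of Lemma~\ref{Lem_Tangent vectors to scL^k}), taking inner products with the $e^{(\ell)}_j$ and passing to the limit in the contradiction hypothesis gives $\langle\hat u_\infty\,\zeta_{\Sigma,g,r_0},e^{(\ell)}_\infty\rangle_{L^2(\Sigma)}=0$ for every $\ell$, i.e. $\pi^{L^2}_{\Sigma,g}(\hat u_\infty\,\zeta_{\Sigma,g,r_0})=0$. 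Since $\hat u_\infty\in\widehat{\Ker}_\tau(L_{\Sigma,g})$, this yields $0=\langle\hat u_\infty\,\zeta_{\Sigma,g,r_0},\hat u_\infty\rangle_{L^2}=\int_\Sigma\zeta_{\Sigma,g,r_0}|\hat u_\infty|^2\,d\|\Sigma\|$, so $\hat u_\infty$ vanishes on the nonempty open set $\{\rho_{\Sigma,g}>r_0\}$; as $\Sigma$ is connected and $L_{\Sigma,g}\hat u_\infty=0$, the unique continuation principle for second-order elliptic operators (\cite{Aro57}) forces $\hat u_\infty\equiv 0$, a contradiction. The bi-Lipschitz constant is the one from the Lipschitz bound.

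\emph{Where the difficulty lies.} The main obstacle is the co-Lipschitz direction, specifically the two limiting passages: extracting the limit equation for $\hat u_\infty$ (already supplied by Lemma~\ref{Lem_Tangent vectors to scL^k}, itself resting on the delicate transfer-of-sections and three-circles machinery of the appendices) and, secondly, the convergence of the $L^2$-projections onto the \emph{varying} kernels $\widehat{\Ker}_\tau(L_{\bar\Sigma_j,\bar g_j})$, which hinges on the non-concentration estimate for tame Jacobi fields of Lemma~\ref{Lem_Cptness for Slower Growth Jac Fields}; everything else is routine once $\cF$ is chosen so as to realize the cokernel by conformal perturbations supported away from the singular set.
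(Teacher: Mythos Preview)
Your proposal is correct and follows essentially the same compactness--contradiction scheme as the paper, hinging on Lemma~\ref{Lem_Tangent vectors to scL^k} to produce a nonzero tame limit $\hat u_\infty$ solving $L_{\Sigma,g}\hat u_\infty=\tfrac n2\nabla^\perp_{\Sigma,g}\hat f_\infty$, and on Lemma~\ref{Lem_Cptness for Slower Growth Jac Fields} to pass the projections onto the varying kernels to the limit.

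There are two minor stylistic differences worth recording. First, the paper proves a separate \emph{Claim} (no tame solution of $L_{\bar\Sigma,\bar g}v=\nabla^\perp_{\bar\Sigma,\bar g}f$ for nonzero $f\in\cF$, for all nearby $(\bar g,\bar\Sigma)$) and invokes it at $(g,\Sigma)$ to force $\hat f_\infty=0$; you argue this directly at the limit by noting that $\tfrac n2\nabla^\perp_{\Sigma,g}\hat f_\infty$ lies in the image of the Fredholm operator, hence has trivial cokernel projection, which by your choice of $\cF$ forces $\hat f_\infty=0$. Second, for the final contradiction the paper arranges $r_0$ so that $v\mapsto\pi^{L^2}_{\Sigma,g}(\zeta_{\Sigma,g,r_0}v)$ is an isomorphism of $\widehat{\Ker}_\tau(L_{\Sigma,g})$, whereas you deduce $\int_\Sigma\zeta_{\Sigma,g,r_0}|\hat u_\infty|^2=0$ and invoke unique continuation. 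Both variants are valid; yours is slightly more streamlined, while the paper's extra Claim is a somewhat stronger intermediate statement than strictly needed here.
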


    \begin{remark} \label{Rem_Metric Equi on cF}
        Note that, for what pertains to the second part of the statement above, it is understood that the semi-metric we use on $\cL^{k,\alpha}_{top}$ is $\mbfD^\cL$. 
    \end{remark}

    Note that, for instance, when $I=0, J>0$ and $g=\overline{g}, \ \Sigma=\overline{\Sigma}$ the statement in question implies that the domain of the map $\mbfP_{\bar{g}, \bar{\Sigma}}$, which is $\cL^{k,\alpha}(g, \Sigma; \Lambda, \varsigma_2)\cap \Pi^{-1}(\cF \cdot\bar{g})$, must consist of a single point. This outcome can, at least heuristically, interpreted in terms of transversality of the two sets in question.
    
    \begin{proof}

    In order to prove the statement, we first need the following technical claim. Set  \be\label{eq:SpecialValueTau}
     \tau_0:=\frac{1}{2}\left(1+\sup_{p\in \Sing(\bar{\Sigma})} \gamma^-(\mbfC_p)\right),
    \ee
    which of course satisfies \eqref{eq:RangeTau},  let $\varphi_1, ..., \varphi_J\in W^{k-2,2}_{\tau_0-2}(\Sigma;\mbfV)$ be a basis (after projecting to the quotient space) of $\widehat{\Coker}_{\tau_0}(L_{\Sigma, g})$; without loss of generality (by the way we defined weighted Sobolev spaces) we can assume all such sections to be supported away from the singular set of $\Sigma$.  Set then
        \[
            r_0(g, \Sigma, \Lambda)\in (0, \min_{1\leq i\leq J}\{\injrad(g, \Sigma), \dist_g(\spt(\varphi_i), \Sing(\Sigma))\}/40)\,
        \]
        chosen
        small enough that the linear map
        \[
            \widehat{\Ker}_{\tau_0}(L_{\Sigma, g}) \to \widehat{\Ker}_{\tau_0}(L_{\Sigma, g}), \quad v \mapsto \pi^{L^2}_{\Sigma, g}(\zeta_{\Sigma, g, r_0}\cdot v)   \]
        is an isomorphism, where $\zeta_{\Sigma, g, r_0}$ is defined in (\ref{Equ_Def zeta_{Sigma, g, r_0}}). 
        Then, by a standard regularization argument we choose $f_i\in C^{k,\alpha}_c(M\setminus B^g(\Sing(\Sigma), 10r_0))$ such that, after projecting to the quotient space,
        $\mathrm{span}_\RR\langle \nabla^\perp_{\Sigma, g} f_i: 1\leq i\leq J\rangle= \mathrm{span}_\RR\langle \varphi_i: 1\leq i\leq J\rangle$ along $\Sigma$, and thus define 
        \begin{align*}
            \cF:= \mathrm{span}_\RR\langle f_i: 1\leq i\leq J\rangle\,.            
        \end{align*}

\textbf{Claim.} For every $(\bar{g}, \bar{\Sigma})\in \cL^{k,\alpha}_{top}(g, \Sigma; \Lambda, \varsigma_2)$ and every non-zero $f\in \cF$, there is no section $v\in \hat{W}^{k,2}_{\tau_0}(\bar{\Sigma};\mbfV)$ solving $L_{\bar{\Sigma}, \bar{g}} v = \nabla^\perp_{\bar{\Sigma}, \bar{g}}(f)$.

        To see this, suppose, for a contradiction, that there exist $(g_j, \Sigma_j) \to (g, \Sigma)$ in $\cL^{k,\alpha}_{top}(g, \Sigma; \Lambda, \varsigma_1)$, $f_j\in \cF$ non-zero 
        and $u_j\in \hat{W}^{k,2}_{\tau_0}(\Sigma_j;\mbfV_j)$ such that 
        \begin{align*}
            L_{\Sigma_j, g_j}(u_j) = \nabla^\perp_{\Sigma_j, g_j} (f_j)\,, & &
            \pi^{L^2}_{\Sigma_j, g_j} (u_j) = 0\,, & &
            \|u_j\|_{L^2(\Sigma_j)} + \|f_j\|_{C^{k,\alpha}(M)} = 1\,.
        \end{align*}   
        Note that since $f_j = 0$ in $B^g(\Sing(\Sigma), 10 r_0)$, $u_j$ are in fact Jacobi fields in $\Sigma_j\cap B^g(\Sing(\Sigma), 10 r_0)$; such Jacobi fields belong to $\hat{W}^{k,2}_{\tau_0}(\bar{\Sigma};\mbfV)$, hence (by Corollary \ref{cor:AR_Jacobi}) belong in fact to $\hat{W}^{k,2}_{\tau}(\bar{\Sigma};\mbfV)$ for any $\tau$ satisfying \eqref{eq:RangeTau}.
     Now, after passing to a subsequence (which we do not rename), we may assume $f_j\to f_{\infty}\in \cF$ as $j\to \infty$ and, on the other hand, $v_j:=\mbfT^{\Sigma_j}_{\Sigma,g}(u_j)$ subconverges in $L^2_{loc}$ (and weakly in $W^{1,2}_{loc}$) to some $v_\infty\in W_{loc}^{1,2}(\Sigma;\mbfV)$ that still satisfies (in weak sense, a priori)
        \[
            L_{\Sigma, g}v_\infty = \nabla^\perp_{\Sigma, g} (f_\infty).
        \]
        Standard elliptic regularity then gives $v_\infty\in W_{loc}^{k,2}(\Sigma;\mbfV)$.
 Note, further, that since $u_j$ are tame Jacobi fields near $\Sing(\Sigma_j)$, arguing as in the proof of the Lemma \ref{Lem_Cptness for Slower Growth Jac Fields}, one shows that $u_j$ has uniform growth upper bound near $\Sing(\Sigma_j)$, which then implies (by item \ref{item:ARSob} of Lemma \ref{Lem_Anal for slower growth func}) that $v_\infty$ is itself a tame section (in particular $v_\infty\in \hat{W}^{k,2}_{\tau_0}(\Sigma;\mbfV)$). Therefore, we will have in fact:
       \begin{align}\label{eq:Trio}
            L_{\Sigma, g}v_\infty = \nabla^\perp_{\Sigma, g} (f_\infty)\,, & &
            \pi^{L^2}_{\Sigma, g} (v_\infty) = 0 \,, & & 
            \|v_\infty\|_{L^2(\Sigma)} + \|f_\infty\|_{C^{k,\alpha}(M)} = 1\,.
        \end{align}
             Here, the second equation is based on the fact that $(\Sigma_j, g_j)\in \cL^{k,\alpha}_{top}$ and thus $\dim \widehat{\Ker}_{\tau_0}(L_{\Sigma_j,g_j}) = \dim \widehat{\Ker}_{\tau_0}(L_{\Sigma, g})$ so that, in view of Lemma \ref{Lem_Cptness for Slower Growth Jac Fields}, an orthonormal basis of $\widehat{\Ker}_{\tau_0}(L_{\Sigma_j,g_j})$ will converge to an orthonormal basis of $\widehat{\Ker}_{\tau_0}(L_{\Sigma,g})$.

        By our choice of $\cF$, it is clear that the right-hand side of the first equation of \eqref{eq:Trio} must vanish, and so must $f_\infty$, which then implies that $v_\infty$ is in fact a tame Jacobi field on $\Sigma$ satisfying, in addition, $\|v_\infty\|_{L^2(\Sigma)}=1$. However, this contradicts the condition $\pi^{L^2}_{\Sigma, g} (v_\infty) = 0$, thereby completing the proof of the claim.
\medskip       
            
        To prove the lemma, we will also argue by contradiction. Observe, preliminarily, that by Corollary \ref{cor:AR_Jacobi} and the fact that we are considering the $L^2$-distance on  $\widehat{\Ker}_{\tau} (L_{\bar{\Sigma}, \bar{g}})$ the  assertion in question is \emph{a posteriori} independent of $\tau$ (within the usual range) and thus it suffices, without loss of generality, to prove it for $\tau=\tau_0$ as we henceforth tacitly assume.
        
        Suppose $\mbfP_{\bar{g}_j, \bar{\Sigma}_j}$ is not uniformly bi-Lipschitz for $(\bar{g}_j, \bar{\Sigma}_j)\in \cL^{k,\alpha}_{top}(g, \Sigma; \Lambda, \varsigma_2)$ with $(\bar{g}_j, \bar{\Sigma}_j)\to (g, \Sigma)$ in $\cL^{k,\alpha}$. (Here the constant $\varsigma_2\in (0,\varsigma_1)$ is chosen, at this stage, so that the conclusion of the previous claim holds true; in the end it shall possibly be chosen even smaller.) Then, this means that there exists a sequence of pairs $(g_j^0, \Sigma_j^0), (g_j^1, \Sigma_j^1)\to (g, \Sigma)$ in $\cL^{k,\alpha}_{top}(g, \Sigma; \Lambda, \varsigma_2)$ such that 
        \begin{itemize} 
            \item $g_j^i = (1 + f_j^i)\bar{g}_j$, $i=0, 1$, $f_j^i\in \cF$;
            \item  set $u_j^{(i)} := \mbfG_{\bar{\Sigma}_j, \bar{g}_j}^{\Sigma_j^i}, v^{(i)}_j=\mbfT^{\bar{\Sigma}_j}_{\Sigma,g}(u^{(i)}_j)$; $\zeta_j:= \zeta_{\bar{\Sigma}_j, \bar{g}_j, r_0}$, $d_j:= \mbfD^{\cL}((g_j^0, \Sigma_j^0), (g_j^1, \Sigma_j^1))$ one of the following holds:
              \begin{align}
                  \text{either } & \  \|\pi^{L^2}_{\bar{\Sigma}_j, \bar{g}_j}((u_j^{(1)}-u_j^{(0)})\zeta_j)\|_{L^2(\bar{\Sigma}_j)} \leq \frac{d_j}{j}, \label{Equ_Either Large Lip inverse} \\  
                  \text{or }     & \ \|\pi^{L^2}_{\bar{\Sigma}_j, \bar{g}_j}((u_j^{(1)}-u_j^{(0)})\zeta_j)\|_{L^2(\bar{\Sigma}_j)} \geq j\cdot d_j. \label{Equ_Or Large Lip}
              \end{align}
        \end{itemize}
        
        By Lemma \ref{Lem_Tangent vectors to scL^k} the normalized difference $(f_j^{(1)} - f_j^{(0)})/d_j$ subconverges in $C^{k,\alpha}(M)$ to some $\hat{f}_\infty \in \cF$ and $(v_j^{(1)}-v_j^{(0)})/d_j$ subconverges  in $W^{1,2}_{loc}$ to some non-zero, tame section $\hat{u}_\infty$, solving 
        \begin{align}
            L_{\Sigma, g} \hat{u}_\infty = \frac{n}{2}\nabla^\perp_{\Sigma, g}(\hat{f}_\infty).   \label{Equ_Tang vec of scL cap cF solves L u = c nu(f)}
        \end{align}

        Now, since $\zeta_j$ is supported away from $\Sing(\Sigma)$, by the aforementioned convergence clearly $\zeta_j(v_j^{(1)}-v_j^{(0)})/d_j$ subconverges in $L^2$ to $\hat{u}_\infty\cdot \zeta_{\Sigma, g, r_0}$ and thus (\ref{Equ_Or Large Lip}) cannot possibly hold for $j$ large enough.  

        We then need to rule out the other possibility, namely \eqref{Equ_Either Large Lip inverse}. Towards that goal, by (\ref{Equ_Tang vec of scL cap cF solves L u = c nu(f)}) and the claim above, we derive $\hat{f}_\infty = 0$, which means $0\neq \hat{u}_\infty \in \widehat{\Ker}_{\tau_0}(L_{\Sigma, g})$. 
        Then since $\zeta_j(v_j^{(1)}-v_j^{(0)})/d_j$ subconverges in $L^2(\Sigma)$ to $\hat{u}_\infty\cdot \zeta_{\Sigma, g, r_0}$, passing to the limit as $j\to\infty$ in \eqref{Equ_Either Large Lip inverse} we would conclude $\pi^{L^2}_{\Sigma, g}(\hat{u}_\infty\cdot \zeta_{\Sigma, g, r_0}) = 0$ (here by the same argument as for proving the previous claim, we rely on the assumption $(\Sigma_j, g_j)\in \cL^{k,\alpha}_{top}$). However, by the choice of $r_0$ we have made, this would force $\hat{u}_\infty$ to be the trivial section, a contradiction.   
    \end{proof}

   Having established this, we can prove Theorem \ref{Thm_Local Sard Thm} by unwrapping the proof of the Sard-Smale Theorem, as we are now about to explain.

    \begin{proof}[Proof of Theorem \ref{Thm_Local Sard Thm}.]
        We claim that the statement holds true for $\varsigma=\varsigma_2(g,\Sigma,\Lambda)$, where the latter constant is as determined in Lemma \ref{Lem_Loc Parametriz of scL^k}; so, in what follows, let indeed $\delta\in (0,\varsigma_2]$.
    
        First of all, we note that openness follows directly from Lemma \ref{Lem_Cptness for scL^k}: indeed, the image --- via the continuous map $\Pi$ --- of the compact set $\cL^{k,\alpha}(g, \Sigma; \Lambda, \delta)$ is compact with respect to $C^k$ convergence and therefore closed (in $\cG^{k,\alpha}(M)$).

          That being said, we shall prove the denseness inductively on $I:= \dim \widehat{\Ker}_{\tau}(L_{\Sigma, g})\geq 0$.  
        By induction, we can assume that the denseness has been established for every pair $(g, \Sigma)$ with $\dim \widehat{\Ker}_{\tau}(L_{\Sigma, g}) \leq I-1$ (Note that when $I=0$, the base case of induction, we are actually making no assumption.) The goal now is to prove it for $I$.

        \

        \textbf{Claim.}
          For each $\bar{g}\in \cG^{k,\alpha}(M)\setminus \cG^{k,\alpha}(g, \Sigma; \Lambda, \delta)=\Pi(\cL^{k,\alpha}(g, \Sigma; \Lambda, \delta))$, there exists a sequence of metrics $g_j \in \cF\cdot \bar{g}\setminus \Pi(\cL^{k,\alpha}_{top}(g, \Sigma; \Lambda, \delta))$ such that $g_j\to \bar{g}$ in $\cG^{k,\alpha}(M)$ when $j\to \infty$.

\
            
        We first finish the proof of denseness assuming this claim: it suffices to show that each $g_j$ in the claim can be approximated by metrics in $\cG^{k,\alpha}(g, \Sigma; \Lambda, \delta)$. Since $\cL^{k,\alpha}_{top}(g, \Sigma; \Lambda, \delta) = \cL^{k,\alpha}(g, \Sigma; \Lambda, \delta)$ when $I=0$, we only need to handle the case $I\geq 1$.
            
        Let $j\geq 1$ be fixed. Suppose, without loss of generality, that $g_j\notin \cG^{k,\alpha}(g, \Sigma; \Lambda, \delta)$. Hence $\Pi^{-1}(g_j)\cap \cL^{k,\alpha}(g, \Sigma; \Lambda, \delta)\neq \emptyset$. 
        By definition of $\cL^{k,\alpha}_{top}$ and Corollary \ref{Cor_Upper Semi-conti of dim Ker^+ L} we have, for every $(g_j, \Sigma')\in \cL^{k,\alpha}(g, \Sigma; \Lambda, \delta)$, $\dim \widehat{\Ker}_{\tau}(L_{\Sigma', g_j}) \leq I-1$. Hence by the inductive assumption, there exists $\delta_{g_j, \Sigma'} = \delta(g_j, \Sigma', \Lambda)>0$ such that $\cG(g_j, \Sigma'; \Lambda, \delta_{g_j, \Sigma'})$ is open and dense in $\cG^{k,\alpha}(M)$. 
        Since $\cL^{k,\alpha}(g_j, \Sigma'; \Lambda, \delta_{g_j, \Sigma'})$ contains an open neighborhood of $(g_j, \Sigma')$ in $\cL^{k,\alpha}(g, \Sigma; \Lambda, \delta)$ and, by Lemma \ref{Lem_Cptness for scL^k}, $\Pi^{-1}(g_j) \cap \cL^{k,\alpha}(g, \Sigma; \Lambda, \delta)$ is compact, we thus know that there exists a finite sequence of pairs $(g_j, \Sigma^{(1)})$, $(g_j, \Sigma^{(2)})$, $\dots$, $(g_j, \Sigma^{(i)}) \in \Pi^{-1}(g_j)\cap \cL^{k,\alpha}(g, \Sigma; \Lambda, \delta)$, $1\leq i\leq i_0$, such that 
        \begin{align}
            \bigcup_{i=1}^{i_0} \cL^{k,\alpha}(g_j, \Sigma^{(i)}; \Lambda, \delta_{g_j, \Sigma^{(i)}}) \supset \Pi^{-1}(g_j) \cap \cL^{k,\alpha}(g, \Sigma; \Lambda, \delta).  \label{Equ_Pi^-1(g_j) covered by smaller nullity neighb}
        \end{align}
        Since $\cG_j := \bigcap_{i=1}^{i_0} \cG(g_j, \Sigma^{(i)}; \Lambda, \delta_{g_j, \Sigma^{(i)}})$ is still open and dense in $\cG^{k,\alpha}(M)$, there exists a sequence $\{g'_m\}_{m\geq 1} \subset \cG_j$ such that $g'_m \to g_j$ in $\cG^{k,\alpha}$ as $m\to \infty$. Again by compactness of $\cL^{k,\alpha}(g, \Sigma; \Lambda, \delta)$, when $m\geq m_0$ is large enough, \[
            \cL^{k,\alpha}(g, \Sigma; \Lambda, \delta)\cap \Pi^{-1}(g'_m) \subset \bigcup_{i=1}^{i_0} \cL^{k,\alpha}(g_j, \Sigma^{(i)}; \Lambda, \delta_{g_j, \Sigma^{(i)}}) \,.
        \]
        By our choice of $g'_m$ this is only possible if $\cL^{k,\alpha}(g, \Sigma; \Lambda, \delta)\cap \Pi^{-1}(g'_m) = \emptyset$, and so $g'_m \in \cG^{k,\alpha}(g, \Sigma; \Lambda, \delta)$ for $m$ large enough. This concludes the proof of denseness assuming the Claim. \\

        \textit{Proof of Claim.} Suppose, without loss of generality, that $(\bar{g}, \bar{\Sigma})\in \cL^{k,\alpha}_{top}(g, \Sigma; \Lambda, \delta)$, for otherwise one just needs to take $g_j \equiv \bar{g}$.
        
        Recall by Lemma \ref{Lem_Loc Parametriz of scL^k}, $\cL^{k,\alpha}_{top}(g, \Sigma; \Lambda, \delta)\cap \Pi^{-1}(\cF\cdot \bar{g})$ is bi-Lipschitz embedded as a compact subset of $\widehat{\Ker}_{\tau}(L_{\bar{\Sigma},\bar{g}})$; we let, for the sake of notational convenience, $\cZ:= \mbfP_{\bar{g}, \bar{\Sigma}}(\cL^{k,\alpha}_{top}(g, \Sigma; \Lambda, \delta)\cap \Pi^{-1}(\cF\cdot \bar{g}))$ be its image in $\widehat{\Ker}_{\tau}(L_{\bar{\Sigma},\bar{g}})$.  Consider the composite map, \[
                \Pi: \cZ \cong \cL^{k,\alpha}_{top}(g, \Sigma; \Lambda, \delta)\cap \Pi^{-1}(\cF\cdot \bar{g}) \overset{\Pi}{\to} \cF\cdot \bar{g}\cong \cF,   \]
            that is a Lipschitz map between (compact subsets of) vector spaces.
            
            Since $\widehat{\ind}_{\tau}(L_{\Sigma, g})<0$, we know that $I<J$ and therefore $\Pi(\cZ)$ has dense complement 
         in $\cF\cdot\bar{g}\cong \cF$ (which, by definition, has dimension $J$). Hence we can find $g_j \in \cF\cdot \bar{g}\setminus \Pi(\cL^{k,\alpha}_{top}(g, \Sigma; \Lambda, \delta))$ to approximate $\bar{g}$, thereby finishing the proof of the claim.   \end{proof}

\subsection{Proof of Theorem \ref{thm:generic_perturbation}}

We will proceed in two steps: first we prove the theorem for metrics of finite, in fact $C^{k,\alpha}$, regularity (which is the setting we have so far considered in this section) and then we shall discuss the simple argument that allows to derive the smooth version of the result (namely: Theorem \ref{thm:generic_perturbation} in the specific form we have stated). For that purpose we will employ the following lemma about nested metric spaces.

\begin{lemma}\label{lem:MetricSpaces}
      Let $\{(X_k, d_k)\}_{k\geq k_0}$ be a sequence of complete metric spaces, with associated inclusions $\iota_k: X_{k+1}\hookrightarrow X_{k}$  such that 
      \begin{align}
        d_k\left( \iota_k(x), \iota_k(y) \right) \leq d_{k+1}(x, y)\,, \qquad \forall\, k\geq k_0\,.  \label{Equ_App_Metric Space_d_k<d_(k+1)} 
      \end{align}
      We identify, using such maps, each $X_k$ as a subset of $X_0$, and define 
      \begin{align*}
          X_\infty:= \bigcap_{k\geq k_0}X_k\,, & & d_\infty(x, y):= \sum_{k\geq k_0} 2^{-k}\cdot \frac{d_k(x, y)}{1+d_k(x, y)}, \quad \forall\, x, y\in X_\infty\,.
      \end{align*} 
      Suppose that $X_\infty$ is dense in $(X_k,d_k)$ for all $k\geq k_0$ and, in addition, that
       $G\subset (X_{k_0}, d_{k_0})$ is an open (respectively $G_\delta$) subset  such that for every $k\geq k_0$,
      $G\cap X_k$ is dense in $(X_k, d_k)$.
      Then $G\cap X_\infty$ is an open (resp. $G_\delta$) dense subset in $(X_\infty, d_\infty)$.
  \end{lemma}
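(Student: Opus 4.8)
The statement is purely a soft fact about nested complete metric spaces, and the natural strategy is to verify each claimed property of $G\cap X_\infty$ directly from the definitions, treating the "open" and "$G_\delta$" cases in parallel. First I would record the basic compatibility: by \eqref{Equ_App_Metric Space_d_k<d_(k+1)}, on $X_\infty$ each $d_k$ is dominated by $d_{k+1}$, hence by all $d_m$ with $m\geq k$; consequently the topology on $X_\infty$ induced by $d_\infty$ is finer than the subspace topology inherited from any single $(X_k,d_k)$, and a sequence converges in $(X_\infty,d_\infty)$ if and only if it converges in every $(X_k,d_k)$. (This last equivalence uses that the weighted sum defining $d_\infty$ converges and that $t\mapsto t/(1+t)$ is monotone; it is the one small computation worth spelling out.) I would also check that $(X_\infty,d_\infty)$ is itself complete: a $d_\infty$-Cauchy sequence is $d_k$-Cauchy for each $k$, hence converges in each complete $(X_k,d_k)$ to a limit which, by uniqueness of limits in $X_{k_0}$, is a single point of $\bigcap_k X_k=X_\infty$, and then $d_\infty$-convergence follows from the dominated-convergence-style estimate on the tail of the series. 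This is not strictly needed for the conclusion as phrased, but it clarifies that "$G_\delta$ dense" in $(X_\infty,d_\infty)$ is a Baire-meaningful assertion.

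\textbf{Denseness.} Fix $x\in X_\infty$ and $\varepsilon>0$. Choose $K\geq k_0$ so large that $\sum_{k>K}2^{-k}<\varepsilon/2$. Since $G\cap X_K$ is dense in $(X_K,d_K)$, pick $y'\in G\cap X_K$ with $d_K(x,y')$ small; since $X_\infty$ is dense in $(X_K,d_K)$ and $G\cap X_K$ is (by hypothesis, being the intersection of a $G_\delta$ with $X_K$, or open) a subset to which we can move within $G$ — more carefully: first approximate $x$ by a point of $G\cap X_K$, then approximate that point by a point of $X_\infty$, then observe that in the open case $G$ is a neighborhood so the $X_\infty$-point can be taken in $G$, while in the $G_\delta$ case one iterates. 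The cleanest route is: reduce the $G_\delta$ case to the open case by writing $G=\bigcap_{i}U_i$ with $U_i$ open in $(X_{k_0},d_{k_0})$, each $U_i\cap X_k$ dense in $(X_k,d_k)$ (which holds since $G\cap X_k\subset U_i\cap X_k$), apply the open-case statement to each $U_i$ to get that $U_i\cap X_\infty$ is open dense in $(X_\infty,d_\infty)$, and conclude that $G\cap X_\infty=\bigcap_i(U_i\cap X_\infty)$ is $G_\delta$; its denseness then follows from the Baire category theorem in the complete space $(X_\infty,d_\infty)$. So the real content is the \emph{open} case.

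\textbf{The open case — the main point.} Assume $G$ is open in $(X_{k_0},d_{k_0})$ with $G\cap X_k$ dense in $(X_k,d_k)$ for every $k$. Openness of $G\cap X_\infty$ in $(X_\infty,d_\infty)$ is immediate, since the inclusion $X_\infty\hookrightarrow X_{k_0}$ is continuous (the topology of $d_\infty$ being finer than that of $d_{k_0}$ on $X_\infty$). For denseness, fix $x\in X_\infty$ and $\varepsilon>0$; I will produce $z\in G\cap X_\infty$ with $d_\infty(x,z)<\varepsilon$ by a diagonal/exhaustion argument. Choose $K$ with $\sum_{k>K}2^{-k}<\varepsilon/3$. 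Using density of $G\cap X_K$ in $(X_K,d_K)$, find $w\in G\cap X_K$ with $d_K(x,w)$ small enough that (by \eqref{Equ_App_Metric Space_d_k<d_(k+1)}, which gives $d_k\leq d_K$ for $k\leq K$) $\sum_{k_0\leq k\leq K}2^{-k}\frac{d_k(x,w)}{1+d_k(x,w)}<\varepsilon/3$. Since $G$ is open in $(X_{k_0},d_{k_0})$ and $w\in G$, there is $r>0$ with $B_{d_{k_0}}(w,r)\subset G$, and by the domination relation the same $d_{k_0}$-ball contains $B_{d_k}(w,r)$ for every $k\geq k_0$. Now use density of $X_\infty$ in $(X_{k_0},d_{k_0})$ — but we need a point of $X_\infty$ close to $w$ \emph{in $d_\infty$}, not merely in $d_{k_0}$; this is exactly where one must be slightly careful, since $X_\infty$ need not be dense in $(X_\infty,d_\infty)$ trivially — it is the whole space. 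So instead: the point $w\in X_K$; apply the \emph{same} construction with $X_K$ in place of $X_{k_0}$, i.e. inductively build a sequence $w=w_K, w_{K+1},w_{K+2},\dots$ with $w_m\in G\cap X_m$, $d_m(w_{m-1},w_m)<2^{-m}$, using at each stage density of $G\cap X_m$ in $(X_m,d_m)$ together with the openness ball $B_{d_m}(w_{m-1},r_{m-1})\subset G$ (possible since $G\cap X_m$ is dense and the ball is a nonempty open subset of $(X_m,d_m)$). The sequence $\{w_m\}$ is then $d_k$-Cauchy for every $k$ (tails are controlled since $d_k(w_{m-1},w_m)\leq d_m(w_{m-1},w_m)<2^{-m}$ for $m\geq k$), hence converges in each complete $(X_k,d_k)$, so its limit $z$ lies in $X_\infty$; moreover $z\in \overline{G}^{d_m}$ for each $m$, but since each $w_m$ stayed inside a fixed $d_m$-ball contained in $G$ one arranges (shrinking the radii $r_m$ at each step) that $z$ itself lies in $G$. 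Finally $d_\infty(x,z)\leq d_\infty(x,w)+d_\infty(w,z)<\varepsilon/3+\varepsilon/3+\varepsilon/3=\varepsilon$ once the radii and the $\varepsilon/3$ from the tail are chosen appropriately. \textbf{The main obstacle} is precisely this last gluing: ensuring the diagonal limit $z$ remains inside the open set $G$ despite $G$ being open only in the coarse metric $d_{k_0}$ — the fix is to keep every $w_m$ (and hence $z$) within a single fixed $G$-ball, chosen at step $K$, and to verify that the domination hypothesis \eqref{Equ_App_Metric Space_d_k<d_(k+1)} makes that ball simultaneously a neighborhood in all the finer metrics. Everything else is bookkeeping with the series defining $d_\infty$ and routine application of completeness.
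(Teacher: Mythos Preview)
Your overall architecture matches the paper's: reduce the $G_\delta$ case to the open case via Baire (after checking completeness of $(X_\infty,d_\infty)$, which the paper leaves implicit), and obtain openness of $G\cap X_\infty$ from continuity of the inclusion $(X_\infty,d_\infty)\hookrightarrow(X_{k_0},d_{k_0})$.

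For denseness, however, the paper takes a much shorter route that avoids your diagonal Cauchy construction entirely. Given $x\in X_\infty$, for each $k$ the paper first picks $x_k\in G\cap X_k$ with $d_k(x_k,x)\leq 1/k$ (density of $G\cap X_k$), then picks $\bar x_k\in X_\infty$ with $d_k(\bar x_k,x_k)$ so small that $\bar x_k\in G$ (density of $X_\infty$ in $(X_k,d_k)$ plus openness of $G\cap X_k$ there) and $d_k(\bar x_k,x)\leq 2/k$. Now $\bar x_k\in G\cap X_\infty$ directly, and one estimates
\[
d_\infty(x,\bar x_k)\ \leq\ \sum_{m=k_0}^{k} 2^{-m}\frac{d_k(x,\bar x_k)}{1+d_k(x,\bar x_k)}\ +\ \sum_{m>k}2^{-m}\ \leq\ \frac{4}{k}+2^{-k}\to 0,
\]
using only that $d_m\leq d_k$ on $X_\infty$ for $m\leq k$. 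So the ``main obstacle'' you identify --- keeping a constructed limit point inside $G$ --- never arises: no new limit point is built, the $\bar x_k$ already lie in $G\cap X_\infty$. Your worry that one needs closeness in $d_\infty$ rather than in a single $d_K$ is exactly what this estimate dispels.

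Separately, your inductive step has a genuine wrinkle as written: you ask for $d_m(w_{m-1},w_m)<2^{-m}$, but $w_{m-1}\in X_{m-1}$ need not lie in $X_m$, so $d_m(w_{m-1},\cdot)$ is undefined; and the density hypothesis for $G\cap X_m$ is in $(X_m,d_m)$, not $(X_{m-1},d_{m-1})$. The natural fix --- use density of $X_\infty$ in $(X_{m-1},d_{m-1})$ together with openness of $G$ to land in $G\cap X_\infty$ at the very first step --- collapses your construction to the paper's two-line argument.
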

  \begin{proof}
      The case when $G$ is a $G_\delta$ subset follows from the case when $G$ is open by appealing to the Baire Category Theorem. So we shall prove the statement assuming $G$ is open.

      First notice that for every $k\geq k_0$, since $(X_k, d_k)\hookrightarrow (X_{k_0}, d_{k_0})$ is continuous (being a finite composition of continuous maps), $G\cap X_k$ is open in $(X_k, d_k)$; the openness of $G\cap X_\infty$ in  $(X_\infty, d_\infty)$ analogously follows from the continuity of the inclusion $(X_{\infty}, d_{\infty})\to (X_{k_0}, d_{k_0})$, which can be checked at once by the way we have defined the distance $d_\infty$.

      To prove the denseness of $G\cap X_\infty$ in $(X_\infty, d_\infty)$, let $x\in X_\infty$ be an arbitrary point. By the denseness of $G\cap X_k$ in $(X_k, d_k)$, for each $k\geq k_0$ there exists $x_k\in G\cap X_k$ such that $d_k(x_k, x)\leq 1/k$; on the other hand, by the denseness of $X_\infty$ in $(X_k, d_k)$, there exists a sequence $\left\{x_k^j\right\}_{j\geq 0}$in $X_\infty$ such that \[
        \lim_{j\to \infty} d_k(x_k^j, x_k) = 0. \]
       Hence, by the openness of $G\cap X_k$ in $(X_k, d_k)$, for every $k\geq k_0$ we can find an index $j_k$ large enough that 
       \[
          \bar x_k:= x_k^{j_k}\in (G\cap X_k)\cap X_\infty = G\cap X_\infty\,, \qquad d_k(\bar x_k, x) \leq 2/k\,.
      \] 
      Then, recalling the definition of $d_{\infty}$, we find, \[
          d_\infty(x, \bar x_k) \leq \sum_{m=k_0}^k 2^{-m}\cdot \frac{2/k}{1+2/k} + \sum_{m=k+1}^\infty 2^{-m} \leq 4/k + 2^{-k} \to 0 \quad \text{ as }k\to \infty\,.
      \]
      Hence $x$ is in the closure of $G\cap X_\infty$ in $(X_\infty, d_\infty)$. 
  \end{proof}

\begin{proof}[Proof of Theorem \ref{thm:generic_perturbation}]

\

\begin{center}
$\boxed{\text{Step 1: metrics of finite regularity} \ C^{k,\alpha}.}$
\end{center}

    For every $(g, \Sigma) \in \cM^{k, \alpha}_n(M)$ with $\widehat{\ind}_{\tau}(L_{\Sigma,g}) < 0$ we know, by Theorem \ref{thm:Count}, that for some $p \in \Sing(\Sigma)$,
    \[
        \rmI(\mbfC_p) > 0\,.
    \]
    Note that there exists $\delta(\mathbf{C}_p) > 0$ such that for any (regular minimal) cone $\mathbf{C}\in\cC_{N,n}$ satisfying $\mbfF(\mbfC_p \cap \mathbb{S}^{N - 1}, \mbfC \cap \mathbb{S}^{N - 1}) \leq \delta(\mathbf{C}_p)$, we also have $\rmI(\mbfC) > 0$.
Hence, given $\Lambda\in\cR^{k,\alpha}_n(g,\Sigma)$,
    by Definition \ref{Def_injrad, canonical neighb} and the compactness Lemma \ref{Lem_Cptness for scL^k},  we can choose $\kappa(g, \Sigma, \Lambda) \in (0, \kappa_0(g, \Sigma, \Lambda)]$, where $\kappa_0$ is the threshold given by Theorem \ref{Thm_Local Sard Thm}, with the following property:
    
    \emph{For every $(g', \Sigma') \in \cL^{k, \alpha}(g, \Sigma; \Lambda, \kappa(g, \Sigma, \Lambda))$, there exists $p' \in \Sing(\Sigma')$ such that 
    \[
        \mbfF(\mbfC_p \cap \mathbb{S}^{N - 1}, \mbfC_{p'} \cap \mathbb{S}^{N - 1}) \leq \delta(\mathbf{C}_p)\,,
    \] and thus,
    \[
        \rmI(\mbfC_{p'}) > 0\,.
    \]
     In particular, $\widehat{\ind}_{\tau}(L_{\Sigma', g'}) < 0$ holds for every $(g', \Sigma') \in \cL^{k, \alpha}(g, \Sigma; \Lambda, \kappa(g, \Sigma, \Lambda))$.
     }

    (In all other cases, one shall simply let $\kappa(g,\Sigma,\Lambda)=\kappa_0(g,\Sigma,\Lambda)$.)

    By Theorem \ref{Thm_Countable Decomp}, for such a function $\kappa: \cE_n^{k,\alpha}(M) \to \RR_+$, we can obtain a countable cover consisting of canonical neighborhoods:
    \[
        \cM_n^{k,\alpha}(M) = \bigcup_{j\geq 1}\cL^{k,\alpha}(g_j, \Sigma_j; \Lambda_j, \kappa_j),   
    \]
    where $\kappa_j:= \kappa(g_j, \Sigma_j; \Lambda_j)$. Then, for every integer $j\geq 1$, we let
    \begin{align*}
        \cL^{k, \alpha}_-(g_j, \Sigma_j; \Lambda_j, \kappa_j) &:= \{(g, \Sigma) \in \cL^{k, \alpha}(g_j, \Sigma_j; \Lambda_j, \kappa_j): \widehat{\ind}_{\tau}(L_{\Sigma, g}) < 0\}\,.
    \end{align*}
    We claim that, for any fixed $j\geq 1$, the set $\cL^{k, \alpha}_-(g_j, \Sigma_j; \Lambda_j, \kappa_j)$ can also be covered by countably many canonical neighborhoods of the form $\cL^{k, \alpha}(g, \Sigma; \Lambda, \kappa(g, \Sigma, \Lambda))$ with $\widehat{\ind}_{\tau}(L_{\Sigma,g}) < 0$. Indeed, by the compactness Lemma \ref{Lem_Cptness for scL^k} again (now considering a countable exhaustion of $\cL_{-}^{k, \alpha}(g_j, \Sigma_j; \Lambda_j, \kappa_j)$ by closed --- hence compact --- sets, defined e.\,g. as $1/i$-sublevel sets of the distance function from the complement of such $\cL_{-}^{k, \alpha}$ in   $\cL^{k,\alpha}(g_j, \Sigma_j; \Lambda_j, \kappa_j)$) it suffices to show that for every $(g, \Sigma) \in \cL^{k, \alpha}(g_j, \Sigma_j; \Lambda_j, \kappa_j)$, the set $\cL^{k,\alpha}(g, \Sigma;\Lambda_j, \kappa(g, \Sigma, \Lambda_j))$ contains an open neighborhood of $(g, \Sigma)$ in $\cL^{k, \alpha}(g_j, \Sigma_j; \Lambda_j, \kappa_j)$. 
    
    Suppose for the sake of contradiction that there exists $(g, \Sigma) \in \cL^{k, \alpha}(g_j, \Sigma_j; \Lambda_j, \kappa_j)$ and a sequence $\{(g_i, \Sigma_i)\} \subset \cL^{k, \alpha}(g_j, \Sigma_j; \Lambda_j, \kappa_j) \setminus \cL^{k, \alpha}(g, \Sigma; \Lambda_j, \kappa(g, \Sigma, \Lambda_j))$,
    so that
    \[
        (g_i, \Sigma_i) \rightarrow (g, \Sigma)\, \ \text{in} \ \cM_n^{k,\alpha}(M). 
    \]
    For large enough $i$, based on the definition of $\cL^{k, \alpha}(g_j, \Sigma_j; \Lambda_j, \kappa_j)$ and the notion of convergence in $\cM^{k,\alpha}_n$, we have
    \begin{itemize}
        \item $g_i$ is a $C^{k, \alpha}$ metric on $M$ with $\|g_i\|_{C^{k, \alpha}}\leq \Lambda_j$ and $\|g-g_i\|_{C^{k}}\leq \kappa(g, \Sigma, \Lambda_j)$;
        \item $\Sigma_i$ is an $\MSI$ in $(M, g_i)$ satisfying
              \[ \mfr_{\Sigma_i, g_i} \geq \Lambda_j^{-1}\rho_{\Sigma_i,g_i}, \quad \mbfF(|\Sigma_i|_{g_i}, |\Sigma|_{g})\leq \kappa(g, \Sigma, \Lambda_j)
              .\]
              
    \end{itemize}
    We further note that (since $(g_i, \Sigma_i) \in \cL^{k, \alpha}(g_j, \Sigma_j; \Lambda_j, \kappa_j))$ there is an ordered bijection between the points of $\Sing(\Sigma_i)$ and $\Sing(\Sigma_j)$, and the densities of the corresponding cones are equal; also, since $(g, \Sigma) \in \cL^{k, \alpha}(g_j, \Sigma_j; \Lambda_j, \kappa_j)$
     there is also a bijection between the points of $\Sing(\Sigma)$ and $\Sing(\Sigma_j)$, and the densities of the corresponding cones are equal. Putting all information together,
    this implies that eventually $(g_i, \Sigma_i) \in \cL^{k, \alpha}(g, \Sigma; \Lambda_j, \kappa(g, \Sigma, \Lambda_j))$, a contradiction.

    Therefore, by now repeating the argument and construction above as one varies $j\geq 1$, we get that the set 
    \[
        \{(g, \Sigma) \in \cM_n^{k,\alpha}(M) : \widehat{\ind}_{\tau}(L_{\Sigma,g}) < 0\} = \bigcup_{j \geq 1} \cL^{k,\alpha}_{-}(g_j, \Sigma_j; \Lambda_j, \kappa_j)
    \]
    can be covered by a countable union of canonical neighborhoods $\{\cL^{k,\alpha}(g'_i, \Sigma'_i; \Lambda'_i, \kappa'_i)\}^\infty_{i= 1}$ where for each $i\geq 1$ there holds $\widehat{\ind}_{\tau}(L_{\Sigma'_i, g'_i}) < 0$; here we have set $\kappa'_i=\kappa(g'_i,\Sigma'_i,\Lambda'_i)$ for $\Lambda'_i=\Lambda_j$ when $(g'_i,\Sigma'_i)\in \cL^{k,\alpha}(g_j, \Sigma_j; \Lambda_j, \kappa_j)$.

    Finally, by Theorem \ref{Thm_Local Sard Thm}, the subset of metrics
    \begin{multline*}
       \{g \in \cG^{k, \alpha} : \forall (g, \Sigma) \in \cM^{k, \alpha}_n(M), \ \widehat{\ind}_{\tau}(L_{\Sigma,g}) \geq 0\}  \\
         = \cG^{k, \alpha} \setminus \{g \in \cG^{k, \alpha} : \exists (g, \Sigma) \in \cM^{k, \alpha}_n(M), \ \widehat{\ind}_{\tau}(L_{\Sigma,g}) < 0\}\\
         = \cG^{k, \alpha} \setminus \Pi(\cup_{i \geq 1} \cL^{k,\alpha}(g'_i, \Sigma'_i; \Lambda'_i, \kappa'_i))
         = \cap_{i \geq 1} \left(\cG^{k, \alpha}(g'_i, \Sigma'_i; \Lambda'_i, \kappa'_i)\right)\,
    \end{multline*}
    is a residual set in the Baire category sense; equivalently, said otherwise, the set of Riemannian metrics $\{g \in \cG^{k, \alpha} : \exists (g, \Sigma) \in \cM^{k, \alpha}_n(M), \widehat{\ind}_{\tau}(L_{\Sigma,g}) < 0\}$ is meagre. This completes the proof of Theorem \ref{thm:generic_perturbation} for $C^{k, \alpha}$-metrics.

\begin{center}

$\boxed{\text{Step 2: transition to smooth metrics.}}$

\end{center}

We apply Lemma \ref{lem:MetricSpaces} with $k_0=7$, taking for every $k\geq k_0$ the metric space $(X_k,d_k)$ to be $\cG^{k,\alpha}(M)$ with the metric induced by its natural norm (cf. Definition \ref{def:Pairs}) and letting 
\[
G=\{g \in \cG^{k_0, \alpha} : \forall (g, \Sigma) \in \cM^{k_0, \alpha}_n(M), \ \widehat{\ind}_{\tau}(L_{\Sigma,g}) \geq 0\}
\]
Note that Step 1 ensures that $G$ is a $G_\delta$ subset in $\cG^{k_0,\alpha}(M)$ and $G\cap \cG^{k,\alpha}(M)$ is dense in $\cG^{k,\alpha}$ for every $k\geq k_0$. Hence, the smooth version of the theorem follows at once.
\end{proof}

  \section{Proof of the main theorems}\label{sec:ProofMain}

\begin{proof}[Proof of Theorem \ref{Thm:GenReg,IsolatedSing}]
Straightforward by combining Theorem \ref{thm:Count}, Theorem \ref{thm:generic_perturbation} and the basic Morse index estimate recalled in Remark \ref{rem:BasicIndexEst}.
\end{proof}

Moving on, let us now see how a suitable area bound implies a definite structure of the singular set of a stationary integral varifold. Recall that we let $\mathbb{S}^d$ denote the round unit sphere in $\R^{d+1}$, and $A_d$ be its $d$-dimensional Hausdorff measure (in particular $A_3=2\pi^2$); if 
$\omega_d$ stands for the Lebesgue measure of the unit ball in $\R^d$ there holds $\omega_d=A_{d-1}/d$. 

\begin{proposition}\label{pro:SmallMass=>Isol}
  For every $\varepsilon>0$, there exists a neighborhood $\scN(\varepsilon)$ of the round metric on $S^4$ such that for every $g\in \scN(\varepsilon)$, every mod 2 cyclic $g$-stationary integral $3$-varifold $V$ with total mass $\leq 2A_3-\varepsilon$ has only strongly isolated singularities.   
\end{proposition}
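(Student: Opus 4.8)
The plan is to reduce the statement to a uniform upper bound on the density and then run a deterministic tangent-cone argument. First I would establish: there is a neighborhood $\scN(\varepsilon)$ of the round metric $g_{0}$ on $S^{4}$ such that for every $g\in\scN(\varepsilon)$ and every mod $2$ cyclic $g$-stationary integral $3$-varifold $V$ with $\|V\|(S^{4})\leq 2A_{3}-\varepsilon$ one has $\theta_{V}(x)<2$ for all $x\in\spt\|V\|$ (here $\theta_{V}$ denotes the $3$-dimensional density). For the round metric this is exact and elementary: viewing $S^{4}\subset\R^{5}$, the Euclidean cone over $V$ with vertex at the origin is a stationary integral $4$-varifold in $\R^{5}$ whose density at the vertex equals $\|V\|(S^{4})/A_{3}$ by homogeneity; since the vertex maximizes the density of a cone, and since near any $p\in\spt\|V\|$ that cone is infinitesimally the product of the tangent cone of $V$ at $p$ with the radial line, one gets $\theta_{V}(p)\leq \|V\|(S^{4})/A_{3}\leq (2A_{3}-\varepsilon)/A_{3}<2$. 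To pass to nearby metrics I would argue by contradiction: a sequence $g_{j}\to g_{0}$ with mod $2$ cyclic $g_{j}$-stationary $V_{j}$ of mass $\leq 2A_{3}-\varepsilon$ and points $p_{j}$ of density $\geq 2$ would, by Allard's compactness theorem (applied with the varying ambient metrics) together with Federer--Fleming compactness for the associated mod $2$ flat chains, subconverge to a mod $2$ cyclic $g_{0}$-stationary $V_{\infty}$ of mass $\leq 2A_{3}-\varepsilon$ carrying a point of density $\geq 2$ (by upper semicontinuity of the density), contradicting the round bound.

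With the density bound in hand, fix $g\in\scN(\varepsilon)$ and $V$ as in the statement and let $p\in\Sing(V)$; let $\mbfC$ be a tangent cone of $V$ at $p$. It is a stationary integral $3$-cone in $\R^{4}\cong T_{p}S^{4}$, it inherits the mod $2$ cyclic property (rescalings of a mod $2$ cycle are mod $2$ cycles, and the flat limit of mod $2$ cycles with locally bounded mass --- bounds guaranteed by monotonicity --- is a mod $2$ cycle), and $\theta_{\mbfC}(\orig)=\theta_{V}(p)<2$, so \emph{every} point of $\mbfC$ has density $<2$. Writing $\mbfC=\orig\conetimes S$ with $S:=\mbfC\cap\SSp^{3}$, slicing shows $S$ is a mod $2$ cyclic stationary integral $2$-varifold in $\SSp^{3}$, all of whose densities are $<2$. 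The heart of the matter is to prove that $S$ is smooth, which I would do by classifying its tangent cones: each is a mod $2$ cyclic stationary integral $2$-cone in $\R^{3}$ of vertex density $<2$, i.e.\ the cone over a balanced finite union of great-circle arcs in $\SSp^{2}$; the density bound forces at most three arc-ends at any junction, while mod $2$ cyclicity forces that number to be even at each junction, so there are no genuine junctions; multiplicity two is excluded (it would give density $2$), and two distinct planes are excluded (they meet along a line of density $\geq 2$), so every such tangent cone is a single multiplicity-one plane. Hence $\theta_{S}\equiv1$ and, by Allard's regularity theorem, $S$ is a smooth, embedded, multiplicity-one minimal surface in $\SSp^{3}$; therefore $\mbfC=\orig\conetimes S$ is a regular cone in the sense of Definition \ref{def:MSI}, the point $p$ is a strongly isolated singularity, and since $p$ was arbitrary, $V$ has only strongly isolated singularities.

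The step I expect to be the main obstacle --- or at least the one requiring the most care --- is the precise bookkeeping around the mod $2$ cyclic hypothesis: that it genuinely descends to tangent cones of $V$, to the link $S$, and to the tangent cones of $S$ (this is the only place the hypothesis enters, and it is exactly what rules out the ``triple junction'' cones that would otherwise obstruct smoothness of $S$), together with invoking the correct slicing and compactness statements for mod $2$ flat chains in the presence of the monotonicity-supplied mass bounds. The classification of two-dimensional stationary integral cones of density $<2$ in $\R^{3}$ (equivalently, of ``short'' stationary geodesic nets in $\SSp^{2}$) is classical but should be cited precisely; the remaining ingredients --- the cone density computation, Allard's compactness and $\varepsilon$-regularity theorems, and upper semicontinuity of density under varifold convergence --- are standard.
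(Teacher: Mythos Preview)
Your proposal is correct and follows essentially the same approach as the paper. Both proofs hinge on the same two ingredients: the monotonicity bound $\theta_{V}(p)\leq \|V\|/A_{3}$ on the round sphere (your cone-over-$V$ computation is the paper's Lemma~\ref{lem:MonotIneq}), and the classification showing that an irregular mod~2 cyclic stationary integral $3$-cone in $\R^{4}$ has vertex density $\geq 2$ (your link analysis is the contrapositive of the paper's Lemma~\ref{lem:AreaLowerBd,NonIsolatedSing}, which likewise reduces to ruling out triple junctions in $2$-dimensional mod~2 cyclic cones in $\R^{3}$, citing \cite{AA76} and \cite[Lemma~A.3]{MarNev14}). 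The only organizational difference is the placement of the compactness step: you first secure the uniform density bound $<2$ for all nearby metrics and then run the cone classification deterministically, whereas the paper assumes a non-strongly-isolated singularity, deduces density $\geq 2$ from the classification, and only then passes to the limit to contradict the mass bound; these are equivalent arrangements of the same argument.
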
 
 
We must first prove the following simple density bound.

\begin{lemma}\label{lem:MonotIneq}
Let $V\subset\mathbb{S}^n$ be a stationary integral $(n-1)$-varifold, let $p\in \text{spt}(V)$ and let $\Theta_V(p)$ denote the density of $V$ at $p$; that is to say: $\Theta_V(p)=\Theta_V(p,0)$. Then there holds:
\[
\Theta_V(p)\leq \frac{\|V\|}{A_{n-1}}.
\]
\end{lemma}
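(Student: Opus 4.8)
The inequality $\Theta_V(p)\leq \|V\|/A_{n-1}$ is a monotonicity-type statement, comparing the density of $V$ at a point $p$ of the round sphere with the total mass of $V$. The natural approach is to transplant the ambient manifold into a cone and use the usual Euclidean monotonicity formula. Concretely, I would first recall that the round sphere $\mathbb{S}^n\subset\R^{n+1}$ is, up to a conformal factor, a cross-section of a cone: write points of $\R^{n+1}\setminus\{0\}$ in polar coordinates $(r,\omega)$ with $\omega\in\mathbb{S}^n$, and observe that $\Sigma:=\mbfC\cap\mathbb{S}^{n}$ type reasoning applies. The cleanest route is to fix $p\in\mathbb{S}^n$ and consider the stereographic-type or gnomonic projection from $\mathbb{S}^n$ (minus the antipode of $p$) onto the tangent hyperplane $T_p\mathbb{S}^n\cong\R^n$, which maps $V$ to a varifold whose generalized mean curvature is controlled; but since $V$ is only stationary (not minimizing) in $\mathbb{S}^n$, the image is \emph{not} stationary in $\R^n$. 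So I would instead work intrinsically.

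The better-adapted tool is the monotonicity formula for stationary varifolds in a Riemannian manifold: for a stationary integral $(n-1)$-varifold $V$ in $(\mathbb{S}^n,g_{\mathrm{round}})$ and $p\in\spt\|V\|$, the density ratio
\[
\Theta_V(p,r)=\frac{\|V\|(B^g(p,r))}{\omega_{n-1}\,r^{\,n-1}}\,e^{\Lambda r}
\]
is monotone non-decreasing in $r$ for a suitable curvature-dependent constant $\Lambda$ (here $\Lambda=0$ actually, because the sphere has positive curvature — in fact the round metric even gives an \emph{exact} comparison). More precisely, on the round $\mathbb{S}^n$ one has the classical fact that for a stationary integral $(n-1)$-varifold, $r\mapsto \|V\|(B(p,r))/\operatorname{vol}_{n-1}(\partial B(p,r))$ is monotone, which at $r=0$ gives $\Theta_V(p)$ and at $r=\pi$ (the whole sphere) gives $\|V\|/A_{n-1}$ since $\partial B(p,\pi)$ degenerates and $B(p,\pi)=\mathbb{S}^n$. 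I would carry this out by taking the first variation of $V$ against the radial vector field $X=\varphi(\dist_g(p,\cdot))\,\partial_r$ with $\varphi$ an appropriate cutoff, integrating, and using that $V$ is stationary so $\delta V(X)=0$; the spherical trigonometry of $\partial_r$ on $\mathbb{S}^n$ (where $\div_{\mathbb{S}^n}\partial_r = (n-1)\cot r$ along any $(n-1)$-plane, with the tangential correction having a favorable sign) produces exactly the monotone quantity, and letting the inner radius go to $0$ and the outer radius go to $\pi$ yields the claimed inequality.

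\textbf{Main obstacle.} The one genuinely delicate point is handling the behavior at the antipodal point $\bar p$ of $p$ and near $r=\pi$: the coordinate $\partial_r$ is singular there, and one must check that the boundary/error terms in the first-variation identity vanish in the limit $r\to\pi^-$. This is where I would be careful — one can either insert a second cutoff excising a small ball around $\bar p$ and estimate the resulting error by the (finite) mass of $V$, showing it tends to $0$, or invoke the fact that $\|V\|$ is a Radon measure so $\|V\|(\{\bar p\})$-type contributions are negligible after suitable scaling. The rest is the standard ODE manipulation of the monotonicity formula, which is routine and which I would not belabor. In fact, for the application in Proposition~\ref{pro:SmallMass=>Isol} only the case $n=4$ (so $(n-1)$-varifolds $=3$-varifolds in $\mathbb{S}^4$, with $A_3=2\pi^2$) is needed, but the argument is dimension-independent so I would state and prove it in general as above.
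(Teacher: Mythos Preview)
Your proposal is correct in spirit but takes a needlessly circuitous route compared with the paper. The paper's argument is a two-line application of the \emph{cone} idea you mention in your first paragraph and then abandon: let $\mbfC:=\mathbf{0}\conetimes V$ be the cone over $V$ in $\R^{n+1}$, which is a stationary integral $n$-varifold. Euclidean monotonicity centered at $p\in\SSp^n\subset\R^{n+1}$ gives
\[
\Theta_V(p)=\Theta_{\mbfC}(p)\leq \lim_{r\to\infty}\frac{\|\mbfC\|(\BB(p,r))}{\omega_n r^n}
=\lim_{r\to\infty}\frac{\|\mbfC\|(\BB(\orig,r))}{\omega_n r^n}=\frac{\|V\|}{A_{n-1}},
\]
using that $\mbfC$ near $p$ splits off the radial line (so the density of $\mbfC$ at $p$ equals that of $V$), and that for a cone the density at infinity is independent of the center. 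No antipodal issues, no spherical first-variation computation.

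You identified this approach (``transplant into a cone and use Euclidean monotonicity'') but then conflated it with gnomonic projection onto $T_p\SSp^n\cong\R^n$, noted that \emph{that} image is not stationary, and switched to the intrinsic argument. The cone over $V$ lives in $\R^{n+1}$, not $\R^n$, and \emph{is} stationary; you should have stayed with it.

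Your intrinsic route can be made to work, but it is more delicate than you indicate. The genuine subtlety is not at the antipode but already past $r=\pi/2$: writing $\div_S(\phi(r)\partial_r)=\phi(r)\cot r\,(n-1)+(\phi'(r)-\phi(r)\cot r)|\partial_r^T|^2$, the $\cot r$ factor changes sign at the equator, so a naive choice of $\phi$ yields a monotone quantity only on a hemisphere. One must choose the comparison function to be the volume $|B_r^{\SSp^{n-1}}|$ of a geodesic ball in the totally geodesic $(n-1)$-sphere (so that the ratio interpolates between $\Theta_V(p)$ and $\|V\|/A_{n-1}$), and then check by a careful first-variation that the resulting ratio is monotone on all of $(0,\pi)$. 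This is true and classical, but amounts to re-deriving the cone argument intrinsically; the paper's proof avoids it entirely.
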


\begin{proof}
Let $\mbfC$ denote the stationary integral varifold corresponding to the ``cone over $V$'' in $\R^{n+1}$, i.\,e. $\mbfC:=\mathbf{0} \conetimes V$. By the monotonicity formula for stationary integral varifolds one has:
\[
\frac{\|V\|}{A_{n-1}}=\lim_{r\to\infty}\frac{\|\mbfC\|(B_r(0))}{\omega_n r^n}=\lim_{r\to\infty}\frac{\|\mbfC\|(B_r(p))}{\omega_n r^n}\geq \lim_{r\to 0}\frac{\|\mbfC\|(B_r(p))}{\omega_n r^n}=\lim_{r\to 0}\frac{\|V\|(B_r(p))}{\omega_{n-1} r^{n-1}} =\Theta_V(p).
\] \end{proof}

In the case when $V$ is associated to a closed embedded minimal hypersurface in $\mathbb{S}^{n+1}$ the density at each point is of course unitary. Hence:

\begin{corollary}\label{cor:LeastArea}
For any $n\geq 2$ the equatorial $n$-dimensional hypersphere is the (unique) element of least area among all closed embedded minimal hypersurfaces in $\mathbb{S}^{n+1}$.
\end{corollary}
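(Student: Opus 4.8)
The plan is to read Corollary \ref{cor:LeastArea} off from Lemma \ref{lem:MonotIneq}, applied with the ambient round sphere taken to be $\mathbb{S}^{n+1}$ (so that in the statement of that lemma one replaces $n$ by $n+1$, hence $A_{n-1}$ by $A_n$). First I would record, as already observed right before the statement, that a closed embedded minimal hypersurface $\Sigma\subset\mathbb{S}^{n+1}$ determines a stationary integral $n$-varifold $V=|\Sigma|$ of multiplicity one, so that $\Theta_V(p)=1$ at every $p\in\spt(V)$. Lemma \ref{lem:MonotIneq} then yields $1=\Theta_V(p)\leq \|V\|/A_n$, i.\,e. $\|V\|\geq A_n$. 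Since an equatorial $n$-sphere $\mathbb{S}^n\subset\mathbb{S}^{n+1}$ is totally geodesic, hence minimal, and has $n$-dimensional area exactly $A_n$, it realizes this lower bound; this settles the assertion that the equatorial hypersphere has least area.

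For the uniqueness part I would analyze the equality case. Suppose $\|V\|=A_n$; then the inequality above is saturated at every $p\in\spt(V)$. Inspecting the proof of Lemma \ref{lem:MonotIneq}, whose only inequality is the monotonicity of the density ratio of the cone $\mbfC:=\mathbf{0}\conetimes V$ (a stationary integral $(n+1)$-varifold in $\R^{n+2}$) between the scales $0$ and $\infty$, I would deduce that $r\mapsto \|\mbfC\|(B_r(p))/(\omega_{n+1}r^{n+1})$ is constant (equal to $1$) on $(0,+\infty)$ for every $p\in\spt(V)$. By the equality case of the monotonicity formula for stationary integral varifolds, $\mbfC$ is then a cone with vertex at each such point $p$.

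The last step is a spine argument: the set of vertices of a cone is a linear subspace $L\subseteq\R^{n+2}$ along which $\mbfC$ splits as a product $L\times\mbfC'$, with $\mbfC'$ a stationary cone in $L^{\perp}$ whose only vertex is the origin. Since every $p\in\spt(V)$ lies in $L$, we get $\spt(\mbfC)=\mathbf{0}\conetimes\spt(V)\subseteq L$; comparing with the product structure $\spt(\mbfC)=L\times\spt(\mbfC')$ and $\spt(\mbfC')\subseteq L^{\perp}$ forces $\spt(\mbfC')=\{\mathbf{0}\}$, hence $\dim L=n+1$ and, by the constancy theorem together with $\Theta_V\equiv 1$, $\mbfC=|L|$ with multiplicity one. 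Slicing the cone at radius $1$ then recovers $V=|L\cap\mathbb{S}^{n+1}|$, so $\Sigma=L\cap\mathbb{S}^{n+1}$ is a great $n$-sphere, i.\,e. an equatorial hypersphere.

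The lower-bound half and the computation that the equator has area $A_n$ are immediate. The only step that needs care is the equality discussion: one must check that saturating Lemma \ref{lem:MonotIneq} genuinely saturates the monotonicity inequality at every point of the support, and then correctly invoke the rigidity of monotonicity, the splitting structure of the vertex set of a minimal cone, and the constancy theorem, keeping the dimension bookkeeping straight (in particular ruling out $\dim L=n+2$, which would be incompatible with $\mbfC$ being $(n+1)$-dimensional). I expect this to be the main, though still essentially routine, obstacle.
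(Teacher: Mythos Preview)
Your proposal is correct and follows essentially the same route as the paper: apply Lemma \ref{lem:MonotIneq} for the lower bound, and for uniqueness use rigidity in the monotonicity formula to conclude the cone $\mbfC$ is flat. The only cosmetic difference is that the paper phrases the rigidity step as an iteration (``the cone splits off the line through $p$ and the origin; iterating by downward induction \dots''), whereas you invoke the spine/vertex-set structure in one shot; these are two equivalent packagings of the same argument.
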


\begin{proof}
The (weak) inequality is implied by Lemma \ref{lem:MonotIneq}. By inspecting the previous proof, we see at once that equality can only occur if the cone in question splits off the line passing through the point $p$ and the origin. Iterating the argument by downward induction we must conclude that $\mbfC$ is in fact an hyperplane through the origin in $\R^{n+1}$, hence the claim.
\end{proof}

\begin{lemma}\label{lem:AreaLowerBd,NonIsolatedSing}
  Let $\mbfC$ be an irregular, mod 2 cyclic $3$-dimensional stationary integral cone in $\R^4$. Then \[
  \Theta_{\mbfC}(\orig)\geq 2.
  \]
\end{lemma}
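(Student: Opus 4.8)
The plan is to argue by contradiction and dimension reduction, exploiting the lower area bound $2A_3$ (equivalently $\Theta_{\mbfC}(\orig) \geq 2$) precisely through the fact that any regular minimal cone below the next density gap must be a hyperplane. Suppose $\mbfC$ is an irregular, mod $2$ cyclic $3$-dimensional stationary integral cone in $\R^4$ with $\Theta_{\mbfC}(\orig) < 2$. The link $\Sigma := \mbfC \cap \SSp^3$ is then a stationary integral $2$-varifold in $\SSp^3$; by Lemma~\ref{lem:MonotIneq} its mass $\|\Sigma\|$ satisfies $\|\Sigma\| = A_2 \cdot \Theta_{\mbfC}(\orig) < 2A_2$, hence $\Theta_{\Sigma}(q) \leq \|\Sigma\|/A_2 < 2$ for every $q \in \spt(\Sigma)$. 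By Allard's regularity theorem, any point of $\Sigma$ with density $< 1 + \eta$ (for $\eta$ the relevant dimensional gap) is a regular point; more to the point, every point of $\Sigma$ has density in $[1,2)$, and where the density is exactly $1$ the varifold is smooth, while a point of density in $(1,2)$ would, by the constancy theorem and Almgren--Allard, already be quite restricted. The key structural input is that the only stationary integral cones in $\R^3$ (which arise as tangent cones to $\Sigma$ inside $\SSp^3$ after passing to the tangent space) with density $< 2$ are multiplicity-one planes: this is the codimension-one regularity theory in low dimensions (there are no singular stable minimal hypercones in $\R^3$, and more generally no irregular stationary integral cones of density below $2$ in $\R^3$ because the link would be a stationary $1$-varifold in $\SSp^2$, i.e.\ a geodesic network, whose mass-to-density ratio forces a single great circle when the density is $<2$).

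Carrying this out: first, by the monotonicity formula, the density function $q \mapsto \Theta_{\Sigma}(q)$ on $\SSp^3$ is upper semicontinuous and, by the above, takes values in $[1,2)$. Second, I would invoke Allard plus the structure of $1$-dimensional stationary varifolds: the tangent cone to $\Sigma$ at any $q$ is a stationary integral $1$-cone in the $3$-dimensional tangent space $T_q\SSp^3 \cong \R^3$, namely a union of rays; if $\Theta_{\Sigma}(q) < 2$ then at most one ``pair'' of rays can meet at the origin with the right total density, forcing the tangent cone to be a single line with multiplicity one, so $q$ is a regular point. Hence $\Sigma$ is an embedded (multiplicity-one) closed minimal surface in $\SSp^3$. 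Third, apply Corollary~\ref{cor:LeastArea} with $n=2$: the equatorial $\SSp^2$ is the unique least-area closed embedded minimal hypersurface in $\SSp^3$, with area $A_2$. If $\Sigma$ is equatorial then $\mbfC$ is a hyperplane through the origin, contradicting irregularity of $\mbfC$. If $\Sigma$ is non-equatorial, then $\|\Sigma\| > A_2$, so $\Theta_{\mbfC}(\orig) = \|\Sigma\|/A_2 > 1$; this alone does not yet give $\geq 2$, so one needs the sharper fact that the \emph{second} smallest area of a closed embedded minimal surface in $\SSp^3$ is $\geq 2A_2$ — this is exactly where the Clifford torus, with area $2\pi^2 = 2A_2$, enters: by the resolution of the Willmore conjecture (Marques--Neves) combined with the Almgren--Pitts / Yau-type lower bounds, or more elementarily by the fact that any non-equatorial closed embedded minimal surface in $\SSp^3$ has area $\geq 2\pi^2$, with equality only for the Clifford torus. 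Since the Clifford torus is smooth, $\mbfC$ being the cone over it is still irregular, but then $\Theta_{\mbfC}(\orig) = 2\pi^2/A_2 = 2$, and for any other non-equatorial $\Sigma$ the area is strictly larger, giving $\Theta_{\mbfC}(\orig) \geq 2$ in all cases — contradicting our assumption $\Theta_{\mbfC}(\orig) < 2$.

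The main obstacle is the sharp area lower bound $\|\Sigma\| \geq 2A_2$ for non-equatorial closed embedded minimal surfaces in $\SSp^3$ — i.e.\ the precise ``gap'' between the area $A_2$ of the equator and the next possible area. The soft part (reducing to a smooth link via Allard and the codimension-one regularity theory in ambient dimension $3$, where there are no stable singular minimal hypercones) is routine, as is the use of Corollary~\ref{cor:LeastArea}. What genuinely requires an external deep input is ruling out non-equatorial links of area strictly between $A_2$ and $2A_2$; I would cite the Willmore-conjecture circle of results (Marques--Neves \cite{MN14}-type statements) or, if one wants to stay closer to first principles, the fact that for a mod $2$ cyclic cone the link is a \emph{cycle}, so it cannot be, say, a one-sided or boundaryless-with-extra-pieces configuration, which constrains the topology enough to push the area past $2A_2$ unless it is the equator. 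If a fully self-contained argument is desired, one can instead argue: the link $\Sigma$, being a closed embedded minimal surface in $\SSp^3$ other than the equator, has genus $\geq 1$ (by Almgren's theorem that minimal spheres in $\SSp^3$ are equators), and then area $\geq 2\pi^2$ by the Willmore/area estimate for positive-genus minimal surfaces in $\SSp^3$; either way the bound $\Theta_{\mbfC}(\orig) \geq 2$ follows, completing the proof.
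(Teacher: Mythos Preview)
Your proposal has two problems, one a genuine gap and one a misreading of terminology that leads you badly astray.

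The gap: you assert that a stationary integral $2$-cone in $\R^3$ with density $<2$ must be a multiplicity-one plane, because ``the link would be a stationary $1$-varifold in $\SSp^2$, i.e.\ a geodesic network, whose mass-to-density ratio forces a single great circle when the density is $<2$''. This is false as stated: three half-great-circles meeting at two antipodal points at $120^\circ$ angles form a stationary geodesic network of total length $3\pi$, giving the $Y$-cone of density $3/2<2$. It is precisely the mod $2$ cyclicity hypothesis that rules out such odd junctions, and you never invoke it at this step. (Minor: your second paragraph calls the tangent cone to $\Sigma$ a ``$1$-cone''; since $\Sigma$ is $2$-dimensional it is a $2$-cone.)

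The misreading: in the paper's terminology (Definition~\ref{def:MSI}), a \emph{regular} cone is one of multiplicity one with smooth link --- equivalently, $\Sing(\mbfC)=\{\orig\}$. The cone over the Clifford torus is therefore \emph{regular}, not irregular as you claim. Consequently, once you establish that the link $\Sigma$ is a smooth, multiplicity-one embedded minimal surface in $\SSp^3$, you are already done: $\mbfC$ is regular, contradicting the hypothesis. Your entire appeal to Marques--Neves and the $2\pi^2$ area gap is a detour caused by misinterpreting ``irregular'' as ``not a hyperplane''.

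For comparison, the paper's proof avoids the contrapositive altogether: since $\mbfC$ is irregular one picks $x\in\SSp^3\cap\Sing(\mbfC)$ with a non-regular tangent cone $\mbfC_x$; either $\mbfC_x$ has multiplicity $\geq 2$ (done), or it has unit multiplicity and splits as $\R\times\mbfC_x'$ for a nontrivial mod $2$ cyclic stationary $2$-cone $\mbfC_x'\subset\R^3$. The Allard--Almgren structure theorem for $1$-dimensional stationary varifolds together with mod $2$ cyclicity (ruling out triple junctions) forces $\Theta_{\mbfC_x'}(\orig)\geq 2$, and monotonicity gives $\Theta_{\mbfC}(\orig)\geq\Theta_{\mbfC}(x)=\Theta_{\mbfC_x'}(\orig)\geq 2$.
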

\begin{proof}
  Let $x\in \SSp^3\cap \Sing(\mbfC)$, and let $\mbfC_x$ be a tangent cone of $\mbfC$ at $x$ that is \emph{not} regular in the sense of Definition \ref{def:MSI}. The case when $\mbfC_x$ has multiplicity at least two is clear; so assume instead that $\mbfC_x$ has unit multiplicity but non-smooth link. It is well-known that, in this case, $\mbfC_x$ splits: there exists an isometry (say $\Phi$) of $\R^4$   sending $x$ to $(0,0,0,1)$, such that $\Phi_*(\mbfC_x) = \R\times \mbfC_x'$ for some non-trivial $2$-dimensional mod 2 cyclic stationary integral cone $\mbfC_x'$ in $\R^3$. Hence, appealing again to the standard monotonicity formula
  \[
    \Theta_\mbfC(\orig) = \lim_{r\to \infty} \frac{\|\mbfC\|(B_r(x))}{\omega_3 r^3} \geq \Theta_{\mbfC}(x) = \Theta_{\mbfC_x}(\orig) = \Theta_{\mbfC_x'}(\orig) \geq 2.
  \]
  where the last inequality follows from the classification of non-trivial $2$-dimensional mod 2 cyclic stationary integral cones in $\R^3$; ultimately that relies on the ``structure theorem'' for 1-dimensional stationary varifolds given in \cite{AA76}, plus an \emph{ad hoc} argument ruling out triple junctions using the mod 2 cyclicity assumption, as can be found e.\,g. in \cite[Lemma A.3]{MarNev14}.
\end{proof}

\begin{proof}[Proof of Proposition \ref{pro:SmallMass=>Isol}]
  We assume for the sake of a contradiction that for some $\eps>0$, there exists smooth metrics $g_j$ on $S^4$ smoothly converging to the round metric and some mod 2 cyclic $g_j$-stationary integral varifolds $V_j$ with not only strongly isolated singularities and total mass $\leq 2A_3-\eps$. Let $p_j\in \Sing(V_j)$ be a point having a tangent cone $\mbfC_j$ that is not regular. Then by Lemma \ref{lem:AreaLowerBd,NonIsolatedSing}, \[
    \Theta_{V_j}(p_j) = \Theta_{\mbfC_j}(\orig) \geq 2.
  \]
  By Allard's Compactness Theorem (see e.\,g. \cite[Theorem 42.7]{Sim83book}), $V_j$ subconverges to some stationary integral varifold $V_\infty$ under the round metric, and clearly $p_j$ subconverges to some $p_\infty \in \Sing(V_\infty)$. In particular, \[
    \|V_\infty\|(\mathbb{S}^4) \leq 2A_3 - \eps,
  \] 
  while by the upper semi-continuity of density under varifold convergence there holds \[
    \Theta_{V_\infty}(p_\infty) \geq \Theta_{V_j}(p_j) \geq 2.
  \]
  Then by Lemma \ref{lem:MonotIneq}, \[
    \|V_\infty\|(\mathbb{S}^4)\geq A_3\Theta_{V_\infty}(p_\infty) \geq 2A_3 \,
  \]
  which contradicts the previous bound and thereby completes the proof.
\end{proof}

\begin{proof}[Proof of Corollary \ref{cor:Smooth}]
Straightforward, by combining Theorem \ref{Thm:GenReg,IsolatedSing} with Proposition \ref{pro:SmallMass=>Isol}.
\end{proof}

\begin{proof}[Proof of Corollary \ref{cor:MainArea}]

For given $\eps>0$ let $\mathscr{N}(\varepsilon)$ as afforded by Corollary \ref{cor:Smooth} and let $g\in\mathscr{N}(\varepsilon)$ be a generic metric (cf. Remark \ref{rem:BaireWhite}). Towards a contradiction, assume the class of closed, embedded minimal hypersurfaces of area less than $4\pi^2-\varepsilon$ contains a sequence $\left\{\Sigma_k\right\}_{k\geq 1}$ with pairwise distinct elements. 

Allard's Compactness Theorem implies that we can extract a subsequence (which we shall not rename) converging to an integral stationary varifold $V$; note that the mass of $V$ is again bounded from above by the same threshold $4\pi^2-\varepsilon$, and furthermore such a stationary varifold is mod 2 cyclic (by appealing e\,g. to \cite[Theorem 3.3]{Whi09}). Thanks to Lemma \ref{lem:MonotIneq} we can thereby derive that the density of $V$ at any point (of its support) is strictly below 2, in fact bounded from above by $2-\frac{\eps}{2\pi^2}$.

Hence, we have by Corollary \ref{cor:Smooth} that $V$ is actually smooth at all points, i.\,e. it is a smooth, closed, embedded minimal hypersurface in $\mathbb{S}^{n+1}$, say $\Sigma$, and the aforementioned density bound implies a posteriori that there does occur smooth graphical convergence of $\Sigma_k$ to $\Sigma$ with multiplicity one. But then, appealing to Sharp's analysis in \cite{Sha17}, the minimal hypersurface $\Sigma$ would be degenerate (i.\,e. it would come with at least a non-trivial Jacobi field), a contradiction. 
\end{proof}

\begin{proof}[Proof of Corollary \ref{cor:MainHsiang}]

Straightforward from Corollary \ref{cor:MainArea}.
\end{proof}

\appendix

\section{The minimal surface system and transfer of normal sections}\label{app:MSE}
  \begin{proposition} \label{Prop_MSE_Main}
    There exist $\kappa_1 = \kappa_1(N)\in (0, 1/4)$ and $C=C(N)>1$ such that the following statement holds. Let:
    \begin{enumerate} [label=(\roman*)]
      \item $g, g'$ be  $C^{5}$ Riemannian metrics on $\BB^N(4)$ with $\|g-g_\eucl\|_{C^{5}}, \|g'-g_\eucl\|_{C^{5}}\leq \kappa_1$;
      \item $f_1, f_2\in C^2(\BB^N(4))$ be functions such that $\|f_1-1\|_{C^2}, \|f_2-1\|_{C^2}\leq \kappa_1$;
      \item $\Sigma$ be a $g$-minimal $\kappa_1$-$C^{5}$ graph in $\RR^N$ over $\BB^n(2)\times \{\orig\}$;
      \item $\mbfV$ be the normal bundle of $\Sigma$ in $(\RR^N, g)$ with induced connection $\nabla^\perp$ from $g$;
      \item $v, v_1, v_2\in C^2(\Sigma; \mbfV)$ with $\|v\|_{C^2}, \|v_1\|_{C^2}, \|v_2\|_{C^2}\leq \kappa_1$.
    \end{enumerate}
(Note that in the items above and throughout the following proof all norms are understood on the respective domains of definition of the corresponding tensors: such domains are not explicitly indicated for the sake of readability.)
    
    Then we have:
    \begin{enumerate} [label=(\Roman*)]
      \item\label{Item_Phi_v: Sigma to graph(v)} $\Phi_v: x\mapsto \exp^g_x(v(x))$ is a $C^2$ embedding of $\Sigma$ into $\RR^N$;
      \item\label{Item_Area Density cA} there exists a $C^3$ function $\cA^{g'} = \cA^{g'}(x, z, \xi)$ such that for every $x\in\Sigma$ the map $x\mapsto  \cA^{g'}(x, \cdot, \cdot)$, defined 
        on the $\kappa_1$-ball centered at $\orig$ in $(\mbfV\oplus (T^*\Sigma\otimes \mbfV))|_x$, 
         satisfies (for the area of $\Phi_v(\Sigma)$ in metric $g'$)
 \[
        \|\Phi_v(\Sigma)\|_{g'} = \int_\Sigma \cA^{g'}(x, v, \nabla^\perp v)\ d\|\Sigma\|_g(x)\,,
      \]
     the local bound $\|\cA^{g'}\|_{C^{3}}\leq C$, and, in local coordinates for $\Sigma$, there holds \[
        \cA^{g'}(x, v, \nabla^\perp v) = \sqrt{\frac{\det[(\Phi_v^* g')_{ij}]}{\det[g_{ij}]}} \,.
      \]
      In particular, $\Phi_v(\Sigma)$ is $g'$-minimal if and only if \[
          \nabla^\perp \cdot \partial_\xi\cA^{g'}(x, v, \nabla^\perp v) - \partial_z\cA^{g'}(x, v, \nabla^\perp v) = 0\,;
      \]
      \item \label{Item_MSE for g' minimal graph}

      if $\Phi_v(\Sigma)$ is minimal with respect to the metric $g'$, then the equation satisfied by $v$ can be written in any of the following two forms:
      \begin{align}
\nabla^\perp\cdot(B_0(x)\nabla^\perp v)
        +  B_1(x) v = k \label{Equ_MSE_Form for reg}
      \end{align}
      where $B_0\in C^1(\Sigma; \End(T^*\Sigma\otimes \mbfV))$
      and $B_1\in C^0(\Sigma; \End(\mbfV))$ satisfy 
      \begin{align*}
        |B_0 - \id| 
        \leq C\left( |v| + |\nabla^\perp v| \right) \,  
      \end{align*}
      for $\id$ the identity map on the bundle $\End(T^*\Sigma\otimes \mbfV)$, and 
      \begin{align*}
        \|B_0\|_{C^1} 
        + \|B_1\|_{C^0} \leq C\,, & &
        \|k\|_{C^0} \leq C\|g' - g\|_{C^2} \,;
      \end{align*}
     \begin{align}
        L_{\Sigma, g} v + \nabla^\perp\cdot b_0(x) + b_1 = k,  \label{Equ_MSE_Form}
      \end{align}
      where $b_0\in C^0(\Sigma;T^*\Sigma\otimes \mbfV)$, $b_1, k\in C^0(\Sigma;\mbfV)$ and 
      \[
      |b_0| + |b_1|  \leq C\left( |v| + |\nabla^\perp v| \right)^2;
      \]
      \item\label{Item_MSE for difference of (1+f_i)g minimal graph} if for $i=1, 2$, $\Phi_{v_i}(\Sigma)$ is minimal with respect to the metric $g_i:= f_ig$, then $w:= v_2-v_1$ satisfies
      \begin{align*}
        L_{\Sigma, g} w - \frac{n}{2}(\nabla (f_2-f_1))^{\perp_{\Sigma, g}} + \nabla^\perp\cdot \tilde b_0(x) + \tilde b_1 = 0,  
      \end{align*}
      where for $\tilde b_0\in C^0(\Sigma;T^*\Sigma\otimes \mbfV)$, $\tilde b_1 \in C^0(\Sigma;\mbfV)$ there hold the pointwise estimates
      \begin{align*}
        |\tilde b_0| + |\tilde b_1| & \leq C\left( \sum_{i=1}^2 |v_i| + |\nabla^\perp v_i| + \|f_i-1\|_{C^2}\right)\cdot \left( |w| + |\nabla^\perp w| + \|f_2-f_1\|_{C^2} \right) \,.
      \end{align*}
    \end{enumerate}
  \end{proposition}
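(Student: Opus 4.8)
The plan is to treat Proposition~\ref{Prop_MSE_Main} as an exercise in carefully expanding the minimal surface system along a family of normal graphs, and then tracking the dependence of every coefficient on the data $(g,f_i,v_i)$. The backbone is item~\ref{Item_Area Density cA}: once one has the area density $\cA^{g'}(x,z,\xi)$ with $C^3$ bounds, items~\ref{Item_MSE for g' minimal graph} and~\ref{Item_MSE for difference of (1+f_i)g minimal graph} follow by differentiating the Euler--Lagrange equation $\nabla^\perp\!\cdot\partial_\xi\cA^{g'}-\partial_z\cA^{g'}=0$ and Taylor-expanding around $(z,\xi)=(0,0)$, respectively around a two-parameter family of metrics/sections. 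So I would organize the proof as: (1) fix the smallness constant $\kappa_1(N)$ and set up the normal exponential parametrization; (2) establish \ref{Item_Phi_v: Sigma to graph(v)}; (3) derive the area-density formula and its regularity, \ref{Item_Area Density cA}; (4) linearize to get \eqref{Equ_MSE_Form for reg} and \eqref{Equ_MSE_Form}; (5) take the difference of two such equations to get \ref{Item_MSE for difference of (1+f_i)g minimal graph}.

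\textbf{Steps (1)--(3).} For (1), since $\|g-g_\eucl\|_{C^5}\le\kappa_1$ and $\Sigma$ is a $\kappa_1$-$C^5$ graph over $\BB^n(2)\times\{\orig\}$, the normal bundle $\mbfV$, the normal exponential map $\mbfE^g:(x,z)\mapsto\exp^g_x(z)$, and the map $\Phi_v=\mbfE^g\circ(\mathrm{id},v)$ are all controlled in $C^{k}$ with $k$ determined by the regularity of $g$; here is where we invoke the regularity statements for $\exp^g$ recalled in the excerpt (cf.\ \cite{Lan24}). For (2), $\Phi_v$ is a $C^2$ immersion because $d\Phi_v$ is a small perturbation of the inclusion $T\Sigma\hookrightarrow TM$ once $\|v\|_{C^2}\le\kappa_1$; injectivity follows from the tubular neighborhood theorem applied to a fixed tubular neighborhood of $\Sigma$ whose size is bounded below in terms of $N$ and $\kappa_1$. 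For (3), in local coordinates $(x^1,\dots,x^n)$ on $\Sigma$ one writes $(\Phi_v^*g')_{ij}=g'(\partial_i\Phi_v,\partial_j\Phi_v)$, notes that $\partial_i\Phi_v$ is a smooth (in the relevant finite-regularity sense) function of $x$, $v(x)$ and $\nabla^\perp v(x)$ — using the identity in Remark~\ref{rem:GradientsCompare} to convert full derivatives of $v$ into $\nabla^\perp v$ plus lower-order terms involving $\II_{\Sigma,g}$ — and then sets $\cA^{g'}(x,z,\xi):=\sqrt{\det[(\Phi^*g')_{ij}]/\det[g_{ij}]}$ evaluated at $(v,\nabla^\perp v)=(z,\xi)$. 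The $C^3$ bound $\|\cA^{g'}\|_{C^3}\le C(N)$ comes from the chain rule applied to $\sqrt{\cdot}$ composed with a determinant, using that on the $\kappa_1$-ball the argument of $\sqrt{\cdot}$ stays in $[1/2,2]$; the Euler--Lagrange characterization of $g'$-minimality is then the first variation of $\int_\Sigma\cA^{g'}\,d\|\Sigma\|_g$ with respect to compactly supported variations of $v$.

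\textbf{Steps (4)--(5).} For \eqref{Equ_MSE_Form for reg}: expand $\partial_\xi\cA^{g'}(x,z,\xi)$ and $\partial_z\cA^{g'}(x,z,\xi)$ to first order in $(z,\xi)$. The zeroth-order term in $\xi$ of $\partial_\xi\cA^{g'}$ evaluated at $g'=g$ is the identity on $T^*\Sigma\otimes\mbfV$ (that is exactly the statement that the second variation of area recovers the rough Laplacian), and the remainder is $O(|v|+|\nabla^\perp v|)$; the $z$-linear part of $\partial_z\cA^{g'}$ evaluated at $g'=g$ gives the zeroth-order Jacobi potential $g(\II_{\Sigma,g},\cdot)\II_{\Sigma,g}+\tr_\Sigma R_g(\cdot,\cdot,\cdot)$. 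Collecting terms yields $B_0=\id+O(|v|+|\nabla^\perp v|)$, $B_1$ the Jacobi potential (bounded in $C^0$ by curvature and $|\II_{\Sigma,g}|$, hence by $C(N)$ given the $\kappa_1$-graph hypothesis), and $k$ collecting everything that vanishes when $g'=g$, so $\|k\|_{C^0}\le C\|g'-g\|_{C^2}$ — here one uses that $\cA^{g'}-\cA^{g}$ and its first two derivatives are $O(\|g'-g\|_{C^2})$, since $\cA^{g'}$ depends on $g'$ only through $g'$ itself and its first derivatives appearing in the Christoffel symbols. The alternative form \eqref{Equ_MSE_Form} is just a rewriting: move the $B_0=\id$ part to produce $\Delta^\perp_{\Sigma,g}v$, absorb $B_1 v$ into $L_{\Sigma,g}v$ by adding and subtracting the missing Jacobi terms, and put the remaining quadratic-in-$(v,\nabla^\perp v)$ pieces into $\nabla^\perp\!\cdot b_0+b_1$ with $|b_0|+|b_1|\le C(|v|+|\nabla^\perp v|)^2$ — the quadratic (rather than linear) bound is because, after extracting the full linear operator $L_{\Sigma,g}$, the Taylor remainder of $\cA^g$ around $(0,0)$ is genuinely second order. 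Finally, for \ref{Item_MSE for difference of (1+f_i)g minimal graph}, apply \eqref{Equ_MSE_Form} with $g'=g_i=f_ig$ and $v=v_i$, $i=1,2$, and subtract. The principal part gives $L_{\Sigma,g}w$ with $w=v_2-v_1$; the right-hand sides $k_i$ differ by the linear-in-$(f_2-f_1)$ term, which one computes to be $\tfrac{n}{2}(\nabla(f_2-f_1))^{\perp_{\Sigma,g}}$ (this uses the conformal transformation law: under $g\mapsto(1+f)g$ the mean curvature of a fixed $\Sigma^n$ changes to first order in $f$ by $-\tfrac n2(\nabla f)^{\perp}$ on a $g$-minimal $\Sigma$, which is the same computation as in Lemma~\ref{lem:JacobiTranslFunct} / Proposition~\ref{Prop_MSE_Main}\ref{Item_MSE for g' minimal graph} specialized to conformal deformations, cf.\ \cite[Theorem~B.1]{LW22}); and all remaining terms — the differences $b_0^{(2)}-b_0^{(1)}$, $b_1^{(2)}-b_1^{(1)}$, and the quadratic remainder in $f_2-f_1$ — assemble into $\nabla^\perp\!\cdot\tilde b_0+\tilde b_1$ with the stated product estimate, obtained by writing each difference as an integral $\int_0^1\partial_t(\cdots)\,dt$ along the segment joining the two data points and bounding the integrand by (sum of the two data) $\times$ (difference of the data), using the $C^3$ bounds on $\cA$.

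\textbf{Main obstacle.} The genuinely delicate point is not any single identity but the bookkeeping: one must verify that every error coefficient depends only on the \emph{allowed} quantities with the \emph{claimed} homogeneity — in particular that the factor multiplying $(|w|+|\nabla^\perp w|+\|f_2-f_1\|_{C^2})$ in the estimate for $\tilde b_0,\tilde b_1$ is controlled by $\sum_i(|v_i|+|\nabla^\perp v_i|+\|f_i-1\|_{C^2})$ and nothing else (e.g.\ no stray $\|v_i\|_{C^2}$ without the accompanying difference factor). This requires writing the difference of the two Euler--Lagrange expressions as a line integral in the combined parameter $(v,f)$ and checking that the $t$-derivative of $\cA^{g_t}(x,v_t,\nabla^\perp v_t)$-type quantities factors through $\partial_{(z,\xi)}\cA$ and $\partial_{g}\cA$ — both of which vanish at the base point $(0,0,g)$ only to first order, so one genuinely gets the sum-times-difference structure rather than a weaker bound. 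The role of the uniform smallness $\kappa_1(N)$ is precisely to keep all these Taylor expansions valid on a fixed domain with $N$-dependent constants; choosing $\kappa_1$ is the first thing to pin down and everything else is then a (lengthy but mechanical) second-order Taylor computation.
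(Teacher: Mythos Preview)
Your proposal is correct and follows essentially the same approach as the paper: set up the area density $\cA^{g'}(x,z,\xi)$, identify its second-order Taylor coefficients at $(z,\xi)=(0,0)$ with the ingredients of $L_{\Sigma,g}$ (the paper does this via a variation $v_t=v+t\phi$ and comparing two expressions for $\frac{d^2}{dt^2}\cA^g[v_t]$, yielding $\partial_{\xi\xi}\cA^g(x,0,0)=\id$, $\partial_{z\xi}\cA^g(x,0,0)=0$, $\partial_{zz}\cA^g(x,0,0)=-\II^2_{\Sigma,g}-\cR_{\Sigma,g}$), and then read off the two forms in \ref{Item_MSE for g' minimal graph} from the first- and second-order Taylor remainders. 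The only organizational difference is in \ref{Item_MSE for difference of (1+f_i)g minimal graph}: the paper uses the explicit conformal factorization $\cA^{f_ig}(x,v,\nabla^\perp v)=(f_i^{n/2}\circ\Phi_v)\,\cA^g(x,v,\nabla^\perp v)$ and differentiates it directly to isolate the $\tfrac{n}{2}(\nabla(f_2-f_1))^{\perp}$ term, whereas you propose to subtract two instances of \eqref{Equ_MSE_Form} and recover the same term from the difference $k_2-k_1$ via a line integral in $(v,f)$; both routes yield the same product-type error estimate and are equivalent in substance.
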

  \begin{remark}\label{rem:Divergence}
    In general, given a Riemannian manifold $(\Sigma, g)$ and a vector bundle $\mbfV$ with a fiberwise specified metric $h$ and a metric compatible connection $\nabla$, for every $b\in C^1(\Sigma; T^*\Sigma\otimes \mbfV)$, we can define $\nabla\cdot b\in C^0(\Sigma; \mbfV)$ by requiring that for every $\varphi\in C^1_c(\Sigma;\mbfV)$ there holds \[
      \int_\Sigma \langle \nabla \cdot b, \varphi\rangle_h\ d\|\Sigma\|_g = -\int_\Sigma \langle b, \nabla\varphi\rangle_{g, h}\ d\|\Sigma\|_g \,.
    \]
    It is easy to check that this is equivalent to the following explicit expression: \[
    \langle\nabla\cdot b, \varphi\rangle_h := -d^*\langle b, \varphi\rangle_h - \langle b, \nabla \varphi\rangle_{g, h} \,,
    \]  
    where $d^*$ is the codifferential on $1$-forms on $(\Sigma, g)$. 
  \end{remark}
  \begin{proof}
    \ref{Item_Phi_v: Sigma to graph(v)} follows from the tubular neighborhood theorem, while \ref{Item_Area Density cA} follows from a direct, rather standard calculation, as we now indicate. Let $\{x^i\}$, where $i=1,\ldots, n$ be the coordinates on $\Sigma$ corresponding to the graphical parametrization over $\BB^n(2)\times \{\orig\}$, and let $\{s_I\}_{I=1}^{N-n}$ be a local orthonormal frame of $\mbfV$, so that one has the parametrizations of $\mbfV$ and $T^*\Sigma\otimes \mbfV$ respectively given by
    \begin{align*}
      \BB^n\times \RR^{N-n} & \to \mbfV\,, & & (x, z)\mapsto (x, \sum_I z^I s_I)\,, \\
      \BB^n\times \RR^{n\times(N-n)} & \to T^*\Sigma\otimes \mbfV \,, & & (x, \xi)\mapsto (x, \sum_{I,i}\xi^I_i\, dx^i\otimes s_I)
    \end{align*}
    Hence, the $C^4$ map, $\mbfV\to \RR^N$, $(x, z)\mapsto \exp_x^g(z)$, pulls back metric $g'$ to a $C^3$ $N\times N$-matrix valued function $[g'_{ab}]$ on $\BB^n\times \RR^{N-n}$; if  
    $v = \sum v^Is_I\in C^2(\Sigma)$, then $\Phi_v^*\, g' = g'_{ij}|_{(x, v)} + g'_{iJ}|_{(x, v)}\partial_jv^J
    + g'_{Ij}|_{(x, v)}\partial_iv^I
    +g'_{IJ}|_{(x, v)}\partial_iv^I\partial_jv^J$, where it is understood that $1\leq i,j\leq n$ and $1\leq I,J\leq N-n$, with sums over repeated indices. Therefore, we set \[
      \cA^{g'}(x, z, \xi) = \sqrt{\det[g_{ij}|_{(x, 0)}]}^{-1}\cdot \sqrt{\det \left[g'_{ij}|_{(x, z)} 
      + g'_{iJ}|_{(x, z)}\xi_j^J
    + g'_{Ij}|_{(x, z)}\xi_i^I
         + g'_{IJ}|_{(x, z)}\xi_i^I\xi_j^J \right]}\,,
    \] 
    which is a $C^3$ function in $(x, z)$ and real analytic in $\xi$. 
    
    We now focus on \ref{Item_MSE for g' minimal graph} and \ref{Item_MSE for difference of (1+f_i)g minimal graph}.   To start with, consider a variation $v_t:= v+t\phi$, where $\phi\in C^2(\Sigma; \mbfV)$. Let then $\vec\phi(x, t):= \partial_t\Phi_{v_t}(x)\in T_{\Phi_{v_t}(x)}\RR^N$. Note that when $v(x)=0$, $\vec\phi(x, 0) = \phi(x)$. Also, for each fixed $x$, $t\mapsto \Phi_{v_t}(x)$ is a geodesic under metric $g$ with constant speed, and $\vec\phi(x, t)$ is its velocity vector field, hence it is parallel along $t\mapsto \Phi_{v_t}(x)$.

   As described before, we will work in local coordinates $\left\{x^i\right\}$ of $\Sigma$, and set $g^t_{ij}:= (\Phi_{v_t}^*g)_{ij}$ and $\partial_i^t:= \partial_i\Phi_{v_t}$. Then we have
    \begin{align*}
      \partial_t g^t_{ij} & = g(\partial_t \partial_i\Phi_{v_t}, \partial_j\Phi_{v_t} )|_{\Phi_{v_t}} + g(\partial_i\Phi_{v_t}, \partial_t \partial_j\Phi_{v_t})|_{\Phi_{v_t}} = g(\nabla^g_{\partial_i^t}\vec\phi, \partial_j^t )|_{\Phi_{v_t}} + g(\partial_i^t, \nabla^g_{\partial_j^t}\vec\phi)|_{\Phi_{v_t}}\,; \\
      \partial_{tt}^2 g^t_{ij} & = g(\nabla^g_{\vec\phi}\,\nabla^g_{\partial_i^t}\vec\phi, \partial_j^t) + g(\partial_i^t, \nabla^g_{\vec\phi}\,\nabla^g_{\partial_j^t}\vec\phi) + 2g(\nabla^g_{\partial_i^t}\vec\phi, \nabla^g_{\partial_j^t}\vec\phi )  \\
      & = -2\Riem_g(\vec\phi, \partial_i^t, \partial_j^t, \vec\phi) + 2g(\nabla^g_{\partial_i^t}\vec\phi, \nabla^g_{\partial_j^t}\vec\phi ) \,.
    \end{align*}
    Denote for simplicity $\cA^g[v_t]:= \cA^g(x, v_t, \nabla^\perp v_t)$, $h^t_{ij}:= g(\nabla^g_{\partial_i^t}\vec\phi, \partial_j^t )$. Then we derive, 
    \begin{align*}
      \frac{d}{dt}\cA^g[v_t] & = \frac12\cA^g[v_t]\cdot \partial_t g_{ij}^t \cdot(g^t)^{ij} = \cA^g[v_t]\cdot h^t_{ij}\cdot (g^t)^{ij}\,; \\
      \frac{d^2}{dt^2}\cA^g[v_t] & = \cA^g[v_t]\left[ \left(h^t_{ij}\cdot (g^t)^{ij}\right)^2 - 2h^t_{ij}h^t_{kl}(g^t)^{ik}(g^t)^{jl} -\Riem_g(\vec\phi, \partial_i^t, \partial_j^t, \vec\phi) + g(\nabla^g_{\partial_i^t}\vec\phi, \nabla^g_{\partial_j^t}\vec\phi) \right]\,.
    \end{align*}
    On the other hand, 
    \begin{align*}
      \frac{d}{dt}\cA^g[v_t] & = \partial_z\cA^g(x, v_t, \nabla^\perp v_t)\cdot \phi + \partial_\xi\cA^g(x, v_t, \nabla^\perp v_t)\cdot \nabla^\perp\phi\,; \\
      \frac{d^2}{dt^2}\cA^g[v_t] & = \partial^2_{zz}\cA^g(x, v_t, \nabla^\perp v_t)(\phi, \phi) + 2\partial^2_{z\xi}\cA^g(x, v_t, \nabla^\perp v_t)(\phi, \nabla^\perp\phi) \\
      & + \partial^2_{\xi\xi}\cA^g(x, v_t, \nabla^\perp v_t)(\nabla^\perp\phi, \nabla^\perp\phi)\,.     
    \end{align*}
    By comparing the coefficients in front of $\phi, \nabla^\perp\phi$ terms, we can find that,
    \begin{align}
      \cA^g(x, 0, 0) & = 1\,, &  \partial_z\cA^g(x, 0, 0) & = 0\,, & \partial_\xi\cA^g(x, 0, 0) & = 0\,; \label{Equ_MSE_A, A_z, A_xi at (0,0)} \\
      \partial_{\xi\xi}\cA^g(x, 0, 0)(\zeta, \zeta) & = |\zeta|^2\,, & \partial^2_{z\xi}\cA^g(x, 0, 0) & = 0\,,  & \partial^2_{zz}\cA^g(x, 0, 0) & = -\II_{\Sigma, g}^2 - \cR_{\Sigma, g} \label{Equ_MSE_A_zz, A_zxi, A_xixi at (0,0)}
    \end{align}
    where $\II_{\Sigma, g}$ denotes the second fundamental form of $\Sigma$ under $g$, and for any section $\varphi$ of  $\mbfV$,
    \begin{align*}
       \II_{\Sigma, g}^2 (\varphi, \varphi) := (\II_{ij}\cdot \varphi) (\II_{kl}\cdot \varphi) g^{ik}g^{jl}\,, & &
       \cR_{\Sigma, g}(\varphi, \varphi) := \Riem_g(\varphi, \partial_i, \partial_j, \varphi)g^{ij}\,. 
    \end{align*}
 Therefore, to prove \eqref{Equ_MSE_Form for reg}, set \[
      B_0(x):= \int_0^1 \partial^2_{\xi\xi}\cA^g(x, tv, t\nabla^\perp v)\ dt \,, \qquad B_0'(x):= \int 2\partial^2_{\xi z}\cA^g(x, tv, t\nabla^\perp v)\ dt
    \] \[
      B_1(x):= -\int_0^1 \partial^2_{zz}\cA^g(x, tv, t\nabla^\perp v)\ dt + \nabla^\perp\cdot \left(\int_0^1 \partial^2_{z\xi}\cA^g(x, tv, t\nabla^\perp v)\ dt\right) \,.
    \]
    We see that \[
      \partial_\xi\cA^g[v] = B_0(x)\nabla^\perp v + \frac12B_0'(x)v\,, \qquad
      \partial_z\cA^g[v] = - B_1(x)v + \frac12 \nabla^\perp \cdot (B_0'(x)v)\,
    \] 
    and $\|B_0\|_{C^1}, \|B_0'\|_{C^1}, \|B_1\|_{C^0}\leq C$. Moreover, since $\|\cA^g\|_{C^3}\leq C$, by \eqref{Equ_MSE_A_zz, A_zxi, A_xixi at (0,0)} we have
    \begin{align*}
      |B_0 - \id| \leq \int_0^1 |\partial^2_{\xi\xi}\cA^g(x, tv, t\nabla^\perp v) - \partial^2_{\xi\xi}\cA^g(x, 0, 0)|\ dt \leq C(|v|+|\nabla^\perp v|).
    \end{align*}
    Also, set \[
      k:= \nabla^\perp\cdot(\partial_\xi \cA^g[v] - \partial_\xi\cA^{g'}[v]) + (\partial_z \cA^g[v] - \partial_z\cA^{g'}[v]) \,,
    \]
    clearly $\|k\|_{C^0}\leq C\|g'-g\|_{C^2}$. Therefore, the form of equation \eqref{Equ_MSE_Form for reg} with the corresponding estimates on the coefficients are established.

    To prove \eqref{Equ_MSE_Form}, recalling that for any $y\in C^2([0,1])$ there holds 
    \[
      |y(1) - y(0) - y'(0)| \leq \frac12\sup_{t\in [0, 1]}|y''(t)|\,,
    \]
    by \eqref{Equ_MSE_A, A_z, A_xi at (0,0)}, \eqref{Equ_MSE_A_zz, A_zxi, A_xixi at (0,0)} and the fact that $\|\cA^g\|_{C^3}\leq C$, we have the pointwise error estimate 
    \begin{align*}
      \left|\partial_\xi\cA^g[v] - \nabla^\perp v\right| + \left|\partial_z\cA^g[v] + (\II_{\Sigma, g}^2 + \cR_{\Sigma, g})(v, \cdot)\right| \leq C(|v| + |\nabla^\perp v|)^2 \,. 
    \end{align*}
  Such bounds 
    allow us to complete the proof of \ref{Item_MSE for g' minimal graph} once we just set
    \[
    b_0 := \partial_\xi\cA^g[v] - \nabla^\perp v, \ \ b_1:= \partial_z\cA^g[v] + (\II_{\Sigma, g}^2 + \cR_{\Sigma, g})(v, \cdot).
    \]

  Lastly, to prove \ref{Item_MSE for difference of (1+f_i)g minimal graph}, first notice that by \ref{Item_Area Density cA}, for $i=1,2$ there holds
    \begin{align*}
      \cA^{g_i}(x, v, \nabla^\perp v) = ((f_i)^{n/2}\circ \Phi_v)\ \cA^g(x, v, \nabla^\perp v)\,.
    \end{align*}
    For every $\varphi\in C^2(\Sigma; \mbfV)$, if we let $v^i_{t}:= v_i+t\varphi$ and $\vec\varphi_i(x, t):= \partial_t\Phi_{v^i_{t}}(x)$, then, \[
      \frac{d}{dt}\cA^{g_i}[v^i_{t}] = ((f_i)^{n/2}\circ\Phi_{v^i_{t}})\ \left(\partial_z \cA^g[v^i_{t}]\cdot \varphi + \partial_\xi\cA^g[v^i_{t}]\cdot \nabla^\perp \varphi\right)
      + (\nabla^g(f_i^{n/2})\cdot \vec\varphi_i(\cdot, t))\ \cA^g[v^i_{t}] \,,
    \]
 where it is tacitly understood that the symbol $\cdot$ refers to products of vectors (and, more generally, of tensors of any type) with respect to the metric $g$; hence we conclude that  
    \begin{align*}
      \partial_\xi\cA^{g_i}[v_i] & = ((f_i)^{n/2}\circ\Phi_{v_i})\ \partial_\xi\cA^g[v_i] \,; \\
      \partial_z\cA^{g_i}[v_i]\cdot \varphi & = ((f_i)^{n/2}\circ\Phi_{v_i})\ \partial_z\cA^g[v_i]\cdot \varphi + (\nabla^g(f_i^{n/2})\cdot \vec\varphi_i(x, 0))\ \cA^g[v_i]\,.
    \end{align*}
    By applying again \eqref{Equ_MSE_A, A_z, A_xi at (0,0)}, \eqref{Equ_MSE_A_zz, A_zxi, A_xixi at (0,0)}, and recalling that $\|\cA^g\|_{C^3}\leq C$, we find
    \begin{align*}
      & \left|\partial_\xi\cA^{g_2}[v_2] - \partial_\xi\cA^{g_1}[v_1] - \nabla^\perp w\right| \\
      +\ & \left|\partial_z\cA^{g_2}[v_2] - \partial_z\cA^{g_1}[v_1] - \frac{n}{2}(\nabla^g(f_2-f_1))^{\perp_{\Sigma, g}} + (\II_{\Sigma, g}^2 + \cR_{\Sigma, g})(w, \cdot)\right| \\
      & \quad \leq C\left( \sum_{i=1}^2 |v_i| + |\nabla^\perp v_i| + \|f_i-1\|_{C^2}\right)\cdot \left( |w| + |\nabla^\perp w| + \|f_2-f_1\|_{C^2} \right)\,.
    \end{align*}
    This proves \ref{Item_MSE for difference of (1+f_i)g minimal graph}. 
  \end{proof}

The utility of \eqref{Equ_MSE_Form for reg}, compared to \eqref{Equ_MSE_Form}, relies on the fact that it allows to easily upgrade $L^2$ local bounds to $C^0$ bounds, as detailed in the following statement.

\begin{corollary}\label{cor:L2toC0}
    In the setting of the preceding proposition, if $\Phi_v(\Sigma)$ is minimal with respect to the metric $g'$ then we have the following estimate on $v$,
      \begin{align}
        \|v\|_{C^0(\Sigma\cap B^g(1))} \leq C(\|v\|_{L^2(\Sigma)} + \|g'-g\|_{C^2(\Sigma)})\,. \label{Equ_MSE_L^2 bd C^0}
      \end{align}
\end{corollary}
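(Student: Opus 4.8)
The plan is to combine the elliptic equation \eqref{Equ_MSE_Form for reg} with standard De Giorgi--Nash--Moser / Schauder estimates, exploiting that the coefficient $B_0$ is close to the identity and all coefficients are bounded in $C^1$, $C^0$ respectively. First I would recall from Proposition \ref{Prop_MSE_Main}\,\ref{Item_MSE for g' minimal graph} that, since $\Phi_v(\Sigma)$ is $g'$-minimal, $v$ solves
\[
\nabla^\perp\cdot(B_0(x)\nabla^\perp v) + B_1(x)v = k
\]
on $\Sigma\cap \BB^n(2)$ (identifying $\Sigma$ with the graph over $\BB^n(2)\times\{\orig\}$), with $|B_0-\id|\leq C(|v|+|\nabla^\perp v|)\leq C\kappa_1$, hence (for $\kappa_1=\kappa_1(N)$ small, which we are free to shrink) $B_0$ is uniformly elliptic with ellipticity constants depending only on $N$; moreover $\|B_0\|_{C^1}+\|B_1\|_{C^0}\leq C$ and $\|k\|_{C^0}\leq C\|g'-g\|_{C^2}$. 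Because $\|v\|_{C^2}\leq\kappa_1$, all the geometric quantities ($\II_{\Sigma,g}$, the metric coefficients in the local frame, etc.) are controlled by $N$ alone, so the equation is a genuine uniformly elliptic divergence-form system with $C^0$ (in fact continuous) leading coefficients and bounded lower-order terms on the fixed domain $\BB^n(2)$.

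The core step is then a purely local interior estimate: I would invoke the standard $L^2\to L^\infty$ interior bound for weak solutions of uniformly elliptic divergence-form equations (Moser iteration, e.g. \cite[Theorem 8.17]{GilTru} in the scalar case; for the system one decomposes into components $v^I$ and absorbs the off-diagonal coupling in $B_0, B_1$ into the lower-order/inhomogeneous terms, which is harmless since those coefficients are merely bounded). This yields, for concentric Euclidean balls $\BB^n(1)\subset\BB^n(3/2)\subset\BB^n(2)$,
\[
\|v\|_{C^0(\Sigma\cap \BB^n(1))} \leq C\big(\|v\|_{L^2(\Sigma\cap \BB^n(3/2))} + \|k\|_{L^\infty(\Sigma\cap \BB^n(3/2))}\big)
\leq C\big(\|v\|_{L^2(\Sigma)} + \|g'-g\|_{C^2(\Sigma)}\big),
\]
with $C=C(N)$. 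The only translation needed to match the statement is to note that $\Sigma\cap B^g(1)$ (the intrinsic/ambient $g$-ball of radius $1$) is contained in $\Sigma\cap \BB^n(1+C\kappa_1)\subset\Sigma\cap\BB^n(3/2)$ after renaming radii — i.e. one proves the estimate on a slightly larger Euclidean ball and shrinks $\kappa_1$ so that $B^g(1)$ fits inside; alternatively one rescales the statement slightly. All norms are comparable to their Euclidean counterparts up to a factor $1+C\kappa_1$ since $\|g-g_\eucl\|_{C^5}\leq\kappa_1$, so passing between $d\|\Sigma\|_g$ and Lebesgue measure on the graph changes constants only by a dimensional factor.

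The main obstacle — really the only point requiring care — is handling the \emph{system} structure: Moser iteration is classically stated for scalar equations, and for systems with non-diagonal principal part $B_0$ one does not in general have an $L^\infty$ bound. Here, however, the situation is benign: the principal part $B_0\nabla^\perp v$ is a perturbation of $\id$ of size $O(\kappa_1)$, so writing the equation as $\Delta^\perp_{\Sigma,g} v = \nabla^\perp\cdot((\id-B_0)\nabla^\perp v) - B_1 v + k$ puts it in the form ``diagonal (Hodge/connection) Laplacian equals a divergence of small times gradient plus bounded stuff''. One then runs Moser iteration on $|v|^2$ (or on each $|v^I|$), using Kato's inequality and the smallness of $\|B_0-\id\|_{C^0}$ to absorb the bad term $\int (\id-B_0)\nabla^\perp v\cdot\nabla^\perp(|v|^{p-2}v)$ into the good Dirichlet term on the left-hand side; this is exactly the strategy used for the analogous estimate in the codimension-one setting and in \cite[Appendix B]{LW22}, and it goes through verbatim because the smallness is quantitative in $\kappa_1=\kappa_1(N)$. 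With that absorption in place, the iteration produces the claimed $C^0$ bound and the proof is complete.
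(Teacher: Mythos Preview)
Your proposal is correct and follows essentially the same approach as the paper: both start from the divergence-form equation \eqref{Equ_MSE_Form for reg} with $B_0$ close to the identity and $\|k\|_{C^0}\leq C\|g'-g\|_{C^2}$, and then apply standard interior elliptic regularity to pass from $L^2$ to $C^0$. The paper packages this as an $L^p$ interior estimate plus Sobolev embedding $W^{1,p}\hookrightarrow L^{p^*}$, bootstrapping from $p=2$ until $p>n$ and then using $W^{1,p}\hookrightarrow C^0$; your Moser iteration on $|v|^2$ (with the absorption of the $(\id-B_0)$ term enabled by $\|B_0-\id\|\leq C\kappa_1$) is the same argument in slightly different dress, and your remarks on handling the system structure are a welcome explicit justification of a step the paper dispatches by citation.
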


\begin{proof}
We know that the section $v$ satisfies the PDE \eqref{Equ_MSE_Form for reg}, which we will treat as a problem in divergence form with the term  $B_1(x) v$ brought to the right-hand side. Hence, standard $L^p$ estimates (see e.\,g. \cite[Theorem 3.35]{AmbCarMas18}) imply that for every $1<p< n$ and $1<r_1<r_2<3/2$, we have \[
      \|v\|_{L^{p^*}(B^g(r_1)\cap \Sigma)}  \leq C(p)\|v\|_{W^{1,p}(B^g(r_1)\cap \Sigma)} \leq C(p, r_1, r_2)(\|v\|_{L^p(B^g(r_2)\cap \Sigma)} + \|g'-g\|_{C^2(\Sigma)})
    \]
    where the first inequality follows from Sobolev embedding with $p^*:= pn/(n-p)$ and the second is the aforementioned elliptic estimate. That being said, \eqref{Equ_MSE_L^2 bd C^0} then follows from this bound (starting at $p=2$) with a bootstrap argument and the Sobolev embedding $W^{1,p}\hookrightarrow C^0$ for $p>n$.
\end{proof}
  \begin{proposition}\label{Prop_MSE_Section Transp}
    There exist $\kappa_2 = \kappa_2(N)\in (0, 1/4)$ and $C = C(N)>1$ such that the following statement holds.  Let $\kappa\in (0, \kappa_2)$ and
    \begin{enumerate} [label=(\roman*)]
      \item\label{Item_MSE_SecTrans_assum1} $g, g'$ be $C^4$ Riemannian metrics on $\BB^N(4)$ with $\|g-g_\eucl\|_{C^4}, \|g'-g_\eucl\|_{C^4}\leq \kappa_2$ and $\|g-g'\|_{C^4}\leq \kappa$;
      \item\label{Item_MSE_SecTrans_assum2} $\Sigma$ be a $g$-minimal $\kappa_2$-$C^4$ graph in $\RR^N$ over $\BB^n(2)\times \{\orig\}$;
      \item\label{Item_MSE_SecTrans_assum3} $v\in C^2(\Sigma; \mbfV)$ with $\|v\|_{C^2}\leq \kappa$; $\Phi_v$ be defined as in \ref{Item_Phi_v: Sigma to graph(v)} of Proposition \ref{Prop_MSE_Main}; 
      \item\label{Item_MSE_SecTrans_assum4} $\mbfV$ be the normal bundle of $\Sigma$ in $(\RR^N, g)$ with induced connection $\nabla^\perp$ from $g$; $\mbfV'$ be the normal bundle of  $\Sigma':=\Phi_v(\Sigma)$ in $(\RR^N, g')$ with induced connection $\nabla'^\perp$ from $g'$;
      \item\label{Item_MSE_SecTrans_assum5} $\mbfT^{\Sigma'}_{\Sigma,g}: \mbfV'\to \mbfV$ be the bundle maps defined in Definition \ref{def:Transfer},  which induces linear maps (still denoted by $\mbfT^{\Sigma'}_{\Sigma,g}$) for every integer $0\leq k\leq n$: \[
        C^0(\Sigma'; \Lambda^k\Sigma'\otimes\mbfV')\to C^0(\Sigma; \Lambda^k\Sigma\otimes\mbfV)\,.  \]
    \end{enumerate}
    Then,
    \begin{enumerate} [label=(\Roman*)]
      \item\label{Item_MSE_SecTrans_Concl1} for every $\beta'\in C^0(\Sigma'; \Lambda^k\Sigma'\otimes\mbfV')$, set $\beta:= \mbfT^{\Sigma'}_{\Sigma,g}(\beta')\in C^0(\Sigma; \Lambda^k\Sigma\otimes\mbfV)$; we have
      \begin{align*}
        (1-C\kappa)|\beta(x)| \leq |\beta'(\Phi_v(x))| & \leq (1+C\kappa)|\beta(x)|\,; 
      \end{align*}
      \item\label{Item_MSE_SecTrans_Concl2} for every $u'\in C^1(\Sigma'; \mbfV')$ and $u:= \mbfT^{\Sigma'}_{\Sigma,g}(u')$, \[
        \left|\mbfT^{\Sigma'}_{\Sigma,g}(\nabla'^\perp u')(x) - \nabla^\perp u(x) \right| \leq C\kappa|u(x)|\,;
      \]
      \item\label{Item_MSE_SecTrans_Concl3} there exist bundle endomorphisms $\cS:\mbfV\to \mbfV$, $\cT: T^*\Sigma\otimes\mbfV\to T^*\Sigma\otimes\mbfV$ such that, denoted by $\id$ the identity map on either bundle in question, there holds \[
        |\cS - \id| + |\cT - \id| \leq C\kappa
      \] 
      and for every $\beta'\in C^1(\Sigma'; T^*\Sigma'\otimes \mbfV')$ and $\beta:= \mbfT^{\Sigma'}_{\Sigma,g}(\beta')$, we have the pointwise estimate \[
        \left|\mbfT^{\Sigma'}_{\Sigma,g}(\nabla'^\perp \cdot \beta') - \cS(\nabla^\perp \cdot \cT(\beta)) \right| \leq C\kappa|\beta|\,.
      \] 
    \end{enumerate}
    In particular, for every $u'\in C^2(\Sigma'; \mbfV')$ and $b_0', b_1'\in C^1(\Sigma'; T^*\Sigma'\otimes \mbfV')$ such that \[
      - L_{\Sigma', g'} u' + \nabla'^\perp \cdot b_0' + b_1' = 0\,,
    \]
    there exists $b_0, b_1\in C^1(\Sigma; T^*\Sigma\otimes \mbfV)$ such that $u:= \mbfT^{\Sigma'}_{\Sigma,g}(u')$ solves,
    \begin{align}
      - L_{\Sigma, g} u + \nabla^\perp \cdot b_0 + b_1 = 0\,, \label{Equ_MSE_Jaco Oper}
    \end{align}
    and 
    \begin{align}
     \begin{split}
      & |b_0(x) - \mbfT^{\Sigma'}_{\Sigma,g}(b_0')(x)| + |b_1(x) - \mbfT^{\Sigma'}_{\Sigma,g}(b_1')(x)| \\
      & \quad \leq C\kappa\left(|b_0'(\Phi_v(x))| + |b_1'(\Phi_v(x))| + |u(x)| + |\nabla^\perp u(x)| \right)  \,. 
     \end{split} \label{Equ_MSE_Error est of Jac Oper under Transp}
    \end{align}
  \end{proposition}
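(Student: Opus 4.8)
The plan is to prove Proposition \ref{Prop_MSE_Section Transp} by a direct, hands-on comparison of the ambient geometries near $\Sigma$ and $\Sigma'=\Phi_v(\Sigma)$, reducing everything to estimates on the bundle map $\mbfT^{\Sigma'}_{\Sigma,g}$ and its interaction with the normal connections. First I would set up coordinates: use the graphical parametrization of $\Sigma$ over $\mathbb{B}^n(2)\times\{\orig\}$ together with a fixed local orthonormal frame $\{s_I\}$ of $\mbfV$, exactly as in the proof of Proposition \ref{Prop_MSE_Main}. Then $\Phi_v(x)=\exp^g_x(v(x))$ is a $C^2$ embedding by item \ref{Item_Phi_v: Sigma to graph(v)} of that proposition, and the differential $(\Phi_v)_*$ and its inverse, as well as the $g'$-orthogonal projection $\Pi^{\perp_{g'}}$ onto $T\Sigma'$, are all explicit $C^1$ expressions in $(v,\nabla^\perp v)$ and the metric coefficients. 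The crucial smallness input is $\|v\|_{C^2}\le\kappa$, $\|g-g'\|_{C^4}\le\kappa$ together with $\|g-g_\eucl\|_{C^4},\|g'-g_\eucl\|_{C^4}\le\kappa_2$; so I would Taylor-expand each of these objects about $v=0$, $g'=g$ and track the $O(\kappa)$ errors uniformly.

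For \ref{Item_MSE_SecTrans_Concl1}: by definition $\beta(x)=[(\Phi_v)^{-1}_*\,\beta'(\Phi_v(x))]^{\perp_g}$, so $|\beta(x)|$ and $|\beta'(\Phi_v(x))|$ differ by the operator norm of the composition $\Pi^{\perp_g}\circ(\Phi_v)^{-1}_*$ restricted to $\mbfV'_{\Phi_v(x)}$, measured with respect to $g$ versus $g'$. Since $(\Phi_v)_*=\mathrm{id}+O(\kappa)$ (in the chosen trivializations), $\mbfV'$ is a $C\kappa$-graph over $\mbfV$, and $g'=g+O(\kappa)$, this operator is $\mathrm{id}+O(\kappa)$, giving the two-sided bound. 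For \ref{Item_MSE_SecTrans_Concl2} I would write $\nabla'^\perp$ in the frame $\{\Pi^{\perp_{g'}}(\Phi_v)_*s_I\}$ of $\mbfV'$ (an $O(\kappa)$-perturbation of $\{s_I\}$); its connection coefficients differ from those of $\nabla^\perp$ by terms built from $\nabla v$, $\nabla(g'-g)$ and the second fundamental form, all $O(\kappa)$, and transferring back through $\mbfT^{\Sigma'}_{\Sigma,g}$ costs another $\mathrm{id}+O(\kappa)$; collecting terms yields the pointwise bound $|\mbfT^{\Sigma'}_{\Sigma,g}(\nabla'^\perp u')-\nabla^\perp u|\le C\kappa|u|$. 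Part \ref{Item_MSE_SecTrans_Concl3} is the divergence analogue: using Remark \ref{rem:Divergence}, $\nabla'^\perp\cdot\beta'$ is determined by pairing $\beta'$ against $\nabla'^\perp\varphi$ and integrating against $d\|\Sigma'\|_{g'}$; pulling back the volume form ($d\|\Sigma'\|_{g'}=\cA^{g'}(x,v,\nabla^\perp v)\,d\|\Sigma\|_g$ with $\cA^{g'}=1+O(\kappa)$ by item \ref{Item_Area Density cA} of Proposition \ref{Prop_MSE_Main}), together with \ref{Item_MSE_SecTrans_Concl2} and \ref{Item_MSE_SecTrans_Concl1}, identifies the endomorphisms $\cS$ (coming from the volume factor and the metric on fibers) and $\cT$ (coming from transporting the $1$-form part and the connection discrepancy), both $\mathrm{id}+O(\kappa)$; the remaining error after extracting $\cS(\nabla^\perp\cdot\cT(\beta))$ is again $O(\kappa)|\beta|$.

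For the final assertion I would simply feed the three parts into the equation $-L_{\Sigma',g'}u'+\nabla'^\perp\cdot b_0'+b_1'=0$. Recalling $L_{\Sigma,g}u=\Delta^\perp_{\Sigma,g}u+g(\II,u)\II+\tr R(u,\cdot,\cdot)$ from \eqref{eq:Jacobi} and noting $\Delta^\perp=\nabla^\perp\cdot\nabla^\perp$, the operator $L_{\Sigma',g'}$ transfers to $L_{\Sigma,g}$ up to: (a) the connection/divergence errors from \ref{Item_MSE_SecTrans_Concl2}--\ref{Item_MSE_SecTrans_Concl3}, which can be absorbed into new $b_0,b_1$ of divergence and zeroth order respectively, each bounded by $C\kappa(|u|+|\nabla^\perp u|)$; (b) the difference $\II_{\Sigma',g'}^2+\cR_{\Sigma',g'}$ versus $\II_{\Sigma,g}^2+\cR_{\Sigma,g}$, which is $O(\kappa)$ pointwise because $\Sigma'$ is a $C\kappa$-$C^2$ graph over $\Sigma$ and $g'=g+O(\kappa)$ in $C^2$, contributing a further zeroth-order term bounded by $C\kappa|u|$; (c) the transferred inhomogeneity $\mbfT^{\Sigma'}_{\Sigma,g}(\nabla'^\perp\cdot b_0'+b_1')$, which by \ref{Item_MSE_SecTrans_Concl1} and \ref{Item_MSE_SecTrans_Concl3} equals $\nabla^\perp\cdot\mbfT^{\Sigma'}_{\Sigma,g}(b_0')+\mbfT^{\Sigma'}_{\Sigma,g}(b_1')$ up to error $C\kappa(|b_0'|+|b_1'|+|\nabla^\perp u|)$ in divergence form plus $C\kappa(|b_0'|+|b_1'|+|u|)$ in zeroth order. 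Packaging (a)--(c) gives $b_0,b_1$ with \eqref{Equ_MSE_Jaco Oper} and the stated estimate \eqref{Equ_MSE_Error est of Jac Oper under Transp}. The main obstacle I anticipate is bookkeeping-theoretic rather than conceptual: one must be careful that all error terms are genuinely \emph{pointwise} (not merely integral) and that the divergence-form errors are realized as $\nabla^\perp\cdot(\text{something }O(\kappa))$ rather than producing uncontrolled derivatives of $v$ or of $g'-g$ of order higher than allowed by the $C^4$ hypotheses — this is exactly why the hypothesis demands $C^4$ control and why the identities \eqref{Equ_MSE_A, A_z, A_xi at (0,0)}--\eqref{Equ_MSE_A_zz, A_zxi, A_xixi at (0,0)} (vanishing of the first-order jet of $\cA^g$) are needed to absorb the genuinely quadratic remainders into the linear-in-$\kappa$ estimates.
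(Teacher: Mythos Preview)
Your proposal is correct and follows essentially the same strategy as the paper: set up adapted frames on $\mbfV$ and $\mbfV'$, verify that all the relevant geometric objects (metrics, connection forms, bundle metrics) differ by $O(\kappa)$, and read off \ref{Item_MSE_SecTrans_Concl1}--\ref{Item_MSE_SecTrans_Concl3} from these estimates; the final assertion then follows by combining the three parts with the $O(\kappa)$-closeness of $\II^2_{\Sigma',g'}+\cR_{\Sigma',g'}$ and $\II^2_{\Sigma,g}+\cR_{\Sigma,g}$.

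Two minor points of divergence are worth noting. First, the paper chooses frames in the opposite direction from you: it defines $e'_j$ on $\mbfV'$ by $g'$-projecting the last $N-n$ standard basis vectors, and then sets $e_j:=\mbfT(e'_j)$ on $\mbfV$. This choice makes $\mbfT$ act \emph{diagonally} on coefficients ($\beta'=\beta'^j\otimes e'_j\mapsto \beta=\Phi_v^*\beta'^j\otimes e_j$), which streamlines the computations for \ref{Item_MSE_SecTrans_Concl2} and \ref{Item_MSE_SecTrans_Concl3}; your push-forward frames $\Pi^{\perp_{g'}}(\Phi_v)_* s_I$ would work equally well but introduce an extra $O(\kappa)$ change-of-frame matrix to track. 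Second, for \ref{Item_MSE_SecTrans_Concl3} the paper does not go through the weak formulation and volume pull-back that you suggest; instead it writes out $\nabla'^\perp\cdot\beta'$ pointwise via the explicit formula in Remark~\ref{rem:Divergence} (the line $\langle\nabla\cdot b,\varphi\rangle_h=-d^*\langle b,\varphi\rangle_h-\langle b,\nabla\varphi\rangle_{g,h}$) in the chosen coordinates and frames, and then reads off $\cS$ and $\cT$ directly as explicit combinations of metric coefficients: $\cT^{\bar jp}_{j\bar p}=(g'^{pq}h'_{jk})\circ\Phi_v\cdot g_{\bar pq}h^{\bar jk}$ and $\cS^l_{\bar l}=(h'^{kl}\circ\Phi_v)\cdot h_{k\bar l}$. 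Your integral/weak route would also produce admissible $\cS,\cT$ (possibly differing from the paper's by the density factor $\cA^{g'}$), but the direct coordinate computation avoids precisely the ``pointwise versus integral'' concern you flag at the end.
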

  
  \begin{proof}
Throughout this proof, let us write $\mbfT:= \mbfT^{\Sigma'}_{\Sigma, g}$ for the sake of simplicity. Let then
    \begin{itemize}
      \item $\bfx': \Sigma'\to \RR^n$ be the projection onto the first $n$-factors, and $\bfx:= \bfx'\circ \Phi_v$;
      \item For every $x'\in \Sigma'$ and $1\leq j\leq N-n$, let \[
        e'_j(x'):= (0, \dots, 0, \underbrace{1}_{n+j\text{-th}}, \dots, 0)^{\perp_{T_{x'}\Sigma', g'}}\,
      \]
      and set $e_j(x):= \mbfT(e_j')(x)$.
    \end{itemize}
    It is straightforward to note that we can take $\kappa_2(N)$ small enough that if \ref{Item_MSE_SecTrans_assum1}-\ref{Item_MSE_SecTrans_assum5} hold, then: 
    \begin{itemize}
      \item $\bfx' = (\bfx'^1, \dots, \bfx'^n)$ and $\bfx = (\bfx^1, \dots, \bfx^n)$ are coordinate systems of, respectively, $\Sigma'$ and $\Sigma$, such that the metric $g'$ and $g$ restricted to $\Sigma'$ and $\Sigma$ and their Christoffel symbols satisfy 
      \begin{align}
         |g'_{ij} - \delta_{ij}|,\ |g_{ij} - \delta_{ij}|,\ |\Gamma'{}_{ij}^k|,\ |\Gamma_{ij}^k| \leq C\kappa_2\,; & & 
         |\Phi_v^*g'_{ij} - g_{ij}| \leq C\kappa \,; \label{Equ_MSE_Metric and Conn close Euc}          
      \end{align}
      \item $\{e'_j\}$ and $\{e_j\}$ are frames of, respectively, $\mbfV'$ and $\mbfV$, such that the induced metrics $h'_{ij}:= g'(e'_i, e'_j)$ and $h_{ij}:= g(e_i, e_j)$ satisfy 
      \begin{align}
         |h'_{ij} - \delta_{ij}|,\ |h_{ij} - \delta_{ij}| \leq C\kappa_2\,; & &
         |h'_{ij}\circ\Phi_v - h_{ij}| \leq C\kappa \,, \label{Equ_MSE_Bundle metric close Euc}  
      \end{align}
      the connection forms $\omega'{}^i_j$ and $\omega^i_j$ of $\nabla'^\perp$ and $\nabla^\perp$ under these two frames satisfy 
      \begin{align}
         |\omega'{}^i_j|_{g'},\ |\omega^i_j|_g \leq C\kappa_2\,; & &
         |\Phi_v^* \omega'{}^i_j - \omega^i_j|_g \leq C\kappa\,. \label{Equ_MSE_Conn form close 0}
      \end{align}
    \end{itemize} 
    Thus for every $\beta' = \beta'{}^j\otimes e'_j\in C^0(\Sigma'; \Lambda^k\Sigma'\otimes \mbfV')$, we have \[
      \beta:= \mbfT^{\Sigma'}_{\Sigma, g}(\beta') = \Phi_v^* \beta'{}^j\otimes e_j\,.
    \]
    In particular, combined with \eqref{Equ_MSE_Metric and Conn close Euc} and \eqref{Equ_MSE_Bundle metric close Euc}, this proves \ref{Item_MSE_SecTrans_Concl1}.

    Let then $u' = u'{}^je'_j\in C^1(\Sigma'; \mbfV')$ and $u^j:= u'{}^j\circ\Phi_v$ (so that by linearity $u:= \mbfT(u') = u^je_j$). Then \ref{Item_MSE_SecTrans_Concl2} follows directly from \eqref{Equ_MSE_Metric and Conn close Euc}-\eqref{Equ_MSE_Conn form close 0} and the identity
    \begin{align*}
        \mbfT(\nabla'^\perp u') = \mbfT \left((du'{}^j + u'{}^k\omega'{}^j_k)\otimes e'_j \right) = (du^j + u^k\Phi_v^*\omega'{}^j_k)\otimes e_j = \nabla^\perp u + u^k\cdot(\Phi_v^*\omega'{}^j_k - \omega^j_k)\otimes e_j\,.
    \end{align*}

    To prove \ref{Item_MSE_SecTrans_Concl3}, first recall that for $\beta' = \beta'{}^j\otimes e'_j = \beta'{}^j_p\, d\bfx'{}^p\otimes e'_j\in C^1(\Sigma; T^*\Sigma'\otimes \mbfV')$, keeping in mind Remark \ref{rem:Divergence},
    \begin{align*}
        \nabla'^\perp\cdot \beta' & = -\left( d^*(\beta'{}^jh'_{jk}) + g'(\beta'{}^i, \omega'{}^j_k)h'_{ij} \right)h'{}^{kl}e'_l \\
        & = \left( \partial_{q}(\beta'{}^j_p\cdot g'{}^{pq} h'_{jk}) - \beta'{}^j_p h'_{jk}\cdot \partial_q(g'{}^{pq}) - g'{}^{pq}\Gamma'{}^r_{pq}(\beta'{}^j_r h'_{jk}) - g'(\beta'{}^i, \omega'{}^j_k)h'_{ij} \right)h'{}^{kl}e'_l\,.
    \end{align*}
    Hence, having set $\beta^j_p := \beta'{}^j_p\circ \Phi_v$ (so that $\beta^j := \beta^j_p\, d\bfx^p = \Phi_v^*\beta'{}^j$ and $\beta:= \beta^j\otimes e_j = \mbfT(\beta')$), there holds
    \begin{align*}
        \mbfT(\nabla'^\perp\cdot \beta') 
        & = \partial_{q}\left(\beta^j_p\cdot (g'{}^{pq} h'_{jk})\circ\Phi_v\right)(h'{}^{kl}\circ\Phi_v) e_l \\
        & \quad - \left(\beta^j_p h_{jk}\cdot \partial_q(g^{pq}) + g^{pq}\Gamma^r_{pq}(\beta^j_r h_{jk}) + g(\beta^i, \omega^j_k)h_{ij} \right)h^{kl}e_l + O_N(\kappa|\beta|_{g}) \\
        & = \partial_{q}\left(\beta^j_p \cT^{\bar j p}_{j \bar p}\cdot g^{\bar p q}h_{\bar j k}\right)h^{k\bar l}\cdot \cS^{l}_{\bar l} e_{ l}  \\
        & \quad - \left(\beta^j_p h_{jk}\cdot \partial_q(g^{pq}) + g^{pq}\Gamma^r_{pq}(\beta^j_r h_{jk}) + g(\beta^i, \omega^j_k)h_{ij} \right)h^{kl}e_l + O_N(\kappa|\beta|_{g}) \\
        & = \cS(\nabla^\perp \cdot \cT(\beta)) + O_N(\kappa|\beta|_g) \,,
    \end{align*}
    where we have set
    \[
\cT^{\bar j p}_{j \bar p}:= (g'{}^{pq} h'_{jk})\circ\Phi_v\cdot g_{\bar p q}h^{\bar j k}, \hspace{6mm} \cS^l_{\bar l}:= (h'{}^{kl}\circ\Phi_v)\cdot h_{k\bar l}\,.
\] 
    and $O_N(\kappa|\beta|_g)$ is a function that is pointwise bounded by $C\kappa|\beta|$ for $C=C(N)$ as throughout this appendix; here $\cS$ and $\cT$ are thus the bundle endomorphisms defined by
    \begin{align*}
        & \cT: T^*\Sigma\otimes \mbfV \to T^*\Sigma\otimes \mbfV\,, &  \alpha^j_p\, d\bfx^p\otimes e_j & \mapsto \alpha^j_p\cT^{\bar j p}_{j \bar p}\, d\bfx^{\bar p}\otimes e_{\bar j}\,; \\
        & \cS:  \mbfV \to \mbfV\,, & w^l e_l & \mapsto w^l\cS^{\bar l}_l e_{\bar l} \,.
    \end{align*}

    The smallness estimates claimed above as well as the estimates for $|\cT-\id|, |\cS-\id|$ both follow from \ref{Item_MSE_SecTrans_Concl1} and \eqref{Equ_MSE_Metric and Conn close Euc}-\eqref{Equ_MSE_Conn form close 0}.  This finishes the proof of \ref{Item_MSE_SecTrans_Concl3}. 

    Finally, to prove \eqref{Equ_MSE_Jaco Oper} and \eqref{Equ_MSE_Error est of Jac Oper under Transp}, recall that using the notation in the proof of Proposition \ref{Prop_MSE_Main}, \[
      L_{\Sigma, g} u = \nabla^\perp \cdot \nabla^\perp u + \II^2_{\Sigma, g}u + \cR_{\Sigma, g}u\,.  \]
    Hence, it suffices to combine \ref{Item_MSE_SecTrans_Concl1}-\ref{Item_MSE_SecTrans_Concl3} with the fact that (pointwise, with respect to the metric $g$) there holds
    \[
      |\Phi_v^*\II^2_{\Sigma', g'} - \II_{\Sigma, g}| + |\Phi_v^*\cR_{\Sigma', g'} - \cR_{\Sigma, g}| \leq C\kappa \,.
    \]  \end{proof}

  \begin{corollary} \label{Cor_MSE_converg preserve equ}
    Let $M$ be a smooth manifold, $g_j$ be a sequence of Riemannian metrics that $C^{4}$-converges to $g$, $\Sigma_j$ be a sequence of $\MSI$ in $(M, g_j)$ with normal bundle $\mbfV_j$, $C^2_{loc}$-converging to an $\MSI$ $\Sigma$ in $(M, g)$ with normal bundle $\mbfV$. Suppose 
    \begin{enumerate}[label=(\roman*)]
      \item\label{item_B_Omega} $\Omega\Subset \Sigma$, $\Phi_j: \Omega\to \Sigma_j$ is the graphical parametrization in metric $g$, $\mbfT_j:=\mbfT^{\Sigma_j}_{\Sigma,g}$ be the bundle maps defined in Definition \ref{def:Transfer}.
      \item\label{item_B_equ} $\Omega_j:= \Phi_j(\Omega)$, $u_j\in C^1(\Omega_j; \mbfV_j)$ be weak solutions to \[
        -L_{\Sigma_j, g_j} u_j + \nabla^\perp \cdot b_{0,j} + b_{1,j} + h_j = 0
      \]
      on $\Omega_j$ with $\limsup \|u_j\|_{L^2(\Omega_j)}<+\infty$, where $h_j\in L^2(\Omega_j; \mbfV_j)$, $\mbfT_j(h_j)\to h\in L^2(\Omega; \mbfV)$ in $L^2$, and the following estimate concerning the continuous data $b_{0,j}, b_{1,j}$ holds on $\Omega_j$ for some $\varepsilon_j \searrow 0$: \be\label{eq:ExtraTerms}
        |b_{0,j}| + |b_{1,j}| \leq \varepsilon_j (|u_j|+|\nabla^\perp u_j| + \|h_j\|_{L^2(\Omega_j)})\,.
      \ee
    \end{enumerate}
    Then for every Lipschitz domain $\Omega'\Subset \Omega$, after passing to a subsequence, $v_j:= \mbfT_j(u_j)$ converges in $L^2(\Omega'; \mbfV)$ and weakly in $W^{1,2}(\Omega'; \mbfV)$ to some $u_\infty\in W^{1,2}(\Omega'; \mbfV)$, which is a weak solution to \[
      -L_{\Sigma, g} u_\infty + h = 0\,.
    \]
  \end{corollary}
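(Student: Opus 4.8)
\textbf{Proof plan for Corollary \ref{Cor_MSE_converg preserve equ}.}

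The plan is to transfer the PDEs satisfied by $u_j$ on $\Sigma_j$ to PDEs satisfied by $v_j=\mbfT_j(u_j)$ on $\Sigma$, extract a limit by elliptic compactness, and identify the limit equation. First I would invoke Proposition \ref{Prop_MSE_Section Transp}: for $j$ large the graphical parametrization $\Phi_j$ has small $C^2$-norm and the metrics are $C^4$-close, so the "in particular" part of that proposition applies after rescaling to the regularity scale (as in Remark \ref{Rem_Choice of smallness in graph rad}), in a locally finite cover of $\Omega'$ by balls on which all hypotheses \ref{Item_MSE_SecTrans_assum1}--\ref{Item_MSE_SecTrans_assum5} hold with $\kappa=\kappa_j\to 0$. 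Concretely, writing the given equation on $\Omega_j$ as $-L_{\Sigma_j,g_j}u_j+\nabla^\perp\cdot b_{0,j}+(b_{1,j}+h_j)=0$, the transfer identity \eqref{Equ_MSE_Jaco Oper}--\eqref{Equ_MSE_Error est of Jac Oper under Transp} yields on $\Omega'$
\[
-L_{\Sigma,g}v_j+\nabla^\perp\cdot \tilde b_{0,j}+\tilde b_{1,j}+\mbfT_j(h_j)=0,
\]
where $\tilde b_{0,j},\tilde b_{1,j}$ differ from $\mbfT_j(b_{0,j}),\mbfT_j(b_{1,j})$ by error terms bounded pointwise by $C\kappa_j(|b_{0,j}\circ\Phi_j|+|b_{1,j}\circ\Phi_j|+|v_j|+|\nabla^\perp v_j|)$; combining with \eqref{eq:ExtraTerms} and part \ref{Item_MSE_SecTrans_Concl1} of Proposition \ref{Prop_MSE_Section Transp} (which controls norms up to factors $1\pm C\kappa_j$) gives
\[
|\tilde b_{0,j}|+|\tilde b_{1,j}|\le C\varepsilon_j'\bigl(|v_j|+|\nabla^\perp v_j|+\|\mbfT_j(h_j)\|_{L^2(\Omega)}\bigr)
\]
on $\Omega'$, with $\varepsilon_j'\to 0$.

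Next I would obtain uniform $W^{1,2}_{loc}$ bounds. From $\limsup\|u_j\|_{L^2(\Omega_j)}<\infty$ and part \ref{Item_MSE_SecTrans_Concl1} one gets $\limsup\|v_j\|_{L^2(\Omega)}<\infty$; testing the transferred equation against $\eta^2 v_j$ for a cutoff $\eta$ supported in $\Omega$ and absorbing the small coefficient terms $\tilde b_{0,j},\tilde b_{1,j}$ (using $\varepsilon_j'\to 0$ to absorb $\|\nabla^\perp v_j\|_{L^2}$ into the left side via Young's inequality), together with the $L^2$-boundedness of $\mbfT_j(h_j)$ and of the zeroth-order coefficients of $L_{\Sigma,g}$ on the fixed manifold $\Sigma$, produces a bound on $\|v_j\|_{W^{1,2}(\Omega'')}$ for any $\Omega''\Subset\Omega$ independent of $j$. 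Hence, after a diagonal extraction over an exhaustion of $\Omega$ by Lipschitz domains, $v_j\rightharpoonup u_\infty$ weakly in $W^{1,2}_{loc}$ and $v_j\to u_\infty$ strongly in $L^2_{loc}$ by Rellich, for some $u_\infty\in W^{1,2}_{loc}(\Omega;\mbfV)$ with the stated $W^{1,2}(\Omega')$ membership.

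Finally I would pass to the limit in the weak formulation: for fixed $\varphi\in C^1_c(\Omega';\mbfV)$ we have $\int_\Sigma \langle \nabla^\perp v_j,\nabla^\perp\varphi\rangle - \langle(\II^2+\cR)v_j,\varphi\rangle + \langle \tilde b_{0,j},\nabla^\perp\varphi\rangle - \langle \tilde b_{1,j}+\mbfT_j(h_j),\varphi\rangle = 0$; the first two terms converge by weak $W^{1,2}$ and strong $L^2$ convergence of $v_j$ (the coefficients $\II^2,\cR$ being fixed and smooth on $\Sigma$), the $\tilde b$-terms vanish in the limit since $\|\tilde b_{0,j}\|_{L^2}+\|\tilde b_{1,j}\|_{L^2}\le C\varepsilon_j'(\|v_j\|_{W^{1,2}(\Omega')}+\|\mbfT_j h_j\|_{L^2})\to 0$, and $\mbfT_j(h_j)\to h$ in $L^2$ by hypothesis, giving $\int_\Sigma\langle\nabla^\perp u_\infty,\nabla^\perp\varphi\rangle-\langle(\II^2+\cR)u_\infty,\varphi\rangle=\int_\Sigma\langle h,\varphi\rangle$, i.e. $-L_{\Sigma,g}u_\infty+h=0$ weakly. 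The main obstacle is the bookkeeping in the first paragraph: one must verify that the rescaling/covering argument lets Proposition \ref{Prop_MSE_Section Transp} apply with a \emph{uniform} constant $C=C(N)$ across the cover — this is where the uniform lower bound on the regularity scale built into the $\MSI$ hypothesis and into $C^2_{loc}$-convergence is essential, and it is what makes the error terms genuinely $o(1)$ rather than merely bounded.
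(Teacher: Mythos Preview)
Your proposal is correct and follows essentially the same approach as the paper: apply Proposition \ref{Prop_MSE_Section Transp} to transfer the equation, obtain a uniform $W^{1,2}$ bound via a Caccioppoli-type estimate, extract a limit by Rellich, and pass to the limit in the weak formulation. The only (inessential) difference is ordering: the paper performs the Caccioppoli estimate for $u_j$ directly on $\Sigma_j$ (testing against $u_j\eta_j^2$ with $\eta_j=\eta\circ\Phi_j^{-1}$) and then uses parts \ref{Item_MSE_SecTrans_Concl1}--\ref{Item_MSE_SecTrans_Concl2} of Proposition \ref{Prop_MSE_Section Transp} to transfer the $W^{1,2}$ bound to $v_j$, whereas you first transfer the equation and then do Caccioppoli for $v_j$ on $\Sigma$; both work. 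One small bookkeeping point: in your pointwise error for $\tilde b_{1,j}$ there should also be a term $C\kappa_j|h_j\circ\Phi_j|$ (since in \eqref{Equ_MSE_Error est of Jac Oper under Transp} the role of $b_1'$ is played by $b_{1,j}+h_j$), but this is harmless in $L^2$ and does not affect the argument.
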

  \begin{proof}
    Let $\eta\in C^1_c(\Omega)$ be a non-negative cutoff function such that $\eta|_{\Omega'}=1$, and let then $\eta_j:= \eta\circ \Phi_j^{-1}\in C^1(\Omega_j)$. Note that for any $j$ sufficiently large, $\|\eta_j\|_{C^1(\Omega_j)}\leq 2\|\eta\|_{C^1(\Omega)}$.

    Multiplying equation in \ref{item_B_Omega} by $u_j\eta_j^2$ and integrating over $\Omega_j$ yields, through standard manipulations, for all $j$ large enough the bound (that is just a Caccioppoli-type inequality)
    \begin{align*}
      \int_{\Omega_j} |\nabla^\perp u_j|^2\eta_j^2 \,d\|\Sigma_j\|
      & \leq \int_{\Omega_j} |\nabla^\perp u_j|\eta_j \cdot 2|u_j||\nabla^\perp \eta_j|\,d\|\Sigma_j\| \\
      & + \int_{\Omega_j}(C(M,g,\Sigma)|u_j|^2\eta_j^2 + |b_{0,j}\cdot \nabla^\perp (u_j\eta_j^2)| + (|b_{1,j}|+|h_j|)|u_j\eta_j^2|)\,d\|\Sigma_j\| \\ 
      & \leq \int_{\Omega_j}\frac12|\nabla^\perp u_j|^2\eta_j^2 \,d\|\Sigma_j\| + \int_{\Omega_j} C(M,g,\Sigma, \|\eta\|_{C^1})(|u_j|^2 + |h_j|^2)\,d\|\Sigma_j\|.
    \end{align*}

    We can then absorb the the first summand on the right-hand side into the left-hand side to get a uniform $W^{1,2}$ bound for $u_j$ on the domain $\Phi_j(\Omega')$.
    
    Combined with Proposition \ref{Prop_MSE_Section Transp} \ref{Item_MSE_SecTrans_Concl1} and \ref{Item_MSE_SecTrans_Concl2}, this implies \[
        \limsup_{j\to \infty}  \|v_j\|_{W^{1,2}(\Omega')}< +\infty \,.
    \] 
    Therefore, $v_j = \mbfT^{\Sigma_j}_{\Sigma, g}(u_j)$ subconverges weakly in $W^{1,2}(\Omega';\mbfV)$ and strongly in $L^2(\Omega';\mbfV)$ to some $u_\infty\in W^{1,2}(\Omega'; \mbfV)$. The claim that $u_\infty$ is a weak solution to $-L_{\Sigma, g}(u_\infty) + h = 0$ follows from Proposition \ref{Prop_MSE_Section Transp} \ref{Item_MSE_SecTrans_Concl1}, \eqref{Equ_MSE_Jaco Oper} and \eqref{Equ_MSE_Error est of Jac Oper under Transp} by exploiting ---  to handle the extra terms in the weak form of the equation --- the Cauchy-Schwartz inequality and the dominated convergence theorem as one lets $j\to\infty$ (in view of \eqref{eq:ExtraTerms}).
  \end{proof}

\section{A three-circle inequality and related decay estimates}\label{sec:3circles}

 The following growth rate monotonicity formula is a direct consequence of the decomposition (\ref{Equ_Pre_Decomp Jac into homogen Jac field}), which was in fact first introduced in \cite{Sim83} in a different form.
    \begin{lemma} \label{Lem_Ana on SMC_Growth Rate Monoton}
        For any $\sigma>0$, there exists $K = K(\sigma)>2$ with the following property.  
            
        For any $\gamma\in \RR$ and any minimal cone $\mbfC\in \cC_{N,n}$ satisfying \[
            \dist_\RR (\gamma, \Gamma(\mbfC)\cup \{-(n-2)/2\}) \geq \sigma\,,  \]
        if $u\in C^2_{loc}(\mbfA(K^{-3}, 1);\mbfV)\cap L^2(\mbfA(K^{-3}, 1);\mbfV)$ solves $L_\mbfC u = 0$ then its growth rate \[
            J_K^\gamma (u; r):= \int_{\mbfA(K^{-1}r, r)} |u|^2\cdot |x|^{-2\gamma - n} d\|\mbfC\|\,,  \]
        satisfies the inequality 
        \[
            J_K^\gamma(u; K^{-2}) - 2 J_K^\gamma(u; K^{-1}) + J_K^\gamma(u; 1) \geq 0\,.   \]
        Moreover, equality holds if and only if $u\equiv 0$.
    \end{lemma}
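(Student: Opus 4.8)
The strategy is to reduce to the eigenfunction decomposition \eqref{Equ_Pre_Decomp Jac into homogen Jac field}--\eqref{Equ_Pre_Homogen Jac field} and observe that $J_K^\gamma$ is additive over the orthogonal summands, so it suffices to establish the three-circle (discrete convexity) inequality for a single homogeneous mode and then sum. First I would expand $u$ on $\mbfA(K^{-3},1)$ as $u(r,\omega) = \sum_j (v_j^+(r)+v_j^-(r))\varphi_j(\omega)$ with $\{\varphi_j\}$ the $L^2(S)$-orthonormal eigenbasis of $L_S$; since $L_\mbfC u = 0$, each scalar coefficient $a_j(r):=v_j^+(r)+v_j^-(r)$ solves the Euler ODE $a_j'' + \tfrac{n-1}{r}a_j' - \tfrac{\mu_j}{r^2}a_j = 0$ whose solutions are $r^{\gamma_j^+}, r^{\gamma_j^-}$ (or $r^{\gamma_j^-}, r^{\gamma_j^-}\log r$ in the degenerate case $\mu_j=-(n-2)^2/4$), and the exponents $\Re(\gamma_j^\pm)$ are exactly the elements of $\Gamma(\mbfC)$. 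By orthonormality,
\[
J_K^\gamma(u;r) = \sum_j \int_{K^{-1}r}^{r} |a_j(t)|^2\, t^{-2\gamma-n}\cdot t^{n-1}\, dt = \sum_j \int_{K^{-1}r}^{r}|a_j(t)|^2\, t^{-2\gamma-1}\, dt =: \sum_j J_j(r),
\]
so it is enough to prove, for each fixed $j$, that $\phi_j(r):=J_j(r)$ satisfies $\phi_j(K^{-2}) - 2\phi_j(K^{-1}) + \phi_j(1)\geq 0$ with equality iff $a_j\equiv 0$.

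For a single mode write $a_j(t) = c^+ t^{p} + c^- t^{q}$ with $p=\gamma_j^+$, $q=\gamma_j^-$ (handle the degenerate log-case and the complex-conjugate case separately but by the same computation). Then $|a_j(t)|^2 t^{-2\gamma-1}$ is a finite sum of terms $t^{2\Re(p)-2\gamma-1}$, $t^{2\Re(q)-2\gamma-1}$, and a cross term $t^{\Re(p)+\Re(q)-2\gamma-1}$ (with oscillating factors controlled by the real parts); integrating over $[K^{-1}r,r]$ produces a sum of functions of the form $r\mapsto r^{2\beta}$ (times positive constants) with $\beta\in\{\Re(p)-\gamma, \Re(q)-\gamma, \tfrac{\Re(p)+\Re(q)}{2}-\gamma\}$, plus possibly an $r\mapsto \log$-type term. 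The key elementary fact is that for any $\beta\neq 0$ the map $s\mapsto s^{2\beta}$ evaluated at $s=1,K^{-1},K^{-2}$ has a strictly positive second difference: $K^{-4\beta}-2K^{-2\beta}+1 = (K^{-2\beta}-1)^2 > 0$; and $\beta=0$ is excluded precisely because our hypothesis forces $\Re(p),\Re(q)$, hence also their average, to stay at distance $\geq\sigma$ from $\gamma$ (the value $-(n-2)/2$ appears in the hypothesis because that is where $\gamma_j^+=\gamma_j^-$, i.e. the average of the two exponents equals $-(n-2)/2$ regardless of $\mu_j$, so excluding it guarantees the cross-term exponent is also nonzero). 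Thus each contributing term has strictly positive second difference, and adding nonnegative constant multiples preserves this; summing over $j$ gives the claimed inequality, and equality in every term forces all $c^\pm=0$, i.e. $a_j\equiv 0$ for all $j$, i.e. $u\equiv 0$.

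The one genuinely delicate point — and what I expect to be the main obstacle — is the choice of $K=K(\sigma)$ and the treatment of the cross term when $p\neq q$: the integral of the cross term $\int t^{\Re(p)+\Re(q)-2\gamma-1}dt$ can in principle be negative (if $c^+\overline{c^-}<0$ in the real case), so one cannot argue term-by-term with arbitrary signs. The fix is to bound the cross term's second difference by a geometric-mean / Cauchy--Schwarz estimate against the two diagonal terms: writing $m_\pm:=(K^{-2(\Re(p)-\gamma)}-1)$ etc., one shows the quadratic form in $(c^+,c^-)$ given by the total second difference is dominated below by a positive-definite form provided the spectral gap $|\Re(p)-\Re(q)|\geq$ (something), which is guaranteed by discreteness of $\Gamma(\mbfC)$, after possibly enlarging $K$ depending on $\sigma$ — here is where the uniformity $K=K(\sigma)$ is used, via the fact (cf. Remark~\ref{rem:SpectrumConv} and Lemma~\ref{Lem_cC_(N,n)(Lambda) bF Cpt}) that on the relevant compact families the spectral data are controlled. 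Once the single-mode quadratic form is shown to be positive semidefinite with trivial kernel (uniformly in $\mbfC$ subject to the distance hypothesis), summing over $j$ and invoking Parseval completes the argument; the equality case is immediate from strict positivity of each mode's form.
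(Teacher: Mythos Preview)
Your overall strategy (orthogonal eigenmode decomposition, reduce to proving the second-difference inequality mode by mode via Parseval) is exactly the paper's approach. The difficulties are all in the single-mode analysis, and there your sketch has two genuine gaps.

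First, the oscillatory case is not handled. When $\mu_j<-(n-2)^2/4$ the exponents $\gamma_j^\pm$ are complex conjugates with common real part $-(n-2)/2$, so $a_j(t)=c_j\,t^{-(n-2)/2}\cos(\alpha_j\log t+\theta_j)$. Then $|a_j(t)|^2 t^{-2\gamma-1}$ is \emph{not} a finite combination of pure powers $t^{2\beta}$: the cosine factor produces an oscillatory integrand, and your assertion that ``integrating over $[K^{-1}r,r]$ produces a sum of functions of the form $r\mapsto r^{2\beta}$'' is simply false here. The paper treats this case by a separate computation (Lemma~\ref{Lem_App_Growth rate mon alpha-beta}) showing directly that $\mathcal I_K(r)=\int_r^{Kr}\cos^2(\alpha\log s+\theta)s^{2\beta-1}\,ds$ has strictly positive second difference once $K\geq K(\sigma)$; this requires an honest estimate of the oscillatory integral, not a reduction to powers.

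Second, your proposed fix for the cross term is built on two incorrect claims. You invoke compactness of cone families (Lemma~\ref{Lem_cC_(N,n)(Lambda) bF Cpt}, Remark~\ref{rem:SpectrumConv}) to control spectral data uniformly, but the present lemma is stated for \emph{all} $\mbfC\in\cC_{N,n}$, with no $\Lambda$-bound, so there is no compact family available; $K$ must be chosen depending only on $\sigma$. And the ``spectral gap $|\Re(p)-\Re(q)|\geq(\text{something})$'' you rely on does not exist: since $\gamma_j^+-\gamma_j^-=2\sqrt{(n-2)^2/4+\mu_j}$, this difference can be arbitrarily small as $\mu_j\nearrow -(n-2)^2/4$, uniformly over cones. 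The hypothesis only gives $|p-\gamma|,|q-\gamma|\geq\sigma$ and $|\tfrac{p+q}{2}-\gamma|=|\gamma+\tfrac{n-2}{2}|\geq\sigma$; the positive-definiteness of the $2\times2$ quadratic form in $(c^+,c^-)$ must be extracted from these three inequalities alone (together with $K$ large), which is what \cite[Lemma~A.1 and~A.2]{LW22}, cited by the paper, actually do. Your Cauchy--Schwarz sketch does not reach this; the near-degenerate case $\alpha\approx\beta$ requires a more careful second-order expansion.
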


Recall that, consistently with the general notational conventions we had stipulated in Section \ref{sec:Prelim} we have denoted by $\mbfV=\mbfV(\mbfC)$ the normal bundle to $\mbfC$ in $\R^{N}$ and by $\mbfA(r, s):= \AAa(\mathbf{0}, r, s)\cap \mbfC$ the annular region, on the cone, of radii $r<s$. 
    
    \begin{proof}
        By \eqref{Equ_Pre_Decomp Jac into homogen Jac field}-\eqref{Equ_Pre_Homogen Jac field}, see Section \ref{subs:AnalysisCone1}, we have 
        \begin{align*}
            J^\gamma_K(v; r)  = \int_{K^{-1}r}^r t^{n-1-n-2\gamma}\ dt\int_{\mbfC\cap \SSp^{N-1}} v(t, \omega)^2\ d\omega 
                              = \sum_{j\geq 1}\underbrace{\int_{K^{-1}r}^r t^{-1-2\gamma}\cdot (v_j^+(t)+v_j^-(t))^2\ dt}_{=:\ T_j},
        \end{align*}
        where of course the second equality relies upon Parseval's identity, with respect to the orthonormal (Hilbertian) basis $\varphi_1,\varphi_2,\ldots,\varphi_k,\ldots$ of the space $L^2(\mbfC\cap\SSp^{N-1};\mbfV)$ of square-integrable
        normal sections to the link $S:=\mbfC\cap\SSp^{N-1}$ in the unit sphere of Euclidean $\R^N$.
        
        Recalling the definition of $\mu_j=\lambda_j+(n-1)$ where $\lambda_j$ is the $j$-th eigenvalue of the Jacobi operator of the link $S$ in question (see the second part of Section \ref{subs:FirstSecVar}), if $\mu_j\geq -(n-2)^2/4$ then we have
        \begin{align*}
            T_j & = \int_{K^{-1}r}^r \left(c_j^+ t^{\gamma_j^+ -\gamma} + c_j^- t^{\gamma_j^- - \gamma} \right)^2 t^{-1}\ dt,   & & \ \text{ if }\mu_j > -\frac{(n-2)^2}{4};  \\
            T_j & = \int_{K^{-1}r}^r \left(c_j^+ t^{-(n-2)/2 -\gamma} + c_j^- t^{-(n-2)/2 - \gamma}\log t\right)^2 t^{-1}\ dt, & &  \ \text{ if }\mu_j = -\frac{(n-2)^2}{4};  \\
        \end{align*}
        while if $\mu_j< -(n-2)^2/4$, set $\alpha_j:= \sqrt{-\mu_j-(n-2)^2/4}$, $2c_j^+ =: c_j e^{i\theta_j}$ (with $c_j\in\R$), then
        \begin{align*}
            T_j = \int_{K^{-1}r}^r c_j^2 \left(\cos(\alpha_j\log(t)+\theta_j)t^{-(n-2)/2-\gamma}\right)^2 t^{-1}\ dt\,.
        \end{align*}
        At this stage it suffices to show that for $K=K(\sigma)$ large enough, the desired inequality holds at the level of each $T_j\geq 0$ with equality if and only if $c_j^\pm = 0$. When $\mu_j\geq -(n-2)^2/4$, this follows from \cite[Lemma A.1 and Lemma A.2]{LW22}; instead when $\mu_j< -(n-2)^2/4$, it is a consequence of the following statement.
    \end{proof}
    \begin{lemma} \label{Lem_App_Growth rate mon alpha-beta}
        Suppose $\sigma>0$. Then there exists a real number $K_0 = K_0(\sigma)>2$ with the following property. 
            
        For any $\alpha, \beta, \theta\in \RR$ such that $\alpha>0, |\beta| \geq \sigma$ and any $K\geq K_0$, the integral \[
            \cI_K(r) := \int_r^{Kr}\cos^2(\alpha\log(s)+\theta)\, s^{2\beta-1}\ ds,   
        \]
        satisfies, for every $r>0$, the inequality \[
            \cI_K(K^2r) - 2\cI_K(Kr) + \cI_K(r) > 0\,. 
        \]
    \end{lemma}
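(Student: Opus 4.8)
Here is my plan to prove Lemma \ref{Lem_App_Growth rate mon alpha-beta}.

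\textbf{Reduction via substitution.} First I would substitute $s = e^t$, which transforms the integral $\cI_K(r)$ into an integral over a fixed-length interval on the logarithmic scale. Writing $\rho = \log r$ and $L = \log K$, we get
\[
  \cI_K(r) = \int_{\rho}^{\rho + L} \cos^2(\alpha t + \theta)\, e^{2\beta t}\ dt\,.
\]
The second-difference quantity $\cI_K(K^2 r) - 2\cI_K(Kr) + \cI_K(r)$ then becomes a sum of three integrals over consecutive intervals $[\rho, \rho+L]$, $[\rho+L, \rho+2L]$, $[\rho+2L, \rho+3L]$ with weights $+1, -2, +1$. Using periodicity/translation of the integrand under $t \mapsto t + L$, one can rewrite this as
\[
  \int_{\rho}^{\rho+L}\!\!\! \cos^2(\alpha t + \theta)\, e^{2\beta t}\bigl(1 - 2e^{2\beta L} + e^{4\beta L}\bigr)\ dt + (\text{cross terms}),
\]
except that $\cos^2$ is \emph{not} $L$-periodic in general, so the clean factorization $(1 - e^{2\beta L})^2$ is spoiled by oscillation. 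The honest approach is to expand $\cos^2(\alpha t + \theta) = \tfrac12 + \tfrac12\cos(2\alpha t + 2\theta)$ and split $\cI_K$ into a ``main part'' (the $\tfrac12$ term) and an ``oscillatory part'' (the $\cos$ term).

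\textbf{Main part and oscillatory part.} For the main part, $\tfrac12\int_r^{Kr} s^{2\beta - 1}\ ds$ is an explicit elementary function: it equals $\tfrac{1}{4\beta}(K^{2\beta} - 1) r^{2\beta}$ when $\beta \neq 0$. The second difference of $r \mapsto c\, r^{2\beta}$ is $c\, r^{2\beta}(K^{4\beta} - 2K^{2\beta} + 1) = c\, r^{2\beta}(K^{2\beta} - 1)^2 > 0$ since $|\beta| \geq \sigma > 0$ forces $K^{2\beta} \neq 1$; crucially this main contribution is bounded below by $c_0(\sigma)\, r^{2\beta} K^{2\cdot 2\beta}$-type quantities that \emph{grow} like a positive power of $K$ (choosing the dominant sign of $\beta$). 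For the oscillatory part, I would compute $\int_r^{Kr} \cos(2\alpha\log s + 2\theta) s^{2\beta - 1}\ ds$ via integration by parts (or directly, recognizing it as the real part of $\int s^{2\beta - 1 + 2i\alpha}\ ds$), obtaining a bound of the form $C\, r^{2\beta} K^{2\beta}/\sqrt{\alpha^2 + \beta^2}$, hence $\leq C\, r^{2\beta} K^{2\beta}$ uniformly (using $\alpha > 0$, so $\alpha^2 + \beta^2 \geq \beta^2 \geq \sigma^2$; actually we want a bound independent of $\alpha$, which integration by parts provides since the oscillation only helps). The second difference of the oscillatory part is therefore $O(r^{2\beta} K^{4\beta})$ in magnitude but \emph{without} the extra gain one gets from the main term.

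\textbf{Choosing $K_0$.} The key point is a size comparison: the main part's second difference is $\tfrac{1}{4\beta}(K^{2\beta}-1)^2 r^{2\beta}$, which after factoring $r^{2\beta}K^{\pm\text{stuff}}$ out dominates, for $K$ large, the oscillatory second difference which carries at most a bounded multiple of $|\beta|^{-1} r^{2\beta} K^{4|\beta|}$ but with an extra factor that is $o(1)$ in $K$ once we account for the denominator $4\beta$ versus the $(K^{2\beta}-1)^2$ growth — more carefully, after dividing both by $r^{2\beta}$ and by the largest power of $K$ present, the main term tends to a positive constant depending only on $\sigma$ (using $|\beta| \geq \sigma$ to bound $4|\beta|$ below and the ratio $(K^{2\beta}-1)^2/K^{4|\beta|} \to 1$), while the oscillatory term tends to $0$ as $K \to \infty$. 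Hence there is $K_0 = K_0(\sigma)$ beyond which the sum is strictly positive for all admissible $\alpha, \beta, \theta$ and all $r > 0$ (the $r$-dependence being the harmless common factor $r^{2\beta}$). \textbf{The main obstacle} I anticipate is getting the oscillatory-term estimate genuinely \emph{uniform in $\alpha$}: a naive bound gives a factor $1/\alpha$ which degenerates as $\alpha \to 0^+$, but as $\alpha \to 0$ the integrand $\cos^2(\alpha\log s + \theta)$ converges to $\cos^2\theta$, a constant, so the analysis limits continuously onto the (easy) non-oscillatory case; one must handle the regime of small $\alpha$ by Taylor-expanding $\cos(2\alpha\log s + 2\theta)$ around $\alpha = 0$ over the interval $s \in [r, K^3 r]$ (where $\log(s/r) \in [0, 3L]$ is bounded), absorbing the correction into the main term's positivity margin. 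This case split — small $\alpha$ versus $\alpha$ bounded away from zero — together with the uniform integration-by-parts estimate in the latter regime, is the technical heart of the argument.
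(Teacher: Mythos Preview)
Your overall strategy---substitute $s=e^t$, split via $\cos^2 = \tfrac12(1+\cos)$, compute each piece explicitly, and compare---is the same as the paper's. However, the comparison step as you describe it has a real gap.

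First, a minor correction: the second difference of the main part is $\tfrac{1}{4\beta}(K^{2\beta}-1)^{3}\,r^{2\beta}$, cubed not squared (each of the three values $M_K(r), M_K(Kr), M_K(K^2r)$ carries the common factor $\tfrac{1}{4\beta}(K^{2\beta}-1)$, and the second difference of $r\mapsto r^{2\beta}$ contributes another $(K^{2\beta}-1)^2$). The oscillatory second difference is likewise $\Re\bigl(\tfrac{e^{2i\theta}r^{\beta''}}{2\beta''}(K^{\beta''}-1)^3\bigr)$ with $\beta''=2\beta+2i\alpha$.

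More importantly, your claim that after normalization ``the oscillatory term tends to $0$ as $K\to\infty$'' is false. Since $|K^{\beta''}-1|/K^{2|\beta|}\to 1$ and $(K^{2\beta}-1)/K^{2|\beta|}\to \pm1$, the ratio of (the modulus bound on) the oscillatory part to the main part tends to $\tfrac{4|\beta|}{2|\beta''|}=\tfrac{|\beta|}{\sqrt{\beta^2+\alpha^2}}$, which is strictly less than $1$ for each fixed $\alpha>0$ but approaches $1$ as $\alpha\to0^+$. So the naive modulus bound on the oscillatory term does \emph{not} beat the main term uniformly in $\alpha$, no matter how large $K$ is. You correctly flag this as the main obstacle, but your proposed fix (Taylor-expand for small $\alpha$) would need substantially more care than you indicate: for instance, if $\theta+\alpha\log r\in\tfrac{\pi}{2}+\pi\ZZ$ the integrand is $O((\alpha\log K)^2)$ throughout $[r,K^3r]$, so the ``main term'' you would expand around is itself degenerate, and the $r$-dependence is no longer just a factor $r^{2\beta}$.

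The paper avoids the case split entirely by being sharper. After reducing (by symmetry) to $\beta>0$ and factoring out $\tfrac{r^{2\beta}(K^{2\beta}-1)^3}{|\beta''|}$, the desired positivity becomes
\[
  \frac{|\beta''|}{2\beta}\;>\;\left|\frac{K^{\beta''}-1}{K^{2\beta}-1}\right|^{3}\,.
\]
Both sides equal $1$ at $\alpha=0$; the left side is $\sqrt{1+(\alpha/\beta)^2}$, while the paper shows the right side is at most $1+C\min\{\alpha\log K,\,1\}^2/(K^{2\beta}-1)$ via the elementary bound $|e^{\beta''L}-1|^2=(e^{2\beta L}-\cos(2\alpha L))^2+\sin^2(2\alpha L)$. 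Since for $\beta L$ large one has $C\min\{\alpha L,1\}^2/(e^{2\beta L}-1)<(\alpha L)^2/(\beta L)^2=(\alpha/\beta)^2$ uniformly in $\alpha>0$, the inequality follows for all $\alpha$ at once. This is the missing idea in your sketch: the oscillatory excess over $1$ decays \emph{exponentially} in $\beta\log K$, while the main-term margin $(\alpha/\beta)^2$ is $K$-independent, and the $\min\{\cdot,1\}$ handles large $\alpha$ automatically.
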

    \begin{proof}
        Notice that 
        \begin{align*}
            \int_r^{Kr} \cos^2(\alpha\log(s) + \theta)\, s^{2\beta-1}\ ds = \int_{\log(r)+\theta/\alpha}^{\log(r)+\theta/\alpha +\log(K)} \cos^2(\alpha\tau)\, e^{2\beta\tau-2\beta\theta/\alpha}\ d\tau\,.
        \end{align*}
        Therefore, if we let  $r':=\log(r)+\theta/\alpha$, $K':=\log(K)$, $\beta':= 2\beta$, $\alpha'=2\alpha$ and \[
            \tilde{\cI}_{K'}(r'):= \int_{r'}^{r'+K'} 2\cos^2(\alpha\tau)\, e^{\beta'\tau}\ d\tau = \int_{r'}^{r'+K'} \left(1+\cos(\alpha'\tau)\right) e^{\beta'\tau}\ d\tau\,
        \]
        then it suffices to show that, when $K'\geq K'(\sigma)$ is large enough, for any $r'\in \RR$ there holds \[
            \tilde\cI_{K'}(r'+2K') - 2\tilde\cI_{K'}(r'+K') + \tilde\cI_{K'}(r') > 0 \,.
        \]
        By replacing $r'$ by $-r'-K'$ if necessary, we can further assume, without loss of generality, that $\beta'\geq 2\sigma>0$. Let $\beta'':= \beta'+i\alpha'\in \CC$. Then,
        \begin{align*}
            \tilde\cI_{K'}(r') = \Re\left(\int_{r'}^{r'+K'}  (e^{\beta'\tau} + e^{\beta''\tau})\ d\tau \right) 
            = e^{\beta'r'}\cdot\frac{e^{\beta'K'}-1}{\beta'} + \Re\left( e^{\beta''r'}\cdot\frac{e^{\beta''K'}-1}{\beta''} \right) \,.
        \end{align*}
        We can employ this formula to rewrite the quantity we wish to prove positive; elementary manipulations allow to write:
        \begin{align*}
        & \;\;\;\;\; 
           \tilde\cI_{K'}(r'+2K') - 2\tilde\cI_{K'}(r'+K') + \tilde\cI_{K'}(r') \\
            & = e^{\beta'r'}\cdot\frac{(e^{\beta'K'}-1)^3}{\beta'} + \Re\left( e^{\beta''r'}\cdot\frac{(e^{\beta''K'}-1)^3}{\beta''} \right) \\
             & = \frac{e^{\beta'r'}(e^{\beta'K'}-1)^3}{|\beta''|} \left[ \frac{|\beta''|}{\beta'} + \Re\left( \frac{|\beta''|e^{i\alpha'(r'+3K')}}{\beta''}\cdot \left(\frac{e^{\beta'K'}-e^{-i\alpha'K'}}{e^{\beta'K'} - 1}\right)^3 \right) \right] \\
            & \geq \frac{e^{\beta'r'}(e^{\beta'K'}-1)^3}{|\beta''|} \left[ \frac{|\beta''|}{\beta'} - \left(\frac{|e^{\beta'K'}-e^{-i\alpha'K'}|}{e^{\beta'K'} - 1}\right)^3 \right]\,.
        \end{align*}
        Since 
        \begin{align*}
            \left(\frac{|e^{\beta'K'}-e^{-i\alpha'K'}|}{e^{\beta'K'} - 1}\right)^2 
            & = \frac{\left(e^{\beta'K'}-\cos(\alpha'K')\right)^2 + \sin^2(\alpha'K')}{(e^{\beta'K'} - 1)^2} \\
            & \leq \left( 1 + \frac{\min\{\alpha'K', 1\}^2}{e^{\beta'K'}-1}\right)^2 + \frac{\min\{\alpha'K', 1\}^2}{(e^{\beta'K'} - 1)^2} \,.
        \end{align*}
        Thus when $\beta'K'$ is large enough (arranged by requiring $K\geq K_0(\sigma)$ for suitable $K_0(\sigma)$), we have in fact
        \begin{align*}
            \left(\frac{|e^{\beta'K'}-e^{-i\alpha'K'}|}{e^{\beta'K'} - 1}\right)^6
            \leq 1 + \frac{20\min\{\alpha'K', 1\}^2}{e^{\beta'K'}-1} < 1 + \frac{(\alpha'K')^2}{(\beta'K')^2} = \left(\frac{|\beta''|}{\beta'} \right)^2,
        \end{align*}
       which implies the claim.
    \end{proof}

    We stress that the equality case in Lemma~\ref{Lem_Ana on SMC_Growth Rate Monoton} holds only when the section $u$ is identically $0$. Therefore, for non-zero sections we can suitably strengthen the lemma to an open condition, which allows us to perturb both the metrics and the coefficients in the PDE as described in the following statement. This refinement is particularly useful in our applications.
    
    \begin{corollary}[Perturbed version of Lemma \ref{Lem_Ana on SMC_Growth Rate Monoton}] \label{Cor_Ana on SMC_Growth Rate Monoton with Perturb}
        For $\sigma>0$ and $\Lambda>0$, let $K(\sigma)>2$ be the same as in Lemma \ref{Lem_Ana on SMC_Growth Rate Monoton}.  Then there exists $\varepsilon(\sigma, \Lambda)>0$ small enough that the following property holds.
        
        For $\gamma \in [-\Lambda,\Lambda]$ and any $\mbfC\in \cC_{N,n}(\Lambda)$ satisfying 
        \[
            \dist_\RR (\gamma, \Gamma(\mbfC)\cup \{-(n-2)/2\}) \geq \sigma\,,
        \] 
        let $0\neq u\in W^{1,2}_{loc}(\mbfA(K^{-3}, 1);\mbfV)\cap L^2(\mbfA(K^{-3}, 1);\mbfV)$ be a weak solution to 
        \begin{align}
        \nabla^\perp\cdot (\nabla^\perp u + b_0(x)) + \langle A_\mbfC, u\rangle A_\mbfC  + |x|^{-1} b_1(x) = 0,  \label{Equ_Pre_Pert Jac equ}
        \end{align}
        where $\nabla^{\perp}=\nabla_{\mbfC, g_{\eucl}}^\perp$ is the Levi-Civita connection of the normal bundle $\mbfV$ to $\mbfC$ in Euclidean $\R^N$, and $b_0, b_1\in L^{2}(\Sigma;\mbfV)$ satisfy the following estimates
        \begin{align}
             \|(|b_0| + |b_1| - \eps|\nabla^\perp u|)_+\|_{L^2(\mbfA(K^{-3}, 1))} \leq \varepsilon \|u\|_{L^2(\mbfA(K^{-3}, 1))},  \label{Equ_Pre_Small error in pert Jac equ}
        \end{align}
        on $\mbfA(K^{-3}, 1)$. (Here we use the notation $v_+:= \max\{v, 0\}$.) Then we have, 
        \[
             J_K^\gamma(u; K^{-2}) - 2(1+\varepsilon) J_K^\gamma(u; K^{-1}) + J_K^\gamma(u; 1) > 0\,.
        \]
    \end{corollary}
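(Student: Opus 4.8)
\textbf{Proof proposal for Corollary \ref{Cor_Ana on SMC_Growth Rate Monoton with Perturb}.} The plan is to argue by contradiction and compactness, leveraging the strict inequality (with equality only for $u\equiv 0$) already established in Lemma \ref{Lem_Ana on SMC_Growth Rate Monoton}. First I would suppose the conclusion fails: there exist sequences $\gamma_i\in[-\Lambda,\Lambda]$, cones $\mbfC_i\in\cC_{N,n}(\Lambda)$ with $\dist_\RR(\gamma_i,\Gamma(\mbfC_i)\cup\{-(n-2)/2\})\geq\sigma$, numbers $\eps_i\searrow 0$, and nonzero weak solutions $u_i$ of \eqref{Equ_Pre_Pert Jac equ} with error terms $b_0^i,b_1^i$ satisfying \eqref{Equ_Pre_Small error in pert Jac equ} for $\eps_i$, yet
\[
J_K^{\gamma_i}(u_i;K^{-2}) - 2(1+\eps_i)J_K^{\gamma_i}(u_i;K^{-1}) + J_K^{\gamma_i}(u_i;1) \leq 0 .
\]
Normalize $\|u_i\|_{L^2(\mbfA(K^{-3},1))}=1$. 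By Lemma \ref{Lem_cC_(N,n)(Lambda) bF Cpt} the cones $\mbfC_i$ subconverge in $\mbfF$-metric (and $C^\infty$ along the link, with multiplicity one) to some $\mbfC_\infty\in\cC_{N,n}(\Lambda)$, and by Remark \ref{rem:SpectrumConv} the asymptotic spectra converge; after passing to a subsequence $\gamma_i\to\gamma_\infty\in[-\Lambda,\Lambda]$, and the lower semicontinuity gives $\dist_\RR(\gamma_\infty,\Gamma(\mbfC_\infty)\cup\{-(n-2)/2\})\geq\sigma$, so Lemma \ref{Lem_Ana on SMC_Growth Rate Monoton} applies to $\mbfC_\infty$ with weight $\gamma_\infty$.

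Next I would establish uniform $W^{1,2}$ control on the $u_i$ on a slightly smaller annulus. Testing \eqref{Equ_Pre_Pert Jac equ} against $u_i\eta^2$ for a fixed cutoff $\eta$ supported in $\mbfA(K^{-3},1)$, equal to $1$ on $\mbfA(K^{-3+\delta},1-\delta)$, and using \eqref{Equ_Pre_Small error in pert Jac equ} together with the boundedness of $|A_{\mbfC_i}|$ in $L^2_{loc}$ (which follows from the regularity-scale lower bound, cf. Remark \ref{rem:RegCurvBound}) yields a Caccioppoli inequality: the $b_0^i,b_1^i$ terms are absorbed precisely because $\eps_i\to 0$, exactly as in the proof of Corollary \ref{Cor_MSE_converg preserve equ}. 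Hence $\sup_i\|u_i\|_{W^{1,2}(\mbfA(K^{-3+\delta},1-\delta))}<\infty$. Using the graphical convergence $\mbfC_i\to\mbfC_\infty$ to transfer the sections (Proposition \ref{Prop_MSE_Section Transp}, Definition \ref{def:Transfer}), the transferred sections $v_i:=\mbfT^{\mbfC_i}_{\mbfC_\infty,g_{\eucl}}(u_i)$ subconverge strongly in $L^2_{loc}$ and weakly in $W^{1,2}_{loc}$ on $\mbfA(K^{-3},1)$ to some $u_\infty\in W^{1,2}_{loc}$. Passing to the limit in the weak formulation — the error terms $b_0^i,b_1^i$ drop out by Cauchy--Schwarz and dominated convergence since their $L^2$ norms are controlled by $\eps_i$ plus $\eps_i\|\nabla^\perp u_i\|_{L^2}$, and the latter is uniformly bounded — we see that $u_\infty$ is a weak, hence (by elliptic regularity) classical, solution of $L_{\mbfC_\infty}u_\infty=0$ on $\mbfA(K^{-3},1)$, with $u_\infty\in L^2(\mbfA(K^{-3},1);\mbfV)$.

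The main point is then to show $u_\infty\not\equiv 0$ and to pass the bad inequality to the limit. For the first, I would observe that the $L^2$ normalization of $u_i$ alone does not immediately survive because the annulus $\mbfA(K^{-3},1)$ is not compactly contained in itself; however, a standard interior-to-boundary elliptic estimate (on the annulus $\mbfA(K^{-3},1)$, using the $W^{1,2}$ bound and the PDE) rules out concentration of the $L^2$ mass near $|x|=K^{-3}$ or $|x|=1$. Concretely, the growth-rate quantities $J_K^{\gamma_i}(u_i;r)$ are, by construction and the Caccioppoli bound, uniformly bounded and uniformly positive: the contradiction hypothesis forces $J_K^{\gamma_i}(u_i;K^{-2})\leq 2(1+\eps_i)J_K^{\gamma_i}(u_i;K^{-1})$, and if all three $J$'s degenerated to $0$ then $u_i\to 0$ in $L^2$ of a definite sub-annulus, contradicting a lower bound one extracts from the three-annulus structure. (This is the step I expect to require the most care; the cleanest route is probably to argue that $\liminf_i J_K^{\gamma_i}(u_i;K^{-1})>0$, using that the total $L^2$-norm is $1$ and the three weighted pieces over $\mbfA(K^{-2},K^{-1})$, $\mbfA(K^{-1},1)$, $\mbfA(K^{-3},K^{-2})$ comparably control the full norm up to constants depending only on $K,\Lambda$.) Granting $u_\infty\not\equiv 0$, the $L^2_{loc}$-convergence of $v_i$ to $u_\infty$ (combined with Proposition \ref{Prop_MSE_Section Transp}\ref{Item_MSE_SecTrans_Concl1} to compare norms on $\mbfC_i$ and $\mbfC_\infty$, and the convergence $\gamma_i\to\gamma_\infty$ which makes the weights $|x|^{-2\gamma_i-n}$ converge uniformly on the relevant annuli) gives $J_K^{\gamma_i}(u_i;r)\to J_K^{\gamma_\infty}(u_\infty;r)$ for $r\in\{K^{-2},K^{-1},1\}$. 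Taking the limit in the displayed bad inequality, and using $\eps_i\to 0$, yields
\[
J_K^{\gamma_\infty}(u_\infty;K^{-2}) - 2J_K^{\gamma_\infty}(u_\infty;K^{-1}) + J_K^{\gamma_\infty}(u_\infty;1)\leq 0,
\]
which by Lemma \ref{Lem_Ana on SMC_Growth Rate Monoton} forces $u_\infty\equiv 0$, a contradiction. This completes the proof.
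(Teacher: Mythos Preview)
Your approach is essentially the same compactness-and-contradiction argument as the paper's, but your choice of normalization creates unnecessary difficulty. The paper normalizes not by $\|u_j\|_{L^2(\mbfA(K^{-3},1))}=1$ but by setting $2c_j^2:=2(1+1/j)J_K^\gamma(u_j;K^{-1})$ and considering $\hat u_j:=c_j^{-1}u_j$. The point is that the \emph{middle} annulus $\mbfA(K^{-2},K^{-1})$ is compactly contained in $\mbfA(K^{-3},1)$, so strong $L^2$-convergence there immediately gives $J_K^\gamma(v_\infty;K^{-1})=1$ and hence $v_\infty\not\equiv 0$, with no extra work. With your normalization the nontriviality step you flag can indeed be completed (your sketch is right: the failed three-circle inequality forces $J_K^{\gamma_i}(u_i;K^{-1})\geq \frac{1}{3+2\eps_i}\sum_{j=0}^2 J_K^{\gamma_i}(u_i;K^{-j})\geq c(K,\Lambda)>0$), but the paper's normalization sidesteps the issue entirely.

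One small correction: you assert that $J_K^{\gamma_i}(u_i;r)\to J_K^{\gamma_\infty}(u_\infty;r)$ for all three radii, but $L^2_{loc}$-convergence only gives equality on the compactly contained middle annulus; for $r=1$ and $r=K^{-2}$ you only get $J_K^{\gamma_\infty}(u_\infty;r)\leq \limsup_i J_K^{\gamma_i}(u_i;r)$. This inequality (combined with equality in the middle) is exactly what is needed to pass the bad inequality to the limit, so the argument goes through, but the claim as written is too strong.
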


    \begin{remark} \label{Rem_Pre_Jac field equ on MH near cone} 
        Let $\Sigma := \graph_\mbfC(\phi)\cap \BB(2)$ be a minimal submanifold, under a metric $g$, parametrized by the cone $\mbfC\in \cC_{N, n}(\Lambda)$ (that, let us stress the point, is minimal in Euclidean $\R^N$ so under the metric $g_{\eucl}$), and assume
        \[
            \|g-g_{\eucl}\|_{C^{4}(\BB(2))} + \|\phi\|_{C^2_1(\mbfC\cap \BB(2))}  \leq \tilde\varepsilon\,,
        \]
        where $\tilde\varepsilon\leq \kappa_2$ and $\kappa_2$ has been defined in Proposition \ref{Prop_MSE_Section Transp}. 
        Then for any Jacobi field $u$ on $\Sigma$, the corresponding normal section $v$ over $\mbfC$ under this parametrization (by which we mean $v:=\mbfT^{\Sigma}_{\mbfC,g_{\eucl}}(u)$ in the sense of Definition \ref{def:Transfer}), solves an equation of form \eqref{Equ_Pre_Pert Jac equ} with 
        \begin{align}
            |b_0(x)| + |b_1(x)| \leq C(\Lambda)\,\tilde\varepsilon\cdot (|x|^{-1}|v(x)| + |\nabla^\perp v(x)|)\,.   \label{Equ_3Circle_Rem}            
        \end{align}
        To see this, for every $\check x\in \mbfC$, let $\check r:= \kappa_2\mfr_{\mbfC, g_\eucl}(\check x)/C$, and $\check \eta : x\mapsto \check x+ \check r x$, where $\kappa_2$ is the dimensional constant in Proposition \ref{Prop_MSE_Section Transp} \ref{Item_MSE_SecTrans_Concl3}, and $C=C(N)>2$ is to guarantee that $\eta^{-1}(\mbfC)$ is a $\kappa_2$-$C^4$ graph over the ball of radius $2$ in $T_{\check x}\mbfC$.
        Then, Proposition \ref{Prop_MSE_Section Transp} \ref{Item_MSE_SecTrans_Concl3} applies with $g_\eucl, \check g:= {\check r}^{-2}\check\eta^*g, \check\mbfC:= \check\eta^{-1}(\mbfC), \check\Sigma:=\check\eta^{-1}(\Sigma), \check v:= v\circ \check\eta, u\circ \check\eta$ in place of $g, g', \Sigma, \Sigma', u, u'$ therein, which gives \[
          -L_{\check\mbfC, g_\eucl} (\check v) + \nabla^\perp \cdot \check b_0 + \check b_1=0,
        \]
        where by \eqref{Equ_MSE_Error est of Jac Oper under Transp} there hold the estimates 
        \[
          |\check b_0(0)| + |\check b_1(0)| \leq C(\Lambda)\,\varepsilon \cdot(|\check v(0)| + |\nabla^\perp \check v(0)|) \,.
        \]
        Now \eqref{Equ_3Circle_Rem} at $\check x$ follows by scaling back the equation and setting 
        \[
        b_0 = \check b_0\circ \check \eta^{-1}\cdot \check r^{-1}, \ \ b_1(x) = |x|\check b_0\circ \check\eta^{-1} \cdot \check r^{-2}
        \]
        so that an equation of the form \eqref{Equ_Pre_Pert Jac equ} is satisfied. As a consequence of \eqref{Equ_3Circle_Rem}, given any $\sigma > 0$ and $\Lambda > 1$, for $\varepsilon(\sigma, \Lambda)$ determined in Corollary \ref{Cor_Ana on SMC_Growth Rate Monoton with Perturb}, we can choose $\tilde \varepsilon(\sigma, \Lambda)$ so small
     that the error estimate assumption \eqref{Equ_Pre_Small error in pert Jac equ} is satisfied with $v$ in place of $u$ therein. Lastly, we note that (without much additional effort)
a similar equation with analogous error estimates holds for $\mbfT^{\Sigma'}_{\Sigma,g}(w)$, where $w$ is a graphical section defining a minimal submanifold $\Sigma'$ close to $\Sigma$, under any metric $g'$ suitably close to $g$ (possibly, but not necessarily, $g$ itself), with error estimate \eqref{Equ_3Circle_Rem} replaced by 
        \begin{align*}
            |b_0(x)| + |b_1(x)| \leq C(\Lambda)\cdot (\tilde\varepsilon(|x|^{-1}|w(x)|+|\nabla^\perp w(x)|) + |x|\|g'-g\|_{C^2(\BB(2))})\,. 
        \end{align*}
    \end{remark}

    \begin{proof}
        Suppose, for a contradiction, that there exist sequences $\mbfC_j\in \cC_{N,n}(\Lambda)$, and $u_j\in W^{1, 2}_{loc}(\mbfA_j(K^{-3}, 1);\mbfV_j)\cap L^2(\mbfA_j(K^{-3}, 1);\mbfV_j)$ weak solutions to \eqref{Equ_Pre_Pert Jac equ}, where $b_0, b_1$ are replaced by $b_0^j,b_1^j$ satisfying \eqref{Equ_Pre_Small error in pert Jac equ} with $\varepsilon=1/j$ and the 3-circle inequality fails: \[
             J_K^\gamma(u_j; K^{-2}) + J_K^\gamma(u_j; 1) \leq 2(1+1/j) J_K^\gamma(u_j; K^{-1}) =: 2c_j^2>0.          
        \]
        (Note that here we have employed the usual notation $\mbfA_j(r,s)=\AAa(r,s)\cap \mbfC_j$, understood for all $j\geq 1$.)
        Consider then $\hat{u}_j:= c_j^{-1}u_j, b_i^j:= c_j^{-1} b_i^j$, where $i=0, 1$; of course \eqref{Equ_Pre_Pert Jac equ} and \eqref{Equ_Pre_Small error in pert Jac equ} are also satisfied with such replacements throughout. Thanks to Lemma \ref{Lem_cC_(N,n)(Lambda) bF Cpt} we can extract a subsequence of cones converging smoothly (at the level of spherical section, hence on the whole annulus of radii $K^{-3}/3$ and $3$) to a limit cone $\mbfC_{\infty}$, and thus for every $p\geq 1$, as $j\to \infty$,
    \[
        \gamma_p^\pm(\mbfC_j) \to \gamma^\pm_p(\mbfC_\infty).
    \]
        Now, for $j\geq 1$ large enough $\mbfA_j(K^{-3},1)$ is a subset of 
        $\graph_{\mbfC_{\infty}}(\phi_j)\cap \AAa( K^{-3}/2, 2)$
        and one may consider
        \[
        v_j := \mbfT^{\mbfC_j}_{\mbfC_{\infty},g_{\eucl}}(\hat u_j)
        \]
        for a suitable sequence $\left\{\phi_j\right\}_{j\geq j_0}$ satisfying $\|\phi_j\|_{C^2_1(K^{-3}/2,2)}\leq\tilde{\varepsilon}$ chosen -- based upon Remark \ref{Rem_Pre_Jac field equ on MH near cone} -- so that condition \eqref{Equ_Pre_Small error in pert Jac equ} is fulfilled.
        Then, since (by its definition)
        $\|\hat{u}_j\|_{L^2(\mbfA_j(K^{-3}, 1))}$ is uniformly bounded in $j\geq 1$,
        by classical elliptic theory (interior estimates in Hilbertian Sobolev spaces), up to extracting a subsequence (which we shall not rename), $v_j$ converges, strongly in $L^2_{loc}(\mbfA_{\infty}(K^{-3}, 1);\mbfV_{\infty})$ and weakly in $W^{1,2}_{loc}(\mbfA_{\infty}(K^{-3}, 1);\mbfV_{\infty})$ to some non-zero section $v_\infty$ in $W^{1,2}_{loc}(\mbfA_{\infty}(K^{-3}, 1);\mbfV_{\infty})$. In particular, note that for any $i=0,1,2$ there holds \[
            J^\gamma_K(v_\infty, K^{-i})\leq \limsup_{j\to \infty} J^\gamma_K(v_j, K^{-i}) =\limsup_{j\to \infty} J^\gamma_K(\hat{u}_j, K^{-i})\, 
        \] 
        with equality for $i=1$, that is for the ``intermediate annulus''. (Also, note that the non-triviality of the limit section $v_{\infty}$ follows from the $L^2$ convergence in such an annulus.) Hence, \[
             J_K^\gamma(v_\infty; K^{-2}) + J_K^\gamma(v_{\infty}; 1) \leq 2 J_K^\gamma(v_\infty; K^{-1}) .          
        \]
        Lastly, by \eqref{Equ_Pre_Small error in pert Jac equ} the dominated convergence theorem allows to conclude that $v_\infty$ weakly solves $L_\mbfC v_\infty = 0$ on $\mbfA(K^{-3}, 1)$. This contradicts Lemma \ref{Lem_Ana on SMC_Growth Rate Monoton}, thereby completing the proof.
    \end{proof}

    \begin{corollary} \label{Cor_App_Uniform Growth Est for Jac Equ}
        For $\sigma\in (0, 1)$, $\Lambda>0$, let $K = K(\sigma/2)>2$ be given by Lemma \ref{Lem_Ana on SMC_Growth Rate Monoton}.  Then there exists $\varepsilon_0(\sigma, \Lambda)>0$ small enough that the following property holds.
       
        Let $\gamma\in [-\Lambda, \Lambda]$, $\Sigma$ be an $\MSI$ in a Riemannian manifold $(M,g)$, $q\in \Sing(\Sigma)$; suppose that, after pulling back to $T_qM$ using the exponential map (thus replacing, with slight abuse of notation, $g$ by its pull-back $\exp^\ast_q g$), $\Sigma \cap B^g(q,4)\subset\graph_{\mbfC}(\phi)$ for some $\mbfC\in\cC_{N,n}(\Lambda)$  and
        \begin{align}
            \|g-g_\eucl\|_{C^{4}(\BB(4))}  < \varepsilon_0\,, & &
            \|\phi\|_{C^2_1(\mbfC\cap \BB(4))} < \varepsilon_0\label{Equ_App_Close to Cone w Reg Scal bd}.
        \end{align}
        Assume further
        \begin{align}
          \dist_\RR (\gamma, \Gamma(\mbfC)\cup \{-(n-2)/2\}) \geq \sigma\,.\label{Equ_App_Uniform away from asymp spectr}
        \end{align}
        Let $u\in W^{2,2}_{loc}(\Sigma;\mbfV)$, $v:=\mbfT^{\Sigma}_{\mbfC,g_{\eucl}}(u)$ and $f:=\mbfT^{\Sigma}_{\mbfC,g_{\eucl}}(L_{\Sigma,g}u) $ be satisfying
        \begin{align*}
            \cA\cR_q(u)>\gamma\,, & &
            F := \sup_{\ell\geq 0}\||x|^{2-\gamma -n/2} f\|_{L^2(\mbfA(K^{-\ell-1}, K^{-\ell}))} < +\infty\,.
        \end{align*}
        Then, we have for every $\ell\geq 0$, 
        \begin{align}
            \|v\|_{L^2(\mbfA(K^{-\ell-1}, K^{-\ell}))} \leq C( \Lambda, \sigma) \left(F + \|v\|_{L^2(\mbfA(K^{-1}, 1))} \right)\cdot K^{-\ell(n/2 + \gamma)}\,. \label{Equ_App_Weighted L^2 Bd by RHS and finite scale L^2}
        \end{align}
    \end{corollary}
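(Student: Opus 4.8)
The plan is to combine the three-circle inequality in its perturbed form (Corollary~\ref{Cor_Ana on SMC_Growth Rate Monoton with Perturb}) with an inhomogeneous (forced) version of the growth-rate monotonicity, and then to run a standard iteration over dyadic-type annuli $\mbfA(K^{-\ell-1}, K^{-\ell})$. First I would fix $K=K(\sigma/2)$ from Lemma~\ref{Lem_Ana on SMC_Growth Rate Monoton}, and choose $\varepsilon_0(\sigma,\Lambda)$ at least as small as the $\tilde\varepsilon(\sigma/2,\Lambda)$ afforded by Remark~\ref{Rem_Pre_Jac field equ on MH near cone}, so that the transferred section $v=\mbfT^{\Sigma}_{\mbfC,g_{\eucl}}(u)$ solves, on the relevant annuli, an equation of the form \eqref{Equ_Pre_Pert Jac equ} with error coefficients $b_0,b_1$ obeying \eqref{Equ_3Circle_Rem}, i.e.\ $|b_0|+|b_1|\le C(\Lambda)\varepsilon_0(|x|^{-1}|v|+|\nabla^\perp v|)$, plus the forcing term $f=\mbfT^{\Sigma}_{\mbfC,g_{\eucl}}(L_{\Sigma,g}u)$ on the right-hand side. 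The hypothesis $\cA\cR_q(u)>\gamma$ translates (via Proposition~\ref{Prop_MSE_Section Transp}\ref{Item_MSE_SecTrans_Concl1}, comparing weighted norms of $u$ and $v$) into $J^\gamma_K(v;K^{-\ell})\to 0$ as $\ell\to\infty$, which is the ``decay at the tip'' input that rules out the slow mode.

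Next I would establish the key inhomogeneous three-circle step: for each $\ell\ge 0$, applying Corollary~\ref{Cor_Ana on SMC_Growth Rate Monoton with Perturb} on the triple of annuli $\mbfA(K^{-\ell-3},K^{-\ell})$ (after rescaling by $K^{\ell}$ to unit scale so that the smallness hypotheses and the weighted-$L^2$ bound on the rescaled forcing term — which is exactly controlled by $F\cdot K^{-\ell(n/2+\gamma)}$ — hold uniformly in $\ell$), one gets
\begin{align*}
J^\gamma_K(v;K^{-\ell-2}) - 2(1+C\varepsilon_0)\,J^\gamma_K(v;K^{-\ell-1}) + J^\gamma_K(v;K^{-\ell}) \;\ge\; -\,C(\Lambda,\sigma)\,F^2\,K^{-2\ell(n/2+\gamma)}\,,
\end{align*}
where the right-hand side records the contribution of the forcing $f$ (here I am being slightly schematic about whether the inhomogeneity enters the three-circle inequality as an additive defect or through a shifted comparison sequence; the honest route is to absorb $f$ by writing $v = v_1 + v_2$ with $L_{\mbfC}v_2$ capturing $f$ and $\|v_2\|$ controlled by $F$ at each scale, then applying the homogeneous perturbed three-circle inequality to $v_1$). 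Setting $a_\ell := K^{2\ell(n/2+\gamma)} J^\gamma_K(v;K^{-\ell})$ (so that $a_\ell = \|v\|_{L^2(\mbfA(K^{-\ell-1},K^{-\ell}))}^2$ up to a harmless $K$-power), the inequality becomes a linear recursive inequality $a_{\ell+2} - 2(1+C\varepsilon_0)K^{2(n/2+\gamma)} a_{\ell+1} + K^{4(n/2+\gamma)} a_\ell \ge -C F^2$ with the boundary condition $a_\ell\to 0$. A standard comparison-sequence argument for such second-order recursions — choosing $\varepsilon_0$ small so that the characteristic roots stay suitably separated from the ``growing'' root, and using the decay $a_\ell\to 0$ to discard that root — then yields $a_\ell \le C(\Lambda,\sigma)(F^2 + a_0)$ for all $\ell\ge 0$, which is precisely \eqref{Equ_App_Weighted L^2 Bd by RHS and finite scale L^2} after taking square roots and relabelling $a_0 = \|v\|^2_{L^2(\mbfA(K^{-1},1))}$ (up to adjusting by one index, absorbed in the constant).

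Finally, to go from this weighted statement to the stated conclusion one just unwinds definitions: $J^\gamma_K(v;K^{-\ell}) = \int_{\mbfA(K^{-\ell-1},K^{-\ell})}|v|^2|x|^{-2\gamma-n}\,d\|\mbfC\|$ is comparable, on that annulus, to $K^{2\ell(n+2\gamma)}\|v\|^2_{L^2(\mbfA(K^{-\ell-1},K^{-\ell}))}$ with constants depending only on $K$ (hence on $\sigma$), so the recursion bound $a_\ell\le C(F^2+\|v\|^2_{L^2(\mbfA(K^{-1},1))})$ is exactly \eqref{Equ_App_Weighted L^2 Bd by RHS and finite scale L^2}. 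The main obstacle I anticipate is the careful bookkeeping in the inhomogeneous three-circle step: Corollary~\ref{Cor_Ana on SMC_Growth Rate Monoton with Perturb} is stated for \emph{solutions} of the perturbed homogeneous equation \eqref{Equ_Pre_Pert Jac equ}, so the forcing $f$ must be handled either by a splitting $v=v_1+v_2$ (with $v_2$ a fixed particular solution at each scale, whose weighted norm is $O(F\cdot K^{-\ell(n/2+\gamma)})$ by interior elliptic estimates on the annulus $\mbfC\in\cC_{N,n}(\Lambda)$) or by directly inserting $f$ into the Parseval computation underlying Lemma~\ref{Lem_Ana on SMC_Growth Rate Monoton} and tracking the resulting defect; in either case one must verify that the error does not destroy the sign of the leading three-circle term, which is why $\varepsilon_0$ must be chosen after (and smaller than) the $\varepsilon(\sigma,\Lambda)$ of Corollary~\ref{Cor_Ana on SMC_Growth Rate Monoton with Perturb}. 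A secondary technical point is ensuring the hypotheses \eqref{Equ_App_Close to Cone w Reg Scal bd} are \emph{scale-invariant} enough that they persist after blowing up by $K^{\ell}$ near $q$ — this is guaranteed by the uniform tangent-cone convergence at strongly isolated singularities (cf.\ Corollary~\ref{Cor_Converg in all Scales}), which lets one shrink the base radius so that $\|\exp_q^*g - g_{\eucl}\|_{C^4}$ and $\|\phi\|_{C^2_1}$ on $\mbfA(K^{-\ell-3},K^{-\ell})$, rescaled to unit size, stay below $\varepsilon_0$ for all $\ell$.
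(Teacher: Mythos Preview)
Your overall plan is sound, and the ingredients you identify (Remark~\ref{Rem_Pre_Jac field equ on MH near cone} for the transferred equation, the asymptotic-rate hypothesis giving $J^\gamma_K(v;K^{-\ell})\to 0$, and an iteration over the scales $K^{-\ell}$) are exactly the right ones. However, your route through an \emph{inhomogeneous} three-circle inequality and a second-order recursion is more elaborate than necessary, and the paper's argument is noticeably slicker.

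The key observation you are missing is that you never need to split $v=v_1+v_2$ or insert $f$ into the Parseval computation. Instead, note that the forcing contributes to the error as $|x||f(x)|$, so on any scale $\ell$ where the weighted $L^2$-norm of $v$ dominates $F$, the full error $|b_0|+|b_1|$ satisfies the smallness hypothesis \eqref{Equ_Pre_Small error in pert Jac equ} of the \emph{homogeneous} perturbed three-circle Corollary~\ref{Cor_Ana on SMC_Growth Rate Monoton with Perturb}. The paper exploits this by showing directly that
\[
  J_\ell \;:=\; \max\bigl\{\,J^\gamma_K(v;K^{-\ell}),\; C_0^2 F^2\,\bigr\}
\]
is non-increasing in $\ell\ge 0$ for $C_0(\Lambda,\sigma)$ large enough. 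Indeed, if $J_{\ell_0+1}>J_{\ell_0}$ then in particular $J^\gamma_K(v;K^{-\ell_0-1})>C_0^2 F^2$, which (after rescaling by $K^{\ell_0}$) makes the $f$-contribution small enough to invoke Corollary~\ref{Cor_Ana on SMC_Growth Rate Monoton with Perturb} directly; combined with $J^\gamma_K(v;K^{-\ell_0-1})\ge J^\gamma_K(v;K^{-\ell_0})$ this forces $J^\gamma_K(v;K^{-\ell_0-2})>J^\gamma_K(v;K^{-\ell_0-1})$, and inductively $J_\ell$ is strictly increasing for $\ell\ge\ell_0$. This contradicts $J^\gamma_K(v;K^{-\ell})\to 0$ (from $\cA\cR_q(u)>\gamma$). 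The monotonicity of $J_\ell$ is exactly \eqref{Equ_App_Weighted L^2 Bd by RHS and finite scale L^2}.

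Two smaller points. First, your normalization $a_\ell:=K^{2\ell(n/2+\gamma)}J^\gamma_K(v;K^{-\ell})$ has the wrong sign in the exponent: since $|x|^{-2\gamma-n}\sim K^{\ell(2\gamma+n)}$ on $\mbfA(K^{-\ell-1},K^{-\ell})$, one has $J^\gamma_K(v;K^{-\ell})\sim K^{\ell(2\gamma+n)}\|v\|^2_{L^2(\mbfA(K^{-\ell-1},K^{-\ell}))}$ already, so the quantity to bound uniformly is $J^\gamma_K(v;K^{-\ell})$ itself, not a further rescaling of it. Second, the scale-invariance concern you raise at the end is not really an issue: the $C^2_1$-norm on $\phi$ is by construction scale-invariant, and the metric closeness $\|g-g_\eucl\|_{C^4}$ only improves under dilation toward the origin, so \eqref{Equ_App_Close to Cone w Reg Scal bd} persists at all scales $K^{-\ell}$ automatically without appealing to Corollary~\ref{Cor_Converg in all Scales}.
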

    \begin{proof}
      Firstly, by taking $\varepsilon_0$ small enough, $v$ satisfies (\ref{Equ_Pre_Pert Jac equ}) and 
        \begin{align*}
            |b_0|(x) + |b_1|(x) \leq \eps \left(|x|^{-1}|v|(x) + |\nabla^\perp v|(x) \right) + |x||f(x)|
        \end{align*}
        where $\varepsilon$ is again as in the statement of Corollary \ref{Cor_Ana on SMC_Growth Rate Monoton with Perturb}.
        Hence, for every $\ell\in \N$, $v_\ell(x):= v(K^{-\ell}x)$ also solves an equation of the form (\ref{Equ_Pre_Pert Jac equ}) with $b_{i, \ell}$ in place of $b_i$, $i=0, 1$, satisfying
        \begin{align}
            |b_{0, \ell}|(x) + |b_{1, \ell}|(x) \leq \eps \left(|x|^{-1}|v_\ell|(x) + |\nabla^\perp v_\ell|(x) \right) + K^{-2\ell}|x||f_\ell(x)|,  \label{Equ_App_In Pf_Error est in Pert Jac equ}
        \end{align}
        on $\mbfA(\orig, K^{-3}, 1)$, where $f_\ell(x):= f(K^{-\ell}x)$. 
        
        Clearly, to prove (\ref{Equ_App_Weighted L^2 Bd by RHS and finite scale L^2}), it suffices to show that there exists $C_0( \Lambda, \sigma)$ so large that
        \begin{align}
            \ell\mapsto J_\ell:=\max\{K^{2\gamma \ell}J^{\gamma}_K(v_\ell; 1), C_0^2F^2\} \label{Equ_App_Tilde{J}_l monotone}
        \end{align}
        is monotone non-increasing in $\ell\geq 0$. To show this, suppose for contradiction that for some $\ell_0\geq 0$, $J_{\ell_0+1}> J_{\ell_0}$, then 
        \begin{align*}
            & F \leq C_0^{-1}C(\Lambda,\sigma)\|v_{\ell_0}\|_{L^2(\mbfA(K^{-2}, K^{-1}))}K^{\gamma \ell_0}\,, \\
            & J^{\gamma}_K(v_{\ell_0}, K^{-1}) = K^{2\gamma}J^{\gamma}_K(v_{\ell_0+1}, 1)\geq J^{\gamma}_K(v_{\ell_0}, 1)\,. 
        \end{align*}
        Combining the first inequality and (\ref{Equ_App_In Pf_Error est in Pert Jac equ}), we can take $C_0(\Lambda, \sigma)$ so big that (\ref{Equ_Pre_Small error in pert Jac equ}) holds for $b_{i, \ell_0}$ in place of $b_i$ and $v_{\ell_0}$ in place of $u$. Hence by Corollary \ref{Cor_Ana on SMC_Growth Rate Monoton with Perturb} and the second of such inequalities, 
        \begin{align*}
           J_{\ell_0+2}\geq K^{2(\ell_0+2)\gamma}J^{\gamma}_K(v_{\ell_0+2}, 1) 
           & = K^{2\ell_0\gamma}J^{\gamma}_K(v_{\ell_0}, K^{-2}) \\
           & > K^{2\ell_0\gamma}J^{\gamma}_K(u_{\ell_0}, K^{-1})
             = J_{\ell_0+1}\,.
        \end{align*}
        Inductively, we thus see $J_\ell$ would be strictly increasing in $\ell\geq \ell_0$ (and thus strictly greater than $C_0^2F^2$). But on the other hand, by definition of $\cA\cR_q(u)$, since \[
          K^{2\gamma \ell}J^\gamma_K(v_\ell, 1) = J^\gamma_K(v, K^{-\ell}) \to 0\,, \quad \text{ as }\ell\to \infty\,
        \] 
        it follows
        that when $\ell$ is large enough, $J_\ell = C_0^2F^2$ unless $F=0$. So if $F>0$ then we directly violate the aforementioned strict monotonicity, otherwise if $F=0$ we get a contradiction from the coexistence of such strict monotonicity with the fact that $J_\ell \to 0$ as $\ell\to\infty$. 
    \end{proof}

   We conclude this appendix with an important ``non-concentration result'' for Jacobi fields. 

    \begin{corollary} \label{Cor_App_Uniform Noncon for Jac fields}
        Let $\sigma, \Lambda, K, \eps_0, \Sigma,$ and $q$ be the same as Corollary \ref{Cor_App_Uniform Growth Est for Jac Equ}. Then there exists $\eps_1(\Lambda,\sigma)\in (0, \eps_0)$ with the following property.
        Suppose (\ref{Equ_App_Close to Cone w Reg Scal bd}) holds for $\eps_1$ in place of $\eps_0$, $\gamma = 1-\sigma$ satisfies (\ref{Equ_App_Uniform away from asymp spectr}) and $u$ is a tame Jacobi field on $\Sigma$, then there exists $\mbfx\in T_qM$ such that 
        \begin{align}
            |\mbfx| + \sup_{\ell\geq 0}\|u - \mbfx^\perp\|_{L^2_{1-\sigma}(A^g(q, K^{-\ell-1}, K^{-\ell})\cap\Sigma)} \leq C(\Lambda, \sigma)\|u\|_{L^2(A^g(q, \varepsilon_1, 2)\cap\Sigma)}\,.  \label{Equ_App_Noncon Est of Slower Growth Jac} 
        \end{align}
    \end{corollary}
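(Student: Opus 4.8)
\textbf{Proof plan for Corollary \ref{Cor_App_Uniform Noncon for Jac fields}.}
The strategy is to combine the growth estimate of Corollary \ref{Cor_App_Uniform Growth Est for Jac Equ}, applied after subtracting off the best approximating translation-like section, with the ancillary computation of Lemma \ref{lem:JacobiTranslFunct}, which controls $L_{\Sigma,g}\mbfx^\perp$. Since $u$ is a tame Jacobi field, Corollary \ref{cor:AR_Jacobi} tells us that at $q$ there exists $\mbfx\in T_qM$ with $\cA\cR_q(u-\mbfx^\perp)\geq 1>1-\sigma=\gamma$; this is the vector $\mbfx$ we will use, and the first task is to show that $|\mbfx|$ is controlled by $\|u\|_{L^2(A^g(q,\eps_1,2)\cap\Sigma)}$. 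For this I would work in the blow-up picture: pulling back by the exponential map at $q$ and passing to the tangent cone $\mbfC=\mbfC_q(\Sigma)\in\cC_{N,n}(\Lambda)$ (using the uniqueness of the tangent cone at a strongly isolated singularity and the closeness hypotheses (\ref{Equ_App_Close to Cone w Reg Scal bd})), the section $u-\mbfx^\perp$ transfers to a section over $\mbfC$ that solves a perturbed Jacobi equation with small error, and $\mbfx^\perp$ itself satisfies $\|L_{\mbfC}\mbfx^\perp\|\lesssim |\mbfx|$ on each dyadic annulus by (the cone version of) Lemma \ref{lem:JacobiTranslFunct}. The bound $|\mbfx|\leq C(\Lambda)\|\mbfx^\perp\|_{L^2(\mbfA(1,2))}$ from Lemma \ref{lem:L2perp} is what converts an $L^2$ control on $\mbfx^\perp$ into a control on the vector $\mbfx$; combined with the triangle inequality $\|\mbfx^\perp\|_{L^2(\mbfA(1,2))}\leq \|u\|_{L^2}+\|u-\mbfx^\perp\|_{L^2}$ one reduces everything to estimating $u-\mbfx^\perp$.

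Granting the control on $|\mbfx|$, the heart of the argument is to apply Corollary \ref{Cor_App_Uniform Growth Est for Jac Equ} to the section $w:=u-\mbfx^\perp$ with $\gamma=1-\sigma$. We have $\cA\cR_q(w)\geq 1>\gamma$ as needed, and $L_{\Sigma,g}w = L_{\Sigma,g}u - L_{\Sigma,g}\mbfx^\perp = -L_{\Sigma,g}\mbfx^\perp$ since $u$ is a Jacobi field. By Lemma \ref{lem:JacobiTranslFunct} (in the uniform form of Remark \ref{rem:LinftyCanNeighb}) we have $\|L_{\Sigma,g}\mbfx^\perp\|_{L^\infty}\leq C(\Lambda)|\mbfx|$ near $q$, with in fact the sharper bound $|L_{\Sigma,g}\mbfx^\perp|(x)\lesssim|\mbfx|\,\rho(x)$, so that the quantity $F=\sup_\ell\||x|^{2-\gamma-n/2}f\|_{L^2(\mbfA(K^{-\ell-1},K^{-\ell}))}$ appearing in that corollary is finite and bounded by $C(\Lambda,\sigma)|\mbfx|$ (here one checks that $2-\gamma=1+\sigma>0$, so the weight is integrable against the measure on each dyadic annulus, uniformly in $\ell$). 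Corollary \ref{Cor_App_Uniform Growth Est for Jac Equ} then yields, for every $\ell\geq 0$,
\[
\|w\|_{L^2(\mbfA(K^{-\ell-1},K^{-\ell}))} \leq C(\Lambda,\sigma)\bigl(|\mbfx| + \|w\|_{L^2(\mbfA(K^{-1},1))}\bigr)K^{-\ell(n/2+1-\sigma)}.
\]
Summing this geometric decay over $\ell$ (noting $n/2+1-\sigma>0$) gives the finiteness of $\|w\|_{L^2_{1-\sigma}}$ on each dyadic annulus with a uniform bound, which is exactly the $\sup_\ell$-estimate in (\ref{Equ_App_Noncon Est of Slower Growth Jac}); finally one re-expresses the right-hand side in terms of $\|u\|_{L^2(A^g(q,\eps_1,2)\cap\Sigma)}$ by absorbing $\|w\|_{L^2(\mbfA(K^{-1},1))}\leq \|u\|_{L^2}+\|\mbfx^\perp\|_{L^2}\lesssim \|u\|_{L^2}+|\mbfx|$ and $|\mbfx|\lesssim\|u\|_{L^2}$, and transferring between the cone picture and $\Sigma$ using Proposition \ref{Prop_MSE_Section Transp} \ref{Item_MSE_SecTrans_Concl1} (which costs only a factor $1+C\kappa$, harmless for $\eps_1$ small).

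The main obstacle, and the point requiring the most care, is the self-referential nature of the bound on $|\mbfx|$: the estimate (\ref{Equ_App_Noncon Est of Slower Growth Jac}) asks to bound $|\mbfx|$ by the $L^2$-norm of $u$ on a \emph{fixed} annulus $A^g(q,\eps_1,2)$, yet $\mbfx$ is defined through the asymptotic behavior of $u$ at the \emph{tip} $q$. The way around this is to run Corollary \ref{Cor_App_Uniform Growth Est for Jac Equ} first without a priori knowing $|\mbfx|$, treating $|\mbfx|$ as an unknown on both sides: one obtains $\|w\|_{L^2(\mbfA(K^{-1},1))}\leq C(|\mbfx|+\|w\|_{L^2(\mbfA(K^{-1},1))})$ type inequalities together with $|\mbfx|\leq C(\|u\|_{L^2(\mbfA(K^{-1},1))}+\|w\|_{L^2(\mbfA(K^{-1},1))})$, and then one must close the loop by absorption, choosing $\eps_1$ small enough that the perturbative constants in Corollary \ref{Cor_Ana on SMC_Growth Rate Monoton with Perturb} (hence in Corollary \ref{Cor_App_Uniform Growth Est for Jac Equ}) make the absorption legitimate. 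Concretely, one argues by contradiction/compactness along a sequence $\eps_1\to 0$: if (\ref{Equ_App_Noncon Est of Slower Growth Jac}) failed with a uniform constant, rescaling $u$ to have unit norm on the annulus $A^g(q,\eps_1,2)$ and passing to the limit (using Lemma \ref{Lem_cC_(N,n)(Lambda) bF Cpt} for the cones and interior elliptic estimates for the sections, exactly as in the proof of Corollary \ref{Cor_Ana on SMC_Growth Rate Monoton with Perturb}) one would produce a nonzero Jacobi field on the limit cone $\mbfC_\infty$ whose associated $\mbfx$ blows up relative to its norm, contradicting the finiteness of $\cA\cR$ and the structure of homogeneous Jacobi fields (\ref{Equ_Pre_Decomp Jac into homogen Jac field}). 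This is the only genuinely delicate step; the rest is a routine combination of the cited lemmata.
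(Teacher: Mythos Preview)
Your overall strategy matches the paper's: obtain $\mbfx$ via Corollary \ref{cor:AR_Jacobi}, apply Corollary \ref{Cor_App_Uniform Growth Est for Jac Equ} to $w=u-\mbfx^\perp$ with source controlled by Lemma \ref{lem:JacobiTranslFunct}, and then bound $|\mbfx|$. The difference lies in how you close the $|\mbfx|$ loop. You propose a compactness/contradiction argument; the paper instead does it directly, and more simply, by evaluating Lemma \ref{lem:L2perp} at a \emph{small} scale $s$ rather than at scale $1$. Concretely, for any $s\in(0,1/2)$ one writes
\[
|\mbfx|\leq C(\Lambda)s^{-n/2}\|\mbfx^\perp\|_{L^2(A^g(q,s,2s)\cap\Sigma)}\leq C(\Lambda)s^{-n/2}\bigl(\|w\|_{L^2(A^g(q,s,2s)\cap\Sigma)}+\|u\|_{L^2(A^g(q,s,2s)\cap\Sigma)}\bigr),
\]
and then feeds the growth estimate \eqref{Equ_App_Weighted L^2 Bd by RHS and finite scale L^2} into the first term to get
\[
|\mbfx|\leq C(\Lambda,\sigma)s^{1-\sigma}\bigl(|\mbfx|+\|u\|_{L^2(A^g(q,K^{-1},1)\cap\Sigma)}\bigr)+C(\Lambda)s^{-n/2}\|u\|_{L^2(A^g(q,s,2s)\cap\Sigma)}.
\]
The crucial gain $s^{1-\sigma}$ (from $s^{-n/2}\cdot s^{n/2+1-\sigma}$) lets one choose $s=s(\Lambda,\sigma)$ so that $C(\Lambda,\sigma)s^{1-\sigma}<1/2$, absorb the $|\mbfx|$ on the right, and then set $\eps_1\leq s$. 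This avoids compactness entirely and makes the dependence of $\eps_1$ on $(\Lambda,\sigma)$ explicit. Your compactness sketch can be made to work (the limit would be a nonzero degree-$0$ Jacobi field $\hat\mbfx^\perp$ on the cone satisfying the $(1-\sigma)$-decay, impossible), but the paper's direct absorption is both shorter and avoids the need to justify passage of the growth estimate to the limit.
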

    \begin{proof}
     
        By definition of tame Jacobi field and Corollary \ref{cor:AR_Jacobi} there exists $\mbfx\in T_qM$ such that $\cA\cR_q(u-\mbfx^\perp)\geq 1$; hence 
        if we apply Corollary \ref{Cor_App_Uniform Growth Est for Jac Equ} to $\tilde{u}:= u-\mbfx^\perp$ (also keeping in mind item \ref{Item_MSE_SecTrans_Concl1}  of Proposition \ref{Prop_MSE_Section Transp} to transfer $L^2$ bounds from $\mbfC$ to $\Sigma$, as well as the content of Lemma \ref{lem:JacobiTranslFunct} and Remark \ref{rem:LinftyCanNeighb}), we find for every $\ell\geq 0$ 
        \begin{align}
            \|u-\mbfx^\perp\|_{L^2(A^g(q,K^{-\ell-1}, K^{-\ell})\cap\Sigma)} \leq C( \Lambda, \sigma)(|\mbfx| + \|u-\mbfx^\perp\|_{L^2(A^g(q,K^{-1}, 1)\cap\Sigma)})\cdot K^{-\ell(n/2+1-\sigma)}  \label{Equ_App_In Pf_Growth Est for u-x^perp}          
        \end{align}

        Hence, for every $s\in (0, 1/2)$, by virtue of Lemma \ref{lem:L2perp} (that is: equation \eqref{eq:L2perp}, suitably scaled), the Minkowski inequality (employed twice) and \eqref{Equ_App_In Pf_Growth Est for u-x^perp} there holds      
        \begin{align*}
          |\mbfx| & \leq C(\Lambda)s^{-n/2}\|\mbfx^\perp\|_{L^2(A^g(q,s, 2s)\cap\Sigma)} \\
          & \leq C(\Lambda)s^{-n/2}(\|u-\mbfx^\perp\|_{L^2(A^g(q,s, 2s)\cap\Sigma)} + \|u\|_{L^2(A^g(q,s, 2s)\cap\Sigma)}) \\
          & \leq C( \Lambda, \sigma)s^{1-\sigma}(\|u-\mbfx^\perp\|_{L^2(A^g(q,K^{-1}, 1)\cap\Sigma)} +|\mbfx|) + C(\Lambda)s^{-n/2}\|u\|_{L^2(A^g(q,s, 2s)\cap\Sigma)} \\
          & \leq C( \Lambda, \sigma)s^{1-\sigma}(\|u\|_{L^2(A^g(q,K^{-1}, 1)\cap\Sigma)} + |\mbfx|) + C(\Lambda)s^{-n/2}\|u\|_{L^2(A^g(q,s, 2s)\cap\Sigma)}.
        \end{align*}
        Since $\sigma\in (0, 1)$, we can take $s(\Lambda, \sigma)<K^{-1}$ such that $C(\Lambda, \sigma)s^{1-\sigma}<1/2$ (in order to allow for absorption on the left-hand side)
        and then $\varepsilon_1\leq s$ so that we eventually derive 
        \begin{align}
          |\mbfx| \leq C( \Lambda, \sigma)\|u\|_{L^2(A^g(\varepsilon_1, 1)\cap\Sigma)}\,. \label{Equ_App_In Pf_|x|<|u|_(L^2)}             \end{align}

Combining this with \eqref{Equ_App_In Pf_Growth Est for u-x^perp}  finishes the proof.
        \end{proof}

\section{Quantitative uniqueness of tangent cones in all (co-)dimensions}\label{sec:QuantUniq}
    
    In \cite[Theorem~6.3]{Edelen2021}, Edelen proved a quantitative version of the uniqueness of tangent cones for minimal hypersurfaces in eight-dimensional manifolds by adapting Simon's argument \cite{Sim83}, based on the \oldL{}ojasiewicz inequality. That same argument can in fact be extended to general dimension and codimension, and leads to the following statement. 
        
    \begin{proposition} \label{Pro_Ana on SMC_Quanti Uniqueness of Tang Cone}
         Given $0<\eps<1$ and $\Lambda>0$, there exists $\delta = \delta(\eps, \Lambda)>0$ with the following property.
            
        Let $\mbfC \in \cC_{N, n}(\Lambda)$ be an $n$-dimensional minimal cone in $\RR^N$ with normal bundle $\mbfV$, $g$ be a $C^4$ metric on $\BB(4)$, $V\in \cI_n(\BB(4))$ be a $g$-stationary integral varifold, and let $r\in (0, 1/16)$. Suppose: 
        \begin{enumerate}[label=(\roman*)]
            \item\label{item:MetricQuasiEucl} $\|g-g_{\eucl}\|_{C^4(\BB(4))}\leq \delta$;
            \item \label{item:DensityQuasiConst}$\theta_{V}(\mathbf{0}, 2)\leq \theta_{V}(\mathbf{0}, r) + \delta$;
            \item\label{item:GraphAnnuli} there exists $s\in (2r, 1/2)$ so that $V$ is a $\delta$-$C^2$ graph over $\mbfC$ in $A^g(\orig, s, 2s)$.
        \end{enumerate}
        Then, $V$ is an $\eps$-$C^2$ graph over $\mbfC$ in $A^g(\orig, 2r, 1)$.
    \end{proposition}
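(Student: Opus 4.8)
The plan is to follow Simon's argument from \cite{Sim83}, in the quantitative form used by Edelen \cite[Theorem~6.3]{Edelen2021}, adapting it to arbitrary codimension via the transfer-of-sections machinery of Appendix~\ref{app:MSE}. First I would argue by contradiction: suppose there exist $\eps>0$, $\Lambda>0$, a sequence $\delta_j\searrow 0$, cones $\mbfC_j\in\cC_{N,n}(\Lambda)$, metrics $g_j$ with $\|g_j-g_\eucl\|_{C^4(\BB(4))}\leq\delta_j$, radii $r_j\in(0,1/16)$, and $g_j$-stationary integral varifolds $V_j\in\cI_n(\BB(4))$ satisfying \ref{item:DensityQuasiConst} and \ref{item:GraphAnnuli} (with some $s_j\in(2r_j,1/2)$) with $\delta_j$ in place of $\delta$, yet such that $V_j$ is \emph{not} an $\eps$-$C^2$ graph over $\mbfC_j$ in $A^{g_j}(\orig,2r_j,1)$. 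By Lemma~\ref{Lem_cC_(N,n)(Lambda) bF Cpt} we may pass to a subsequence so that $\mbfC_j\to\mbfC_\infty$ in $\cC_{N,n}(\Lambda)$ (with multiplicity one, smoothly along the link), and $g_j\to g_\eucl$ in $C^4_{loc}$.

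The core of the argument is a dyadic decay estimate. For each scale $\rho\in[2r_j,1/2]$ on which $V_j$ is already known to be a graph over $\mbfC_j$ with small $C^2$-norm in, say, $A^{g_j}(\orig,\rho,2\rho)$, one writes the graphical section $u_{j,\rho}$ over the rescaled cone and uses the minimal surface system (Proposition~\ref{Prop_MSE_Main}, specifically the form \eqref{Equ_MSE_Form for reg}, and the transfer estimates of Proposition~\ref{Prop_MSE_Section Transp}) together with the three-circle/growth-rate machinery of Appendix~\ref{sec:3circles} --- in particular Corollary~\ref{Cor_App_Uniform Growth Est for Jac Equ} applied with a weight $\gamma$ chosen just below $1$ and bounded away from $\Gamma(\mbfC_\infty)\cup\{-(n-2)/2\}$ --- to propagate the graphical representation to the next smaller scale while controlling the increment of the section. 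The \oldL{}ojasiewicz--Simon inequality (as in \cite[Theorem~3]{Sim83}, cf. the use in Lemma~\ref{Lem_cC_(N,n)(Lambda) finitely many density}) enters to control the \emph{change of the tangent cone along the dyadic chain}: the near-constancy of the density ratio (hypothesis \ref{item:DensityQuasiConst}) forces, via the monotonicity formula and the Łojasiewicz estimate, a summable bound $\sum_k \|u_{j,2^{-k}s_j}\|$, which both keeps the $C^2$-smallness intact at all scales down to $2r_j$ and shows the graphical sections do not drift away from (a fixed rotate of) $\mbfC_j$. Allard's regularity theorem \cite{All72} is what converts the varifold closeness at a given scale (obtained from the density pinching plus the graphical hypothesis at the top scale) into a genuine $C^{1,\alpha}$, hence by Schauder $C^2$, graph; the smallness $\delta_j\to0$ of the metric perturbation and of the graph norm at scale $s_j$ provides the base case of the induction.

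Passing $j\to\infty$: the contradiction hypothesis gives a scale $\rho_j\in[2r_j,1]$ at which the $\eps$-$C^2$-graph condition first fails; after rescaling by $\rho_j$ and translating, the pinching \ref{item:DensityQuasiConst} forces the blow-up limit to be a cone, which by the graphical hypothesis \ref{item:GraphAnnuli} at scale $s_j$ (rescaled) and unique-continuation-type rigidity must equal $\mbfC_\infty$, so the rescaled $V_j$ converge to $\mbfC_\infty$ in $C^2_{loc}$ on an annulus --- contradicting that the $C^2$-norm of the graph is $\geq\eps$ there. I expect the main obstacle to be the higher-codimension bookkeeping in the iteration: unlike the codimension-one case of \cite{Edelen2021}, one cannot simply pre-compose with a scalar graph map, but must repeatedly project onto the normal bundle of the (slowly rotating) reference cone and track how the Łojasiewicz inequality, the monotonicity identity, and the three-circle estimate transform under these projections and under the ambient metric perturbation; Proposition~\ref{Prop_MSE_Section Transp} and Corollary~\ref{Cor_Ana on SMC_Growth Rate Monoton with Perturb} are precisely designed to absorb the resulting error terms, and the delicate point is verifying that the accumulated errors over infinitely many dyadic scales remain summable with constants depending only on $\Lambda$ and the chosen weight $\sigma$.
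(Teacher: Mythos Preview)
Your proposal is in the right spirit --- contradiction, compactness of $\cC_{N,n}(\Lambda)$, and a \L ojasiewicz--Simon decay argument --- and this is indeed what the paper does. But the route you outline is more involved than necessary, and conflates two distinct tools in the paper.

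The paper does \emph{not} use the three-circle machinery of Appendix~\ref{sec:3circles} (Corollary~\ref{Cor_App_Uniform Growth Est for Jac Equ}) here at all. That machinery is for solutions of (perturbed) \emph{linear} Jacobi equations with an a priori asymptotic-rate hypothesis $\cA\cR_q(u)>\gamma$; for the graphical section of $V_j$ over $\mbfC_j$ one has no such rate a priori, and the nonlinear minimal surface system does not reduce to that setting without already knowing uniform smallness at all scales --- which is the very thing to be proved. Instead, the paper directly invokes \cite[Theorem~11.1]{Edelen2021}, which packages Simon's \L ojasiewicz--Simon decay estimate (already valid in arbitrary codimension, since \cite{Sim83} is) and applies it to the cylindrical reparametrization $v_j(t,\theta)=r^{-1}u_j(r\theta)$ on $\Sigma\times(T_j^-,T_j^+)$. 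No dyadic iteration, no transfer-of-sections bookkeeping from Appendix~\ref{app:MSE}, and no Appendix~\ref{sec:3circles} input are needed: Edelen's theorem gives $|v_j|_2^*(t)\leq C(\mbfC_\infty)\delta_j^\alpha$ directly.

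A second point you do not address: the graphical hypothesis is given only at the intermediate scale $s_j\in(2r_j,1/2)$, and one must propagate \emph{both} inward (toward $2r_j$) and outward (toward $1$). In Edelen's theorem the inward direction corresponds to the parameter $m=n>0$; the outward direction requires $m=-n<0$, and the paper observes that while \cite[Theorem~11.1]{Edelen2021} is stated for $m>0$, the only place this sign is used is in the stability inequality \cite[Lemma~1]{Sim83}, which for $m<0$ is replaced by the argument in \cite[4.26--4.32]{Sim85}. The contradiction then closes by showing that if the maximal graphical annulus $\AAa(\bar s_j^-,\bar s_j^+)$ does not exhaust $A^{g_j}(\orig,3r_j/2,3/2)$, the uniform $\delta_j'$-smallness just obtained plus Allard compactness and monotonicity rigidity at the endpoint scales lets one enlarge the annulus, contradicting maximality.
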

  
    \begin{proof}
        Suppose for contradiction, that there exist $\eps>0$, $\Lambda>0$ and a sequence of metrics $g_j$, stationary integral varifolds $V_j\in \cI_n(\BB(4))$, $r_j\in (0, 1/16)$, $s_j\in (2r_j, 1/2)$ and $\mbfC_j\in \cC_{N, n}(\Lambda)$ such that for every $j\geq 1$, properties \ref{item:MetricQuasiEucl}-\ref{item:DensityQuasiConst}-\ref{item:GraphAnnuli} hold for $g_j, V_j, \mbfC_j, r_j, s_j, 1/j$ in place of $g, V, \mbfC, r, s, \delta$, but $V_j$ is not an $\eps$-$C^2$ graph over $\mbfC_j$ in $A^{g_j}(\orig, 2r_j, 1)$. 
        
        By Lemma \ref{Lem_cC_(N,n)(Lambda) bF Cpt}, after passing to a subsequence, $\mbfC_j$ smoothly converges to some $\mbfC_\infty\in \cC_{N, n}(\Lambda)$ as $j\to \infty$; then there exists a sequence $\left\{\delta_j\right\}_{j\geq 1}$ with $\delta_j\searrow 0$ such that $V_j$ is a $\delta_j$-$C^2$ graph over $\mbfC_\infty$ in $A^{g_j}(\orig, s_j, 2s_j)$. Let $\bar{s}_j^- \leq s_j< \bar{s}_j^+$ be radii such that $\AAa(\bar{s}_j^-, \bar{s}_j^+)\subset A^{g_j}(\orig, r_j, 2)$ is the largest annulus centered at $\orig$ in which $V_j$ is an $\eps'$-$C^2$ graph over $\mbfC_\infty$, where $\eps'=\eps'(\mbfC_\infty)<\eps$ is to be determined at a later stage along the course of the argument (independently of $j$).

        Our goal is to show that for infinitely many indices $j\geq 1$ there hold in fact the inequalities
        \begin{align}
            \bar{s}_j^- \leq 3r_j/2\,, \qquad \bar{s}_j^+ \geq 3/2\,. \label{Equ_App_UniqTan_s_j^pm attains max}
        \end{align}
        Note that since $\mbfC_j\to \mbfC_\infty$ smoothly away from $\orig$, this immediately implies that for all such indices (possibly with finitely many initial exceptions) $V_j$ is an $\eps$-$C^2$ graph over $\mbfC_j$ in $A^g(\orig, 2r_j, 1)$, which is a contradiction.

        To prove \eqref{Equ_App_UniqTan_s_j^pm attains max}, we shall apply \cite[Theorem 11.1]{Edelen2021}. Recalling the notation for the scaling map $\eta_\lambda: x\mapsto \lambda x$, we first observe that when $j\to \infty$, $s_j^{-2}\eta_{s_j}^*g_j$ $C^4$-converges to $g_\eucl$ and by Allard's Compactness Theorem, after passing to a subsequence, $V_j':= (\eta_{s_j}^{-1})_\# V_j$ $\mbfF$-converges to some $g_\eucl$-stationary integral varifold $V_\infty'$ in $\AAa(\hat{r}_-, \hat{r}^+)$, where \[
            \hat{r}^-:= \lim_{j\to \infty} r_j/s_j\,, \qquad 
            \hat{r}^+:= \lim_{j\to \infty} 2/s_j\,
        \] 
        so that $r\mapsto\theta_{V_\infty'}(\orig, r)$ is constant in $r\in (\hat{r}^-, \hat{r}^+)$, and on the other hand $V_\infty'$ coincides with the (multiplicity one) varifold associated to $\mbfC_\infty$ in $\AAa(1,2)$. 
        Note, in particular, that we allow $\hat{r}^{\pm}$ being $0$ or $+\infty$.

        Hence, by the rigidity case of monotonicity formula, $V_\infty'$ is a regular cone with multiplicity one (namely $\mbfC_{\infty}$) in $\AAa(\hat r^-, \hat r^+)$; then by the Allard Regularity Theorem, $V_j'$ converges smoothly, and with unit multiplicity, to $\mbfC_\infty$ in that same annulus. This implies \[
            \lim_{j\to\infty} s_j^\pm = \hat{r}^\pm \,,
        \]
        where $s_j^\pm:= \bar{s}_j^\pm/s_j$, for either consistent choice of signs. It follows that, if $\hat r^- >0$, then $\bar{s}_j^-\leq 3r_j/2$ for all sufficiently large $j$; and, analogously, if $\hat r^+ <+\infty$, then $\bar{s}_j^+\geq 3/2$ for all sufficiently large $j$. To prove (\ref{Equ_App_UniqTan_s_j^pm attains max}), we are left to deal with the case when either $\hat{r}^- = 0$ or $\hat{r}^+ = +\infty$.
        
        Let $\Sigma:= \mbfC_\infty\cap \SSp^{N-1}$, and let here (with slight abuse of notation) $\nabla^\perp$ denote the connection on the normal bundle $T^\perp \Sigma$ in $\SSp^{N-1}$; let $u_j: \mbfA(s_j^-, s_j^+) \to T^\perp\mbfC_\infty$ be the graphical section defining $V_j'$ over $\mbfC_\infty$ in $\AAa( s_j^-, s_j^+)$. Furthermore, set $t:= -\log r$, $T_j^\pm := -\log(s_j^\pm)$, $v_j(t, \theta):= r^{-1}u_j(r\theta)$ defined on $\Sigma\times (T_j^-, T_j^+)$. 
        Note that for all sufficiently large $j$
        we have \[
         |v_j|_2^*(t):= \sum_{i+j\leq 2, i, j\geq 0} |\partial_t^i(\nabla^\perp)^jv_j(\cdot, t)|_{C^0(\Sigma)} \leq C(\mbfC_\infty)\delta_j\,, \qquad \forall\, t\in [0, \ln 2]\,,  
        \] 
        and by taking $\eps'(\mbfC_\infty)$ small enough (which is fixed here, as determined by the constant $\delta_0$ in \cite[Theorem 11.1]{Edelen2021}), following the same argument as \cite[pages 29 - 32]{Edelen2021}, the assumptions in \cite[Theorem 11.1]{Edelen2021} are satisfied by $v_j$ on $\Sigma\times [0, T_j^+)$ with $n$ in place of $m$, which --- in turn --- implies that for some $\alpha = \alpha(\mbfC_\infty)\in (0, 1/2)$, 
        \begin{align}
          |v_j|_2^*(t) \leq C(\mbfC_\infty)\delta_j^{\alpha}\,, \qquad \forall\, t\in (0, T_j^+)\,.  \label{Equ_App_UniqTan_|v_j|_2^*< delta_j}            
        \end{align}
        
        Also note that the assumption $m>0$ in \cite[Theorem 11.1]{Edelen2021} can be replaced by $|m|\neq 0$. In fact, in \cite{Edelen2021} and \cite{Sim83}, the only place where $m>0$ is used is in the proof of the stability inequality \cite[Lemma 1]{Sim83}, which lead to the $L^1$ estimate \cite[(6.34)]{Sim83} on the interval where the neutral mode dominates $\partial_t v$. This estimate can be still derived when $m<0$ following \cite[4.26-4.32]{Sim85}. 
        By considering $\check v_j(t, \theta):= v_j(\ln 2 -t, \theta)$ and repeating the argument above, the assumptions in \cite[Theorem 11.1]{Edelen2021} are satisfied by $\check v_j$ on $\Sigma\times [0, \ln 2 - T_j^-)$ with $-n$ in place of $m$, which again implies that for some $\alpha' = \alpha'(\mbfC_\infty)\in (0, 1/2)$ there holds 
        \begin{align}
            |\check v_j|_2^*(t) \leq C(\mbfC_\infty)\delta_j^{\alpha'}\,, \qquad \forall\, t\in (0, \ln 2- T_j^-) \,.  \label{Equ_App_UniqTan_|check v_j|_2^*< delta_j}            
        \end{align}

        Combining \eqref{Equ_App_UniqTan_|v_j|_2^*< delta_j} and \eqref{Equ_App_UniqTan_|check v_j|_2^*< delta_j}, we conclude that $V_j'$ is a $\delta_j'$-$C^2$ graph over $\mbfC_\infty$ in $\AAa(s_j^-, s_j^+)$ for some sequence $\delta'_j\to 0$. Therefore, again by the Allard's Compactness Theorem, the characterization of the rigidity case in the  monotonicity formula and Allard Regularity Theorem, for any infinite subset $I\subset \NN$, $V_j^\pm:= (\eta_{s_j^\pm}^{-1})_\# V_j' = (\eta_{\bar{s}_j^\pm}^{-1})_{\#}V_j$ will (in both cases) still $C^2_{loc}$-converge to $\mbfC_\infty$ as $I\ni j\to \infty$ in the annuli
        \[
            \AAa^\pm(I) := \bigcup_{k\geq 1}\bigcap_{j\in I, j\geq k} \eta_{\bar{s}_j^\pm}^{-1}(A^{g_j}(\orig, r_j, 2))\,. 
        \]
        
        If \eqref{Equ_App_UniqTan_s_j^pm attains max} fails for all sufficiently large $j$, then there exists an infinite subset $\bar I\subset \NN$ such that either $\AAa^-(\bar I)\supset \AAa(2/3, 2)$, or $\AAa^+(\bar I)\supset \AAa(1/2, 4/3)$.
        But by tracing back the scaling factor, in the former case, $V_j$ is an $\eps^-_j$-$C^2$ graph over $\mbfC_\infty$ in \[
          \AAa(\bar s_j^-, \bar s_j^+)\cup \AAa(3\bar s_j^-/4, 7s_j^-/4) \supset \AAa(3\bar s_j^-/4, \bar s_j^+) \,;
        \]   
        for $j\in \bar I$ and some $\eps^-_j\to 0$. (We wish to stress that $\varepsilon_j^-$-graphicality in the first interval on the left-hand side follows from the fact that $V_j'$ is $\delta_j'$-$C^2$ graph over $\mbfC_\infty$ in $\AAa(s_j^-, s_j^+)$, while for the second interval on left-hand side the claim follows from the $C^2_{loc}$-convergence of $V_j^-$ to $\mbfC_\infty$ in $\AAa(2/3, 2)\subset\AAa^-(\bar I)$.) 
        Similarly, $\AAa^+(\bar I)\supset \AAa(1/2, 4/3)$ implies that $V_j$ is an $\eps^+_j$-$C^2$ graph over $\mbfC_\infty$ in \[
          \AAa(\bar s_j^-, \bar s_j^+)\cup \AAa(3\bar s_j^+/4, 5s_j^+/4) \supset \AAa(\bar s_j^-, 5\bar s_j^+/4) \,;
        \]   
        for $j\in \bar I$ and $\eps^+_j\to 0$. 
        
     Thus, either way, when $j$ is large enough $\AAa(\bar{s}_j^-, \bar{s}_j^+)$ would not be the largest annulus in $A^{g_j}(\orig, r_j, 2)$ centered at $\orig$ in which $V_j$ is an $\eps'$-$C^2$ graph over $\mbfC_\infty$. This is a contradiction.
    \end{proof}

    This in particular implies the following uniform convergence result at all scales, which we will repeatedly employ in Appendix \ref{app:GraphicalPar}.
    
    \begin{corollary} \label{Cor_Converg in all Scales}
        Let $\Lambda>0$, $\{\Sigma_j\}_{j\geq 1}$ be a sequence of $n$-dimensional $\MSI$ in $(\BB(4), g_j)$ with  $\Sing(\Sigma_j)=\{\orig\}$ and $\mfr_{\Sigma_j, g_j} \geq \Lambda^{-1}\rho_j$ in $\BB(3)$, where $\rho_j$ denotes the distance to $\orig$ in metric $g_j$. Suppose that, as one lets $j\to \infty$, there holds
        \begin{enumerate}[label=(\roman*)]
            \item\label{enum_Pre_Metric Converg} $\|g_j - g_{\eucl}\|_{C^4(\BB(4))}\to 0$,
            \item\label{enum_Pre_Density Drop Converg} $\theta_{|\Sigma_j|}(\orig, 2) - \theta_{|\Sigma_j|}(\orig) \to 0$.
        \end{enumerate}
        Then there exists some $\mbfC\in \cC_{N, n}(\Lambda)$ such that after passing to a subsequence (which we do not rename), both $\Sigma_j$ and its tangent cone $\mbfC_j$ at $\orig$ are all $\eps_j$-$C^2$ graphs over $\mbfC$ in $\BB(1)$ for some sequence $\left\{\eps_j\right\}_{j\geq j_0}$ such that $\eps_j\to 0$ as one lets $j\to\infty$.
    \end{corollary}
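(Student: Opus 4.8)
The plan is to deduce Corollary~\ref{Cor_Converg in all Scales} from Proposition~\ref{Pro_Ana on SMC_Quanti Uniqueness of Tang Cone} together with the compactness results of Section~\ref{sub:Compact}, by a diagonal/bootstrap argument over dyadic scales. First I would extract a limiting cone: since each $\Sigma_j$ satisfies $\mfr_{\Sigma_j,g_j}\geq \Lambda^{-1}\rho_j$ on $\BB(3)$, the rescaled pieces $\Sigma_j\cap(\BB(3)\setminus \BB(\tfrac14))$ have uniformly bounded second fundamental form and area (cf.~Remark~\ref{rem:RegCurvBound}), so after passing to a subsequence $\Sigma_j$ converges in $C^2_{loc}(\BB(3)\setminus\{\orig\})$, with multiplicity one, to some stationary varifold $V_\infty$ in the Euclidean metric. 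By assumption~\ref{enum_Pre_Density Drop Converg} and upper semicontinuity of density, $\theta_{V_\infty}(\orig,2)=\theta_{V_\infty}(\orig)$, so the monotonicity formula forces $V_\infty$ to be a cone; the regularity scale bound passes to the limit, so $V_\infty=\mbfC$ for some $\mbfC\in\cC_{N,n}(\Lambda)$ (the non-triviality, i.e.\ that $\mbfC$ is not an $n$-plane, follows as in the proof of Lemma~\ref{Lem_cC_(N,n)(Lambda) bF Cpt} via Allard, since otherwise $\Sigma_j$ would be smooth at $\orig$ for large $j$). Likewise, the tangent cone $\mbfC_j$ at $\orig$ lies in $\cC_{N,n}(\Lambda)$ (as $\mfr_{\mbfC_j,g_\eucl}$ inherits the lower bound by taking a blow-up), so by Lemma~\ref{Lem_cC_(N,n)(Lambda) bF Cpt} a further subsequence gives $\mbfC_j\to\mbfC_\infty'\in\cC_{N,n}(\Lambda)$.

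Next I would fix, for each $j$, an annulus $A^{g_j}(\orig,s_j,2s_j)$ in which $\Sigma_j$ is a $\delta_j$-$C^2$ graph over $\mbfC$ with $\delta_j\to 0$: by the $C^2_{loc}$ convergence above one may simply take $s_j=\tfrac12$ (or any fixed radius in $(0,1)$), and graphicality with errors $\delta_j\to 0$ over $\mbfC$ in $A^{g_j}(\orig,\tfrac12,1)$ is exactly what the convergence gives. Then for any target $\eps>0$, choose $\delta=\delta(\eps,\Lambda)$ from Proposition~\ref{Pro_Ana on SMC_Quanti Uniqueness of Tang Cone}. For $j$ large enough we have $\|g_j-g_\eucl\|_{C^4(\BB(4))}\leq\delta$ (by~\ref{enum_Pre_Metric Converg}), $\theta_{|\Sigma_j|}(\orig,2)\leq\theta_{|\Sigma_j|}(\orig,r)+\delta$ for \emph{every} $r\in(0,1/16)$ (by~\ref{enum_Pre_Density Drop Converg}, since the density drop between scales $2$ and anything smaller is at most the total drop, which tends to $0$), and the graphical hypothesis~\ref{item:GraphAnnuli} with $s=\tfrac12$ holds with parameter $\delta_j\leq\delta$. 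Hence Proposition~\ref{Pro_Ana on SMC_Quanti Uniqueness of Tang Cone} applies with this $r$ arbitrarily small, yielding that $\Sigma_j$ is an $\eps$-$C^2$ graph over $\mbfC$ in $A^{g_j}(\orig,2r,1)$ for all $r\in(0,1/16)$; letting $r\searrow 0$ (the constant in the conclusion is uniform in $r$) this says $\Sigma_j$ is an $\eps$-$C^2$ graph over $\mbfC$ on the punctured ball $\BB(1)\setminus\{\orig\}$, and then on all of $\BB(1)$ since $\orig\in\Sing(\Sigma_j)$ corresponds to the cone point of $\mbfC$. The same statement applied with the varifold $V_j$ replaced by its blow-up (equivalently, letting $s\to 0$, which is allowed since $\mbfC_j=\lim_{\lambda\to 0}(\eta_\lambda^{-1})_\#\Sigma_j$ is graphical over $\mbfC$ in $A(s,2s)$ for $s$ small by the above) shows $\mbfC_j$ is an $\eps$-$C^2$ graph over $\mbfC$ in $\BB(1)$; alternatively, just observe $\mbfC_j\to\mbfC$ in $C^2_{loc}(\BB(1)\setminus\{\orig\})$, since $\mbfC$ is itself a cone and is the unique tangent cone of $V_\infty$.

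Finally, a standard diagonal argument upgrades this: take a sequence $\eps_m\searrow 0$, for each $m$ find $j_m$ beyond which the $\eps_m$-graphicality holds (and beyond $j_{m-1}$), and set $\eps_j:=\eps_m$ for $j_m\leq j<j_{m+1}$; then $\eps_j\to 0$ and along the chosen subsequence both $\Sigma_j$ and $\mbfC_j$ are $\eps_j$-$C^2$ graphs over $\mbfC$ in $\BB(1)$, which is the claim. The main obstacle, and the only point requiring real care, is verifying the hypotheses of Proposition~\ref{Pro_Ana on SMC_Quanti Uniqueness of Tang Cone} \emph{uniformly as the inner radius $r\to 0$} — in particular that the conclusion's $\eps$-graphicality constant does not degenerate as $r\searrow 0$ (which is built into the statement of that proposition, asserting graphicality on the whole range $A^g(\orig,2r,1)$), and that the density-gap condition~\ref{item:DensityQuasiConst} survives in the limit for all small $r$, for which one uses that $r\mapsto\theta_{|\Sigma_j|}(\orig,r)$ is monotone so the gap over $(r,2)$ is dominated by the gap over $(\orig,2)$ which goes to zero by~\ref{enum_Pre_Density Drop Converg}. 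Everything else is routine compactness (Allard, monotonicity rigidity, Lemma~\ref{Lem_cC_(N,n)(Lambda) bF Cpt}).
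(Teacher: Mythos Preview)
Your proposal is correct and follows essentially the same approach as the paper: extract a reference cone by compactness, verify the hypotheses of Proposition~\ref{Pro_Ana on SMC_Quanti Uniqueness of Tang Cone} uniformly in the inner radius $r$, and finish with a diagonal argument. The only (minor) difference is that the paper obtains the reference cone $\mbfC$ as the subsequential limit of the \emph{tangent cones} $\mbfC_j$ and then verifies hypothesis~\ref{item:GraphAnnuli} at a scale $s_j\to 0$ (using that $\Sigma_j$ is close to $\mbfC_j$ near the origin), whereas you obtain $\mbfC$ as the limit of the $\Sigma_j$ themselves at a fixed scale and verify~\ref{item:GraphAnnuli} there; both are legitimate and lead to the same conclusion.
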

    \begin{proof}
        First note that, by the constraint on regularity scale function, $\mbfC_j\in \cC_{N, n}(\Lambda)$. By Lemma \ref{Lem_cC_(N,n)(Lambda) bF Cpt}, $\mbfC_j$ subconverges to some $\mbfC$ smoothly away from $\orig$. This implies, in particular, that for any $j\geq j_0$ large enough $\Sigma_j$ is a $\kappa_j$-$C^2$ graph over $\mbfC$ in $A^{g_j}(\orig,s_j,2s_j)$ for suitable sequences $\left\{\kappa_j\right\}_{j\geq j_0}$ and $\left\{s_j\right\}_{j\geq j_0}$ both tending to zero as one lets $j\to\infty$.
        
        Hence, set $\eps_j=1/j$ let $\delta_{j}:=\delta(1/j, \Lambda)$ be as afforded by Proposition \ref{Pro_Ana on SMC_Quanti Uniqueness of Tang Cone}. Thanks to the assumptions \ref{enum_Pre_Metric Converg}, \ref{enum_Pre_Density Drop Converg}, for any $j\geq j_0$ there exists $i_0=i_0(j)$ such that
      for any $i\geq i_0(j)$
          the hypotheses of  Proposition \ref{Pro_Ana on SMC_Quanti Uniqueness of Tang Cone} are satisfied by $\Sigma_i$ (and $g_i$) for any $i\geq i_0(j)$. Thus the sequence $\{\Sigma_{i_{0}(j)}\}_{j\geq j_0}$ has the desired properties. 
    \end{proof}

\section{Comparing graphical sections defining singular minimal submanifolds }\label{app:GraphicalPar}

\begin{lemma}\label{Lem_App_GraphParam_Compare diff graph func} There exists $\delta_0(N)\in (0, 1/4)$ and $C(N)>0$ such that for every $\delta\in (0, \delta_0)$ the following statements hold true. 

Let $M$ be a smooth manifold, $p\in M$, let $g^0$ (respectively: $g^1, g^2$)  be a $C^7$ (respectively: $C^5$) Riemannian metrics on $M$ such that $\injrad_{M,g^0}(p)>5$, $\|(\exp^{g^0}_p)^* g^i-g_\eucl\|_{C^{5}(\BB(5))}\leq \delta_0$, and for $i=0,1,2$ let $\Sigma^i$ be a submanifold in $M$ such that 
$(\exp^{g^0}_{p})^{-1}(\Sigma^i)$ is a $\delta_0$-$C^4$ graph over $\BB^n(3)\subset T_{p}\Sigma^i$; for $j=1, 2$ suppose further that $\Sigma^{j}$ are $\delta$-$C^2$ graphs over $\Sigma^0$ in $(B^{g^0}(2), g^0)$, as per Definition \ref{def:KappaGraph}. Under such assumptions, in what follows we will be writing, for the sake of notational simplicity, $g^i$ in lieu of $(\exp^{g_0})^*_p g^i$ and $\Sigma^i$ in lieu of $(\exp^{g^0}_{p})^{-1}(\Sigma^i)$. Then, given $\mbfx\in \RR^N$ with $|\mbfx|\leq \delta$:
\begin{enumerate}[label=(\roman*)]
\item\label{Item_GraphParam_Graph over each other} for all $i, j=0, 1, 2$, both $\Sigma^j$ and  $\Sigma^j+\mbfx$ are $\sqrt{\delta}$-$C^2$ graphs over $\Sigma^i$ in $(B^{g^i}(1), g^i)$; we denote by $u^j_i$ and $\tilde u^j_i$ the graphical section of $\Sigma^j$ and $\Sigma^j+\mbfx$ over $\Sigma^i$, respectively;
            \item
            \label{Item_GraphParam_Graph diff est} 
            when $g^0=g^1=g^2 =: g$ there holds,
            \be
              \frac12\|u^2_0 - u^1_0\|_{C^0(B^g(1/8)\cap \Sigma^0)}  \leq \|u^2_1\|_{C^0(B^g(1/4)\cap \Sigma^1)} \leq 2\|u^2_0 - u^1_0\|_{C^0(B^g(1/2)\cap \Sigma^0)}\,, \label{Equ_GraphParam_|u^2_0-u^1_0| approx |u^2_1|} 
              \ee
and, furthermore, for every $i, j=0, 1, 2$ there also holds
      \be        
              \left\|\tilde u^j_i - u^j_i - \mbfx^{\perp_{\Sigma^i, g}} \right\|_{C^0(B^{g}(1/4)\cap \Sigma^i)} \leq C \left(\|u^j_i\|_{C^0(B^g(1/2)\cap \Sigma^i)} + |\mbfx| \right)|\mbfx|\,. \label{Equ_GraphParam_|tilde u - u - x^perp| bd}
            \ee
        \end{enumerate}
    \end{lemma}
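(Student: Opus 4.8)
\textbf{Proof plan for Lemma \ref{Lem_App_GraphParam_Compare diff graph func}.}

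The plan is to treat this as a sequence of quantitative implicit-function-theorem statements for the normal exponential map, all carried out at a fixed scale (essentially scale $1$) where the ambient metrics are $\delta_0$-close to Euclidean and all submanifolds are $\delta_0$-$C^4$-graphical over tangent $n$-planes. The key analytic engine is the tubular neighborhood / normal exponential map of $\Sigma^0$ in $(M,g^0)$: since $\Sigma^0$ is $\delta_0$-$C^4$-graphical over a ball in a linear subspace and $g^0$ is $C^5$-close to Euclidean, the normal exponential map $\mbfE^0\colon \mbfV(\Sigma^0,g^0)\to M$ is a $C^3$ diffeomorphism from a uniform-size neighborhood of the zero section onto a uniform-size neighborhood of $\Sigma^0\cap B^{g^0}(1)$, with $C^2$ control on $\mbfE^0$ and its inverse depending only on $N$. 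All the assertions are then statements about composing this fixed chart with a fixed bounded amount of data.

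\emph{Step 1 (item \ref{Item_GraphParam_Graph over each other}).} For each $j$, $\Sigma^j$ is already a $\delta$-$C^2$ graph over $\Sigma^0$ in $(B^{g^0}(2),g^0)$; expressing $\Sigma^j$ in the normal chart $\mbfE^i$ of $\Sigma^i$ for $i\neq 0$ amounts to composing the known graphical section $u^j_0$ (or the defining data of $\Sigma^i$ itself when $j=i$) with the transition map between the two normal charts, which is $C^2$-bounded by a dimensional constant because both $\Sigma^0,\Sigma^i$ are $\delta_0$-$C^4$-graphical and the metrics are $\delta_0$-Euclidean. The resulting section over $\Sigma^i$ is then $C\delta$-$C^2$-small; choosing $\delta_0(N)$ small enough that $C\delta_0 \le \sqrt{\delta_0}$ (e.g. $C\delta_0\le 1$) and that the composed section still has its graph inside $B^{g^i}(1)$ gives the $\sqrt\delta$-$C^2$ bound. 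The translated submanifold $\Sigma^j+\mbfx$ is handled identically: translation by $|\mbfx|\le\delta$ is a $C^\infty$-small perturbation of the identity on $\BB(5)$, so $\Sigma^j+\mbfx$ is still $\sqrt\delta$-$C^2$-graphical over $\Sigma^i$. The graphical sections $u^j_i$, $\tilde u^j_i$ are then well-defined by the invertibility of $\mbfE^i$ on the relevant neighborhood.

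\emph{Step 2 (the two-sided comparison \eqref{Equ_GraphParam_|u^2_0-u^1_0| approx |u^2_1|}).} Here $g^0=g^1=g^2=g$. The point is that $u^2_1$ and $u^2_0-u^1_0$ both measure, to leading order, the same separation between $\Sigma^2$ and $\Sigma^1$, expressed in two different (but uniformly comparable) ways: $u^2_1$ in the normal chart of $\Sigma^1$, and $u^2_0-u^1_0$ as the difference of the two graphical sections over $\Sigma^0$. Write $\Sigma^1=\graph_{\Sigma^0}(u^1_0)$, $\Sigma^2=\graph_{\Sigma^0}(u^2_0)$ in the chart $\mbfE^0$. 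Composing with $(\mbfE^1)^{-1}\circ\mbfE^0$ — a $C^2$-bounded diffeomorphism that is $C\delta$-$C^1$-close to a bundle isomorphism identifying $\mbfV(\Sigma^0)$ and $\mbfV(\Sigma^1)$ along $\Sigma^1$ — one gets $u^2_1 = A\cdot(u^2_0-u^1_0) + E$, where $A$ is a fibrewise linear isomorphism with $\|A-\mathrm{Id}\|,\|A^{-1}-\mathrm{Id}\|\le C\delta$ and $E$ is a quadratic error bounded by $C\delta(\|u^2_0\|+\|u^1_0\|)\le C\delta^2$. Since $\|u^2_0-u^1_0\|$ controls $\|u^2_1\|$ from above and below up to the factor $(1\pm C\delta)$ plus the quadratic error (which, being $\le C\delta\|u^2_0-u^1_0\|$ after re-expressing, gets absorbed), choosing $\delta_0$ small enough that $C\delta_0\le 1/2$ yields the factor-$2$ inequalities; the shrinking of the balls from $1/2$ to $1/4$ to $1/8$ absorbs the (uniformly bounded) distortion of the chart transition on nested balls. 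One must be slightly careful to apply the comparison on slightly larger balls before restricting, which is exactly why the radii decrease in \eqref{Equ_GraphParam_|u^2_0-u^1_0| approx |u^2_1|}.

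\emph{Step 3 (the translation estimate \eqref{Equ_GraphParam_|tilde u - u - x^perp| bd}).} Now compare $\Sigma^j+\mbfx$ with $\Sigma^j$, both graphed over $\Sigma^i$. In the normal chart $\mbfE^i$ of $\Sigma^i$, translation by $\mbfx$ in $\RR^N$ pulls back to a map that, to first order in $\mbfx$, is the flow of the constant vector field $\mbfx$; its normal component along $\Sigma^i$ is precisely $\mbfx^{\perp_{\Sigma^i,g}}$. Hence $\tilde u^j_i = u^j_i + \mbfx^{\perp_{\Sigma^i,g}} + E'$, where $E'$ collects (a) the second-order Taylor remainder of the translation flow, bounded by $C|\mbfx|^2$; (b) the coupling between the displacement $u^j_i$ and the $\mbfx$-dependence of the projection $\perp_{\Sigma^i,g}$ at the displaced point versus the base point, bounded by $C\|u^j_i\||\mbfx|$; and (c) curvature/metric-deviation terms of size $C\delta_0\|u^j_i\||\mbfx|\le C\|u^j_i\||\mbfx|$. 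All of these are captured by $C(\|u^j_i\|_{C^0(B^g(1/2)\cap\Sigma^i)}+|\mbfx|)|\mbfx|$, as claimed. The $C^0$ norms on the left at radius $1/4$ are controlled by the $C^0$ (indeed $C^1$) norms of the data at radius $1/2$ because the normal chart and the translation flow are $C^1$-bounded on that scale.

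\emph{Main obstacle.} The genuinely delicate point is Step 2: making precise that the chart transition $(\mbfE^1)^{-1}\circ\mbfE^0$ differs from an honest bundle isomorphism $\mbfV(\Sigma^0)\to\mbfV(\Sigma^1)$ by a term that is not merely $C\delta$-small in $C^1$ but, crucially, is quadratic in the pair $(u^1_0,u^2_0)$ when evaluated on the difference — so that the error $E$ really does get absorbed into the factor $2$ rather than only giving a $(1+C\sqrt\delta)$-type bound. This requires expanding $\mbfE^1$ around $\mbfE^0$ using that $\Sigma^1=\graph_{\Sigma^0}(u^1_0)$ with $u^1_0$ itself $C\delta$-small, i.e. a second-order Taylor expansion of the exponential map in the graphical direction. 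The metric regularity hypotheses ($g^0\in C^7$, $g^1,g^2\in C^5$, graphs $\delta_0$-$C^4$) are exactly what is needed to have enough derivatives of $\mbfE^0,\mbfE^1$ for these $C^2$ (and $C^3$ for $\mbfE^0$) estimates; the asymmetry in the regularity assumptions should be tracked to ensure every composition stays within the available differentiability. Everything else is a routine, if lengthy, bookkeeping of constants depending only on $N$.
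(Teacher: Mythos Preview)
Your plan is sound and close in spirit to the paper's proof: both rely on the normal exponential charts $\mbfE^i$ with uniform $C^2$/$C^3$ control and the implicit function theorem. For item \ref{Item_GraphParam_Graph over each other} and for \eqref{Equ_GraphParam_|tilde u - u - x^perp| bd} your outline matches the paper almost exactly; the paper packages your error terms (a)--(c) via a two-parameter family $\Psi(x,s,t)=\mbfE^0(x,su^j_i(x))+t\mbfx$, writes $\Psi(\Sigma^0,s,t)=\graph_{\Sigma^0}(w_{s,t})$, and bounds $\partial^2_{st}w_{s,t}$, $\partial^2_{tt}w_{s,t}$ directly.

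The one place where your route diverges, and where your write-up is imprecise, is Step 2. You assert $u^2_1=A\cdot(u^2_0-u^1_0)+E$ with $|E|\le C\delta(\|u^2_0\|+\|u^1_0\|)$, and then claim this ``re-expresses'' as $|E|\le C\delta\|u^2_0-u^1_0\|$. The stated bound does \emph{not} imply that re-expression; what you need is that the map $G\colon u\mapsto(\text{graphical section of }\graph_{\Sigma^0}(u)\text{ over }\Sigma^1)$ satisfies $G(u^1_0)=0$, so its Taylor remainder is $O(\|u^2_0-u^1_0\|^2)\le C\delta\|u^2_0-u^1_0\|$ by second-derivative control on $G$. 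This is fixable, but the paper avoids the issue altogether by an interpolation trick: for the left inequality of \eqref{Equ_GraphParam_|u^2_0-u^1_0| approx |u^2_1|} it sets $\Phi(x,s)=\mbfE^1(\mbfE^0(x,u^1_0(x)),\,su^2_1)$, so that $\Phi(\cdot,0)$ and $\Phi(\cdot,1)$ parametrize $\Sigma^1$ and $\Sigma^2$, writes $\Phi(\Sigma^0,s)=\graph_{\Sigma^0}(v_s)$ with $v_0=u^1_0$, $v_1=u^2_0$, and concludes via $\|u^2_0-u^1_0\|\le\sup_s\|\partial_s v_s\|\le(1+C\delta_0)\|u^2_1\|$. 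This one-parameter homotopy replaces your second-order Taylor bookkeeping with a single first-derivative estimate and is worth adding to your toolkit.
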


      \begin{proof}     
        \ref{Item_GraphParam_Graph over each other} follows directly from implicit function theorem, as we now explain. 
        It suffices to show that $\Sigma^2$ and $\Sigma^2+\mbfx$ are $\sqrt\delta$-$C^2$ graphs over $\Sigma^1$. For $i=0, 1, 2$, let $\mbfV_i$ be the $g^i$-normal bundle of $\Sigma^i$, let $\mbfV_i(R)$ denote the $R$-neighborhood of the zero section in $\mbfV_i$; and let $\mbfE_i: \mbfV_i\to \RR^N$ be the $g^i$-exponential map $(x, z)\mapsto \exp^{g^i}_x(z)$, that is of class $C^3$ (see Section \ref{sec:Prelim}). 
        By taking $\delta_0(N)\in (0, 1/4)$ sufficiently small, we can require that for some $C(N) > 1$ the map
        $\mbfE_i$ is a  diffeomorphism from $\mbfV_i(4\delta_0)$ to some domain in $\RR^N$ containing the $\delta_0$-neighborhood $U$, in metric $g^0$, of $\cup_{0\leq j\leq 2} (\Sigma^j\cap B^{g^j}(3/2))$, and $\|\mbfE_i\|_{C^3(\mbfV_i(4\delta_0))}, \|\mbfE_i^{-1}\|_{C^3(U)}\leq C$;
         furthermore there exists a local orthonormal frame $\{s_I\}_{I=1}^{N-n}$ of $\mbfV_1$ such that \[
              \rmP_i: \Sigma^i\times \RR^{N-n} \to \mbfV_i, \quad (x, z^I)\mapsto \left(x, \sum z^I s_I(x) \right)
          \]
          is a $C^3$ bundle isomorphism with $\|\rmP_i\|_{C^3(\Sigma^i\times \BB^{N-n}(1))} + \|\rmP^{-1}_i\|_{C^3(\mbfV_i(1))}\leq C$.
        
        Let then $\Pi_i: U\to \Sigma_i$, $\rmW_i: U\to \RR^{N-n}$ be the first and second component of $\rmP_i^{-1}\circ \mbfE_i^{-1}: U\to \Sigma^i\times \RR^{N-n}$ respectively; by the discussion above, $\|\Pi_i\|_{C^3(U)}, \|\rmW_i\|_{C^3(U)} \leq C$. 
        Therefore, we can define the following two maps, whose $C^2$ norms are also bounded by the same constant on their respective domains:
        \begin{align*}
          \Xi: (\Sigma^0\cap B^{g^0}(5/4))\times [0, 1]^2 & \to \Sigma^1, & & (x, s, t)\mapsto \Pi_1(\mbfE_0(x, (u^1_0 + s(u^2_0-u^1_0))(x)) + t\mbfx)\,, \\
          w: (\Sigma^0\cap B^{g^0}(5/4))\times [0, 1]^2 & \to \RR^{N-n}, & & (x, s, t)\mapsto \rmW_1(\mbfE_0(x, (u^1_0 + s(u^2_0-u^1_0))(x)) + t\mbfx) \,.
        \end{align*}
        By taking $\delta_0(N)\in (0, 1/4)$ even smaller if necessary, we have that:
        \begin{itemize}
          \item $\Xi(\cdot , 0, 0) = \mbfE_0(\cdot , u^1_0(\cdot))$ is a diffeomorphism onto a domain $\Omega_1$ of $\Sigma^1$ containing the set $\Sigma^1\cap B^{g^1}(6/5)$, with the estimate $|D\Xi(\cdot , 0,0)^{-1}|_{C^0(\Omega^1)}\leq C$ as well as \[
          \|\partial_s \Xi\|_{C^1((\Sigma^0\cap B^{g^0}(5/4))\times[0,1]^2)}, \|\partial_t\Xi\|_{C^1((\Sigma^0\cap B^{g^0}(5/4))\times[0,1]^2)}\leq C\delta\,.
          \]Hence, $\Xi_{s,t}:=\Xi(\cdot, s, t)$ are all diffeomorphisms with $\|\Xi_{s,t}^{-1}\|_{C^2(\Sigma^1\cap B^{g^1}(1))}\leq C$;
          \item since $w(x, 0,0) = 0$ and $\|\partial_sw\|_{C^2 ((\Sigma^0\cap B^{g^0}(5/4))\times [0, 1]^2)} + \|\partial_t w\|_{C^2((\Sigma^0\cap B^{g^0}(5/4))\times [0, 1]^2)}\leq C\delta$, we have for every $s, t\in [0, 1]$, $w_{s,t}:= w(\cdot, s, t)$ has $\|w_{s,t}\|_{C^2((\Sigma^0\cap B^{g^0}(5/4))\times [0, 1]^2)}\leq C\delta$.
        \end{itemize}
        Therefore, both $\Sigma^2$ and $\Sigma^2+\mbfx$ are graphs over $\Sigma^1$ in $B^{g^1}(1)$, with graphical sections \[
          u^2_1(x) = \rmP_1(x, w_{1, 0}\circ \Xi_{1, 0}^{-1}(x)) \,, 
          \qquad
          \tilde u^2_1 = \rmP_1(x, w_{1, 1}\circ \Xi_{1, 1}^{-1}(x)) \, 
        \] 
        and, by the estimates above, have $C^2$ norms on $\Sigma^1\cap B^{g^1}(1)$ bounded by $C\delta\leq \sqrt{\delta}$.

    Moving to item \ref{Item_GraphParam_Graph diff est}, to prove the left-hand side inequality of \eqref{Equ_GraphParam_|u^2_0-u^1_0| approx |u^2_1|}, consider the map \[
          \Phi: \Sigma^0 \times [0, 1] \to \RR^N\,, \quad (x, s)\mapsto E_1(E_0(x, u^1_0(x)), su^2_1(x))\,.
        \]
        Then by possibly taking $\delta_0$ smaller, for every $s\in [0, 1]$, $\Phi(\Sigma^0, s)$ is a $\sqrt{\delta}$-$C^2$ graph over $\Sigma^0$ in $B^g(1)$ with graphical section $v_s$. Moreover, $v_1 = u^2_0$, $v_0 = u^1_0$ and 
        \begin{align*}
            \|u^2_0 - u^1_0\|_{C^0(B^g(1/8)\cap\Sigma^0)} \leq \sup_{s\in [0, 1]} \|\partial_s v_s\|_{C^0(B^g(1/8)\cap\Sigma^0)} & \leq (1+C\delta_0) \|\partial_s \Phi\|_{C^0((B^g(1/4)\cap\Sigma^0)\times [0, 1])} \\
            & \leq (1+C\delta_0)\|u^2_1\|_{C^0(B^g(1/4)\cap\Sigma^1)}.
        \end{align*}
        where the second inequality follows from the implicit function theorem and the last inequality follows from a direct computation. Taking $\delta_0$ small enough finishes the proof.
    To prove the right-hand side of \eqref{Equ_GraphParam_|u^2_0-u^1_0| approx |u^2_1|}, one can essentially repeat the argument above for \[
          \Phi: (\Sigma^1\cap B^g(1)) \times [0, 1] \to \RR^N\,, \quad (x, s)\mapsto E_0(\Pi_0(x), (su^2_0 + (1-s)u^1_0)(x))\,.
        \]

        Lastly, in order to prove \eqref{Equ_GraphParam_|tilde u - u - x^perp| bd}, it suffices to only deal with the case when $j=1, i=0$ since the other cases are substantially analogous. Consider the map \[
          \Psi: \Sigma^0\times [0, 1]^2 \to \RR^N\,, \quad (x, s, t)\mapsto E_0(x, su^1_0(x)) + t\mbfx \,.
        \]
        Again, by possibly taking $\delta_0$ even smaller, for every $s, t\in [0, 1]$, $\Psi(\Sigma^0, s, t)$ is a graph (of class $C^2$) over $\Sigma^0$ in $B^g(1)$ with graphical section $w_{s, t}$. Moreover, $w_{1, 1} = \tilde u^1_0$, $w_{1, 0} = u^1_0$, $\partial_t w_{0, 0} = \mbfx^{\perp_{\Sigma^0, g}}$, and since $(x, w_{s,t}(x)) = E_0^{-1}(\Psi(x, s, t))$, by the chain rule we have for the first partial derivatives of $w_{s,t}$
  \begin{align*}
          \|\partial_s w_{s,t}\|_{C^0(B^g(1/4)\cap \Sigma^0)} & \leq C\|\partial_s\Psi\|_{C^0((B^g(3/8)\cap \Sigma^0)\times [0, 1]^2)} \leq C\|u_0^1\|_{C^0(B^g(1/2)\cap \Sigma^0)}\,, \\
          \|\partial_t w_{s,t}\|_{C^0(B^g(1/4)\cap \Sigma^0)} & \leq C\|\partial_t\Psi\|_{C^0((B^g(3/8)\cap \Sigma^0)\times [0, 1]^2)} \leq C|\mbfx| \,, 
          \end{align*}
          and for the second derivatives of later relevance there holds instead
          \begin{align*}
          \|\partial^2_{tt} w_{s,t}\|_{C^0(B^g(1/4)\cap \Sigma^0)} & \leq C(\|\partial_t\Psi\|^2_{C^0((B^g(3/8)\cap \Sigma^0)\times [0, 1]^2)} + \|\partial^2_{tt}\Psi\|_{C^0((B^g(3/8)\cap \Sigma^0)\times [0, 1]^2)}) \leq C|\mbfx|^2 \\
          \|\partial^2_{st} w_{s,t}\|_{C^0(B^g(1/4)\cap \Sigma^0)} & \leq C(\||\partial_t\Psi||\partial_s\varphi|\|_{C^0((B^g(3/8)\cap \Sigma^0)\times [0, 1]^2)} + \|\partial^2_{st}\Psi\|_{C^0((B^g(3/8)\cap \Sigma^0)\times [0, 1]^2)}) \\
          & \leq C|\mbfx|\|u_0^1\|_{C^0(B^g(1/2)\cap \Sigma^0)}\,.
        \end{align*} 

        Since \[
          w_{1,1} - w_{1,0} - \partial_t w_{0,0} = \int_0^1 \left( \int_0^1\frac{d}{d\lambda} (\partial_tw_{\lambda, t\lambda})\ d\lambda \right)\ dt = \int_0^1  \left( \int_0^1 \partial^2_{st} w_{\lambda, t\lambda} + t\partial^2_{tt}w_{\lambda, t\lambda}\ d\lambda \right)\ dt\,,
        \]
        we then find 
        \begin{align*}
            \left\|\tilde u^1_0 - u^1_0 - \mbfx^{\perp_{\Sigma^0, g}}\right\|_{C^0(B^g(1/4)\cap\Sigma^0)} & \leq \sup_{s, t\in [0, 1]} \left(\|\partial^2_{st} w_{s, t}\|_{C^0(B^g(1/4)\cap\Sigma^0)} + \|\partial^2_{tt} w_{s, t}\|_{C^0(B^g(1/4)\cap\Sigma^0)} \right) \\
            & \leq C \left(\|u^1_0\|_{C^0(B^g(1/2)\cap \Sigma^0)} + |\mbfx| \right)|\mbfx|\,,
        \end{align*} 
        as claimed.
    \end{proof}

    Next, we show that if two $\MSI$ with a common singular point are sufficiently close to each other, then each can be written as a graph of the other with effective estimates. 
    
    \begin{remark}\label{rem:RoleRefMetric}
    As stipulated in Section \ref{sec:GenReg}, in what follows we assume to have fixed a smooth Riemannian metric $\hat g$ on $M$, which we employ to measure the norm of tensors. We make the following observation: given $\Lambda>0$ if $\|g^0\|_{C^7(M)}\leq \Lambda$ there exist $\delta(\Lambda)>0$ and $C(\Lambda)>0$ such that $\|g^0-g^1\|_{C^5(M)}\leq\delta(\Lambda)$ implies $\|(\exp^{g^0}_p)^* g^0-(\exp^{g^0}_p)^* g^1\|_{C^5(\BB(1))}\leq C\|g^0-g^1\|_{C^5(M)}$.
    \end{remark}

     \begin{lemma} \label{Lem_Growth Estimate}
        Let $\gamma,\sigma \in  (0, 1)$ and $\Lambda>0$. Then there exist constants $\delta_1 = \delta_1(\sigma, \Lambda)\in (0,1)$, $K = K(\sigma)>2$ and $C_1(\sigma, \Lambda)>0$ such that the following holds.
Let $M$ be a smooth manifold, $p\in M$, and let $g^0$ (respectively: $g^1$) be a $C^7$-(respectively: $C^5$) Riemannian metric such that $\|g^0\|_{C^7(M)}\leq \Lambda$, $\injrad_{M,g^0}(p)>4$, $\|(\exp^{g^0}_p)^* g^i-g_\eucl\|_{C^{5}(\BB(4))}\leq \delta_0$, and for $i=0,1$ let $\Sigma^i$ be an $\MSI$ in $(M,g^i)$ such that:
\begin{enumerate}[label=(\roman*)]
 \item\label{item:3FirstLemGraph}  for $i=0, 1$, $\Sing(\Sigma^i)\cap B^{g^i}(p, 3) = \{p\}$,  and
            \[
                \theta_{|\Sigma^i|}(p, 2) - \theta_{|\Sigma^i|}(p) \leq \delta_1\,,
            \]
            as well as
            \[
                \dist_\R(\gamma, \Gamma(\mathbf{C}_p(\Sigma^i)) \cup \{- (n - 2)/ 2\}) \geq \sigma\,;
            \]
 \item\label{item:4FirstLemGraph}  for $i=0, 1$, the regularity scale function, defined in Definition \ref{def:RegularScale}, satisfies in $B^{g^i}(p, 3)$
            \[
              \mfr_{\Sigma^i, g^i} \geq \Lambda^{-1}\dist_{g^i}(\cdot, p) \,;
            \]
            
            \item\label{item:5FirstLemGraph}  the graphical radius, defined in Definition \ref{def:GraphRadius}, satisfies
            \[
                \bfr^{\Sigma^1}_{\Sigma^0, g^0}(p)\leq \delta_1\,;
            \]
            \item\label{item:LastLemGraph} the graphical section, $u:= \mbfG^{\Sigma^1}_{\Sigma^0, g^0}$, defined in Definition \ref{def:GraphRadius}, satisfies
            \[
                \|u\|_{C^2(A^{g^0}(p, 1, 2)\cap\Sigma^0)} \leq \delta_1\,.
            \]
\end{enumerate}

Then in $B^{g^0}(p, 1)$, $\Sigma^1$ is a graph of class $C^2$ over $(\Sigma^0, g^0)$, i.\,e., $\bfr^{\Sigma^1}_{\Sigma^0, g^0}(p) = 0$, and the graphical function $u$ satisfies the estimate,
        \[
            \|u\|_{C^0_{\gamma}(B^{g^0}(p, 1/2)\cap\Sigma^0)} \leq C_1(\sigma, \Lambda)\cdot \left(\|g^0-g^1\|_{C^5(M)} + \|u\|_{L^2_{\gamma}(A^{g^0}(p, K^{-3}, 1)\cap\Sigma^0)} \right) \,.
        \]
        \end{lemma}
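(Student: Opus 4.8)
The strategy is to reduce to the three-circle machinery of Appendix~\ref{sec:3circles}, applied after pulling everything back via $\exp^{g^0}_p$ and scaling. First I would observe that, by Remark~\ref{rem:RoleRefMetric} and assumptions~\ref{item:3FirstLemGraph}--\ref{item:4FirstLemGraph}, both $\Sigma^0$ and $\Sigma^1$ are (after the pull-back) $\delta$-$C^2$ graphs over a common cone $\mbfC := \mbfC_p(\Sigma^0)\in\cC_{N,n}(\Lambda)$ on the annular region $A^{g^0}(p,1,2)$, where $\delta=\delta(\delta_1)\to 0$ as $\delta_1\to 0$; this uses the quantitative tangent cone uniqueness (Proposition~\ref{Pro_Ana on SMC_Quanti Uniqueness of Tang Cone} and its corollary~\ref{Cor_Converg in all Scales}) together with the density-gap hypothesis $\theta_{|\Sigma^i|}(p,2)-\theta_{|\Sigma^i|}(p)\leq\delta_1$. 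By the condition $\dist_\R(\gamma,\Gamma(\mbfC_p(\Sigma^i))\cup\{-(n-2)/2\})\geq\sigma$, the weight $\gamma$ avoids the asymptotic spectrum with a uniform gap, so Corollary~\ref{Cor_App_Uniform Growth Est for Jac Equ} will be applicable once we have a suitable transferred section solving a perturbed Jacobi-type equation.

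The second step is to show that $\bfr^{\Sigma^1}_{\Sigma^0,g^0}(p)=0$, i.e. $\Sigma^1$ is genuinely a $C^2$ graph over $\Sigma^0$ all the way to $p$ (minus the vertex). Here I would run a continuity/maximum-principle argument on the radius $\bfr^{\Sigma^1}_{\Sigma^0,g^0}$: by definition of graphical radius and the smallness in \ref{item:5FirstLemGraph}--\ref{item:LastLemGraph}, there is a largest $s_0\in[0,1]$ below which graphicality might fail; if $s_0>0$ I would rescale by $s_0$, invoke Lemma~\ref{Lem_App_GraphParam_Compare diff graph func} (with $\delta_0=\delta_0(N)$) to transfer the $C^2$-smallness of the graphical section to the adjacent dyadic annulus, and derive a contradiction with maximality via the decay produced by Corollary~\ref{Cor_App_Uniform Growth Est for Jac Equ}. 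The key input is that $\mbfT^{\Sigma^1}_{\Sigma^0,g^0}(\text{graphical section})$, which I will call $v$, solves an equation of the form \eqref{Equ_Pre_Pert Jac equ} with error bounded by $C(\Lambda)\cdot(\tilde\varepsilon(|x|^{-1}|v|+|\nabla^\perp v|)+|x|\|g^0-g^1\|_{C^2})$, exactly as in the last display of Remark~\ref{Rem_Pre_Jac field equ on MH near cone} (the "close minimal submanifold under a nearby metric" version), since $\Sigma^1$ is $g^1$-minimal and $\Sigma^0$ is $g^0$-minimal.

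The third step is the quantitative $C^0_\gamma$ estimate itself. Having established that $u = \mbfG^{\Sigma^1}_{\Sigma^0,g^0}$ is defined on all of $B^{g^0}(p,1)\setminus\{p\}$, set $f := \mbfT^{\Sigma^0}_{\mbfC,g_\eucl}(\text{rescaled data})$ and apply Corollary~\ref{Cor_App_Uniform Growth Est for Jac Equ} to the transferred section $v$ over $\mbfC$ with weight $\gamma$: the hypothesis $\cA\cR_p(v)>\gamma$ follows from $\gamma\in(0,1)$ together with the fact that $v$ is bounded near the vertex (which in turn follows once we know $u$ is a graphical section of one $\MSI$ over another, so $|u|\lesssim\dist$, hence $\cA\cR_p(v)\geq 1>\gamma$). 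The corollary gives, for every dyadic scale $K^{-\ell}$,
\[
  \|v\|_{L^2(\mbfA(K^{-\ell-1},K^{-\ell}))}\leq C(\Lambda,\sigma)\bigl(F+\|v\|_{L^2(\mbfA(K^{-1},1))}\bigr)K^{-\ell(n/2+\gamma)},
\]
where $F\lesssim\|g^0-g^1\|_{C^2}$ absorbs the inhomogeneous term (using that the metric-difference error in Remark~\ref{Rem_Pre_Jac field equ on MH near cone} contributes a term controlled by $\|g^0-g^1\|_{C^2(\BB(2))}$ on each annulus, summed geometrically). Summing the dyadic $L^2_\gamma$ bounds gives a finite $L^2_\gamma$ norm controlled by $\|g^0-g^1\|_{C^5(M)}+\|u\|_{L^2_\gamma(A^{g^0}(p,K^{-3},1))}$; then scaling-invariant interior elliptic estimates (as in the proof of Corollary~\ref{cor:L2toC0} / item~\ref{item:ARSob} of Lemma~\ref{Lem_Anal for slower growth func}) upgrade this to the $C^0_\gamma$ bound on $B^{g^0}(p,1/2)\cap\Sigma^0$, after transferring back from $\mbfC$ to $\Sigma^0$ via Proposition~\ref{Prop_MSE_Section Transp}\ref{Item_MSE_SecTrans_Concl1} (which costs only a factor $1+C\sqrt{\delta_1}$). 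Throughout, one fixes $\delta_1=\delta_1(\sigma,\Lambda)$ small enough that all the smallness thresholds $\tilde\varepsilon(\sigma,\Lambda)$, $\varepsilon_0(\sigma,\Lambda)$, $\delta_0(N)$ invoked above are met.

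\textbf{Main obstacle.} The delicate point is Step~2, namely promoting the a priori hypothesis "$\bfr^{\Sigma^1}_{\Sigma^0,g^0}(p)\leq\delta_1$ and the section is $C^2$-small on the outermost annulus" to "$\bfr^{\Sigma^1}_{\Sigma^0,g^0}(p)=0$". One must carefully propagate smallness inward dyadic-annulus by dyadic-annulus, at each stage checking that the hypotheses of Corollary~\ref{Cor_App_Uniform Growth Est for Jac Equ} (in particular \eqref{Equ_App_Close to Cone w Reg Scal bd}, the graph being a $\varepsilon_0$-$C^2$ graph over $\mbfC$) remain satisfied — this requires that the decay estimate we are trying to prove be at least weakly available to bootstrap, so the argument is genuinely a fixed-point / continuity argument rather than a direct estimate. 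The subtlety is that the metric-difference term $\|g^0-g^1\|_{C^2}$ does not decay under rescaling near the vertex, but it is multiplied by $|x|$ in the error \eqref{Equ_3Circle_Rem}, which is exactly the factor that makes the geometric sum converge; keeping track of this weighting correctly, and ensuring the constant $C_1(\sigma,\Lambda)$ does not blow up as one iterates, is where the care is needed. Everything else is a routine assembly of the cited lemmata.
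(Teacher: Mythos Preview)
Your overall plan matches the paper's: transfer to the cone, apply the three-circle growth estimate (Corollary~\ref{Cor_App_Uniform Growth Est for Jac Equ}) using $\cA\cR_p(u)\geq 1>\gamma$, then upgrade $L^2_\gamma$ to $C^0_\gamma$ via Corollary~\ref{cor:L2toC0}. Step~3 is essentially identical to the paper's argument.

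The difference lies in Step~2, and here your proposal has a gap. The paper does not run an iterative bootstrap to push the graphical radius to zero. Instead it argues by contradiction with sequences $(\Sigma^0_j,\Sigma^1_j,g^0_j,g^1_j)$ and $\delta_1=1/j$: by Corollary~\ref{Cor_Converg in all Scales}, each $\Sigma^i_j$ becomes an $\eps_j$-$C^2$ graph over a cone $\mbfC^i$ on the \emph{entire} punctured ball (not just the outermost annulus), with $\eps_j\to 0$. Assumptions~\ref{item:5FirstLemGraph}--\ref{item:LastLemGraph} then force $\mbfC^0=\mbfC^1=:\mbfC$, so both $\Sigma^i_j$ are $\eps_j$-close to the \emph{same} cone at every scale; Lemma~\ref{Lem_App_GraphParam_Compare diff graph func}\ref{Item_GraphParam_Graph over each other} then gives $\bfr^{\Sigma^1_j}_{\Sigma^0_j,g^0_j}(p)=0$ immediately, with no iteration.

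Your route for Step~2, by contrast, is circular as written: you invoke Corollary~\ref{Cor_App_Uniform Growth Est for Jac Equ} to produce decay and thereby extend graphicality inward, but that corollary requires $\cA\cR_p(u)>\gamma$, which needs $u$ to be defined down to $p$---precisely what you are trying to prove. The underlying three-circle inequality (Corollary~\ref{Cor_Ana on SMC_Growth Rate Monoton with Perturb}) is only a convexity statement; without information at the inner boundary it does not give the one-sided decay you need to push $s_0$ down. The paper sidesteps this entirely by working at the level of the submanifolds (all-scales closeness to a common cone) rather than at the level of the section $u$; the key observation you are missing in Step~1 is that Corollary~\ref{Cor_Converg in all Scales} already gives closeness to $\mbfC$ on $A^{g^0}(p,0,1)$, not merely on $A^{g^0}(p,1,2)$.
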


        \begin{proof}
We only need to work in $T_pM$ by pulling back metrics and $\MSI$ using the exponential map in metric $g^0$; we shall thus apply the same notational conventions declared in the statement of Lemma \ref{Lem_App_GraphParam_Compare diff graph func}. It suffices to show that for any sequence of $\{\gamma_j\}_{j\geq 1}$ in $(0, 1)$, any sequence of pairs of metrics $\{(g^0_j, g^1_j)\}_{j \geq 1}$ and any sequence of pairs $\{(\Sigma^0_j, \Sigma^1_j)\}$, where $\Sigma^i_j \subset (\BB(3), g^i_j)$ is an $\MSI$ for $i =0, 1$, if \ref{item:3FirstLemGraph}-\ref{item:LastLemGraph} in the lemma hold for $g_j^i, \Sigma_j^i, \gamma_j, 1/j$ in place of $g^i, \Sigma^i, \gamma, \delta_1$, then the conclusion of the lemma holds for all sufficiently large $j$ and for some finite constant $C > 0$. Without loss of generality, we assume that $\Sigma^0_j \neq \Sigma^1_j$ for all $j$; otherwise, $\Sigma^1_j$ is a global graph over $(\Sigma^0_j, g^0_j)$ whose graphical section is identically zero.

        First notice that by Corollary \ref{Cor_Converg in all Scales}, as $j \to \infty$, up to subsequences, $\Sigma^i_j$ are $\eps_j$-$C^2$ graphs over $\mbfC^i$ for some $\eps_j\to 0$ and $\mbfC^i\in \cC_{N, n}(\Lambda)$, where $i=0, 1$.  Since by \ref{item:5FirstLemGraph}, \ref{item:LastLemGraph}, $\Sigma^1_j$ and $\Sigma^0_j$ are approaching each other in $A^{g^0}(\orig, 1, 2)$, we must have $\mbfC^0=\mbfC^1 =: \mbfC$. Then by Lemma \ref{Lem_App_GraphParam_Compare diff graph func} \ref{Item_GraphParam_Graph over each other}, for all $j$ large enough $\Sigma^1_j$ is a $\tilde\eps_j$-$C^2$ graph over $\Sigma^0_j$ in $B^{g^0_j}(\orig, 3/2)$, where $\tilde\eps_j\to 0$ as one lets $j\to\infty$.

By the definition of asymptotic rate we then have
        \[
            \cA\cR_p(u_j) \geq 1 > \gamma\,.
        \]
        In particular, by \ref{Item_MSE for g' minimal graph} of Proposition \ref{Prop_MSE_Main} and \ref{Item_MSE_SecTrans_Concl3} of Proposition \ref{Prop_MSE_Section Transp}, the same argument as the one presented in the proof of Corollary \ref{Cor_App_Uniform Growth Est for Jac Equ} applies to $u_j$ to give an $L^2_\gamma-$ estimate of $v_j := \mbfT^{\Sigma^0_j}_{\mbfC, g_\text{euc}}(u_j)$. Thus, for $K=K(\sigma)>2$ given by Lemma \ref{Lem_Ana on SMC_Growth Rate Monoton}, for every $\ell\geq 1$,
        \begin{align*}
          \|v_j\|_{L^2(\mbfA(K^{-\ell-1}, K^{-\ell}))} \leq C(\Lambda, \sigma)(\|g^1_j - g^0_j\|_{C^5(\BB(1))} + \|v_j\|_{L^2(\mbfA(K^{-2}, K^{-1}))})\cdot K^{-\ell(n/2+\gamma)} \,.
        \end{align*}
        The analogous $L^2_\gamma$ estimates for $u_j$ then follow then by combining the preceding inequality with \ref{Item_MSE_SecTrans_Concl1} of Proposition \ref{Prop_MSE_Section Transp}.
At that stage, the desired conclusion comes by putting together such a bound with the estimate provided by Corollary \ref{cor:L2toC0} (suitably recast via standard scaling arguments), appealing to Remark \ref{rem:RoleRefMetric} to bound the term involving the difference of the metrics. 
        \end{proof}
    
       More generally, we may consider the case of two $\MSI$ having distinct singular points that are separated by a sufficiently small distance. 

     \begin{corollary} \label{Cor_L^2 Growth Est}
        Given $\gamma, \sigma\in (0, 1)$ and $\Lambda>0$, let $\delta_1=\delta_1(\gamma, \sigma, \Lambda)\in (0,1)$ and $K = K(\sigma) > 2$ be as in Lemma~\ref{Lem_Growth Estimate}.  There exist constants $\delta_2 = \delta_2(\gamma, \sigma, \Lambda)\in (0, \delta_1)$, $\tilde \delta_2 = \tilde \delta_2(\gamma, \sigma, \Lambda) \in (0, \delta_2)$ and $C_2 = C_2(\gamma, \sigma, \Lambda) > 0$ such that the following holds. Let $M$ be a smooth manifold, $p\in M$, and let $g$ be a $C^7$ Riemannian metric such that $\|g\|_{C^7(M)}\leq \Lambda$, $\injrad_{M,g}(p)>4$, $\|(\exp^{g}_p)^*g-g_\eucl\|_{C^{5}(\BB(4))}\leq \tilde \delta_2$, and for $i=0,1$ let $\Sigma^i$ be an $\MSI$ in $(M,g)$ such that:
        \begin{enumerate}[label=(\roman*)]
            \item\label{item:DensityControlCorNormalGraph} for $i=0,1$, there exist $p^i\in B^g(p, \tilde\delta_2)$ such that $\Sing(\Sigma^i)\cap B^g(p^i, 3) = \{p_i\}$, and
            \[
                \theta_{|\Sigma^i|}(p^i, 2) - \theta_{|\Sigma^i|}(p^i) \leq \tilde \delta_2\,,
            \]
            as well as 
            \[
                \dist_\R(\gamma, \Gamma(\mathbf{C}_{p^i}(\Sigma^i)) \cup \{- (n - 2)/ 2\}) \geq \sigma\,;
            \] 
            in what follows let $\mbfx^i := (\exp_p^g)^{-1}(p^i)\in T_pM$; 
            \item\label{item:GraphParam_RegScaleBdBelow} for $i=0, 1$, the regularity scale function, defined in Definition \ref{def:RegularScale}, satisfies
            \[
              \mfr_{\Sigma^i, g} \geq \Lambda^{-1}\dist_g(\cdot, p^i) \,;
            \]
            \item\label{item:GraphParam_GraphRadBd} the graphical radius, defined in Definition \ref{def:GraphRadius}, satisfies
            \[
                \bfr := \bfr^{\Sigma^1}_{\Sigma^0, g}(p^0)\leq \tilde \delta_2\,;
            \]
            \item\label{item:LastCorNormalGraph} the graphical section, $u:= \mbfG^{\Sigma^1}_{\Sigma^0, g}$, defined in Definition \ref{def:GraphRadius}, satisfies
            \[
                \|u\|_{C^2(A^{g}(p^0,1/2, 5/2)\cap\Sigma^0)} \leq \tilde \delta_2\,.
            \]
        \end{enumerate}
        Then we have the following estimates,
        \begin{align}
            {\frac12}|\mbfx^1-\mbfx^0| \leq \bfr := \bfr^{\Sigma^1}_{\Sigma^0, g}(p^0) & \leq C_2\|u\|_{L^2(A^{g}(p^0, \delta_2, 2)\cap\Sigma^0)}\,, \label{Equ_App_GraphParam_|x^1-x^0| < |u|_L^2} \\
            \|u - (\mbfx^1-\mbfx^0)^{\perp_{\Sigma^0, g}}\|_{C^0 (A^{g}(p^0, s, 2s)\cap\Sigma^0)} & \leq C_2 \left(\|u\|_{L^2(A^{g}(p^0, \delta_2, 2)\cap\Sigma^0)}\cdot s^\gamma + s^{-1}|\mbfx^1-\mbfx^0|^2 \right)  \,. \label{Equ_App_GraphParam_|u - x^perp|_s < s^gamma|u|_L^2}
        \end{align}
        for every $s\in (C_2\bfr, 1/2)$. (Here and in what follows we view $\mbfx^1-\mbfx^0$ as a vector field on $B^g(p,4)\subset M$ by pushing forward using $\exp^g_p$.)
  In addition, for some $\tilde C_2(\gamma,\sigma,\Lambda) = \tilde C_2(C_2) > 0$, there holds
        \begin{equation}\label{eq:BasicEllEst}
            \|u\|_{C^0 (A^{g}(p^0, C_2 \bfr, 1)\cap\Sigma^0)} \leq \tilde C_2 \|u\|_{L^2(A^{g}(p^0, \delta_2, 2)\cap\Sigma^0)} \,.
        \end{equation}       
        Moreover, considering for $\mbfx\in T_pM$ with $|\mbfx|\leq 1$ the map $\tau_\mbfx^g$ given by
        \[
          \tau_\mbfx^g: B^g(p, 4)\to M, \quad q\mapsto \exp_p^g \left((\exp^g_p)^{-1}(q) + \mbfx \right) \,
        \] 
        and set $\tilde \Sigma^1 := (\tau^g_{\mbfx^1 - \mbfx^0})^{-1}(\Sigma^1)$ and $\tilde g := \left(\tau^g_{\mathbf{x}^1 - \mathbf{x}^0}\right)^*(g)$, in $B^g(p^0, 3)$, all  assumptions in Lemma~\ref{Lem_Growth Estimate} are satisfied with $\Sigma^0, g; \tilde \Sigma^1, \tilde g, p^0$ replacing $\Sigma^0, g^0; \Sigma^1, g^1, p$ therein.
    \end{corollary}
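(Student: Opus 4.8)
\textbf{Plan of proof for Corollary \ref{Cor_L^2 Growth Est}.} The strategy is to reduce everything to Lemma~\ref{Lem_Growth Estimate} via a translation that identifies the two singular points, and then to extract the effective estimates \eqref{Equ_App_GraphParam_|x^1-x^0| < |u|_L^2}--\eqref{Equ_App_GraphParam_|u - x^perp|_s < s^gamma|u|_L^2} from the comparison estimates for graphical sections collected in Lemma~\ref{Lem_App_GraphParam_Compare diff graph func}. First I would pull back all data by $\exp^g_p$, so that I may work in $T_pM$ with a metric $C^5$-close to $g_\eucl$ and the singular points $\mathbf{x}^0,\mathbf{x}^1$ lying within distance $\tilde\delta_2$ of the origin; I will write $\mbfx := \mbfx^1 - \mbfx^0$, so $|\mbfx|\le 2\tilde\delta_2$. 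Since $\injrad_{M,g}(p)>4$ and the metric is almost Euclidean, the translation map $\tau^g_\mbfx$ is well-defined on a ball of radius $3$ around $p^0$ and is $C^4$-close to the identity with $\|\tilde g - g\|_{C^5}\le C(\Lambda)|\mbfx|\le C\tilde\delta_2$; moreover $\tilde\Sigma^1 := (\tau^g_\mbfx)^{-1}(\Sigma^1)$ is an $\MSI$ in $(B^g(p^0,3),\tilde g)$ with its unique singular point at $p^0$ (because $\tau^g_\mbfx(p^0)=p^1$), same density, and same tangent cone (which depends only on the point and the metric near it, up to the small $C^4$ perturbation so the separation condition $\dist_\R(\gamma,\Gamma(\mbfC_{p^0}(\tilde\Sigma^1))\cup\{-(n-2)/2\})\ge\sigma/2$ persists by Remark~\ref{rem:SpectrumConv}). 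The regularity scale bound and density-drop bound are likewise preserved under this small perturbation. This will establish the last assertion of the corollary, provided $\tilde\delta_2 = \tilde\delta_2(\gamma,\sigma,\Lambda)$ is chosen small enough that all the smallness thresholds $\delta_0$, $\delta_1$ are met.

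Next, assuming the last assertion, Lemma~\ref{Lem_Growth Estimate} applies to $\Sigma^0, g$ and $\tilde\Sigma^1,\tilde g$ with base point $p^0$: it gives that $\tilde\Sigma^1$ is a genuine $C^2$ graph over $(\Sigma^0,g)$ in $B^g(p^0,1)$ with graphical section $\tilde u := \mbfG^{\tilde\Sigma^1}_{\Sigma^0,g}$ obeying
\[
\|\tilde u\|_{C^0_\gamma(B^g(p^0,1/2)\cap\Sigma^0)} \le C_1(\gamma,\sigma,\Lambda)\bigl(\|g-\tilde g\|_{C^5(M)} + \|\tilde u\|_{L^2_\gamma(A^g(p^0,K^{-3},1)\cap\Sigma^0)}\bigr).
\]
Now I would relate $\tilde u$ to the original graphical section $u = \mbfG^{\Sigma^1}_{\Sigma^0,g}$ and to the vector $\mbfx^\perp$. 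On the annular region $A^g(p^0,1/2,5/2)$ both $\Sigma^1$ and $\tilde\Sigma^1=(\tau^g_\mbfx)^{-1}(\Sigma^1)$ are small $C^2$ graphs over $\Sigma^0$, and $\tilde\Sigma^1$ is essentially $\Sigma^1$ translated by $-\mbfx$; applying estimate \eqref{Equ_GraphParam_|tilde u - u - x^perp| bd} of Lemma~\ref{Lem_App_GraphParam_Compare diff graph func} (with the roles of "$\Sigma^j+\mbfx$" played by $\Sigma^1$ and of "$\Sigma^j$" by $\tilde\Sigma^1$, after rescaling $A^g(p^0,s,2s)$ to unit scale) yields the pointwise comparison
\[
\bigl|\,u - \tilde u - \mbfx^{\perp_{\Sigma^0,g}}\,\bigr| \le C(\Lambda)\bigl(|\tilde u| + |\mbfx|\bigr)|\mbfx| \quad \text{on } A^g(p^0,s,2s)\cap\Sigma^0,
\]
where the $s$-dependence enters through the rescaling (the quadratic error carries an extra $s^{-1}$ when measured at scale $s$, accounting for the $s^{-1}|\mbfx|^2$ term in \eqref{Equ_App_GraphParam_|u - x^perp|_s < s^gamma|u|_L^2}). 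Combining this with the weighted $C^0$ bound on $\tilde u$ and $\|g-\tilde g\|_{C^5}\le C|\mbfx|$, and absorbing the $C|\mbfx|\cdot\|\tilde u\|$ terms (legitimate since $|\mbfx|$ is small), gives \eqref{Equ_App_GraphParam_|u - x^perp|_s < s^gamma|u|_L^2} once one replaces the weighted $L^2$ norm of $\tilde u$ on $A^g(p^0,K^{-3},1)$ by that of $u$ — which is again a consequence of the pointwise comparison above integrated over that fixed annulus, where $s\sim 1$ so no $s^{-1}$ blow-up occurs. The interior elliptic estimate \eqref{eq:BasicEllEst} is obtained along the way by the same bootstrap (Corollary~\ref{cor:L2toC0}) as in Lemma~\ref{Lem_Growth Estimate}.

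Finally, for the lower bound ${\tfrac12}|\mbfx| \le \bfr^{\Sigma^1}_{\Sigma^0,g}(p^0)$ and the upper bound $\bfr \le C_2\|u\|_{L^2(A^g(p^0,\delta_2,2)\cap\Sigma^0)}$ in \eqref{Equ_App_GraphParam_|x^1-x^0| < |u|_L^2}: the lower bound is essentially geometric — near $p^0$ the cone $\mbfC_{p^0}(\Sigma^0)$ has isolated vertex, so $\Sigma^1$ (whose singular point is at $p^1$, a distance $\approx|\mbfx|$ away) cannot be a $\delta_0^2$-$C^2$ graph over $\Sigma^0$ inside a ball of radius much smaller than $|\mbfx|$ centered at $p^0$; quantitatively this follows because on $A^g(p^0, c|\mbfx|, |\mbfx|/c)$ the would-be graphical section would have to accommodate the displacement $\mbfx^\perp$, whose $L^2$-norm at that scale is bounded below by $c(\Lambda)|\mbfx|\cdot|\mbfx|^{n/2}$ (here I invoke Lemma~\ref{lem:L2perp} applied to the link, suitably scaled), which exceeds $\delta_0^2$ times the scale unless $\bfr\gtrsim|\mbfx|$. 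The upper bound on $\bfr$ in terms of the $L^2$-norm of $u$ is what Lemma~\ref{Lem_Growth Estimate} already delivers for the translated pair (it asserts $\bfr^{\tilde\Sigma^1}_{\Sigma^0,g}(p^0)=0$ and controls $\tilde u$), so I would combine $\bfr^{\Sigma^1}_{\Sigma^0,g}(p^0) \lesssim |\mbfx| + \bfr^{\tilde\Sigma^1}_{\Sigma^0,g}(p^0)\cdot(\ldots)$ — more precisely, the $\delta_0^2$-$C^2$-graphicality of $\Sigma^1$ over $\Sigma^0$ outside $B^g(p^0,C|\mbfx|)$ follows from that of $\tilde\Sigma^1$ together with \eqref{Equ_GraphParam_|tilde u - u - x^perp| bd} — with the already-established lower bound $|\mbfx| \le 2\bfr$ and with the weighted $C^0 \le L^2$ estimate to close the loop. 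The main obstacle I anticipate is bookkeeping the various radii and the interplay between the three smallness constants $\delta_0 > \delta_1 > \delta_2 > \tilde\delta_2$, ensuring in particular that the translation $\tau^g_\mbfx$ does not push any singular point outside the domain where Lemma~\ref{Lem_Growth Estimate} is applicable and that the cone-separation condition survives the $C^4$-perturbation of the metric; the analytic content is entirely contained in Lemma~\ref{Lem_Growth Estimate} and Lemma~\ref{Lem_App_GraphParam_Compare diff graph func}, so this corollary is a packaging statement.
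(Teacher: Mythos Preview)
Your proposal follows essentially the same route as the paper: translate by $\mbfx:=\mbfx^1-\mbfx^0$ to align the singular points, verify that Lemma~\ref{Lem_Growth Estimate} applies to $(\Sigma^0,g;\tilde\Sigma^1,\tilde g)$, and then transfer the resulting weighted $C^0_\gamma$ bound on $\tilde u$ back to $u$ via the comparison estimates of Lemma~\ref{Lem_App_GraphParam_Compare diff graph func}. The overall architecture is correct.

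There is however one step you treat too casually: obtaining $|\mbfx|\le C\|u\|_{L^2(A^g(p^0,\delta_2,2))}$, which is what makes \eqref{Equ_App_GraphParam_|x^1-x^0| < |u|_L^2} close and removes the spurious $|\mbfx|$ from the right-hand side of \eqref{Equ_App_GraphParam_|u - x^perp|_s < s^gamma|u|_L^2}. Your ``close the loop'' sentence combines $\bfr\lesssim|\mbfx|$ with $|\mbfx|\le 2\bfr$, but that only gives $\bfr\sim|\mbfx|$, not a bound by $\|u\|_{L^2}$. What is actually needed is the cone non-splitting estimate $|\mbfx|\le C\|\mbfx^{\perp_{\Sigma^0,g}}\|_{C^0(A^g(p^0,r,2r))}$ at a well-chosen small fixed scale $r=\delta_2$; the paper proves this (its Claim~1) by a contradiction argument using Corollary~\ref{Cor_Converg in all Scales}. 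Once one has it, combining with the $C^0_\gamma$ bound on $\tilde u$ and the comparison $u\approx\tilde u+\mbfx^\perp$ at scale $r$ yields $|\mbfx|\le Cr^\gamma|\mbfx|+C\|u\|_{C^0(A^g(p^0,\delta_2,2))}$, and then the $r^\gamma$ term is absorbed by choosing $\delta_2$ small. You could equivalently get the non-splitting bound from Lemma~\ref{lem:L2perp} (applied to the tangent cone of $\Sigma^0$, transferred via Corollary~\ref{Cor_Converg in all Scales}), but you only invoked that lemma for the \emph{lower} bound on $\bfr$; it is doing the real work for the upper bound as well. A smaller point: to deduce $\bfr\le C|\mbfx|$ from the global graphicality of $\tilde\Sigma^1$ you need the $C^2$ graphicality statement in Lemma~\ref{Lem_App_GraphParam_Compare diff graph func}\,\ref{Item_GraphParam_Graph over each other}, not the $C^0$ estimate \eqref{Equ_GraphParam_|tilde u - u - x^perp| bd} you cite.
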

    
    \begin{proof}
         Throughout this proof, we denote for simplicity $\eta_{p, r}: T_pM\to M$ the map $v\mapsto \exp^g_p(rv)$; note that $\eta_{p, r}^{-1}$ is well-defined on $B^g(p, 4)$.

		\medskip
        \textbf{Claim 1.} There exists $\delta' > 0$ and $C' > 1$ such that if $\tilde\delta_2 \leq \delta'$ and $r\in (0, 1)$, then 
        \begin{equation}\label{eqn:x^i_dist}
            \frac12 |\mathbf{x}^1 - \mathbf{x}^0| \leq \bfr^{\Sigma^1}_{\Sigma^0, g}(p^0) \leq C' \|(\mathbf{x}^1 - \mathbf{x}^0)^{\perp_{\Sigma^0, g}}\|_{C^0(A^g(p^0, r, 2r)\cap\Sigma^0)}\,.
        \end{equation}
        \begin{proof}
            Since $p^i$ is a singular point of $\Sigma^i$, it is clear from the definition of graphical radius (Definition~\ref{def:GraphRadius}) that by taking $\tilde\delta_2>0$ sufficiently small, we have,
            \[
               \frac12|\mathbf{x}^1 - \mathbf{x}^0| \leq \dist_g(p^1, p^0) \leq \bfr^{\Sigma^1}_{\Sigma^0, g}(p^0)\,.
            \]
                To prove the right-hand side inequality, suppose for the sake of contradiction, that there exist sequences $\{\Sigma^0_j\}_{j\geq 1}$ and $\{\Sigma^1_j\}_{j\geq 1}$ where $\Sigma^0_j$ and $\Sigma^1_j$ are $\MSI$ in $(M, g_j)$ satisfying all assumptions in the preceding statement with $p_j, p^i_j, \mbfx^i_j$ in place of $p, p^i, \mbfx^i$ and $1/j$ in place of $\tilde \delta_2$, but
            \begin{align}
				\bfr^{\Sigma^1_j}_{\Sigma^0_j, g_j}(p^0_j) > j \|(\mathbf{x}^1_j - \mathbf{x}^0_j)^{\perp_{\Sigma^0_j, g_j}}\|_{C^0(A^{g_j}(p^0_j, s_j, 2s_j)\cap\Sigma^0_j)}\,. \label{Equ_App_GraphParam_r^0_1 > j|x^perp|} 
            \end{align}
            for some $s_j\in (0, 1)$. First note that for any $j$ sufficiently large, we must have 
            \begin{align}
              \sqrt{j} \|(\mathbf{x}^1_j - \mathbf{x}^0_j)^{\perp_{\Sigma^0_j, g_j}}\|_{C^0(A^{g_j}(p^0_j, s_j, 2s_j)\cap\Sigma^0_j)} \geq |\mbfx^1_j - \mbfx^0_j|\,.  \label{Equ_App_GraphParam_|x| < |x^perp|} 
            \end{align}
            This is because otherwise, by \ref{item:DensityControlCorNormalGraph} and Corollary~\ref{Cor_Converg in all Scales}, $\eta^{-1}_{p^0_j, s_j}(\Sigma^0_{j}\cap B^{g_j}(p_j, 3))$ would subconverge to some nontrivial regular minimal cone $\mbfC\in \cC_{N,n}(\Lambda)$, but the failure of \eqref{Equ_App_GraphParam_|x| < |x^perp|} implies that the subsequential limit $\hat\mbfx \in \RR^N$ of $(\mbfx^1_j - \mbfx^0_j)/|\mbfx^1_j - \mbfx^0_j|$ satisfies $\hat\mbfx^{\perp_{\mbfC, g_\eucl}} = 0$. Since $\hat\mbfx\neq 0$, this means $\mbfC$ splits in $\hat\mbfx$ direction, contradicting the fact that $\mbfC$ is a regular cone.
            
            Now let $\bfr_j := \bfr^{\Sigma^1_j}_{\Sigma^0_j, g_j}(p^0_j)$:  by \ref{item:DensityControlCorNormalGraph}, Corollary~\ref{Cor_Converg in all Scales}, \eqref{Equ_App_GraphParam_r^0_1 > j|x^perp|} and \eqref{Equ_App_GraphParam_|x| < |x^perp|}, $\hat{\Sigma}^i_j:= \eta^{-1}_{p^0_j, \bfr_j}(\Sigma^i_{j}\cap B^{g_j}(p_j, 3))$  subconverges to some $\mbfC\in \cC_{N,n}(\Lambda)$ for both $i=0, 1$. It then follows from Allard's regularity theorem~\cite{All72} and Lemma \ref{Lem_App_GraphParam_Compare diff graph func} \ref{Item_GraphParam_Graph over each other} that for sufficiently large $j$, 
            $\hat{\Sigma}^1_j$ is a $\delta^2_0/2$-$C^2$ graph over $\hat{\Sigma}^0_j$ in the Euclidean annulus centered at the origin and having radii $1/2$ and $2$, which contradicts the definition of $\bfr^{\Sigma^1_j}_{\Sigma^0_j, g_j}(\mbfx^0_j)$.
        \end{proof}

		In order to proceed with the next ancillary claim, recall the definitions of $\tilde\Sigma^1$, $\tilde g^1$ (given at the end of the statement); note that $\Sing(\tilde{\Sigma}^1)\cap B^{\tilde{g}}(p^0, 3) = \{p^0\}$ and let $\tilde u := \mathbf{G}^{\tilde \Sigma^1}_{\Sigma^0, g}$ be the corresponding graphical section (according to Definition \ref{def:GraphRadius}).

        \medskip

        \textbf{Claim 2.} There exists $\delta''(N, \Lambda) \in (0, \delta')$ such that if we choose $\tilde\delta_2 \leq \delta''$, then all assumptions in Lemma~\ref{Lem_Growth Estimate} are satisfied for $(\Sigma^0, \tilde \Sigma^1, g, \tilde g, p^0)$ in place of $(\Sigma^0, \Sigma^1, g^0, g^1, p)$. 
          In particular, $\tilde \Sigma^1$ is a global graph of class $C^2$ over $(\Sigma^0, g)$ with graphical section $\tilde u$ satisfying, 
        \begin{equation}\label{eqn:tilde_u_est}
            \|\tilde u\|_{C^0_\gamma(B^{g}(p^0, 1/2)\cap\Sigma^0)} 
            \leq C''(\sigma, \Lambda)\left(|\mbfx^1-\mbfx^0| + \|\tilde u\|_{L^2_{\gamma} (A^{g^0}(p^0, K^{-3}, 1)\cap\Sigma^0)} \right)\,.
        \end{equation}
    
        \begin{proof}
 
          The only non-straightforward task is
            to justify \ref{item:5FirstLemGraph} and \ref{item:LastLemGraph} in Lemma~\ref{Lem_Growth Estimate}.
            Working with 
           $\delta_0$ from Lemma \ref{Lem_App_GraphParam_Compare diff graph func} and $\delta_1$ from Lemma \ref{Lem_Growth Estimate}, we note that
         by our assumption \ref{item:GraphParam_RegScaleBdBelow}, taking $\delta''<\delta_1/4$, there exists $C(N, \Lambda)>4$ large enough that for every $q\in \Sigma^i\cap A^g(p, \delta_1/2, 3)$, after pulling back to $T_qM$ with the map $\eta_{q, \delta_1/C}$, $\Sigma^i$ is a $\delta_0$-$C^4$ graph over $T_q\Sigma^i$ for $i=0,1$.
          
            Therefore, by our assumptions \ref{item:GraphParam_GraphRadBd} and \ref{item:LastCorNormalGraph}, we can take sufficiently small $\delta''=\delta''(N, \Lambda)$ and apply Lemma \ref{Lem_App_GraphParam_Compare diff graph func} \ref{Item_GraphParam_Graph over each other} in the ball of center $q$ and radius $\delta_1/C$ for every $q\in A^g(p^0, \delta_1, 5/2)$ to verify the satisfaction of \ref{item:5FirstLemGraph} and \ref{item:LastLemGraph} in Lemma~\ref{Lem_Growth Estimate}.        \end{proof}
        
		\medskip

		\textbf{Claim 3.} There exist $\delta'''(\Lambda, \sigma) \in (0, \delta'')$, $C'''(\Lambda, \sigma)> 1$ and $C''''(\Lambda, \sigma)> 1$ such that if $\tilde\delta_2 \leq \delta'''$ and $r \in ((C''')^2\bfr^{\Sigma^1}_{\Sigma^0, g}(p^0), 1/2)$, then we have, 
		\begin{equation}\label{eqn:tilde_u_comp}
            \|u - \tilde u - (\mbfx^1-\mbfx^0)^{\perp_{\Sigma_0, g}}\|_{C^0(A^g(p^0, 2r, 3r)\cap\Sigma^0)} \leq C'''\left(\|\tilde u\|_{C^0(A^g(p^0, r, 4r)\cap\Sigma^0)} + |\mbfx^1 - \mbfx^0|\right)r^{-1}\cdot |\mbfx^1 - \mbfx^0|\,,
        \end{equation}
		\begin{equation}\label{eqn:tilde_tilde(u)_comp}
            \|u - \tilde u - (\mbfx^1-\mbfx^0)^{\perp_{\Sigma_0, g}}\|_{C^0(A^g(p^0, 2r, 3r)\cap\Sigma^0)} \leq C'''\left(\|u\|_{C^0(A^g(p^0, r, 4r)\cap\Sigma^0)} + |\mbfx^1 - \mbfx^0|\right)r^{-1}\cdot |\mbfx^1 - \mbfx^0|\,,
        \end{equation}
		and
		\begin{align}
			|\mbfx^1-\mbfx^0|\leq C'''' (\|\tilde u\|_{C^0(A^g(p^0, r, 4r)\cap\Sigma^0)} + \|u\|_{C^0(A^g(p^0, 2r, 3r)\cap\Sigma^0)}) \,. \label{eqn:x1_x0_dist_tilde_u_comp}
		\end{align}
		\begin{proof}

          In view of Remark \ref{Rem_Choice of smallness in graph rad}, by choosing sufficiently small $\delta'''$ and large $C'''$, \eqref{eqn:tilde_u_comp} and \eqref{eqn:tilde_tilde(u)_comp} follow directly by applying Lemma \ref{Lem_App_GraphParam_Compare diff graph func} near each $q\in A^g(p^0, r, 2r)$. At that stage, \eqref{eqn:x1_x0_dist_tilde_u_comp} then follows from \eqref{eqn:tilde_u_comp}, Claim 1 and the triangle inequality by taking $C'''$ much larger than $C'$.
		\end{proof}

        We are finally in the position of proving \eqref{Equ_App_GraphParam_|x^1-x^0| < |u|_L^2}, \eqref{Equ_App_GraphParam_|u - x^perp|_s < s^gamma|u|_L^2} and \eqref{eq:BasicEllEst}.
		In what follows we will always take $\tilde\delta_2 \leq \delta'''$ where the latter constant is as afforded by the previous claim. Then for any $r \in ((C''')^2\bfr^{\Sigma^1}_{\Sigma^0, g}(p^0), 1/8)$, we have 
		\begin{align}\begin{split}
            |\mathbf{x}^1 - \mathbf{x}^0| & \leq C(\|\tilde u\|_{C^0(A^g(p^0, r, 4r)\cap\Sigma^0)} + \|u\|_{C^0(A^g(p^0, 2r, 3r)\cap\Sigma^0)}) \\
                & \leq C r^{\gamma} \|\tilde u\|_{C^0_{\gamma}(A^g(p^0, r, 4r)\cap\Sigma^0)} + C\|u\|_{C^0(A^{g}(p^0, 2r, 3r)\cap\Sigma^0)} \\ 
                &\leq Cr^{\gamma} \cdot (|\mbfx^1-\mbfx^0| + \|\tilde u\|_{C^0(A^{g^0}(p^0, K^{-3}, 1)\cap\Sigma^0)}) + C \|u\|_{C^0(A^{g}(p^0,2r, 3r)\cap\Sigma^0)} \\ 
                &\leq C r^{\gamma} |\mbfx^1 - \mbfx^0| + C\|u\|_{C^0(A^{g}(p^0, \min\{2r, K^{-3}/2\}, 4/3)\cap\Sigma^0)}\,. 
        \label{Equ_App_GraphParam_Est |x^1-x^0| by |u|}
        \end{split}
        \end{align}
		Here and below, we remind the reader that the constant $C>1$ are all only depending on $\Lambda, \sigma$ (and possibly on $N$) and are changing from line to line; the first inequality follows from \eqref{eqn:x1_x0_dist_tilde_u_comp}; the second inequality follows from the definition of $C^0_\gamma$ and \ref{item:GraphParam_RegScaleBdBelow}; the third inequality follows from \eqref{eqn:tilde_u_est}; the last inequality follows from triangle inequality, \eqref{eqn:tilde_tilde(u)_comp}, Claim 1 and the allowed range for $r$. Now we fix the choice of $\delta_2(\Lambda, \sigma)$ so small that $C \delta_2^\gamma < 1/2$ and $4\delta_2<K^{-3}$.

        Recalling that, by Corollary \ref{cor:L2toC0}, we have 
        \begin{align}
           \|u\|_{C^0(A^{g}(p^0, 2r, 16/9)\cap\Sigma^0)} \leq C(r)\|u\|_{L^2(A^{g}(p^0, r, 2)\cap\Sigma^0)}\,. \label{Equ_App_GraphParam_C^0 bd by L^2} 
        \end{align}
 \eqref{Equ_App_GraphParam_|x^1-x^0| < |u|_L^2} finally follows by combining \eqref{Equ_App_GraphParam_Est |x^1-x^0| by |u|}, \eqref{Equ_App_GraphParam_C^0 bd by L^2} (with $r=\delta_2$) and \eqref{eqn:x^i_dist} with $\tilde \delta_2\in (0, \delta_2)$ small enough.
Next, to prove \eqref{Equ_App_GraphParam_|u - x^perp|_s < s^gamma|u|_L^2}, notice that for every $s\in ((C''')^2\bfr, 1/3)$ we have 
        \begin{align*}
          \|u - (\mbfx^1-\mbfx^0)^{\perp_{\Sigma^0, g}}\|_{C^0(A^g(p^0, 2s, 3s)\cap\Sigma^0)} & \leq C\left(\|\tilde u\|_{C^0(A^g(p^0, s, 4s)\cap\Sigma^0)} + s^{-1}\cdot|\mbfx^1 - \mbfx^0|^2\right) \\
          & \leq C\left(\|\tilde u\|_{C^0_\gamma(A^g(p^0, s, 4s)\cap\Sigma^0)}\cdot s^\gamma + s^{-1}\cdot|\mbfx^1 - \mbfx^0|^2\right) 
        \end{align*}
        Here, the first inequality follows again from triangle inequality, \eqref{eqn:tilde_u_comp}, Claim 1 and choice of $s$; and the second inequality follows by definition of $C^0_\gamma$ norm and \ref{item:GraphParam_RegScaleBdBelow}.
On the other hand,
        by \eqref{eqn:tilde_u_est}, \eqref{eqn:tilde_tilde(u)_comp}, \eqref{Equ_App_GraphParam_C^0 bd by L^2} and \eqref{Equ_App_GraphParam_|x^1-x^0| < |u|_L^2}, we have
        \[
          \|\tilde u\|_{C^0_\gamma(A^g(p^0, s, 4s)\cap\Sigma^0)} \leq C \left(|\mbfx^1-\mbfx^0| + \|u\|_{C^0(A^{g}(p^0, K^{-3}/2, 16/9))} \right) \leq C\|u\|_{L^2(A^{g}(p^0, \delta_2, 2)\cap \Sigma^0)}.
        \]
        We can now choose $C_2$ in terms of $C'''$ and the constants appearing in the right-hand side of the preceding two inequalities (in particular $C_2$ shall be larger than $2(C''')^2$) to obtain \eqref{Equ_App_GraphParam_|u - x^perp|_s < s^gamma|u|_L^2}, which completes the proof.
          \end{proof}
    
     We also need --- for the purpose of Lemma \ref{Lem_Tangent vectors to scL^k} --- to compare the graphical sections defining three different submanifolds. 

    \begin{corollary} \label{Cor_L^2 Growth est for pair}
        Let $\gamma, \sigma\in (0, 1)$ and $\Lambda>0$. Then there exists $\delta_3 = \delta_3(\gamma, \sigma, \Lambda)\in (0,1)$, $\tilde \delta_3 = \tilde \delta_3(\gamma, \sigma, \Lambda)\in (0, \delta_3)$ and $C_3 = C_3(\gamma, \sigma, \Lambda) > 2$ such that the following holds. Let $M$ be a smooth manifold, $p\in M$, and let $g$ be a $C^7$ Riemannian metric such that $\|g\|_{C^7(M)}\leq \Lambda$, $\injrad_{M,g}(p)>5$, $\|(\exp^{g}_p)^*g-g_\eucl\|_{C^{5}(\BB(5))}\leq \tilde \delta_3$, and for $i=0,1, 2$ let $\Sigma^i$ be an $\MSI$ in $(M,g)$ such that:
        \begin{enumerate}[label=(\roman*)]
           
            \item for $i=0,1,2$, there exists $p^i\in B^g(p, \tilde\delta_3)$ such that $p^0=p$, $\Sing(\Sigma^i)\cap B^g(p^i, 3) = \{p_i\}$, and: 
			\[
				\theta_{|\Sigma^i|}(p^i, 2) - \theta_{|\Sigma^i|}(p^i) \leq \tilde \delta_3\,, 
			\]
            as well as
            \[
                \dist_\R(\gamma, \Gamma(\mathbf{C}_{p^i}(\Sigma^i)) \cup \{- (n - 2)/ 2\}) \geq \sigma\,;
            \]
            in what follows let $\mbfx^i := (\exp_p^g)^{-1}(p^i)\in T_pM$ for $i=0,1,2$ (so that $\mbfx^0=\orig$); 
		  \item for $i=0,1,2$, the regularity scale function, defined in Definition \ref{def:RegularScale}, satisfies
			\[
					\mfr_{\Sigma^i, g} \geq \Lambda^{-1}\dist_g(\cdot, p^i)\,;
			\]
		  \item for $i=1,2$, the graphical radius satisfies
            \[
                \bfr^{\Sigma^i}_{\Sigma^0, g}(p)\leq \tilde \delta_3\,;
            \]
            \item for $i=1,2$, the graphical section $u^{(i)}:= \mbfG_{\Sigma^0, g}^{\Sigma^i}$ satisfies $\|u^{(i)}\|_{L^2(A^g(p,1/2, 2)\cap\Sigma^0)} \leq \tilde \delta_3$\,.
        \end{enumerate}
         
        Then for $i=1,2$, we have 
        \be\label{eq:LastCompareEasy}
           \frac{1}{2} |\mbfx^i| \leq  \bfr^{\Sigma^i}_{\Sigma^0, g}(p) \leq  C_3\|u^{(i)}\|_{L^2(A^{g}(p, \delta_3, 2)\cap\Sigma^0)}\,,
        \ee
        and, set $\bfr:= \max_i \bfr^{\Sigma^i}_{\Sigma^0, g}(p)$, $\mbfx:= \mbfx^2-\mbfx^1$, then for every $s\in (C_3 \bfr, 1/4)$ there holds
        \begin{align*}
            \|(u^{(1)} - u^{(2)}) & + \mbfx^{\perp_{\Sigma^0, g}}\|_{C^0(A^g(p, s, 2s)\cap\Sigma^0)} \\
            & \leq C_3 \|\mbfG^{\Sigma^2}_{\Sigma^1, g}\|_{L^2(A^{g}(p, \delta_3, 2)\cap\Sigma^1)} \cdot 
            \Big(s^\gamma + s^{-1}\left( \|u^{(2)}\|_{L^2(A^g(p, \delta_3, 2)\cap\Sigma^0)} + |\mbfx^1|+|\mbfx^2|\right) \Big) \,.
        \end{align*}
    \end{corollary}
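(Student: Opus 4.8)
The plan is to reduce the three-submanifold comparison to two applications of Corollary~\ref{Cor_L^2 Growth Est}, combined with Lemma~\ref{Lem_App_GraphParam_Compare diff graph func} to relate the various graphical sections over each other. First I would apply Corollary~\ref{Cor_L^2 Growth Est} with $(\Sigma^0, \Sigma^i, g, p, p^i)$ in place of $(\Sigma^0, \Sigma^1, g, p, p^0)$ for $i=1,2$ separately: choosing $\delta_3, \tilde\delta_3$ small enough (so that $\tilde\delta_3 \le \tilde\delta_2(\gamma,\sigma,\Lambda)$ and the hypotheses transfer, using that $\mbfx^0 = \orig$) yields \eqref{eq:LastCompareEasy} directly from \eqref{Equ_App_GraphParam_|x^1-x^0| < |u|_L^2}, and also gives the pointwise estimate
\[
\|u^{(i)} - (\mbfx^i)^{\perp_{\Sigma^0,g}}\|_{C^0(A^g(p,s,2s)\cap\Sigma^0)} \le C_2\big(\|u^{(i)}\|_{L^2(A^g(p,\delta_2,2)\cap\Sigma^0)}\cdot s^\gamma + s^{-1}|\mbfx^i|^2\big)
\]
for $s\in(C_2\bfr^{\Sigma^i}_{\Sigma^0,g}(p), 1/2)$.

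Next, subtracting the two estimates for $i=1$ and $i=2$, the triangle inequality controls $\|(u^{(1)}-u^{(2)}) + \mbfx^{\perp_{\Sigma^0,g}}\|_{C^0(A^g(p,s,2s)\cap\Sigma^0)}$ by
\[
C_2\big(\|u^{(1)}\|_{L^2} + \|u^{(2)}\|_{L^2}\big)s^\gamma + C_2 s^{-1}(|\mbfx^1|^2 + |\mbfx^2|^2).
\]
The remaining task is to replace the right-hand side of this bound with the quantity appearing in the statement, namely a multiple of $\|\mbfG^{\Sigma^2}_{\Sigma^1,g}\|_{L^2(A^g(p,\delta_3,2)\cap\Sigma^1)}$ times $\big(s^\gamma + s^{-1}(\|u^{(2)}\|_{L^2} + |\mbfx^1| + |\mbfx^2|)\big)$. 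For this I would run Corollary~\ref{Cor_L^2 Growth Est} once more, this time with $(\Sigma^1, \Sigma^2, g, p^1, p^2)$ in place of $(\Sigma^0, \Sigma^1, g, p^0, p^1)$: this gives $\frac12|\mbfx^2-\mbfx^1| = \frac12|\mbfx| \le C_2\|\mbfG^{\Sigma^2}_{\Sigma^1,g}\|_{L^2(A^g(p^1,\delta_2,2)\cap\Sigma^1)}$, and Lemma~\ref{Lem_App_GraphParam_Compare diff graph func}\ref{Item_GraphParam_Graph diff est} (inequality \eqref{Equ_GraphParam_|u^2_0-u^1_0| approx |u^2_1|}, applied via a covering argument over the annulus as in the proof of Lemma~\ref{Lem_Tangent vectors to scL^k}) bounds $\|\mbfG^{\Sigma^2}_{\Sigma^1,g}\|_{L^2(A^g(p,\delta_3,2)\cap\Sigma^1)}$ from below by a multiple of $\|u^{(1)}-u^{(2)}\|_{L^2}$ up to the translation error $|\mbfx^1|$ and from above. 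So both $|\mbfx|$ and the $L^2$-norms $\|u^{(i)}\|_{L^2}$ are comparable (modulo the already-present error terms $|\mbfx^1|, |\mbfx^2|$) to $\|\mbfG^{\Sigma^2}_{\Sigma^1,g}\|_{L^2(A^g(p,\delta_3,2)\cap\Sigma^1)}$; substituting these comparisons and using $|\mbfx^i|^2 \le (|\mbfx^1|+|\mbfx^2|)\cdot(|\mbfx^1|+|\mbfx^2|)$ so that one factor of $|\mbfx^i|$ can be absorbed into the displayed $s^{-1}(\cdots)$ term while the other is converted to an $L^2$ quantity, yields the claimed estimate after adjusting $C_3$ and shrinking $\delta_3, \tilde\delta_3$ once more.

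The main obstacle I anticipate is bookkeeping the interplay between the three different singular points $p^0, p^1, p^2$, the three graphical radii, and the condition $s \in (C_3\bfr, 1/4)$ versus the ranges $(C_2\bfr^{\Sigma^i}_{\Sigma^0,g}(p), 1/2)$ coming from each invocation of Corollary~\ref{Cor_L^2 Growth Est}: one must choose $C_3$ large enough (in terms of the $C_2$, $C'''$ constants from the proofs of Corollary~\ref{Cor_L^2 Growth Est} and Lemma~\ref{Lem_App_GraphParam_Compare diff graph func}) so that $s > C_3\bfr$ guarantees $s$ lies in all the relevant admissible ranges simultaneously, and $\delta_3, \tilde\delta_3$ small enough that all the smallness hypotheses (on the metric, the density drop, the graphical radii, and the $C^2$-norms of the sections on the outer annulus) propagate through each application — in particular one must verify that hypothesis~\ref{item:LastCorNormalGraph} of Corollary~\ref{Cor_L^2 Growth Est}, which requires a $C^2$-bound rather than merely $L^2$, follows from the $L^2$-smallness assumed here via Corollary~\ref{cor:L2toC0}. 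The other delicate point is ensuring, when comparing $\mbfG^{\Sigma^2}_{\Sigma^1,g}$ with $u^{(1)}-u^{(2)}$, that the covering argument is applied on an annulus bounded away from both singular points $p^1$ and $p^2$ where all three submanifolds are genuine graphs over one another, which is exactly the regime controlled by Lemma~\ref{Lem_App_GraphParam_Compare diff graph func}\ref{Item_GraphParam_Graph over each other}; this is routine but requires care in tracking which metric balls the estimates hold in.
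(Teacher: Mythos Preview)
Your reduction to two separate applications of Corollary~\ref{Cor_L^2 Growth Est} followed by subtraction has a genuine gap: after subtracting the two instances of \eqref{Equ_App_GraphParam_|u - x^perp|_s < s^gamma|u|_L^2} you obtain an $s^\gamma$–coefficient of order $\|u^{(1)}\|_{L^2}+\|u^{(2)}\|_{L^2}$, and this quantity is \emph{not} controlled by $\|\mbfG^{\Sigma^2}_{\Sigma^1,g}\|_{L^2}$ (nor by that quantity times any combination of $|\mbfx^1|,|\mbfx^2|,\|u^{(2)}\|_{L^2}$). Think of $\Sigma^1$ and $\Sigma^2$ very close to each other but both at a fixed positive distance from $\Sigma^0$: then $\|\mbfG^{\Sigma^2}_{\Sigma^1,g}\|_{L^2}$ is arbitrarily small while $\|u^{(1)}\|+\|u^{(2)}\|$ stays bounded below. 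Your comparison via \eqref{Equ_GraphParam_|u^2_0-u^1_0| approx |u^2_1|} only gives $\|u^{(1)}-u^{(2)}\|\le C\|\mbfG^{\Sigma^2}_{\Sigma^1,g}\|$, not a bound on each $\|u^{(i)}\|$ individually. This sharpness is exactly what is needed downstream in Lemma~\ref{Lem_Tangent vectors to scL^k}, where one divides by $d_j\sim\|\mbfG^{\Sigma^1_j}_{\Sigma^0_j}\|$ and needs the $s^\gamma$–coefficient to stay bounded after division.

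The paper avoids this by \emph{not} subtracting two estimates over $\Sigma^0$. Instead it translates $\Sigma^2$ by $-\mbfx$ to obtain $\tilde\Sigma^2$, which now shares the singular point $p^1$ with $\Sigma^1$, and applies Lemma~\ref{Lem_Growth Estimate} directly to the pair $(\Sigma^1,\tilde\Sigma^2)$ (this is precisely what the ``Moreover'' clause at the end of Corollary~\ref{Cor_L^2 Growth Est} is set up for). The resulting decay estimate for $\tilde v=\mbfG^{\tilde\Sigma^2}_{\Sigma^1,g}$ carries the correct coefficient $\|v\|_{L^2}=\|\mbfG^{\Sigma^2}_{\Sigma^1,g}\|_{L^2}$ (after a single use of \eqref{Equ_GraphParam_|tilde u - u - x^perp| bd} to pass from $\tilde v$ back to $v$ on the outer annulus). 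One then writes
\[
(u^{(1)}-u^{(2)})+\mbfx^{\perp_{\Sigma^0,g}}
=\big[(\tilde u^{(2)}-u^{(2)})+\mbfx^{\perp_{\Sigma^0,g}}\big]+\big[u^{(1)}-\tilde u^{(2)}\big],
\]
bounds the first bracket by \eqref{Equ_GraphParam_|tilde u - u - x^perp| bd} (yielding the $s^{-1}|\mbfx|\cdot(\|u^{(2)}\|+|\mbfx|)$ term, with $|\mbfx|\le C\|v\|_{L^2}$), and the second bracket by \eqref{Equ_GraphParam_|u^2_0-u^1_0| approx |u^2_1|} in terms of $\|\tilde v\|_{C^0}$, which is already controlled by $C\|v\|_{L^2}s^\gamma$. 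The missing idea in your proposal is this use of the translated $\tilde\Sigma^2$ to make Lemma~\ref{Lem_Growth Estimate} apply with $\Sigma^1$ as the base.
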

    \begin{proof}
The first claim, i.\,e. \eqref{eq:LastCompareEasy} follows directly from the corresponding conclusion in Corollary \ref{Cor_L^2 Growth Est}. To proceed and justify the other estimate, 
 define
 $   \tilde \Sigma^2 := (\tau^g_{\mbfx})^{-1}\Sigma^2), \tilde g := \left(\tau^g_{\mathbf{x}}\right)^*(g)\,.
        $
     Let $\tilde u^{(2)}:= \mbfG_{\Sigma^0, g}^{\tilde\Sigma^2}$ be the graphical section of $\tilde\Sigma^2$ over $\Sigma^0$, and $v:= \mbfG^{\Sigma^2}_{\Sigma^1, g}$, $\tilde v:= \mbfG^{\tilde\Sigma^2}_{\Sigma^1, g}$ be the graphical section of $\Sigma^2$ and $\tilde\Sigma^2$ over $\Sigma^1$ respectively. 
        By making $\tilde\delta_3$ small enough, by the last assertion in the statement of Corollary~\ref{Cor_L^2 Growth Est}, we know that the assumptions in Lemma \ref{Lem_Growth Estimate} hold for $\Sigma^1, g, \tilde\Sigma^2, \tilde g$. Therefore, since $\tilde\Sigma^2$ is $\tilde g$-minimal and 
        $\|(\exp^{g}_p)^*{\tilde g} - (\exp^{g}_p)^*g\|_{C^5(\BB(4))} \leq C|\mbfx|$, we have for every $s\in (0, 1/6)$,
        \begin{align}
         \begin{split}
          \|\tilde v\|_{C^0(A^g(p^1, s/2, 3s)\cap\Sigma^1)} & \leq C \left( |\mbfx| + \|\tilde v\|_{C^0(A^g(p^1, K^{-4}, 1)\cap\Sigma^1)} \right)\cdot s^\gamma \\ 
          & \leq C\left( |\mbfx| + \|v\|_{C^0(A^g(p^1, K^{-5}, 3/2)\cap\Sigma^1)} \right)\cdot s^\gamma \\ 
          & \leq C\hspace{1mm}\|v\|_{L^2(A^g(p^1, \min\{\delta_2, K^{-6}\}, 2)\cap\Sigma^1)} \cdot s^\gamma \,. 
         \end{split} \label{Equ_App_GraphParam_|tilde v|< |v|s^gamma}
        \end{align}
        where the first inequality follows by Lemma \ref{Lem_Growth Estimate}, the second inequality follows from \eqref{Equ_GraphParam_|tilde u - u - x^perp| bd} and the last follows from \eqref{Equ_App_GraphParam_|x^1-x^0| < |u|_L^2} in Corollary \ref{Cor_L^2 Growth Est} to absorb $|\mbfx|$ and Corollary \ref{cor:L2toC0} if $\tilde\delta_3, \delta_3$ are taken small enough. (Here and below it is understood that $C=C(\gamma,\sigma,\Lambda)$.)

        Also, by applying \eqref{Equ_App_GraphParam_|u - x^perp|_s < s^gamma|u|_L^2} to the data $\Sigma^0, \Sigma^2, g$ and choosing $\delta_3\leq \delta_2$, $\tilde\delta_3\leq \tilde\delta_2$, we have for every $s\in (C_2\bfr, 1/2)$, 
        \begin{equation}
            \|u^{(2)}\|_{C^0(A^g(\mathbf{0}, s, 2s)\cap\Sigma^0)} \leq C \left(\|u^{(2)}\|_{L^2(A^g(\orig, \delta_3, 2)\cap\Sigma^0)} \cdot s^\gamma + |\mbfx^2| \right) \,. \label{Equ_App_GraphParam_|u^2|_A(s) < |u^2|_0 s^gamma + |x^2|}      
        \end{equation}
            On the other hand, Lemma \ref{Lem_App_GraphParam_Compare diff graph func} implies that for every $s\in (4C_2\bfr, 1/4)$, 
        \begin{align*}
            & \|(u^{(1)} - u^{(2)}) + \mbfx^{\perp_{\Sigma^0, g}}\|_{C^0(A^g(p, s, 2s)\cap\Sigma^0)} \\
             \leq\ & \|(\tilde u^{(2)} - u^{(2)}) + \mbfx^{\perp_{\Sigma^0, g}}\|_{C^0(A^g(p, s, 2s)\cap\Sigma^0)} + \|u^{(1)} - \tilde u^{(2)}\|_{C^0(A^g(p, s, 2s)\cap\Sigma^0)} \\
             \leq\ & C\left(\|u^{(2)}\|_{C^0(A^g(p, s/2, 3s)\cap\Sigma^0)} + |\mbfx|\right)s^{-1}\cdot|\mbfx| + C\|\tilde v\|_{C^0(A^g(p^1, s/2, 3s)\cap \Sigma^1)}\,.
        \end{align*}   Here, the extra $s^{-1}$ comes from rescaling $B^g(p, 2s)$ to a ball of radius $2$ as in Lemma~\ref{Lem_App_GraphParam_Compare diff graph func}. Combining this inequality with \eqref{Equ_App_GraphParam_|tilde v|< |v|s^gamma} and \eqref{Equ_App_GraphParam_|u^2|_A(s) < |u^2|_0 s^gamma + |x^2|} allows to conclude the proof.
    \end{proof}

\section{Parametrizing the space of MSI in a Riemannian manifold} \label{Sec:Count_Decomp}

    For $N > n\geq 2$ be integers, recall that $\mathcal{C}_{N,n}$ is the collection of \emph{non-trivial} regular $n$-dimensional minimal cones $\mathbf{C}$ in $\mathbb{R}^N$. For each $\mathbf{C} \in \mathcal{C}_{N,n}$ and every $x \in \mathbf{C} \setminus \{\mathbf{0}\}$, we have (essentially as a specification of Definition \ref{def:RegularScale})
    \[
    \mathfrak{r}_\mathbf{C}(x) := \sup\left\{r > 0 : \mbfC\cap \BB(x,r)=\graph_{T_x \mbfC}(u), \ r^{-1}|u|+|\mathring{\nabla} u|+r|\mathring{\nabla}^2 u|\leq 1\right\}
    \]
for $\phi:\textrm{dom}(\phi)\subset T_x\mbfC\to T^{\perp}_x\mbfC$, of class $C^2$.
Note that $r_\mathbf{C}$ is $1$-homogeneous in the radial direction. To obtain a compactness result, Lemma \ref{Lem_cC_(N,n)(Lambda) bF Cpt}, for each $\Lambda \geq 1$ we had set
    \[
        \mathcal{C}_{N,n}(\Lambda):=\{\mbfC\in \cC_{N,n}:  \inf_{\mbfC \cap \SSp^{N-1}} \mathfrak{r}_\mbfC(x) \geq \Lambda^{-1}\};
    \] clearly, there holds $\mathcal{C}_{N, n} = \bigcup^\infty_{\Lambda = 1} \mathcal{C}_{N, n}(\Lambda).$
    
\vspace{3mm}

\textbf{Multiplicity-one cone decomposition.}
    In this appendix, we adapt a result of Edelen~\cite{Edelen2021} to ``parametrize'' the class of all minimal submanifolds with strongly isolated singular points ($\MSI$) inside a given Riemannian manifold. It is important to note that in his original work, Edelen considered minimal cones and varifolds with higher multiplicities, whereas for our applications it suffices to restrict to multiplicity-one objects. On the other hand Edelen focused on hypersurfaces, where there is a natural hierarchy among the minimal graphs over a given region;  this hierarchy does not exist in the submanifold setting (namely: for general dimension and codimension), so we need to exploit partly different ideas.

    \begin{definition}[Strong-cone region]\label{Def:strong_cone_region}
        Let $g$ be a $C^2$ metric on $\mathbb{B}(a, R) \subset \mathbb{R}^N$, and $V$ be an $n$-dimensional integral varifold on $(\mathbb{B}(a, R), g)$. Given $\mathbf{C} \in \mathcal{C}_{N, n}$, $\beta\in [0, 1/4]$, $\rho \in [0, R]$, we say that $V\llcorner(\mathbb{A}(a, \rho, R), g)$ is a \textbf{$(\mathbf{C}, \beta)$-strong-cone region} if there is a $C^\infty$ normal section 
        \[
            u: (a + \mathbf{C}) \cap \mathbb{A}(a, \rho/8, R) \rightarrow \mathbf{C}^\perp
        \] so that for any $r \in [\rho, R] \cap (0, \infty)$ there holds $V \llcorner  \mathbb{A}(a, \rho/8, R) = |\graph_{a + \mathbf{C}}(u)|$ (i.\,e. $V$ coincides in that annulus with the multiplicity one varifold associated to the graph of $u$) and we have
        \begin{enumerate}[label=(\roman*)]
            \item small $C^2$ norms: $r^{-1}|u| + |\nabla u| + r|\nabla^2 u| \leq \beta$;
            \item almost constant density ratios: $\theta_\mathbf{C}(0) - \beta \leq \theta_V(a, r) \leq \theta_\mathbf{C}(0) + \beta$.
        \end{enumerate}
        In this case, for simplicity, we will also call $\mathbb{A}(a, \rho, R)$ a strong-cone region for $V$.
    \end{definition}
    \begin{remark}
        In~\cite[Definition~6.0.1,~Definition~6.0.3]{Edelen2021}, Edelen introduced notions of a \emph{weak-cone region} and a \emph{strong-cone region}, respectively. It is then shown in~\cite[Definition~6.1(3)]{Edelen2021} that, when the parameters $\beta$ and $\tau$ are sufficiently small depending on the cone $\mathbf{C}$, a weak-cone region is in fact also a strong-cone region. This result relies on a compactness theorem (Theorem 5.1 therein), which can be replaced by Lemma~\ref{Lem_cC_(N,n)(Lambda) bF Cpt} to obtain the same result in our (more general) setting. Therefore, for simplicity, in this manuscript we shall only employ the notion of strong-cone region.
    \end{remark}
    
    As discussed prior to the definition, we consider only multiplicity-one objects, so we fix the multiplicity parameter $m$ in~\cite[~Definition~6.0.3]{Edelen2021} to be $1$. This choice applies throughout this section.

    \begin{definition}[Smooth model]\label{Def:smooth_model}
        Given parameters $\Lambda, \gamma \in (0, \infty), \sigma \in (0, 1/3)$, a tuple $(S, \mbfC, \set{(\mbfC_\alpha,  \mathbb{B}(y_\alpha, r_\alpha))}_\alpha)$ is called a \textbf{$(\Lambda, \sigma, \gamma)$-smooth model} if
        \begin{itemize}
            \item $S$ is a stationary integral $n$-varifold in $(\mathbb{R}^N, g_{\eucl})$ with 
            \[
                \theta_S(\mathbf{0}, \infty) \leq \Theta(\Lambda)\,,
            \]
            \item $\mbfC, \set{\mbfC_\alpha}_\alpha \subset \mathcal{C}_{N, n}(\Lambda)$,
            \item $\set{ \mathbb{B}(y_\alpha, 2r_\alpha)}$ is a finite collection of disjoint balls in $\mathbb{B}(1 - 3\sigma)$,
        \end{itemize}
        such that the following conditions are satisfied:
        \begin{enumerate}[label=(\roman*)]
            \item $\spt S$ is a smooth, closed, embedded minimal submanifold $\mathring{S}$ in $\mathbb{R}^N\setminus \set{y_\alpha}_\alpha$ such that for every $x \in \mathring{S}$,
            \[
                \mfr_{\mathring{S}, g_{\eucl}}(x) \cdot \min\{1, \dist_{g_{\eucl}}(x, \set{y_\alpha}_\alpha)\} \geq \Lambda^{-1}\,,
            \]
            where the regularity scale $\mfr_{\mathring{S}, g_\eucl}$ is that of Definition \ref{def:RegularScale};
            \item $S\llcorner( \mathbb{A}(\mathbf{0}, 1, \infty), g_{\eucl})$ is a $(\mbfC, \gamma)$-strong-cone region;
            \item for each $\alpha$, $\spt S \cap  \mathbb{A}(y_\alpha, 0, 2r_\alpha)$ is a $(\mbfC_\alpha, \gamma)$-strong-cone region.
        \end{enumerate}
        (Note that the function $\Theta(\Lambda)$ has been defined in Remark \ref{rem:DensityFunct}.)
        If there is no ambiguity, we will refer to the smooth model as $S$ for simplicity.
    \end{definition}
    \begin{remark}
        Intuitively, when a minimal submanifold is close to a cone outside some small ball, this approximation does not necessarily extend directly into the small ball. Fortunately, due to the volume monotonicity, on a small scale, if the minimal submanifold is modelled on another cone, the density of the new cone is controlled by that of the original one. The notion of smooth region, introduced below, aims at depicting, in $\BB(1) \setminus\bigcup_{\alpha} \BB(y_\alpha, r_\alpha / 4)$, the region between strong-cone regions at two different scales.
    \end{remark}

    \begin{definition}[Smooth model scale constant]\label{Def:smooth_model_const}
        Given a $(\Lambda, \sigma, \gamma)$-smooth model $S$, we let $\epsilon_S$ be the largest number $\leq \min(1, \min_\alpha\set{r_\alpha})$ for which the graph map 
        \[
            \graph_S: T^\perp(\mathring{S}) \rightarrow \mathbb{R}^N, \quad \graph_S(x, v):= x + v\,,
        \]
        is a diffeomorphism from $\set{(x,v) \in T^\perp(\mathring{S}): x \in \BB(2)\setminus \bigcup_\alpha \BB(y_\alpha, r_\alpha/8), |v| < 2\epsilon_S}$ onto its image, and satisfies
        \[
            \left|D\graph_S\vert_{(x,v)} - \id\right| \leq \epsilon_S^{-1}|v|\,.
        \]
(Here $T^\perp(\mathring{S})$ denotes the normal bundle to $S$ in Euclidean $\R^N$.)
        
    \end{definition}

    \begin{definition}[Smooth region]
        Given a smooth model $S$, a $C^2$ metric $g$ on $ \mathbb{B}(a, R) \subset \mathbb{R}^N$, and $\beta \in (0, 1)$, we say that an integral varifold $V$ in $(\mathbb{B}(a, R), g)$ is an \textbf{$(S, \beta)$-smooth region} if there is a $C^2$ function $u: \mathring{S} \rightarrow \mathring{S}^\perp$ so that 
        \[
            \left((\eta_{a, R})_\# V\right)\llcorner( \BB(1) \setminus \bigcup_\alpha  \mathbb{B}(y_\alpha, r_\alpha /4)) = \left[\graph_{\mathring{S}}(u)\cap  \BB(1) \setminus \bigcup_\alpha  \mathbb{B}(y_\alpha, {r_\alpha /4} )\right]_{R^{-2} \circ \eta^*_{a,R}(g)}\,,
        \]
        where $\eta_{a,R}$ is the dilation of center $a$ and scale $1/R$,
        and
        \[
            |u|_{C^2(\mathring{S})} \leq \beta \epsilon_{S}\,,
        \]
        where $\epsilon_S$ is a scale constant in the previous definition. 
        
        In this case, for simplicity, we will also call $\mathbb{B}(a, R)$ a smooth region for $V$.
    \end{definition}

    \begin{definition}[Local cone decomposition]\label{Def:cone_decomposition}
        Given $\Lambda \geq 1$, $\gamma, \beta \in \mathbb{R}$, $\sigma \in (0, 1/3)$, and $N_R \in \mathbb{N}^+$, we let 
        \begin{itemize}
            \item $g$ be a $C^2$ metric on $\mathbb{B}(x, R) \subset \mathbb{R}^N$;
            \item $V$ be an integral varifold in $(\mathbb{B}(x, R), g)$;
            \item $\cS = \set{S_s}_{s}$ be a finite collection of $(\Lambda, \sigma, \gamma)$-smooth models.
        \end{itemize}
        A \textbf{$(\Lambda, \beta, \cS, N_R)$-cone decomposition} of $V$ consists of the following parameters:
        \begin{itemize}
            \item Integers $N_C$, $N_S$ satisfying $N_C + N_S \leq N_R$, where $N_C$ is the number of strong-cone regions and $N_S$ the number of smooth regions;
            \item Points $\set{x_a}_a, \set{x_b}_b \subset \mathbb{B}(x, R)$, where $\set{x_a}$ are henceforth referred to as centers of strong-cone regions and $\set{x_b}$ as centers of smooth regions;
            \item Radii $\set{R_a, \rho_a \st R_a \geq 2 \rho_a}_a, \set{R_b}_b$, corresponding to radii of annuli in the definition of strong-cone regions and of balls in the definition of smooth regions, respectively;
            \item Cones $\set{\mbfC_a}_a \subset \cC_{N,n}(\Lambda)$;
            \item Indices $\set{s_b}_b$, corresponding to the smooth model $S_{s_b}$;
        \end{itemize}
        where $a = 1, \cdots, N_C$ and $b = 1, \cdots, N_S$. Such parameters determine a covering of balls and annuli satisfying:
        \begin{enumerate}[label=(\roman*)]
            \item every $V\llcorner(\mathbb{A}(x_a, \rho_a, R_a), g)$ is a $(\mbfC_a, \beta)$-strong cone region and every $V \llcorner (\mathbb{B}(x_b, R_b), g)$ is a $(S_{s_b}, \beta)$-smooth region;
            \item there is either a strong-cone region $\mathbb{A}(x_a, \rho_a, R_a)$ for $V$ with $R_a = R$ and $x_a = x$, or a smooth region $B_{R_b}(x_b)$ for $V$ with $R_b = R$ and $x_b = x$;
            \item if $V \llcorner{(\mathbb{A}(x_a, \rho_a, R_a),g)}$ is a $(\mbfC_a, \beta)$-strong-cone region and $\rho_a > 0$, then there exists either a smooth region $\mathbb{B}(x_b, R_b)$ for $V$ with $R_b = \rho_a$, or another cone region $\mathbb{A}(x_{a'}, \rho_{a'}, R_{a'})$ for $V$ with $R_{a'} = \rho_a, x_{a'} = x_a$. If $\rho_a = 0$, then $\theta_{\mbfC_a}(\mathbf{0}) > 1$;
            \item if $V\llcorner (\mathbb{B}(x_b, R_b), g)$ is a smooth region with $(S, \mbfC, \set{\mbfC_{\hat\alpha}, \BB(y_{\hat\alpha}, r_{\hat\alpha})}_{\hat\alpha})\in \cS$, then for any ${\hat\alpha}$, there exists a point $x_{b,{\hat\alpha}}$ and a radius $R_{b,{\hat\alpha}}$ satisfying
                \[
                    |x_{b,{\hat\alpha}} - (x_b + R_b \cdot y_{\hat\alpha})| \leq \beta R_b r_{\hat\alpha}, \quad \frac{1}{2}\leq \frac{R_{b,{\hat\alpha}}}{R_b r_{\hat\alpha}} \leq 1 + \beta\,,
                \]
                and either a strong-cone region $\mathbb{A}(x_{a'}, \rho_{a'}, R_{a'})$ for $V$ with $R_{a'} = R_{b, {\hat\alpha}}, x_{a'} = x_{b, {\hat\alpha}}$, or another smooth region $\mathbb{B}(x_{b'}, R_{b'})$ with $R_{b'} = R_{b, {\hat\alpha}}$ and $x_{b'} = x_{b, {\hat\alpha}}$.
        \end{enumerate}
    \end{definition}

    \begin{remark}
        In these definitions, we do not assume the stability of minimal cones. Instead, we impose a regularity scale condition, in the definition of $\mathcal{C}_{N,n}(\Lambda)$, to have uniform control on the geometry of the link 
        (cf. Section
\ref{sub:Compact}).
    \end{remark}

    Given a stationary integral varifold $V$ in $(\BB(1), g)$ with finitely many singular points $\Sing V$, for every $x \in \Reg V$, we define
    \[
        \rho_{V}(x) := \min\{1, \inf_{p \in \Sing V}\{\dist_g(x, p)\}\}\,.
    \]
    By Lemma \ref{Lem_cC_(N,n)(Lambda) finitely many density}, for each $\Lambda > 0$, there are only finitely many possible densities for regular minimal cones in $\mathcal{C}_{N, n}(\Lambda)$. Among various consequences, we have the following important local cone decomposition theorem.
    
    \begin{theorem}[Existence of local cone decomposition]\label{Thm:cone_decomposition}
        Given parameters $I \in \mathbb{N}$, $\Lambda \in \mathbb{N}^+$, $\sigma \in (0, \frac{1}{100 (\Lambda + 1)})$, $\gamma \in (0, 1)$, there exist constants $\delta$, $N_R$ and a finite collection of $(\Lambda, \sigma, \gamma)$-smooth models $\set{S_s}_s = \mathcal{S}$, all depending only on $(\Lambda, \sigma, \gamma)$ with the following property.
        
        For any $C^3$ metric $g$ on $\BB(1)$ satisfying $|g - g_{\eucl}|_{C^3(\BB(1))} \leq \delta$, any stationary integral varifold $V$ in $(\BB(1), g)$ with $\#\Sing V \leq I$, $\mathbf{C} \in \mathcal{C}_{N, n}(\Lambda)$, if
        \begin{enumerate}[label=(\roman*)]
            \item $\dist_H(\spt V \cap \BB(1), \mathbf{C} \cap \BB(1)) \leq \delta$,
            \item $\frac{1}{2} \theta_{\mathbf{C}}(0) \leq \theta_V(0, 1/2)$ and $\theta_V(0, 1) \leq \frac{3}{2} \theta_{\mathbf{C}}(0)$,
            \item for all $x \in \Reg V$, $\mfr_{\Reg V, g}(x)  \geq \Lambda^{-1}\rho_V(x)$,
        \end{enumerate}
        where $\dist_H$ stands for the Hausdorff distance, then there exists a radius $r \in (1 - 20\sigma, 1)$ so that $V \llcorner \BB(x,r)$ admits a $(\Lambda, \gamma, \mathcal{S}, N_R)$-cone decomposition.
    \end{theorem}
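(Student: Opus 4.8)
The plan is to prove Theorem~\ref{Thm:cone_decomposition} by induction on the density $\theta_{\mathbf{C}}(\orig)$, following closely the inductive scheme of \cite{Edelen2021} (specifically its Section 7) but with the simplifications afforded by working only with multiplicity-one objects and the modifications needed in the absence of a codimension-one ``order structure''. First I would fix the discrete set of possible densities: by Lemma~\ref{Lem_cC_(N,n)(Lambda) finitely many density} the set $\{\theta_{\mathbf{C}}(\orig):\mathbf{C}\in\cC_{N,n}(\Lambda)\}$ is finite, so I can enumerate the \emph{attainable} values $1=\theta_0<\theta_1<\dots<\theta_L\leq\Theta(\Lambda)$ and induct on $\ell$, where the inductive hypothesis asserts the theorem (with some constants $\delta_\ell, N_{R,\ell}$ and model family $\cS_\ell$) for all stationary integral varifolds modelled on cones of density $\leq\theta_{\ell-1}$. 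The base case is $\theta_{\mathbf{C}}(\orig)=1$: then $\mathbf{C}$ is an $n$-plane, hypothesis (i)--(ii) force $V$ to be close to that plane in Hausdorff distance with the right density ratios, and Allard's regularity theorem \cite{All72} gives that $V$ is a single $C^{1,\alpha}$ (hence, by Schauder, smooth) graph over the plane in $\BB(1-\sigma)$, so $\BB(\orig,r)$ is itself a single strong-cone region with $\rho=0$ allowed only if $\theta>1$ --- which is not the case --- so in fact $\rho_a>0$ and we recurse trivially, or more directly the whole of $V\llcorner\BB(\orig,r)$ is a smooth region over the trivial smooth model consisting of the plane with no $\BB(y_\alpha,r_\alpha)$'s. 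Either way the decomposition exists with $N_R$ bounded.

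For the inductive step at density $\theta_\ell$, the key mechanism is the following dichotomy, obtained by a compactness-and-contradiction argument built on Lemma~\ref{Lem_cC_(N,n)(Lambda) bF Cpt}, the quantitative uniqueness of tangent cones (Proposition~\ref{Pro_Ana on SMC_Quanti Uniqueness of Tang Cone} and Corollary~\ref{Cor_Converg in all Scales}), and the monotonicity formula. Given $V$ modelled on $\mathbf{C}$ with $\theta_{\mathbf{C}}(\orig)=\theta_\ell$, I would look at the largest annulus $\AAa(\orig,\rho,R)$, $R\in(1-20\sigma,1)$, on which $V$ is a $(\mathbf{C},\gamma)$-strong-cone region (nonemptiness for $\rho$ slightly below $1$ comes from hypothesis (i)--(ii) combined with Proposition~\ref{Pro_Ana on SMC_Quanti Uniqueness of Tang Cone}). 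If $\rho=0$ we are done since then $\theta_{\mathbf{C}}(\orig)=\theta_\ell>1$, which is exactly alternative (iii) of Definition~\ref{Def:cone_decomposition}. Otherwise, at scale $\rho$ the varifold $(\eta_{\orig,\rho}^{-1})_\# V$ either (a) is still close to $\mathbf{C}$ in the next annulus down --- impossible by maximality of the strong-cone region unless a genuine obstruction occurs --- or (b) ``opens up'': there is a finite collection of points $\{y_\alpha\}\subset\BB(1-3\sigma)$ near which $V$ develops new, lower-density conical behaviour. Using the density-gap structure and upper semicontinuity of density under varifold convergence, the density of each such new cone $\mathbf{C}_\alpha$ at $y_\alpha$ is $<\theta_\ell$, hence $\leq\theta_{\ell-1}$; a compactness argument (extracting from a hypothetical sequence of counterexamples, applying Lemma~\ref{Lem_cC_(N,n)(Lambda) bF Cpt} to produce a limit \emph{smooth model} in the sense of Definition~\ref{Def:smooth_model}, and invoking Allard away from the $y_\alpha$'s) produces a \emph{finite} list $\cS_\ell^{new}$ of new smooth models such that $V\llcorner\BB(\orig,r)$ is an $(S,\beta)$-smooth region over one of them, with the $\BB(y_\alpha,r_\alpha)$-balls carved out. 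One then applies the inductive hypothesis to $V$ rescaled around each $x_{b,\alpha}$ (which satisfies hypotheses (i)--(iii) at density $\leq\theta_{\ell-1}$ by construction and by the regularity-scale bound (iii) being scale-invariant), obtaining a cone decomposition there with $\leq N_{R,\ell-1}$ regions; gluing gives a decomposition of $V$ with $N_{R,\ell}:=N_{R,\ell-1}\cdot(\text{max number of }y_\alpha\text{'s}) + 1$ regions, and $\cS_\ell:=\cS_{\ell-1}\cup\cS_\ell^{new}$. The constant $\delta_\ell$ is taken small enough that all the compactness-derived closeness statements hold and small enough to feed the inductive hypothesis.

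The finiteness of the new model family $\cS_\ell^{new}$ is the crucial point that makes the induction terminate with \emph{finitely many} models, and it is here that I would be most careful: one must show that, as the counterexample sequence varies, the limiting smooth models fall into finitely many ``types'' up to the allowed $C^2$-closeness. This follows because (1) the limit cones $\mathbf{C},\mathbf{C}_\alpha$ range in the compact space $\cC_{N,n}(\Lambda)$ with finitely many densities, (2) the smooth part $\mathring S$ satisfies a uniform regularity-scale bound away from the $y_\alpha$'s (Definition~\ref{Def:smooth_model}(i)), giving uniform $C^2$ (indeed $C^\infty$) control and hence compactness, (3) the number $\#\{y_\alpha\}$ is bounded by $I$ (the bound on $\#\Sing V$), and (4) the scale constant $\epsilon_S$ of Definition~\ref{Def:smooth_model_const} is bounded below on the resulting compact family; so a finite $\beta$-net in this space serves as $\cS_\ell^{new}$. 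I expect the main obstacle --- the genuinely new content relative to \cite{Edelen2021} --- to be handling the ``combinatorics of cascades'': because in codimension $>1$ there is no order structure on graphs over an annulus, one cannot a priori rule out that rescaling around a new singular point $x_{b,\alpha}$ lands one back near a cone of \emph{the same} density $\theta_\ell$, which would break the induction. The resolution is to track the \emph{drop in density ratio} $\theta_V(\cdot,R)-\theta_V(\cdot,\rho)$ across each transition and use the strict monotonicity plus the uniform density gap from Lemma~\ref{Lem_cC_(N,n)(Lambda) finitely many density}: each transition either strictly decreases the relevant density (so recursion proceeds at lower level) or terminates in a $\rho=0$ cone region, and the total number of transitions is bounded in terms of $\Theta(\Lambda)$, the density gap, and $I$; packaging this bookkeeping into the bound on $N_R$ is the technical heart of the proof.
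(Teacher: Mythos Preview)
Your proposal is correct and follows essentially the same inductive scheme as the paper, which simply defers to \cite[Theorem~7.1]{Edelen2021}. The paper's own proof is two sentences: it states that the argument follows \emph{verbatim} from Edelen, with the single observation that the regularity-scale hypothesis (iii) together with $\#\Sing V\leq I$ forces all subsequential limits in the contradiction-compactness steps to be smooth multiplicity-one convergence away from a finite set, so that only Case~1 ($\mathcal{I}\subset\{0\}$) of Edelen's dichotomy ever occurs.

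One correction to your framing: the ``cascade'' concern you flag at the end---landing back at a cone of the \emph{same} density $\theta_\ell$ after rescaling at a new singular point---is not new content relative to \cite{Edelen2021}, and is not where the codimension-$>1$ difficulty lies. The density drop at each $y_\alpha$ is forced by the rigidity case of the monotonicity formula exactly as in Edelen: if the limiting smooth model $S$ had $\theta_S(y_\alpha)=\theta_{\mathbf C}(\orig)=\theta_S(\orig,\infty)$, then $S$ would be a cone centered at $y_\alpha$, hence (being asymptotic to $\mathbf C$ at infinity) translation-invariant, contradicting regularity of $\mathbf C$. This works identically in any codimension, and combined with Lemma~\ref{Lem_cC_(N,n)(Lambda) finitely many density} gives $\theta_{\mathbf C_\alpha}(\orig)\leq\theta_{\ell-1}$. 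The loss of the order structure that the paper mentions in the introduction to Appendix~\ref{Sec:Count_Decomp} refers to other parts of that appendix (the global parametrization and Theorem~\ref{Thm_Countable Decomp}), not to this theorem, whose proof the paper explicitly says is unchanged.
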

    \begin{proof}
        The proof follows verbatim that of~\cite[Theorem~7.1]{Edelen2021}. Note that by Allard's regularity theorem~\cite{All72}, the regularity scale condition $\mfr_{\Reg V, g}(x) \geq \Lambda^{-1}\rho_V(x)$ and the assumption  $\#\Sing V \leq I$ ensure that the convergence is smooth and occurs with multiplicity one outside a finite set, and thus, only Case 1 ($\mathcal{I} \subset \{0\}$) from the proof of~\cite[Theorem~7.1]{Edelen2021} will arise during the induction in our setting. 
    \end{proof}

\vspace{3mm}

\textbf{Tree representations of cone decomposition.}
    In the definition of the cone decomposition of a minimal submanifold near a cone, Definition~\ref{Def:cone_decomposition}, there is a natural hierarchy among the strong-cone regions and smooth regions based on their respective scales due to the volume monotonicity. Therefore, we can use a tree structure to represent the cone decomposition.
                
    \begin{definition}[Tree representation of a local cone decomposition]
        Given a $(\Lambda, \beta, \cS, N_R)$-cone decomposition of $V\llcorner \mathbb{B}(x, R)$ as in Definition \ref{Def:cone_decomposition} with parameters:
        \begin{itemize}
            \item Integers $N_S$, $N_C$ satisfying $N_S + N_C \leq N_R$;
            \item Points $\set{x_a}_a, \set{x_b}_b \subset \mathbb{B}(x, R)$;
            \item Radii $\set{R_a, \rho_a \st R_a \geq 2 \rho_a}_a, \set{R_b}_b$;
            \item Indices $\set{s_b}_b$;
            \item Cones $\set{\mathbf{C}_a}_a \subset \cC_{N, n}(\Lambda)$,
        \end{itemize}
        where $a = 1, \cdots, N_C$ and $b = 1, \cdots, N_S$.
        The corresponding \textbf{tree representation} of the cone decomposition is a rooted tree (in the sense of e.\,g. \cite[Section~B.5]{Cormen2022}) uniquely defined by the following requirements:
        \begin{enumerate}[label=(\roman*)]
            \item there are two types of nodes: every node of \textit{type I} is labeled with $(\mathbf{C}_a, x_a, R_a, \rho_a)$, while every node of \textit{type II} with $(S_{s_b}, x_b, R_b)$;
            \item the root is labeled with either $(\mathbf{C}_a, x_a = x, R_a = R, \rho_a)$ or $(S_{s_b}, x_b = x, R_b = R)$;
            \item for any type I node $(\mathbf{C}_a, x_a, R_a, \rho_a)$, either $\rho_a = 0$, $\theta_{\mbfC_a}(\mathbf{0}) > 1$ and it is a leaf; or $\rho_\alpha > 0$ and it has a unique child of either
                  \begin{itemize}
                      \item type I $(\mathbf{C}_{a'}, x_{a'} = x_a, R_{a'} = \rho_a, \rho_{a'})$, or
                      \item type II $(S_{s_{b'}}, x_{b'} = x_a, R_{b'} = \rho_a)$;
                  \end{itemize}
            \item for any type II node $(S_{s_b}, x_b, R_b)$ where $S_{s_b} = (S, \mathbf{C}, \set{\mathbf{C}_{{\hat\alpha}}, \mathbb{B}(y_{{\hat\alpha}}, r_{{\hat\alpha}})}_{{\hat\alpha}\in I_b})$, it has $\text{card}(I_b)$ child nodes such that for each ${\hat\alpha}$, there exists $R_{b,{\hat\alpha}}$ and $x_{b,{\hat\alpha}}$ such that
                  \[
                      |x_{b,{\hat\alpha}} - (x_b + R_b \cdot y_{\hat\alpha})| \leq \beta R_b r_{\hat\alpha}, \quad \frac{1}{2}\leq \frac{R_{b,{\hat\alpha}}}{R_b r_{\hat\alpha}} \leq 1 + \beta\,,
                  \]
                  so that the corresponding child node is either 
                  \begin{itemize}
                      \item of type I $(\mathbf{C}_{a'} = \mathbf{C}_{{\hat\alpha}}, x_{a'} = x_{b, {\hat\alpha}}, R_{a'} = R_{b, {\hat\alpha}}, \rho_{a'})$, or
                      \item of type II $(S_{s_{b'}}, x_{b'} = x_{b, {\hat\alpha}}, R_{b'} = R_{b, {\hat\alpha}})$.
                  \end{itemize}
        \end{enumerate}
        
        The \textbf{coarse tree representation} of the cone decomposition is obtained by relabeling the rooted tree above, i.e., replacing the type I node $(\mathbf{C}_a, x_a, R_a, \rho_a)$ by $(\theta_{\mathbf{C}_a}(\mathbf{0}))$, and the type II node $(S_{s_b}, x_b, R_b)$ by $S_{s_b}$.
    \end{definition}

    \begin{definition}
        For $\gamma \in (0, 1/100)$, two $(\Lambda, \beta, \cS, N_R)$-tree representations of local cone decompositions with parameters
        \begin{itemize}
            \item $(N_S, N_C, \set{x_a}, \set{x_b}, \set{R_a}, \set{\rho_a}, \set{R_b}, \set{\mbfC_a} \set{s_b})$,
            \item $(N'_S, N'_C, \set{x'_a}, \set{x'_b}, \set{R'_a}, \set{\rho'_a}, \set{R'_b}, \set{\mbfC'_a}, \set{s'_b})$,
        \end{itemize}
        are said to be \textbf{$\gamma$-close} if $N'_S = N_S$, $N'_C = N_C$, they have the same coarse tree representations, and in addition:
        \begin{enumerate}[label=(\roman*)]
            \item if the corresponding two nodes are both of type I, then
                \begin{itemize}
                    \item $\dist_H(\mathbf{C}_a \cap \partial B_1,\mathbf{C}_{a'} \cap \partial B_1) \leq \gamma$,
                    \item If $\rho_a > 0$, then
                        \begin{itemize}
                            \item $|\rho_a - \rho_{a'}|\leq \gamma \min(\rho_a, \rho_{a'})$;
                            \item $|x_a - x_{a'}| \leq \gamma \min(\rho_a, \rho_{a'})$;
                            \item $|R_a - R_{a'}| \leq \gamma \min(\rho_a, \rho_{a'})$;
                        \end{itemize}
                        otherwise, if $\rho_a = 0$, then
                        \begin{itemize}
                            \item $\rho_{a'} = 0$;
                            \item $|x_a - x_{a'}| \leq \gamma \min(R_a, R_{a'})$;
                            \item $|R_a - R_{a'}| \leq \gamma \min(R_a, R_{a'})$;
                        \end{itemize}
                \end{itemize}
            \item if the corresponding two nodes are both of type II, then
                \begin{itemize}
                \item $|x_b - x_{b'}| \leq \gamma \min(R_b, R_b') \min_{{\hat\alpha} \in I_b}(r_{\hat\alpha})$,
                \item $|R_b - R_{b'}| \leq \gamma \min(R_b, R_b') \min_{{\hat\alpha} \in I_b}(r_{\hat\alpha})$.
                \end{itemize}
        \end{enumerate}
    \end{definition}

    In order to deal with minimal submanifolds in a closed Riemannian manifold directly, we also introduce a large-scale cone decomposition for an $\MSI$ in a closed Riemannian manifold $M$ of dimension $N$.

    \begin{definition}[Large-scale cone decomposition]
        Given $\Lambda, \gamma, \beta \in \mathbb{R}_+$, $\sigma \in (0, 1/3)$, and $N_R \in \mathbb{N}$, let \begin{itemize}
            \item $g_0$, $g$ be two $C^3$ metrics on $M$;
            \item $\Sigma_0$, $\Sigma$ be two $\MSI$ in $(M, g_0)$ and $(M, g)$ respectively;
            \item $\cS = \set{S_s}_{s}$ be a finite collection of $(\Lambda, \sigma, \gamma)$-smooth models.
        \end{itemize}
        A \textbf{large-scale $(\Lambda, \beta, g_0, \Sigma_0, \mathcal{S}, N_R)$-cone decomposition} of $\Sigma$ consists of:
        \begin{itemize}
            \item a collection of radii $\set{r_{\hat\alpha}}_{\hat\alpha}$ corresponding to the singular set $\Sing(\Sigma_0) = \set{p_{\hat\alpha}}_{\hat\alpha}$, such that the metric balls $\set{B^g(p_{\hat\alpha}, r_{\hat\alpha})}_{\hat\alpha}$ are pairwise disjoint;
            \item a $(\Lambda, \beta, \cS, N_R)$-cone decomposition for each $|\Sigma| \llcorner B^g(p_{\hat\alpha}, r_{\hat\alpha})$;
            \item a $C^2$ normal section $u: \Sigma_0 \setminus \bigcup_{p_{\hat\alpha} \in \Sing(\Sigma_0)} B^g(p_{\hat\alpha}, {r_{\hat\alpha}/2}) \rightarrow \Sigma_0^\perp$ so that for $r_0 = \min_{\hat\alpha}\set{r_{\hat\alpha}} > 0$,
                \[
                    r_0^{-1}|u| + |\nabla u| + r_0 |\nabla^2 u| \leq \beta\,,
                \]
                and $|\Sigma| \llcorner B^g(p_{\hat\alpha}, r_{\hat\alpha})$ coincides with $\graph_{\Sigma_0}(u) \setminus B^g(p_{\hat\alpha}, {r_{\hat\alpha}})$ understood as varifold with multiplicity one).
        \end{itemize}
    \end{definition}

    Similarly, we can define the corresponding tree representation and the notion of $\gamma$-closeness.

    \begin{definition}[Tree representation of large-scale cone decomposition]
        Given a large-scale $(\Lambda, \beta, g_0, \Sigma_0, \cS, N_R)$-cone decomposition of an $\MSI$ $\Sigma$ in $(M, g)$ with parameters:
        \begin{itemize}
            \item $\Sing(\Sigma_0) = \set{p_{\hat\alpha}}_{\hat\alpha}$;
            \item radii $\set{r_{\hat\alpha}}_{\hat\alpha}$;
            \item $(\Lambda, \beta, \cS, N_R)$-cone decompositions for each 
            $|\Sigma| \llcorner B^g(p_{\hat\alpha}, r_{\hat\alpha})$;
        \end{itemize}
        The corresponding \textbf{tree representation} of the large-scale cone decomposition is a rooted tree uniquely defined by:
        \begin{enumerate}
            \item the root node is a labeled by a tuple $(\Sigma_0, g_0, \set{p_{\hat\alpha}}, \set{r_{\hat\alpha}})$;
            \item the root node has $\# \Sing(\Sigma_0)$ children, indexed by $\hat\alpha$. The corresponding subtree rooted at the $\hat\alpha$-child is the tree representation of the $(\Lambda, \beta, \cS, N_R)$-cone decomposition for each $|\Sigma|\llcorner B^g(p_{\hat\alpha}, {r_{\hat\alpha}})$.
        \end{enumerate}
        
        Similarly, the \textbf{coarse tree representation} will be the same directed rooted tree with the subtrees above replaced by their corresponding coarse trees.
    \end{definition}

    \begin{definition}
        For $\gamma \in (0, 1/100)$, two $(\Lambda, \beta, g_0, \Sigma_0, \cS, N_R)$-tree representations are said to be \textbf{$\gamma$-close} if 
        \begin{itemize}
            \item their root nodes have the same label;
            \item their subtrees corresponding to the $\hat\alpha$-child are $\gamma$-close for each $\hat\alpha$.
        \end{itemize}
    \end{definition}

    \begin{theorem}\label{thm:delta_nbhd-decomposition}
        In a closed smooth manifold $M$ of dimension $N$, for any given $(g, \Sigma) \in \cM^{k, \alpha}_n(M)$, $\Lambda\in\cR^{k,\alpha}_n(g,\Sigma)$, $\beta \in (0, 1/100)$, and $I\in\mathbb{N}$ with
        $\#\Sing(\Sigma) \leq I$
        there exists $\delta(g, \Sigma, \beta, \Lambda, I) > 0$ satisfying the following property.
        
        Defined a $\delta$-``neighborhood'' of $(g, \Sigma)$ as
        \[
            \begin{split}
                \cM^{k, \alpha}_n(g, \Sigma; \Lambda, I,\beta) := \Big\{ (g', \Sigma') \in \cM^{k,\alpha}_n(M): \|g'\|_{C^{k, \alpha}} \leq \Lambda,  
                \mfr_{\Sigma', g'} \geq \Lambda^{-1}\rho_{\Sigma',g'}, 
                \\
                \# \Sing(\Sigma') \leq I, \|g - g'\|_{C^k} + \mathbf{F}(|\Sigma|_g, |\Sigma'|_{g'}) \leq \delta(g, \Sigma, \beta, \Lambda, I)\
               \Big\} \,
            \end{split}
        \]
        \begin{enumerate}[label=(\roman*)]
            \item there exist a finite collection of $(\Lambda, \sigma, \beta)$-smooth models $\cS$ and an integer $N_R$, so that any $(g', \Sigma') \in \cM^{k, \alpha}_n(g, \Sigma; \Lambda, I, \beta)$ admits a large-scale $(\Lambda, \beta, g, \Sigma, \cS, N_R)$-cone decomposition;
            \item there exists a countable collection $\set{(g_v, \Sigma_v)}_{v \in \mathbb{N}} \subset \cM^{k, \alpha}_n(g, \Sigma; \Lambda, I, \beta)$ with fixed large-scale $(\Lambda, \beta, g, \Sigma, \cS, N_R)$-cone decompositions with the following property: every $(g', \Sigma') \in \mathcal{M}^{k, \alpha}_n(g, \Sigma; \Lambda, I, \beta)$ admits a large-scale $(\Lambda, \beta, g, \Sigma, \cS, N_R)$-cone decomposition whose tree representation is $\beta$-close to that of some $(g_v, \Sigma_v)$.
        \end{enumerate}
    \end{theorem}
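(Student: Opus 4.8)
The plan is to reduce the statement to the already-established \emph{local} cone decomposition (Theorem~\ref{Thm:cone_decomposition}) applied near each singular point of $\Sigma$, glued together over a smooth region away from $\Sing(\Sigma)$, and then to extract a countable ``skeleton'' by a pigeonhole/compactness argument on the discrete part of the decomposition data. The key structural observation is that once the parameters $(\Lambda,\sigma,\gamma,I)$ are fixed, Theorem~\ref{Thm:cone_decomposition} produces a \emph{finite} list of smooth models $\cS$ and a uniform bound $N_R$; hence all cone decompositions arising from pairs in $\cM^{k,\alpha}_n(g,\Sigma;\Lambda,I,\beta)$ share the same finite ``alphabet'', and the only data that varies continuously are the centers, radii and cones attached to the (finitely many, uniformly bounded in number) nodes of the tree representation.

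\textbf{Step 1: a large-scale skeleton from $\mbfF$-closeness.} First I would use the compactness Lemma~\ref{Lem_Cptness for scL^k} (or more precisely the fact that a uniform lower bound on the regularity scale forces smooth multiplicity-one convergence away from finitely many points, Remark~\ref{rem:RegCurvBound} and Remark~\ref{rem:AllardMetrization}) to choose $\delta$ small enough that: for every $(g',\Sigma')\in\cM^{k,\alpha}_n(g,\Sigma;\Lambda,I,\beta)$ the singular set $\Sing(\Sigma')$ has exactly $Q=\#\Sing(\Sigma)$ points (the density lower bound $\theta\geq\Theta(\Lambda)^{-1}\cdot(\cdots)$ and the discreteness of densities, Lemma~\ref{Lem_cC_(N,n)(Lambda) finitely many density}, prevent singular points from appearing or disappearing in the limit), and there is a bijection $p_{\hat\alpha}\mapsto p'_{\hat\alpha}$ with $\dist_g(p_{\hat\alpha},p'_{\hat\alpha})$ small; moreover $\Sigma'$ is a $\beta$-$C^2$ graph over $\Sigma$ on the complement of $\bigcup_{\hat\alpha}B^g(p_{\hat\alpha},r_{\hat\alpha}/2)$ for a fixed collection of radii $\{r_{\hat\alpha}\}$ with the balls $B^g(p_{\hat\alpha},r_{\hat\alpha})$ pairwise disjoint. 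This furnishes the outer normal section $u$ in the definition of large-scale cone decomposition, with the required $C^2$ smallness coming from $\mbfF$-closeness and elliptic estimates (as in Lemma~\ref{Lem_App_GraphParam_Compare diff graph func} and Corollary~\ref{cor:L2toC0}).

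\textbf{Step 2: local decompositions near each $p'_{\hat\alpha}$.} After rescaling the ball $B^g(p'_{\hat\alpha},r_{\hat\alpha})$ to unit size and pulling back the metric by the exponential map, the hypotheses (i)--(iii) of Theorem~\ref{Thm:cone_decomposition} are met for $|\Sigma'|$ with cone $\mbfC=\mbfC_{p'_{\hat\alpha}}(\Sigma')\in\cC_{N,n}(\Lambda)$: the Hausdorff-closeness to the cone follows from $\mbfF$-closeness plus the uniform tangent-cone convergence of Corollary~\ref{Cor_Converg in all Scales}; the density pinching follows from monotonicity and the density equality built into the definition of $\cM^{k,\alpha}_n$; the regularity-scale bound is exactly condition (ii) in the definition of the neighborhood. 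Theorem~\ref{Thm:cone_decomposition} then yields a $(\Lambda,\gamma,\cS,N_R)$-cone decomposition of each $|\Sigma'|\llcorner B^g(p'_{\hat\alpha},r_{\hat\alpha})$ (after a harmless adjustment of $r_{\hat\alpha}$ within $(1-20\sigma,1)$ of the chosen scale), with $\cS$ and $N_R$ \emph{independent of $\Sigma'$}. Assembling these with Step~1 gives part (i): the large-scale $(\Lambda,\beta,g,\Sigma,\cS,N_R)$-cone decomposition, noting that $\gamma$ can be taken $\leq\beta$.

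\textbf{Step 3: countability via the coarse tree and pigeonhole.} For part (ii), observe that the combinatorial type of the large-scale decomposition --- i.e.\ its coarse tree representation --- takes only finitely many values: the number of nodes is bounded by $Q\cdot N_R$, the node types range over $\cS$ (finite) together with the finite set of densities $\{\theta_\mbfC(\orig):\mbfC\in\cC_{N,n}(\Lambda)\}$ (Lemma~\ref{Lem_cC_(N,n)(Lambda) finitely many density}), and $Q\leq I$. Fixing one coarse tree, the continuous data attached to the tree --- the cones $\mbfC_a\in\cC_{N,n}(\Lambda)$ (a compact metric space by Lemma~\ref{Lem_cC_(N,n)(Lambda) bF Cpt}), the centers $x_a,x_b$ (in a compact ambient region), and the ratios $\rho_a/R_a$, $R_{b,\hat\alpha}/(R_b r_{\hat\alpha})$ (in a compact interval) --- all live in a compact metric space $\mathcal K$. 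Cover $\mathcal K$ by countably many balls of radius $\to 0$ in the product metric (indeed finitely many of each size suffices by compactness, so countably many in total), pick one representative pair $(g_v,\Sigma_v)$ in $\cM^{k,\alpha}_n(g,\Sigma;\Lambda,I,\beta)$ realizing each ball that is actually hit (discard empty ones); then by construction every $(g',\Sigma')$ has a decomposition whose tree representation is $\beta$-close to that of some $(g_v,\Sigma_v)$, since $\beta$-closeness is precisely proximity of all these continuous parameters (scaled by the local radii) together with equality of coarse trees. Taking the union over the finitely many coarse trees yields the desired countable collection.

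\textbf{Main obstacle.} The delicate point is not the compactness packaging but ensuring the \emph{consistency} of the choices across Steps~1--2: one must verify that the local cone decompositions produced by Theorem~\ref{Thm:cone_decomposition} near different singular points, and the outer graphical region from Step~1, can be made to overlap correctly on the annular ``transition'' zones $A^g(p'_{\hat\alpha},r_{\hat\alpha}/2,r_{\hat\alpha})$, so that the normal section $u$ matches the outermost strong-cone region of each local tree to the prescribed accuracy $\beta$. This is exactly where one invokes the graphical-comparison estimates of Appendix~\ref{app:GraphicalPar} --- in particular Corollary~\ref{Cor_L^2 Growth Est} and Corollary~\ref{Cor_Converg in all Scales} --- to propagate the $C^2$-graphicality from the scale $r_{\hat\alpha}$ down into the region where Theorem~\ref{Thm:cone_decomposition} takes over, and to control how the cone $\mbfC_{p'_{\hat\alpha}}(\Sigma')$ varies in $\cC_{N,n}(\Lambda)$. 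The bookkeeping is involved but, given the machinery already developed in the earlier appendices and the verbatim adaptability of Edelen's argument noted in the proof of Theorem~\ref{Thm:cone_decomposition}, presents no essentially new analytic difficulty. The statement is then exactly what feeds into Theorem~\ref{Thm_Countable Decomp} once one checks that the $\beta$-closeness of tree representations, for $\beta$ small depending on the data, implies containment in a common canonical neighborhood $\cL^{k,\alpha}(g_v,\Sigma_v;\Lambda,\kappa_v)$ --- but that last implication is the content of Theorem~\ref{Thm_Countable Decomp} itself and is carried out separately.
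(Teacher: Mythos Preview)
Your overall strategy is correct and matches the route the paper defers to (the proof in the paper simply cites \cite[Theorem 9.6]{LW22}); the three-step structure --- graphical region away from $\Sing(\Sigma)$, local cone decomposition via Theorem~\ref{Thm:cone_decomposition} inside each ball, then countable discretization of tree data --- is exactly the intended architecture.

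There is, however, one genuine misstep in Step~1. You assert that for $\delta$ small there is a \emph{bijection} $\Sing(\Sigma)\to\Sing(\Sigma')$, arguing via density discreteness. This is neither needed nor generally true: the definition of $\cM^{k,\alpha}_n(g,\Sigma;\Lambda,I,\beta)$ only imposes $\#\Sing(\Sigma')\leq I$, and several singular points of $\Sigma'$ may cluster near a single $p_{\hat\alpha}\in\Sing(\Sigma)$ (this is precisely the ``cascade'' phenomenon the cone-decomposition machinery is designed to handle, and why Theorem~\ref{Thm:cone_decomposition} carries the hypothesis $\#\Sing V\leq I$ rather than $\#\Sing V=1$). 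What you actually need --- and what Allard regularity plus $\mbfF$-closeness does give --- is that $\Sing(\Sigma')\subset\bigcup_{\hat\alpha}B^g(p_{\hat\alpha},r_{\hat\alpha})$; the balls in the large-scale decomposition are centered at $\Sing(\Sigma)$, not at $\Sing(\Sigma')$, and the local theorem then sorts out whatever singular structure $\Sigma'$ has inside each ball. Correspondingly, in Step~2 the relevant cone is $\mbfC_{p_{\hat\alpha}}(\Sigma)$ (the tangent cone of the \emph{reference}), not $\mbfC_{p'_{\hat\alpha}}(\Sigma')$.

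A smaller point in Step~3: the radii $R_a,\rho_a,R_b$ range over $(0,r_{\hat\alpha}]$, which is not compact, and the $\beta$-closeness conditions are multiplicative (e.g.\ $|\rho_a-\rho_{a'}|\leq\gamma\min(\rho_a,\rho_{a'})$). So the discretization must be done on a logarithmic scale in the radii --- countably many dyadic shells rather than finitely many balls in a compact space. Combined with the finitely many coarse trees and the genuine compactness of the remaining data (cones in $\cC_{N,n}(\Lambda)$, centers in bounded balls, ratio parameters in fixed intervals), this still yields a countable family, which is all that is claimed.
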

    \begin{proof}
        The proof is essentially the same as that of \cite[Theorem 9.6]{LW22}.
    \end{proof}

    To proceed, define
    \[
        \cM^{k, \alpha}_n(M; \Lambda, I) := \Big\{ (g, \Sigma) \in \cM^{k,\alpha}_n(M), \Lambda\in\cR^{k,\alpha}_n(g,\Sigma), \
        \#\Sing(\Sigma) \leq I\Big\} \,.
    \]
    For any $\beta > 0$, it follows from the corresponding compactness statement that there exists a sequence of $\{(g_i, \Sigma_i)\}_i$ of $\cM^{k, \alpha}_n(M; \Lambda, I)$ such that
    \[
        \cM^{k, \alpha}_n(M; \Lambda, I) = \bigcup^\infty_{i = 1} \cM^{k, \alpha}_n(g_i, \Sigma_i; \Lambda, I, \beta)\,.
    \]

    With reference to the second point of Theorem~\ref{thm:delta_nbhd-decomposition}, for each $v$, we can define an ``\emph{intermediate canonical neighborhood}'' by declaring
    \[
        \cL^{k, \alpha}_0(g_v, \Sigma_v; \Lambda, I ,\beta)
    \]
    to consist of every pair $(g', \Sigma') \in \cM^{k, \alpha}_n(g, \Sigma; \Lambda, I,\beta)$ that admits a large-scale $(\Lambda, \beta, g, \Sigma, \cS, N_R)$-cone decomposition whose tree representation is $\beta$-close to that of $(g_v, \Sigma_v)$. (This definition should be compared with that of canonical neighborhood, see Definition \ref{Def_injrad, canonical neighb}).

    Therefore, we have
    \[\begin{aligned}
        \cM^{k,\alpha}_n(M) &= \bigcup^\infty_{I = 0} \bigcup^\infty_{\Lambda = 1} \cM^{k, \alpha}_n(M; \Lambda, I)\\
            &= \bigcup^\infty_{I = 0} \bigcup^\infty_{\Lambda = 1} \bigcup^\infty_{i = 1} \cM^{k, \alpha}_n(g_i, \Sigma_i; \Lambda, I, \beta)\\            
            &= \bigcup^\infty_{I = 0} \bigcup^\infty_{\Lambda = 1} \bigcup^\infty_{i = 1} \bigcup^\infty_{v = 1} \cL^{k, \alpha}_0(g_{i, v}, \Sigma_{i, v}; \Lambda, I ,\beta)\,.
    \end{aligned}\]

    After relabeling the subscripts of the various parameters in play (each varying in a countable set), we obtain
    \begin{equation}\label{Eqn:Snd_Decomp}
        \cM^{k,\alpha}_n(M) = \bigcup^\infty_{i = 1} \cL^{k, \alpha}_0(g_i, \Sigma_i; \Lambda_i, I_i ,\beta)\,.
    \end{equation}

\vspace{3mm}

\textbf{Second decomposition.}
    Following the arguments in \cite[Subsection 9.2]{LW22}, we can prove that an intermediate canonical neighborhood $\mathcal{L}^{k, \alpha}_0(g, \Sigma; \Lambda, I, \beta)$ is sequentially compact, and thus we have the following result.

      \begin{proposition}[Finite covering of $\cL^{k,\alpha}_0$]\label{prop:finite_covering}
        For any $g, \Sigma, \Lambda, I, \beta$ as in (\ref{Eqn:Snd_Decomp}) (with subscripts omitted) and for any positive function $\kappa: \cT^{k,\alpha}_n(M)\to \RR_+$ (not necessarily continuous), there exists a finite set of pairs $\set{(g_{v}, \Sigma_{v})}_v \subset \cL^{k, \alpha}_0(g, \Sigma; \Lambda, I ,\beta)$ such that
        \[
            \cL^{k, \alpha}_0(g, \Sigma; \Lambda, I ,\beta) \subset \bigcup^{V}_{v = 1} \cL^{k,\alpha}(g_{v}, \Sigma_{v};\Lambda, \kappa_{v})\,,
        \]
        where $\kappa_v = \kappa(g_{v}, \Sigma_{v}; \Lambda)$.
    \end{proposition}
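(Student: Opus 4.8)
\textbf{Proof strategy for Proposition~\ref{prop:finite_covering}.}
The plan is to extract a finite subcover of the intermediate canonical neighborhood $\cL^{k,\alpha}_0(g,\Sigma;\Lambda,I,\beta)$ by genuine canonical neighborhoods $\cL^{k,\alpha}(g_v,\Sigma_v;\Lambda,\kappa_v)$, capitalizing on the sequential compactness of $\cL^{k,\alpha}_0$. The first step is to establish this compactness: a sequence $\{(g_j,\Sigma_j)\}_{j\geq 1}$ in $\cL^{k,\alpha}_0(g,\Sigma;\Lambda,I,\beta)$ satisfies uniform $C^{k,\alpha}$ bounds on the metrics, a uniform lower bound on the regularity scale (hence, via Remark~\ref{rem:RegCurvBound}, uniform curvature and area bounds), and a uniform bound $\#\Sing(\Sigma_j)\leq I$; moreover, by the fixed large-scale cone decomposition structure, the singular points, the cone densities, and the whole tree representation vary in a controlled (essentially finite, up to $\beta$-closeness) way. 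Arzel\`a--Ascoli together with Allard's regularity theorem~\cite{All72} (exactly as in the proof of Lemma~\ref{Lem_Cptness for scL^k}, and following the argument in \cite[Subsection~9.2]{LW22}) then yields a subsequence converging in $\cM^{k,\alpha}_n(M)$ to a limit $(g_\infty,\Sigma_\infty)$; one checks the limit still lies in $\cL^{k,\alpha}_0(g,\Sigma;\Lambda,I,\beta)$ because all the defining conditions (norm bounds, regularity-scale bound, density equalities along the tree, and $\beta$-closeness of tree representations, which is a closed condition) pass to the limit. This gives sequential compactness of $\cL^{k,\alpha}_0$.

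The second step is a purely point-set-topological covering argument, but it requires a preliminary observation: for each $(g',\Sigma')\in \cL^{k,\alpha}_0(g,\Sigma;\Lambda,I,\beta)$ and the prescribed value $\kappa':=\kappa(g',\Sigma';\Lambda)$, the canonical neighborhood $\cL^{k,\alpha}(g',\Sigma';\Lambda,\kappa')$ contains a \emph{relatively open} neighborhood of $(g',\Sigma')$ inside $\cL^{k,\alpha}_0(g,\Sigma;\Lambda,I,\beta)$. This is the analogue, at the level of intermediate canonical neighborhoods, of the openness claim already exploited in the proof of Theorem~\ref{thm:generic_perturbation} (Step~1), and is proven by the same contradiction scheme: if a sequence $(g_i,\Sigma_i)\to (g',\Sigma')$ in $\cM^{k,\alpha}_n(M)$ eventually fails to lie in $\cL^{k,\alpha}(g',\Sigma';\Lambda,\kappa')$, then for large $i$ one has $\|g_i\|_{C^{k,\alpha}}\leq \Lambda$, $\|g'-g_i\|_{C^k}\leq\kappa'$, $\mfr_{\Sigma_i,g_i}\geq\Lambda^{-1}\rho_{\Sigma_i,g_i}$ and $\mbfF(|\Sigma_i|_{g_i},|\Sigma'|_{g'})\leq\kappa'$; since $(g_i,\Sigma_i),(g',\Sigma')\in \cL^{k,\alpha}_0$ and $\cL^{k,\alpha}_0$ records the singular points together with the densities of their tangent cones via the tree representation, the required bijection $\Sing(\Sigma_i)\to\Sing(\Sigma')$ with matching densities and $\dist_{g_i}(p_i,p')\leq\injrad(g',\Sigma')/2$ follows from transitivity through the common reference pair, contradicting the assumed non-membership.

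Granting these two ingredients, the conclusion is immediate. The collection $\{\cL^{k,\alpha}(g',\Sigma';\Lambda,\kappa(g',\Sigma';\Lambda))\}_{(g',\Sigma')\in\cL^{k,\alpha}_0}$, intersected with $\cL^{k,\alpha}_0(g,\Sigma;\Lambda,I,\beta)$, is an open cover of the sequentially compact (hence, being metrizable via $\mbfD^\cL$ together with $C^{k,\alpha}$ convergence, compact) space $\cL^{k,\alpha}_0(g,\Sigma;\Lambda,I,\beta)$; extracting a finite subcover produces $(g_1,\Sigma_1),\dots,(g_V,\Sigma_V)$ with
\[
\cL^{k,\alpha}_0(g,\Sigma;\Lambda,I,\beta)\subset\bigcup_{v=1}^{V}\cL^{k,\alpha}(g_v,\Sigma_v;\Lambda,\kappa_v),\qquad \kappa_v=\kappa(g_v,\Sigma_v;\Lambda),
\]
as claimed. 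I expect the main obstacle to be the openness statement of the second step: one must verify carefully that the combinatorial bookkeeping built into the tree representation of the large-scale cone decomposition is robust enough to supply, in the limit, the ordered bijection of singular sets with exact density matching required by Definition~\ref{Def_injrad, canonical neighb}—this is precisely the place where, in contrast with \cite{Edelen2021}, the absence of the codimension-one order structure forces one to rely on the $\beta$-closeness of tree representations rather than on a monotone trivialization of the normal bundle. The compactness part, by contrast, is a routine adaptation of Lemma~\ref{Lem_Cptness for scL^k} and \cite[Subsection~9.2]{LW22}.
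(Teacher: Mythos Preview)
Your proposal is correct and follows essentially the same route as the paper: the paper's proof consists of the single sentence that $\cL^{k,\alpha}_0(g,\Sigma;\Lambda,I,\beta)$ is sequentially compact by the arguments in \cite[Subsection~9.2]{LW22}, after which the finite-cover statement is declared to follow; you have simply spelled out the two ingredients (sequential compactness, and relative openness of each $\cL^{k,\alpha}(g',\Sigma';\Lambda,\kappa')\cap\cL^{k,\alpha}_0$ at its center) that make ``thus the result follows'' into an argument, the second being the direct analogue of the openness claim already written out in Step~1 of the proof of Theorem~\ref{thm:generic_perturbation}. One small correction: $\mbfD^\cL$ is only a semi-metric, so the passage from sequential compactness to compactness should instead appeal to the metrizable topology on $\cM^{k,\alpha}_n(M)$ declared in Remark~\ref{rem:TopPairs} (i.e.\ $C^{k,\alpha}$ norm on metrics together with $\mbfF$-distance on varifolds).
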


\

\textbf{Proof of Theorem \ref{Thm_Countable Decomp}.}
    By (\ref{Eqn:Snd_Decomp}) and Proposition~\ref{prop:finite_covering}, we have
    \begin{align*}
        \cM^{k,\alpha}(M) & = \bigcup^\infty_{i = 1} \cL^{k, \alpha}_0(g_i, \Sigma_i; \Lambda_i, I_i ,\beta)                                    \\
                        & = \bigcup^\infty_{i = 1} \bigcup^{V_i}_{v = 1} \cL^{k,\alpha}(g_{i, v}, \Sigma_{i, v};\Lambda_i, \kappa_{i, v})\,,
    \end{align*}
    where $\kappa_{i, v} = \kappa(g_{i, v}, \Sigma_{i, v}; \Lambda_i)$.

    Hence, Theorem \ref{Thm_Countable Decomp} follows from relabeling the indices $\set{i, v}$.

\

\section*{Acknowledgements}
This project has received funding from the European Research Council (ERC) under the European Union’s Horizon 2020 research and innovation programme (grant agreement No. 947923). Parts of this article were finalized while A. C. was visiting the University of Chicago and then during various visits of the authors at the Simons Laufer Mathematical Sciences Institute (SLMath): the support
and excellent working conditions of both institutions are gratefully acknowledged. Y. L. was partially supported by an AMS-Simons travel grant.

\bibliography{biblio}

    \end{document}